\documentclass[12pt]{article}
\usepackage{verbatim}
\usepackage{appendix,latexsym,amsfonts,amsmath,amssymb,graphicx,hyperref,amsthm,authblk,comment,epstopdf}
\usepackage{graphicx}
\usepackage{pinlabel}
\usepackage{amssymb,amsmath,comment}
\usepackage{amsthm}
\usepackage{graphicx}
\usepackage{pinlabel}
\setlength{\textwidth}{6.50in}
\setlength{\oddsidemargin}{0in}
\setlength{\evensidemargin}{0in}
\setlength{\textheight}{8.5in}
\setlength{\topmargin}{-.25in}
\newtheorem{corollary}{Corollary}[section]
\newtheorem{lemma}[corollary]{Lemma}
\newtheorem{proposition}[corollary]{Proposition}
\newtheorem{theorem}[corollary]{Theorem}

\newcommand{\Z}{{\mathbb Z}}

\newcommand{\R}{{\mathbb{R}}}
\newcommand{\C}{{\mathbb C}}

\newcommand{\EE}{\mathbb{E}}
\newcommand{\PP}{\mathbb{P}}

\newcommand{\HH}{\mathbb{H}}
\newcommand{\N}{\mathbb{N}}
\newcommand{\D}{\mathbb{D}}

\newcommand{\Ree}{\mbox{Re}\,}

\newcommand{\pa}{\partial}

\newcommand{\F}{{\cal F}}
\newcommand{\no}{\noindent}

\newcommand{\BGE}{\begin{equation}}
\newcommand{\BGEN}{\begin{equation*}}
\newcommand{\EDE}{\end{equation}}
\newcommand{\EDEN}{\end{equation*}}

\newcommand{\Half}{{\mathbb H}}
\newcommand{\inrad}{{\rm{inrad}}}

\def\eps{\varepsilon}

\def\til{\widetilde}
\def\ha{\widehat}
\def\sem{\setminus}
\def\lin{\overline}

\numberwithin{equation}{section}
 
 \DeclareMathOperator{\diam}{diam}
\DeclareMathOperator{\dist}{dist} 
\DeclareMathOperator{\hcap}{hcap} \DeclareMathOperator{\id}{id}

  \DeclareMathOperator{\Imm}{Im}

 \DeclareMathOperator{\doub}{doub}

\def\Z{\mathbb{Z}}

\def \Im {{\rm Im}}

\def \Half {{\mathbb H}}
\newenvironment{remark}[1][Remark]{\begin{trivlist}
		\item[\hskip \labelsep {\bfseries #1}]}{\end{trivlist}}

\begin{document}
	
\title{Green's functions for chordal SLE curves}
\author{Mohammad A. Rezaei}
\author{Dapeng Zhan\thanks{Research partially supported by NSF grant  DMS-1056840 and Simons Foundation grant \#396973.}}
\affil{Michigan State University}
\renewcommand\Authands{ and }
\maketitle
	
\begin{abstract}
For a chordal SLE$_\kappa$ ($\kappa\in(0,8)$) curve in a domain $D$, the $n$-point Green's function valued at distinct points $z_1,\dots,z_n\in D$ is defined to be $$G(z_1,\dots,z_n)=\lim_{r_1,\dots,r_n\downarrow 0} \prod_{k=1}^n r_k^{d-2} \PP[\dist(\gamma,z_k)<r_k,1\le k\le n],$$ where $d=1+\frac{\kappa}{8}$ is the Hausdorff dimension of SLE$_\kappa$, provided that the limit converges. In this paper, we will show that such Green's functions exist for any finite number of points. Along the way we provide the rate of convergence and modulus of continuity for Green's functions as well. Finally, we give up-to-constant bounds for them.
\end{abstract}

\tableofcontents

\section{Introduction}  \label{Introsec}

The Schramm-Loewner evolution (SLE) is a measure on the space of curves which was defined in the groundbreaking work of Schramm \cite{Sch}. It is the main universal object emerging as the scaling limit of many models from statistical physics. Since then the geometry of SLE curves has been studied extensively. See \cite{RS,Law1} for definition and properties of SLE.

One of the most important functions associated to SLE (in general any random process) is the Green's function. Roughly, it can be defined as the normalized probability that SLE curve hits a set of $n\geq 1$ given points in its domain. See equation \eqref{mlti-green} for precise definition. For $n=1$, the existence of Green's function for chordal SLE was given in \cite{Law4} where conformal radius was used instead of Euclidean distance. 
For $n=2$, the existence was proved in \cite{LW} (again for conformal radius instead of Euclidean distance) following a method initiated by Beffara \cite{Bf}. Finally in \cite{LR1} the authors showed that Green's function as defined here (using Euclidean distance) exists for $n=1,2$, and obtained an explicit formula of the one-point Green's function for chordal SLE in the upper half plane (see (\ref{G(z)})). To the best of our knowledge, existence of Green's function for $n>2$ has not been proved so far. Our main goal in this paper is to show that Green's function exists for all $n\geq2$. In addition we find convergence rate and modulus of continuity of the Green's functions, and provide sharp bounds for them.

Chordal SLE$_\kappa$ ($\kappa>0$) in a simply connected domain $D$ is a probability measure on curves in $\lin D$ from one marked boundary point (or prime end) $a$ to another marked boundary point (or prime end) $b$. It is first defined in the upper half plane $\HH=\{z\in\C:\Imm z>0\}$ using chordal Loewner equation, and then extended to other domains by conformal maps. For $\kappa\ge 8$, the curve is space filling (\cite{RS}), i.e., it visits every point in the domain. In this paper we only consider SLE$_\kappa$ for $\kappa\in(0,8)$ and fix $\kappa$ throughout. It is known (\cite{Bf}) that SLE$_\kappa$ has Hausdorff dimension $d=1+\frac\kappa 8$. Let $z_1,\dots,z_n \in D$ be $n$ distinct points. The $n$-point Green's function for SLE$_\kappa$ (in $D$ from $a$ to $b$) at $z_1,\dots,z_n$ is defined by
\BGE  G_{(D;a,b)}(z_1,\dots,z_n)  = \lim_{r_1,\dots,r_n \downarrow 0} \prod_{k=1}^n r_k^{d-2}  \, \PP\Big [\bigcap_{k=1}^n\{\dist(z_k,\gamma) \leq r_k\}\Big ], \label{mlti-green}\EDE
provided the limit exists. By conformal invariance of SLE, we easily see that the Green's function satisfies conformal covariance. That is, if $G_{(\HH;0,\infty)}$ exists, then $G_{(D;a,b)}$ exists for any triple $(D;a,b)$, and if $g$ is a conformal map from $(D;a,b)$ onto $(\HH;0,\infty)$, then
$$G_{(D;a,b)}(z_1,\dots,z_n)=\prod_{k=1}^n |g'(z_j)|^{2-d} G_{(\HH;0,\infty)}(g(z_1),\dots,g(z_n)).$$
Thus, it suffices to prove the existence of $G_{(\HH;0,\infty)}$, which we write as $G$. As we mentioned above, the one-point Green's function $G(z)$ has a closed-form formula (\cite{LR1}):
\BGE G(z)=\hat c (\Imm z)^{d-2+\alpha}|z|^{-\alpha}.\label{G(z)}\EDE
where $\alpha=\frac{8}{\kappa}-1$ is the boundary exponent, and $\hat c$ is a positive constant depending on $\kappa$, which is unknown so far.

Now we can state the main result of the paper.
\begin{theorem} \label{maintheorem}
For any $n\in\N$, $G(z_1,\dots,z_n)$  exists and is locally H\"older continuous. Also there is an explicit function $F(z_1,\dots,z_n)$  (defined in \eqref{F}) such that for any distinct points $z_1,\dots,z_n\in\HH$,
$G(z_1,\dots,z_n) \asymp F(z_1,\dots,z_n)$, where
the constant depends only on $\kappa$ and $n$.
\end{theorem}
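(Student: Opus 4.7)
The plan is to prove Theorem \ref{maintheorem} by induction on $n$. The case $n=1$ is given by the closed form \eqref{G(z)}, and the case $n=2$ with Euclidean distance is established in \cite{LR1} with a quantitative rate. I will propagate all four ingredients in the induction hypothesis at once, namely existence of the limit, a power-law rate of convergence, local H\"older continuity in $(z_1,\dots,z_n)$, and the sharp bound $G\asymp F$. Carrying all of them simultaneously is necessary because the inductive step feeds the $(n-1)$-point statement into an expectation whose analysis requires both the rate and the bounds.

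For the inductive step, fix distinct $z_1,\dots,z_n\in\HH$ and small radii $r_1,\dots,r_n>0$, pick a distinguished point (say $z_1$), and set $\tau=\inf\{t:\dist(\gamma(t),z_1)\le r_1\}$. Let $g_\tau$ be the Loewner map at time $\tau$. Conditional on $\F_\tau$, the image of $\gamma[\tau,\infty)$ under $g_\tau$ is again a chordal SLE$_\kappa$ in $\HH$ from the driving point $U_\tau$ to $\infty$. Writing
\BGEN
\prod_{k=1}^n r_k^{d-2}\,\PP\Big[\bigcap_{k=1}^n\{\dist(z_k,\gamma)\le r_k\}\Big]
=
\E\bigg[r_1^{d-2}\mathbf{1}_{\{\tau<\infty\}}\prod_{k=2}^n r_k^{d-2}\,\PP\big[\,\dist(z_k,\gamma)\le r_k,\,k\ge 2\,\big|\,\F_\tau\big]\bigg],
\EDEN
I express the inner conditional probability via $g_\tau$ as an $(n-1)$-point crossing event for the images $g_\tau(z_k)$ with rescaled radii $r_k'\asymp |g_\tau'(z_k)|r_k$. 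By the inductive hypothesis, it equals $\prod_{k\ge 2}(r_k')^{2-d}\,G(g_\tau(z_2),\dots,g_\tau(z_n))$ up to a controlled error, and the outer limit as $r_1\downarrow 0$ is then obtained by adapting the one-point argument of \cite{LR1} to integrate a continuous functional of $\gamma[0,\tau]$, yielding the Green's function as the integral of $G(g_\tau(z_2),\dots,g_\tau(z_n))$ against an SLE--Green measure on stopping paths.

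The main obstacle is uniform quantitative control of the induction. The image configuration $(g_\tau(z_2),\dots,g_\tau(z_n))$ can degenerate in two ways that occur with non-negligible probability: two images may collide, or an image may approach the driving point $U_\tau$. To handle this, I would split on a good event that $\gamma[0,\tau]$ avoids dangerous neighborhoods of the $z_k$, bound the bad event using one- and two-point Green's function estimates, and show that both the inductive error and the bad-event contribution are dominated by $F(z_1,\dots,z_n)$ times a positive power of the largest $r_k$. Running the induction with this bookkeeping yields existence and the convergence rate; the H\"older modulus follows because Koebe distortion makes the integrand jointly H\"older in $(z_1,\dots,z_n)$ by the inductive hypothesis; and $G\asymp F$ follows from the same decomposition, with the upper bound produced by summing the Markov identity over a dyadic partition of scales dictated by $F$ and the lower bound by constructing a staircase event on which $\gamma$ visits the $z_k$ in the order optimizing $F$, whose probability is estimated by concatenating SLE crossing estimates.
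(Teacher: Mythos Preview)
Your overall strategy---induction on $n$, stopping at $\tau^{z_1}_{r_1}$, applying the domain Markov property, and carrying the rate of convergence and H\"older continuity together---is exactly the architecture the paper uses. Two substantive points separate your sketch from a working proof.

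First, the paper does not work with the unordered event in your displayed identity. It proves the limit for the \emph{ordered} event $\{\tau^{z_1}_{r_1}<\cdots<\tau^{z_n}_{r_n}<\infty\}$ and recovers the unordered Green's function by summing over permutations. The reason is precisely the difficulty you allude to: on $\{\tau^{z_1}_{r_1}<\infty\}$, the initial segment $\gamma[0,\tau^{z_1}_{r_1}]$ may already have visited some of the other discs, so the conditional probability given $\F_{\tau^{z_1}_{r_1}}$ is not cleanly an $(n-1)$-point event for the image configuration. With the ordering imposed, it is. Your ``SLE--Green measure on stopping paths'' is realized in the paper as the two-sided radial measure $\PP_{z_1}^*$, and the limiting object is defined as $\hat G(z_1,\dots,z_n)=G(z_1)\,\EE_{z_1}^*[\hat G_{T_{z_1}}(z_2,\dots,z_n)]$; the inductive step shows that the ordered probability, suitably normalized, converges to this $\hat G$.

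Second, your plan to ``bound the bad event using one- and two-point Green's function estimates'' is the real gap. The bad event is $\{\tau^{z_1}_{r_1}<\cdots<\tau^{z_n}_{r_n}<\infty\}\cap\{\dist(z_{k_0},\gamma[0,\tau^{z_1}_{r_1}])\le s_{k_0}\}$, and one needs its probability to be bounded by $F(z_1,\dots,z_n;r_1,\dots,r_n)$ times a positive power of $s_{k_0}/(|z_{k_0}-z_1|\wedge|z_{k_0}|)$. This is an $n$-point estimate with an additional decay factor and cannot be assembled from one- or two-point bounds; it is the content of Theorem~\ref{RZ-Thm3.1} in the paper, whose proof (in the Appendix) is a nontrivial refinement of the $n$-point upper bound of \cite{RZ} and occupies most of the technical work. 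Relatedly, the passage from $\hat G_{\tau^{z_1}_{r_1}}$ to $\hat G_{T_{z_1}}$ is not a soft limit: the paper introduces intermediate scales $r_1<\eta_1<\eta_2<d_1$, controls excursions via a ``stay-in'' estimate (Lemma~\ref{stayin}), and proves a dedicated continuity lemma (Lemma~\ref{e4-lemma}) comparing $\hat G_a$ and $\hat G_b$ when $K_b\setminus K_a$ is well separated from the remaining points in extremal-distance terms. Without analogues of these two ingredients your induction will not close quantitatively.
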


We prove stronger results than Theorem \ref{maintheorem}. Specifically we provide a rate of convergence in the limit \eqref{mlti-green}. See Theorem \ref{main}. The function $F(z_1,\dots,z_n)$ appeared implicitly in \cite{RZ} and we define it explicitly here. The upper bound for Green's function (assuming existence of $G$) was proved in \cite[Theorem 1.1]{RZ} but the lower bound is new.

Our result will shed light on the study of some random lattice paths, e.g., loop-erased random walk (LERW), which are known to converge to SLE (\cite{LSW,LERW}). More specifically, combining the convergence rate of LERW to SLE$_2$ (\cite{rate}) with our convergence rate of the rescaled visiting probability to Green's function for SLE, one may get a good estimate on the probability that a number of small discs be visited by LERW.

We may also work on the Green's function when some points lie on the boundary. In order to have a non-trivial limit, the exponent $d-2$ in the definition (\ref{mlti-green}) for these points should be replaced by $-\alpha$. For $\kappa=8/3$, the existence of boundary Green's function for any $n$ follows from the restriction property (\cite{FW}). The existence and exact formulas of boundary Green's functions when $n=1,2$ were provided in \cite{Law3}.  In \cite{JJK} the authors found closed-form formulas of boundary Green's functions of up to $4$ points assuming their existence.
Since our upper bound (Proposition \ref{RZ-Thm1.1}) and lower bound (Theorem \ref{strong-lower}) are about the probability that SLE visits discs, where the centers are allowed to lie on the boundary, we immediately have sharp bounds of the boundary or mixed type Green's functions assuming their existence, which may be proved using the main technique here.

It is also interesting to study the Green's functions for other types of SLE such as radial SLE, SLE$_\kappa(\rho)$, or stopped SLE. In \cite{radial-Green}, the authors proved the existence of the conformal radius version of one-point Green's function for radial SLE.

The rest of the paper is organized as the following. In Section \ref{Prelim} we go over basic definitions and tools that we need from complex analysis and SLE theory. Then in Section \ref{mainestimates} we describe the main estimates that we need to show convergence, continuity and lower bound. One of them is a generalization of the main result in \cite{RZ} which quantifies the probability that SLE can go back and forth between a set of points, and its proof is postponed to the Appendix. In Section \ref{maintheorems} we state our main results, and then in Section \ref{proof1} we use estimates provided in Section \ref{mainestimates} to show existence and continuity of the Green's function. We prove the theorems by induction on the number of the points following a method initiated in \cite{LW}, which is to write the $n$-point Green’s function in terms of an expectation of $(n-1)$-point Green's function with respect to two-sided radial SLE.  Finally in Section \ref{proof2} we prove  sharp lower bounds for Green's functions, which match the upper bounds obtained in \cite{RZ}. 
\smallbreak

\noindent\textbf{Acknowledgment.} The authors acknowledge Gregory Lawler, Brent Werness and Julien Dub\'edat for helpful discussions.
Dapeng Zhan's work is partially supported by a grant from NSF (DMS-1056840) and a grant from the Simons Foundation (\#396973).

\section{Preliminaries} \label{Prelim}

\subsection{Notation and Definitions}

We fix $\kappa \in (0,8)$ and set (Hausdorff dimension and boundary exponent)
\[
d=1+\frac{\kappa}{8}, \qquad \alpha=\frac{8}{\kappa}-1.
\]
Note that $d\in(0,2)$ and $\alpha>2-d$.
Throughout, a constant (such as $d$ or $\alpha$) depends only on $\kappa$ and a variable $n\in\N$ (number of points), unless otherwise specified. We write $X\lesssim Y$ or $Y\gtrsim X$ if there is a  constant $C>0$ such that $X\le CY$. We write $X\asymp Y$ if $X\lesssim Y$ and $X\gtrsim Y$. We write $X=O(Y)$ if there are two constants $\delta,C>0$ such that if $|Y|<\delta$, then $|X|\le C|Y|$. Note that this is slightly weaker than $|X|\lesssim |Y|$.

For $y\ge 0$ define $P_y$ on $[0,\infty)$ by
$$ P_y(x)=\left\{\begin{array}{ll} y^{\alpha-(2-d)}x^{2-d},&  x\le y;\\  x^\alpha,& x\ge y.
\end{array}\right. $$
we will frequently use the following lemmas without reference.

\begin{lemma}
	For $0\le x_1<x_2$, $0\le y_1\le y_2$, $0<x$, and $0\le y$, we have
	$$\frac{P_{y_1}(x_1)}{P_{y_1}(x_2)}\le \frac{P_{y_2}(x_1)}{P_{y_2}(x_2)};$$
	$$\Big(\frac{x_1}{x_2}\Big)^\alpha\le \frac{P_y(x_1)}{P_y(x_2)}\le \Big(\frac{x_1}{x_2}\Big)^{2-d}=\frac{P_{x_2}(x_1)}{P_{x_2}(x_2)};$$
	$$ \Big(\frac{y_1}{y_2}\Big)^{\alpha-(2-d)}\le \frac{P_{y_1}(x)}{P_{y_2}(x)}\le 1  .$$\label{Py}
\end{lemma}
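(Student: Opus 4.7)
The plan is to rewrite $P_y$ in the single uniform form
\[
P_y(x)\;=\;\max(x,y)^{\alpha-(2-d)}\,x^{2-d},
\]
which agrees with both branches of the definition (for $x\le y$, $\max(x,y)=y$ gives $y^{\alpha-(2-d)}x^{2-d}$; for $x\ge y$, $\max(x,y)=x$ gives $x^{\alpha-(2-d)}x^{2-d}=x^{\alpha}$). Because $\alpha>2-d$, the exponent $\alpha-(2-d)$ is positive, so every monotonicity question for $P_y$ collapses to an elementary fact about $\max(\cdot,\cdot)$.

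For the first inequality, I would write
\[
\frac{P_y(x_1)}{P_y(x_2)}\;=\;\Big(\frac{\max(x_1,y)}{\max(x_2,y)}\Big)^{\alpha-(2-d)}\Big(\frac{x_1}{x_2}\Big)^{2-d},
\]
and observe that the map $y\mapsto \max(x_1,y)/\max(x_2,y)$ equals $x_1/x_2$ on $[0,x_1]$, equals $y/x_2$ on $[x_1,x_2]$, and equals $1$ on $[x_2,\infty)$; in particular it is continuous and non-decreasing. Raising to the positive power $\alpha-(2-d)$ preserves monotonicity, so $y\mapsto P_y(x_1)/P_y(x_2)$ is non-decreasing, which is the first claim.

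The second inequality then falls out by evaluating this ratio at $y=0$ (giving $(x_1/x_2)^{\alpha}$) and at any $y\ge x_2$ (giving $(x_1/x_2)^{2-d}$); the rewritten equality $P_{x_2}(x_1)/P_{x_2}(x_2)=(x_1/x_2)^{2-d}$ is immediate from $x_1\le x_2$. For the third inequality, the uniform formula yields
\[
\frac{P_{y_1}(x)}{P_{y_2}(x)}\;=\;\Big(\frac{\max(x,y_1)}{\max(x,y_2)}\Big)^{\alpha-(2-d)},
\]
so the upper bound by $1$ follows from $y_1\le y_2$; for the lower bound it suffices to show $\max(x,y_1)/\max(x,y_2)\ge y_1/y_2$, which I would handle by the three one-line cases $x\le y_1$, $y_1\le x\le y_2$, $x\ge y_2$ and then raise to the positive power $\alpha-(2-d)$. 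There is no genuine obstacle here: once the unified formula $P_y(x)=\max(x,y)^{\alpha-(2-d)}x^{2-d}$ is in hand, the entire lemma is routine case analysis.
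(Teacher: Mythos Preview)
Your proof is correct. The unified formula $P_y(x)=\max(x,y)^{\alpha-(2-d)}x^{2-d}$ is a clean repackaging that the paper does not write down explicitly; once you have it, the three claims reduce to the monotonicity of $y\mapsto \max(x_1,y)/\max(x_2,y)$ and the trivial bounds on $\max(x,y_1)/\max(x,y_2)$, exactly as you describe.

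The paper's proof is morally the same case analysis but organized differently: for the first inequality it first checks the three ``diagonal'' cases $y_1\le y_2\in[0,x_1]$, $[x_1,x_2]$, $[x_2,\infty)$ separately and then pieces the general case together by transitivity, whereas your monotonicity-in-$y$ argument handles all positions of $y_1,y_2$ at once. Your route is slightly more economical; the paper's is slightly more hands-on. Both derive the second inequality from the first by specializing $y$, and both handle the third inequality by the same three-interval split on $x$.
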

\begin{proof}
	For the first formula, one may first prove that it holds in the following special cases: $y_1\le y_2\in[0,x_1]$; $y_1\le y_2\in[x_1,x_2]$; and $y_1\le y_2\in [x_2,\infty]$. The formula in the general case then easily follows. The second formula follows from the first by first setting $y_1=0$ and $y_2=y$ and then $y_1=y$ and $y_2=x_2\vee y$. The third formula can be proved by considering the following cases one by one: $x\in(0,y_1]$; $x\in[y_1,y_2]$; and $x\in[y_2,\infty)$.
\end{proof}

\begin{lemma}
	Let $z_1,\dots,z_n$ be distinct points in $\lin\HH$. Let $S$ be a nonempty set in $\C$ with positive distance from $\{z_1,\dots,z_n\}$.
	Then for any permutation $\sigma$ of $\{1,\dots,n\}$,
	\BGE \prod_{k=1}^n P_{\Imm z_{\sigma(k)}}(\dist(z_{\sigma(k)},S\cup\{z_{\sigma(j)}:j<k\}))\asymp  \prod_{k=1}^n P_{\Imm z_{k}}(\dist(z_k,S\cup\{z_j:j<k\})).\label{perm}\EDE
	\label{perm-lem}
\end{lemma}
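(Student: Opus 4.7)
My plan is to reduce the statement to the special case of an \emph{adjacent} transposition $\sigma=(k,k{+}1)$, and then verify that swapping two consecutive factors changes the product by at most a constant multiple. Since any permutation of $\{1,\dots,n\}$ is a product of at most $n(n-1)/2$ adjacent transpositions, and since constants are allowed to depend on $n$, the general case follows by iteration. So from now on fix $k$ and assume $\sigma$ swaps $k$ and $k{+}1$.

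\textbf{Isolating the two changed factors.} Set $A:=S\cup\{z_j:j<k\}$. For $j<k$ the factor is unaffected by the swap because the prior set is still $S\cup\{z_j:j<k\}$. For $j>k{+}1$ the factor is also unaffected because the prior set contains both $z_k$ and $z_{k{+}1}$ either way. Hence, writing $d_k=\dist(z_k,A)$, $d_{k+1}=\dist(z_{k+1},A)$, and $\rho=|z_k-z_{k+1}|$, it suffices to prove
$$P_{\Imm z_k}(d_k)\,P_{\Imm z_{k+1}}(d_{k+1}\wedge \rho)\;\asymp\;P_{\Imm z_{k+1}}(d_{k+1})\,P_{\Imm z_k}(d_k\wedge\rho),$$
because $\dist(z_k,A\cup\{z_{k+1}\})=d_k\wedge\rho$ and $\dist(z_{k+1},A\cup\{z_k\})=d_{k+1}\wedge\rho$.

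\textbf{Case analysis.} Assume without loss of generality that $d_k\le d_{k+1}$. I would split on the position of $\rho$ relative to $d_k,d_{k+1}$. When $\rho\ge d_{k+1}$ both minima equal the corresponding $d$'s and the two products are literally equal. When $d_k\le \rho\le d_{k+1}$, the triangle inequality gives $d_{k+1}\le d_k+\rho\le 2\rho$, so $\rho\asymp d_{k+1}$; the desired comparison then reduces to $P_{\Imm z_{k+1}}(\rho)\asymp P_{\Imm z_{k+1}}(d_{k+1})$, which is immediate from Lemma \ref{Py} (the middle inequality there gives $P_y(x_1)/P_y(x_2)\asymp 1$ whenever $x_1\asymp x_2$, with constants depending only on $\kappa$). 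When $\rho\le d_k\le d_{k+1}$, the triangle inequality gives $d_{k+1}\le d_k+\rho\le 2d_k$, so $d_k\asymp d_{k+1}$; applying the same absorbing estimate from Lemma \ref{Py} to both $P_{\Imm z_k}$ and $P_{\Imm z_{k+1}}$ closes the case.

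\textbf{Main obstacle.} There is no real analytic difficulty here; the work is organizational. The main point to monitor is that each adjacent swap produces a multiplicative error depending only on $\kappa$, so that after at most $n(n-1)/2$ swaps the accumulated constant still depends only on $\kappa$ and $n$, as required. A minor subtlety is ensuring the argument is not spoiled when some $z_k$ lies on $\partial\HH$ (so $\Imm z_k=0$) or when $d_k=0$; both possibilities are handled by the conventions that $P_0(x)=x^\alpha$ and $P_y(0)=0$ built into Lemma \ref{Py}, which remain monotone and quasi-multiplicative in the same range of exponents.
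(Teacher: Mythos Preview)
Your overall strategy is exactly the paper's: reduce to an adjacent transposition, observe that only the $k$-th and $(k{+}1)$-st factors change, and compare
\[
P_{y_k}(d_k)\,P_{y_{k+1}}(d_{k+1}\wedge\rho)\quad\text{with}\quad P_{y_{k+1}}(d_{k+1})\,P_{y_k}(d_k\wedge\rho),
\]
where $y_j=\Imm z_j$. Your Cases~1 and~2 are fine and match the paper's first two cases.

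The gap is in your Case~3 ($\rho\le d_k\le d_{k+1}$). From $d_k\asymp d_{k+1}$ you would need
\[
\frac{P_{y_k}(d_k)}{P_{y_k}(\rho)}\;\asymp\;\frac{P_{y_{k+1}}(d_{k+1})}{P_{y_{k+1}}(\rho)},
\]
but the ``absorbing estimate'' you invoke from Lemma~\ref{Py} only says $P_y(x_1)\asymp P_y(x_2)$ when $x_1\asymp x_2$ for a \emph{fixed} subscript $y$. It gives no comparison between $P_{y_k}$ and $P_{y_{k+1}}$, and indeed the ratio $P_y(d)/P_y(\rho)$ ranges between $(d/\rho)^{2-d}$ and $(d/\rho)^{\alpha}$ as $y$ varies, which can be arbitrarily far apart when $d/\rho$ is large. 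What rescues the situation---and what you never use---is the elementary bound $|y_k-y_{k+1}|\le|z_k-z_{k+1}|=\rho$. The paper exploits this by splitting Case~3 into subcases according to whether $y_k,y_{k+1}$ lie below or above $\rho$: if both exceed $\rho$ then $y_k\asymp y_{k+1}$ and the third inequality of Lemma~\ref{Py} gives $P_{y_k}\asymp P_{y_{k+1}}$ pointwise; if one (hence both) is $\lesssim\rho$, then $P_{y_j}(\rho)\asymp\rho^\alpha$ and $P_{y_j}(d)\asymp d^\alpha$ for $j=k,k{+}1$, and the comparison is immediate. Adding this one observation and the two-line subcase split completes your argument; as written, Case~3 does not close.
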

\begin{proof}
	It suffices to prove the lemma for $\sigma=(k_0,k_0+1)$. In this case, the factors on the LHS of (\ref{perm}) for $k\ne k_0,k_0+1$ agree with the corresponding factors on the RHS of (\ref{perm}). So we only need to focus on the factors for $k=k_0,k_0+1$. Let $w_1=z_{k_0}$, $w_2=z_{k_0+1}$, $u_j=\Imm w_j$,  $L_j=\dist(w_j,S\cup\{z_k:k<k_0\})$, $j=1,2$. Then
	it suffices to show that
	\BGE P_{u_2}(L_2)P_{u_1}(L_1\wedge |w_1-w_2|) \asymp P_{u_1}(L_1) P_{u_2}(L_2\wedge |w_2-w_1|).\label{perm2}\EDE
	Let $r=|w_1-w_2|$. Note that $|u_1-u_2|,|L_1-L_2|\le r$. We consider several cases. First, suppose $L_1\le r$. Then $L_2\le 2r$, and we get $L_1\wedge r=L_1$ and $L_2/2\le L_2\wedge r\le L_2$. From the above lemma, we immediately get (\ref{perm2}). Second, suppose $L_2\le r$. This case is similar to the first case. Third, suppose $L_1,L_2\ge r$. In this case, $L_1\wedge r=L_2\wedge r=r$, and $L_1\asymp L_2$. Now we consider subcases. First, suppose $u_1\le r$. Then $u_2\le 2r$. If $u_2\le r$, by the definition, $\frac{P_{u_2}(L_2)}{P_{u_2}(r)} =(\frac{L_2}r)^\alpha$; if $r\le u_2\le 2r$, from the previous lemma, we get $\frac{P_{u_2}(L_2)}{P_{u_2}(r)}\asymp \frac{P_{r}(L_2)}{P_{r}(r)}=(\frac{L_2}r)^\alpha$. Since $u_1\le r$, we have $\frac{P_{u_1}(L_1)}{P_{u_1}(r)}=(\frac{L_1}r)^\alpha$. Since $L_1\asymp L_2$, we get (\ref{perm2}) in the first subcase. Second, suppose $u_2\le r$. This is similar to the first subcase. Third, suppose $u_1,u_2\ge r$. Then we get $\frac{P_{u_j}(L_j)}{P_{u_j}(r)}=(\frac{L_j}r)^{2-d}$, $j=1,2$. Using $L_1\asymp L_2$, we get (\ref{perm2}) in the last subcase.
\end{proof}
For (ordered) set of distinct points $z_1,\dots,z_n\in\lin\HH\sem\{0\}$, we let $z_0=0$ and define for $1\le k\le n$,
\BGE l_k=\min_{0\le j\le k-1}\{|z_k-z_j|\},\quad d_k=\min_{0\le j\le n,j\ne k} \{|z_k-z_j|\},\quad 
y_k=\Imm z_k,\quad R_k=d_k\wedge y_k.\label{ldyR}\EDE
Also set \BGE Q=\max_{1\le k \le n}\frac{|z_k|}{d_k}\ge 1.\label{Q}\EDE
Note that we have
\[
R_k \leq d_k \leq l_k.
\]

For $r_1,\dots,r_n>0$, define
$$F(z_1,\dots,z_n;r_1,\dots,r_n)=\prod_{k=1}^n \frac{P_{y_k}(r_k)}{P_{y_k}(l_k)};$$
\BGE F(z_1,\dots,z_n)=\lim_{r_1,\dots,r_n\to 0^+} \prod_{k=1}^n r_k^{d-2} F(z_1,\dots,z_n;r_1,\dots,r_n) =\prod_{k=1}^n \frac{y_k^{\alpha-(2-d)}}{P_{y_k}(l_k)}.\label{F}\EDE
This is the function $F$ in Theorem \ref{maintheorem}. When it is clear from the context, we write $F$ for $F(z_1,\dots,z_n)$. From Lemma \ref{Py} we see that
\BGE F(z_1,\dots,z_n;r_1,\dots,r_n)\le F(z_1,\dots,z_n)  \prod_{k=1}^n r_k^{2-d},\quad \mbox{if } r_k\le l_k,1\le k\le n.\label{F<F}\EDE
Applying Lemma \ref{perm-lem} with $S=\{0\}$, we see that for any permutation $\sigma$ of $\{1,\dots,n\}$,
\BGE F(z_1,\dots,z_n;r_1,\dots,r_n)\asymp  F(z_{\sigma(1)},\dots,z_{\sigma(n)};r_{\sigma(1)},\dots,r_{\sigma(n)}),\label{Frasymp}\EDE
and
$$ F(z_1,\dots,z_n )\asymp  F(z_{\sigma(1)},\dots,z_{\sigma(n)} ).$$ 

Let $D$ be a simply connected domain with two distinct prime ends $w_0$ and $w_\infty$. We define
\[
F_{(D;w_0,w_\infty)}(z_1,\dots,z_n)= \prod_{j=1}^n |g'(z_j)|^{2-d}\cdot F(g(z_1),\dots,g(z_n)),
\]
where $g$ is any conformal map from $(D;w_0,w_\infty)$ onto $(\HH;0,\infty)$. Although such $g$ is not unique, the value of $F_{(D;w_0,w_\infty)}$ does not depend on the choice of $g$. 

Throughout, we use $\gamma$ to denote a (random) chordal Loewner curve, use $(U_t)$ to denote its driving function, and $(g_t)$ and $(K_t)$ the chordal Loewner maps and hulls driven by $（U_t)$. This means that $\gamma$ is a continuous curve in $\lin\HH$ starting from a point on $\R$; for each $t$, $H_t:=\HH\sem K_t$ is the unbounded component of $\HH\sem\gamma[0,t]$, whose boundary contains $\gamma(t)$; and $g_t$ is a conformal map from $(H_t;\gamma(t),\infty)$ onto $(\HH;0,\infty)$ that solves the chordal Loewner equation
\BGE \pa_t g_t(z)=\frac 2{g_t(z)-U_t},\quad g_0(z)=z.\label{Loewner}\EDE
Let $Z_t=g_t-U_t$ denote the centered Loewner map, which is a conformal map from $(H_t;\gamma(t),\infty)$ onto $(\HH;0,\infty)$. See \cite{Law1} for more on Loewner curves.

When $\gamma$ is fixed, for any set $S$, $\tau_S$ is used to denote the infimum of the times that $\gamma$ visits $S$, and is set to be $\infty$ if such times do not exist. We write $\tau^{z_0}_r$ for $\tau_{\{|z-z_0|\le r\}}$, and $T_{z_0}$ for $\tau^{z_0}_0=\tau_{\{z_0\}}$. So another way to say that $\dist(\gamma,z_0)\le r$ is $\tau^{z_0}_r<\infty$.

Let $\PP$ denote the law of a chordal SLE$_\kappa$ curve in $\HH$ from $0$ to $\infty$, and $\EE$ the corresponding expectation. Then $\PP$ is a probability measure on the space of chordal Loewner curves such that the driving function $(U_t)$ has the law of $\sqrt\kappa$ times a standard Brownian motion. In fact, chordal SLE$_\kappa$ is defined by solving (\ref{Loewner}) with $U_t=\sqrt\kappa B_t$.

As we mentioned the upper bound in Theorem \ref{maintheorem} is not new. We now state \cite[Theorem 1.1]{RZ} using the notation just defined.

\begin{proposition}
	Let $z_1,\dots,z_n$ be distinct points in $\lin\HH\sem\{0\}$. Let $d_1,\dots,d_n$ be defined by (\ref{ldyR}). Let $r_j\in(0,d_j)$, $1\le j\le n$. Then we have
	$$\PP[\tau^{z_j}_{r_j}<\infty,1\le j\le n]\lesssim F(z_1,\dots,z_n;r_1,\dots,r_n).$$
	\label{RZ-Thm1.1}
\end{proposition}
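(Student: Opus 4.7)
The plan is to argue by induction on $n$. For the base case $n=1$, one has $l_1=|z_1|$, so $F(z_1;r_1)=P_{y_1}(r_1)/P_{y_1}(|z_1|)$. The bound $\PP[\tau^{z_1}_{r_1}<\infty]\lesssim F(z_1;r_1)$ is the classical one-point hitting estimate for chordal SLE$_\kappa$ in $\HH$; it follows from martingale computations involving the one-point Green's function in \eqref{G(z)}, and the two branches of $P_y$ arise naturally depending on whether $r_1\le y_1$ (interior regime) or $r_1>y_1$ (boundary regime). See \cite{Law4,LR1}.

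For the inductive step I would condition on the order in which the disks are visited. Write $\tau_k=\tau^{z_k}_{r_k}$ and let $E=\bigcap_k\{\tau_k<\infty\}$. Decomposing $E$ by the visit order and invoking the permutation invariance \eqref{Frasymp} of $F$ (up to a constant depending only on $\kappa$ and $n$), it suffices to bound $\PP[E\cap\{\tau_1\le\tau_2\le\cdots\le\tau_n\}]$. Apply the strong Markov property at $\tau:=\tau_1$. The one-point estimate gives $\PP[\tau<\infty]\lesssim P_{y_1}(r_1)/P_{y_1}(l_1)$. On $\{\tau<\infty\}$, let $Z_\tau$ be the centered Loewner map, set $\tilde z_j=Z_\tau(z_j)$ for $j\ge 2$, and by Koebe's distortion theorem choose $\tilde r_j\asymp|Z_\tau'(z_j)|r_j$ so that $Z_\tau\bigl(\{|z-z_j|\le r_j\}\cap H_\tau\bigr)\subset\{|w-\tilde z_j|\le\tilde r_j\}$. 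Then the domain Markov property combined with the inductive hypothesis yields
$$\PP[\tau_j<\infty\text{ for }2\le j\le n\mid\F_\tau]\lesssim F(\tilde z_2,\dots,\tilde z_n;\tilde r_2,\dots,\tilde r_n).$$

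The main obstacle, and the technical heart of the argument, is the almost sure conformal distortion bound
$$\frac{P_{y_1}(r_1)}{P_{y_1}(l_1)}\cdot F(\tilde z_2,\dots,\tilde z_n;\tilde r_2,\dots,\tilde r_n)\lesssim F(z_1,\dots,z_n;r_1,\dots,r_n)$$
on $\{\tau<\infty\}$. Unpacking, this amounts to absorbing the derivatives $|Z_\tau'(z_j)|^{2-d}$ produced by $\tilde r_j^{2-d}$ into the transformation of the denominators $P_{\tilde y_j}(\tilde l_j)$ back into $P_{y_j}(l_j)$. Proving it requires combining Koebe-type derivative bounds with half-plane estimates relating $\Imm Z_\tau(z_j)$ to $y_j$, together with a case analysis based on whether each $z_j$ lies near the tip $\gamma(\tau)$, near the real axis, or well separated from both, since the two branches in the definition of $P_y$ interact nontrivially across these regimes and must be reconciled uniformly. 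The subcase $r_j\gtrsim\dist(z_j,K_\tau)$, where Koebe does not apply cleanly, is handled by enlarging the target disk before invoking induction. Once this distortion bound is established, combining it with the one-point estimate and the induction hypothesis closes the induction. This is a generalization of the main argument of \cite{RZ} and is carried out in detail in the Appendix.
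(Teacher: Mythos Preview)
The paper does not prove this proposition; it is quoted from \cite[Theorem~1.1]{RZ}. The method there---mirrored in the Appendix of the present paper for the stronger Theorem~\ref{RZ-Thm3.1}---is \emph{not} an induction on $n$. One surrounds each $z_j$ by a family of concentric circles with geometrically decreasing radii, decomposes over all orders in which $\gamma$ crosses these circles, bounds each ordered piece by iterated one-point and boundary (extremal-length) estimates, and closes with a combinatorial summation over orderings. The number of points is fixed throughout.

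Your inductive scheme has a real gap precisely at the step you call the ``technical heart''. The displayed inequality
\[
\frac{P_{y_1}(r_1)}{P_{y_1}(l_1)}\cdot F(\tilde z_2,\dots,\tilde z_n;\tilde r_2,\dots,\tilde r_n)\lesssim F(z_1,\dots,z_n;r_1,\dots,r_n)
\]
is \emph{false} as an almost-sure bound on $\{\tau_1\le\cdots\le\tau_n\}$. On that event one only knows $\dist(z_j,K_{\tau_1})\ge r_j$ for $j\ge 2$; nothing prevents $\gamma[0,\tau_1]$ from passing within some $s_j\in(r_j,l_j)$ of $z_j$ before reaching $\{|z-z_1|=r_1\}$. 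When that happens, Koebe gives $|\tilde z_j|\asymp|Z_{\tau_1}'(z_j)|\,s_j$, so the $j$-th factor on the left behaves like $P_{y_j}(r_j)/P_{y_j}(s_j)$, larger than the target $P_{y_j}(r_j)/P_{y_j}(l_j)$ by an unbounded factor $\ge (l_j/s_j)^{2-d}$. A concrete two-point example: $z_2$ between $0$ and $z_1$, $r_1=r_2=\eps$; with positive probability $\gamma$ comes within $2\eps$ of $z_2$ en route to $z_1$, and then the left side is $\asymp F(z_1;r_1)$ while the right side is $\asymp F(z_1;r_1)\cdot\eps^{2-d}$. Enlarging the target disk in the bad subcase only makes the left side larger, so that patch does not help. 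To salvage the induction you would need, at each step, control of $\PP[\tau_1<\infty,\ \dist(z_j,K_{\tau_1})\le s_j]$---but that is essentially the $n$-point statement itself (compare Theorem~\ref{RZ-Thm3.1}), so the argument is circular. The Appendix you cite does not carry out your induction; it implements the multiscale circle argument.
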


\subsection{Lemmas on $\HH$-hulls}
We will need some results on $\HH$-hulls.  A relatively closed bounded subset $K$ of $\HH$ is called an $\HH$-hull if $\HH\sem K$ is simply connected. Given an $\HH$-hull $K$, we use $g_K$ to denote the unique conformal map from $\HH\sem K$ onto $\HH$ that satisfies $g_K(z)=z+O(|z|^{-1})$ as $z\to \infty$. The half-plane capacity of $K$ is $\hcap(K):=\lim_{z\to\infty} z(g_K(z)-z)$. Let $f_K=g_K^{-1}$. If $K=\emptyset$, then $g_K=f_K=\id$, and $\hcap(K)=0$. Now suppose $K\ne\emptyset$. Let $a_K=\min(\lin K\cap\R)$ and $b_K=\max(\lin K\cap\R)$. Let $K^{\doub}=K\cup[a_K,b_K]\cup\{\lin z: z\in K\}$. By Schwarz reflection principle, $g_K$ extends to a conformal map from $\C\sem K^{\doub}$ onto $\C\sem[c_K,d_K]$ for some $c_K<d_K\in\R$, and satisfies $g_K(\lin z)=\lin{g_K(z)}$. In this paper, we write $S_K$ for $[c_K,d_K]$.
\vskip 3mm
\no{\bf Examples}
\begin{itemize}
\item For $x_0\in\R$ and $r>0$, let $ \lin{D}^+_{x_0,r}$ denote semi-disc $\{z\in\HH:|z-x_0|\le r\}$, which is an $\HH$-hull. It is straightforward to check that $g_{ \lin{D}^+_{x_0,r}}(z)=z+\frac{r^2}{z-x_0}$, $\hcap( \lin{D}^+_{x_0,r})=r^2$, and $S_{ \lin{D}^+_{x_0,r}}=[x_0-2r,x_0+2r]$.
\item Each $K_t$ associated with a chordal Loewner curve $\gamma$ is an $\HH$-hull with $\hcap(K_t)=2t$. Since $\gamma(t)\in \pa K_t$ and $g_t(\gamma(t))=U_t$, we have  $U_t\in S_{K_t}$.
\end{itemize}

\begin{lemma}
	For any nonempty $\HH$-hull $K$, there is a positive measure $\mu_K$ supported by $S_K$ with total mass $|\mu_K|=\hcap(K)$ such that,
	\BGE f_K(z)-z=\int \frac{-1}{z-x}d\mu_K(x),\quad z\in\C\sem S_K.\label{f-z}\EDE
\end{lemma}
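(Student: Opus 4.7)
The plan is to apply the Cauchy integral formula for unbounded domains to $h(z) := f_K(z) - z$ on $\C\sem S_K$ and read off the representation directly. First I would record the needed behavior at infinity and on the slit. Inverting $g_K(z) = z + \hcap(K)/z + O(|z|^{-2})$ yields $f_K(z) = z - \hcap(K)/z + O(|z|^{-2})$, so $h$ is holomorphic on $\C\sem S_K$ and vanishes at infinity. By Schwarz reflection $f_K(\lin z) = \overline{f_K(z)}$, so if $f_K^\pm(x) := \lim_{\eps\downarrow 0} f_K(x\pm i\eps)$ denote the boundary values from above and below on $S_K$, then $f_K^-(x) = \overline{f_K^+(x)}$, and since $f_K^+(x)$ lies on the upper part of $\pa K^{\doub}\subset \lin\HH$ one has $\Imm f_K^+(x)\ge 0$.

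Second, fix $z\in\C\sem S_K$ and enclose $S_K$ by a small counterclockwise Jordan loop $\Gamma$ with $z$ in its exterior. Since $h$ is holomorphic on the exterior of $\Gamma$ and decays at $\infty$, the exterior Cauchy formula gives
\BGE h(z) = -\frac{1}{2\pi i}\oint_\Gamma \frac{h(\zeta)}{\zeta-z}\,d\zeta.\label{cauchy-ext}\EDE
Collapsing $\Gamma$ onto $S_K$ via a dog-bone contour and using the jump $f_K^+(x)-f_K^-(x) = 2i\,\Imm f_K^+(x)$ produces
\BGEN f_K(z)-z = -\frac{1}{2\pi i}\int_{S_K}\frac{f_K^+(x)-f_K^-(x)}{x-z}\,dx = \int_{S_K}\frac{-1}{z-x}\cdot \frac{\Imm f_K^+(x)}{\pi}\,dx,\EDEN
so setting $d\mu_K(x) = \pi^{-1}\Imm f_K^+(x)\,dx$ on $S_K$ yields \eqref{f-z} with $\mu_K\ge 0$.

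Third, for the total mass, expand the right-hand side of \eqref{f-z} at $\infty$:
\BGEN \int \frac{-1}{z-x}\,d\mu_K(x) = -\frac{|\mu_K|}{z} + O(|z|^{-2}),\EDEN
and matching with $f_K(z)-z = -\hcap(K)/z + O(|z|^{-2})$ gives $|\mu_K| = \hcap(K)$.

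The only delicate point is legitimacy of the contour collapse onto the slit. We need $f_K$ to extend continuously to each side of $S_K$ (including the endpoints $c_K,d_K$) and the small loops around those endpoints to give vanishing contribution. Continuous extension on each side follows from the fact that $g_K$ is conformal across the analytic arcs $\R\sem\overline{S_K^{\rm complement}}$; the endpoint loops contribute nothing because $f_K$ maps into the bounded set $\lin{K^{\doub}}$ and is therefore bounded near $c_K,d_K$ even though $f_K'$ may blow up there. Once continuity and boundedness of $\Imm f_K^+$ on $S_K$ are in hand, the shrinking argument and the identification of $\mu_K$ are immediate.
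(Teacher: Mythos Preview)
The paper does not argue this lemma; it simply cites \cite[Formula (5.1)]{LERW}. Your overall strategy (exterior Cauchy formula, jump across the slit, asymptotic matching for the mass) is the right shape, and your Steps~1 and~3 are fine. The gap is in Step~2: collapsing the contour and writing $d\mu_K=\pi^{-1}\Imm f_K^+(x)\,dx$ requires $f_K$ to extend continuously to each side of $S_K$, and for a general $\HH$-hull this need not hold. By Carath\'eodory's theorem, continuous extension of $f_K:\HH\to\HH\sem K$ to $\lin\HH$ is equivalent to local connectivity of $\pa(\HH\sem K)$, which fails for $\HH$-hulls with, say, comb-like or topologist's-sine-curve boundaries. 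Your proposed justification---that $g_K$ is conformal across the real axis away from $S_K$---concerns the regular part of the boundary, not the slit $S_K$ itself, and does not yield the extension you need. Moreover, even when $f_K^+$ exists a.e., the identification $d\mu_K=\pi^{-1}\Imm f_K^+\,dx$ presupposes absolute continuity of $\mu_K$, which the lemma does not assert and which can fail.

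The clean fix bypasses boundary regularity: $\Imm f_K$ is a positive harmonic function on $\HH$ with $\Imm f_K(iy)/y\to 1$ as $y\to\infty$ and boundary value $0$ on $\R\sem S_K$, so the Herglotz representation produces a finite positive measure $\mu_K$ supported on $S_K$ with
\[
\Imm f_K(z)=\Imm z+\int_{S_K}\frac{\Imm z}{|z-x|^2}\,d\mu_K(x)=\Imm z+\Imm\int_{S_K}\frac{-1}{z-x}\,d\mu_K(x),\qquad z\in\HH.
\]
Then $f_K(z)-z$ and $\int\frac{-1}{z-x}\,d\mu_K(x)$ have the same imaginary part on $\HH$ and both vanish at $\infty$, hence they coincide on $\HH$ and, by reflection, on $\C\sem S_K$. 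Your Step~3 then gives $|\mu_K|=\hcap(K)$ unchanged.
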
\label{integral}
\begin{proof}
	This is \cite[Formula (5.1)]{LERW}.
\end{proof}

\begin{lemma}
	If a nonempty $\HH$-hull $K$ is contained in $ \lin{D}^+_{x_0,r}$ for some $x_0\in\R$ and $r>0$, then $\hcap(K)\le r^2$, $S_K\subset[x_0-2r,x_0+2r]$, and   \BGE |g_K(z)-z|\le 3r, \quad z\in\C\sem K^{\doub}.\label{f-z0}\EDE
\label{small}
\end{lemma}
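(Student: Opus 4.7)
The plan is to establish the three conclusions in turn. For $\hcap(K)\le r^2$, I would invoke the monotonicity of the half-plane capacity under $\HH$-hull inclusion, combined with the computation $\hcap(\lin{D}^+_{x_0,r})=r^2$ from the examples preceding the lemma.

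For $S_K\subset[x_0-2r,x_0+2r]$, I would first translate so that $x_0=0$. Since $K\subset\lin{D}^+_{0,r}$, we have $a_K\ge-r$ and $b_K\le r$. The extension of $g_K$ to the real axis is continuous and increasing on each component of $\R\sem[a_K,b_K]$, so taking one-sided limits at $\pm r$ yields $d_K\le g_K(r^+)$ and $c_K\ge g_K(-r^-)$. It therefore suffices to show $g_K(r^+)\le 2r$ and, symmetrically, $g_K(-r^-)\ge-2r$. I would obtain the former from the monotonicity principle $g_K\le g_{K'}$ on $\R\sem (K')^{\doub}$ whenever $K\subset K'$ are $\HH$-hulls. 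This principle follows from the composition identity $g_{K'}=g_{\tilde K}\circ g_K$ for the quotient hull $\tilde K=g_K(K'\sem K)$, together with the inequality $g_{\tilde K}(x)\ge x$ for $x>b_{\tilde K}$, which is immediate from the Herglotz-type representation of $g_{\tilde K}(x)-x$ as a Stieltjes integral against a positive measure on $[a_{\tilde K},b_{\tilde K}]$. Specializing to $K'=K_0:=\lin{D}^+_{0,r}$ and using $g_{K_0}(r)=r+r^2/r=2r$ gives $g_K(r^+)\le 2r$ after a continuity argument at the boundary.

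The final bound $|g_K(z)-z|\le 3r$ on $\C\sem K^{\doub}$ follows from the maximum modulus principle applied to the holomorphic function $g_K(z)-z$. By the hydrodynamic normalization this function vanishes at $\infty$, and it extends continuously to $\pa K^{\doub}\subset\lin{D}_{x_0,r}$, where the boundary value $g_K(z)$ lies in $S_K\subset[x_0-2r,x_0+2r]$ by the previous step. The triangle inequality then gives $|g_K(z)-z|\le|g_K(z)-x_0|+|x_0-z|\le 2r+r=3r$ on $\pa K^{\doub}$, and the maximum principle extends this bound to the whole domain.

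The main obstacle is the boundary case $b_K=r$ in the second part, where $g_K(r)$ is only defined as a one-sided limit and the decomposition $g_{K_0}=g_{\tilde K}\circ g_K$ would evaluate $g_{\tilde K}$ at its real endpoint $b_{\tilde K}$ itself. This is handled by a routine limiting argument, approximating from within by slightly shrunken hulls.
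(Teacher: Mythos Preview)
Your argument is correct in outline and is more self-contained than the paper's, which simply quotes external results: monotonicity of $\hcap$ from \cite{Law1} for the first claim, \cite[Lemma~5.3]{LERW} for $S_K\subset S_{\lin D^+_{x_0,r}}=[x_0-2r,x_0+2r]$, and \cite[Formula~(3.12)]{Law1} for the displacement bound. Your Part~2 is effectively a direct proof of the relevant half of that cited lemma, via the decomposition $g_{K'}=g_{\tilde K}\circ g_K$ together with $g_{\tilde K}(y)>y$ for $y>b_{\tilde K}$, the latter following from the integral representation of $f_{\tilde K}-\id$ stated in the lemma just before this one. Incidentally, the boundary case $b_K=r$ needs no separate approximation: the inequality $g_K(x)\le g_{K'}(x)$ holds for every $x>r$, and one simply lets $x\to r^+$ on both sides.

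There is one genuine slip in Part~3: $g_K$ need \emph{not} extend continuously to $\pa K^{\doub}$. Already for the vertical slit $K=[0,i]$ one has $K^{\doub}=[-i,i]$ and $g_K(z)=\sqrt{z^2+1}$, which takes different limits from the two sides of the slit. What is true, and is all you need, is that every cluster value of $g_K$ at a finite boundary point lies in $S_K$: this holds because $g_K$ is a conformal bijection of $\C\sem K^{\doub}$ onto $\C\sem S_K$ fixing $\infty$, so the cluster set at $\zeta\in\pa K^{\doub}$ is contained in $\pa(\C\sem S_K)=S_K$. Hence $\limsup_{z\to\zeta}|g_K(z)-z|\le 2r+r=3r$ for every $\zeta\in\pa K^{\doub}$, and since $g_K(z)-z$ is holomorphic on $\hat\C\sem K^{\doub}$ with value $0$ at $\infty$, the $\limsup$ form of the maximum principle (together with a compactness argument showing $g_K-\id$ is bounded) gives the conclusion. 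With this correction your maximum-modulus route is a clean alternative to the citation in the paper.
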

\begin{proof}
	From the monotone property of $\hcap$ (\cite{Law1}), we have $\hcap(K)\le \hcap( \lin{D}^+_{x_0,r})=r^2$. From  \cite[Lemma 5.3]{LERW}, we know that $S_K\subset S_{ \lin{D}^+_{x_0,r}}=[x_0-2r,x_0+2r]$. Formula (\ref{f-z0}) follows from  \cite[Formula (3.12)]{Law1} and that $g_{K-x_0}(z-x_0)=g_K(z)-x_0$.
\end{proof}

\begin{lemma}
	Let $K$ be as in the above lemma. Then for any $z\in\C$ with $|z-x_0|\ge 5r$, we have
	\BGE |g_K(z)-z|\le 2|z-x_0|\Big(\frac{r}{|z-x_0|}\Big)^2 ;\label{1}\EDE
	\BGE \frac{|\Imm g_K(z)-\Imm z|}{|\Imm z|}\le 4\Big(\frac{r}{|z-x_0|}\Big)^2 ;\label{2}\EDE
	\BGE |g_K'(z)-1|\le 5\Big(\frac{r}{|z-x_0|}\Big)^2 .\label{3}\EDE
	\label{lemma-gK}
\end{lemma}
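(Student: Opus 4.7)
My plan is to apply an integral representation for $g_K$ analogous to Lemma \ref{integral}. Since $g_K$ extends by Schwarz reflection to a conformal map from $\C\sem K^{\doub}$ onto $\C\sem S_K$ (as noted in the Examples above), the function $g_K(z)-z$ is holomorphic on $\C\sem K^{\doub}$, real on $\R\sem S_K$, and decays as $\hcap(K)/z+O(z^{-2})$ at infinity. The same Schwarz reflection / Cauchy integral argument underlying Lemma \ref{integral} produces a positive measure $\nu_K$ supported on $S_K$ with total mass $\hcap(K)$ such that
\BGEN
g_K(z)-z=\int_{S_K}\frac{d\nu_K(x)}{z-x},\qquad z\in\C\sem K^{\doub}.
\EDEN
From Lemma \ref{small} one already knows $\hcap(K)\le r^2$ and $S_K\subset[x_0-2r,x_0+2r]$.

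The geometric input, valid for $|z-x_0|\ge 5r$ and any $x\in S_K$, is $|x-x_0|\le 2r\le \tfrac{2}{5}|z-x_0|$, hence
\BGEN
|z-x|\ge |z-x_0|-|x-x_0|\ge \tfrac{3}{5}|z-x_0|.
\EDEN
For (\ref{1}) I would bound $|z-x|^{-1}\le \tfrac{5}{3}|z-x_0|^{-1}$ under the integral and use $|\nu_K|\le r^2$ to get
$$|g_K(z)-z|\le \tfrac{5r^2}{3|z-x_0|}\le 2|z-x_0|(r/|z-x_0|)^2.$$
For (\ref{2}), using $\Imm(z-x)^{-1}=-\Imm z/|z-x|^2$ for $x\in\R$ one obtains $\Imm g_K(z)-\Imm z=-(\Imm z)\int d\nu_K(x)/|z-x|^2$, and dividing by $|\Imm z|$ and reusing the same kernel bound gives $\tfrac{25}{9}(r/|z-x_0|)^2\le 4(r/|z-x_0|)^2$. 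For (\ref{3}) I would differentiate under the integral to get $g_K'(z)-1=-\int d\nu_K(x)/(z-x)^2$; the same kernel estimate yields $\tfrac{25}{9}(r/|z-x_0|)^2\le 5(r/|z-x_0|)^2$.

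The only subtle point is that the excerpt states the integral representation for the inverse $f_K$, whereas the short proof above uses the representation for $g_K$ itself. Trying to push through directly with the $f_K$ formula by setting $w=g_K(z)$ and using Lemma \ref{integral} yields $g_K(z)-z=\int d\mu_K(x)/(w-x)$, but the best lower bound on $|w-x|$ extractable from Lemma \ref{small} is $|w-x|\ge |z-x_0|-5r$, which degenerates precisely at the threshold $|z-x_0|=5r$. Consequently, the cleanest route is to invoke the $g_K$ representation directly; I expect the authors to either cite it as a standard companion to Lemma \ref{integral} or note that the same Schwarz reflection argument applies verbatim.
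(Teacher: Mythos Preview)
Your proposed integral representation for $g_K$ is incorrect, and this is the fatal gap. The function $g_K(z)-z$ is holomorphic on $\C\sem K^{\doub}$, not on $\C\sem S_K$; since $K^{\doub}$ is in general \emph{not} a subset of $\R$, no Cauchy-transform representation against a measure on the real interval $S_K$ can exist. Concretely, for the vertical slit $K=(0,ih]$ one has $K^{\doub}=[-ih,ih]$ and $g_K(z)=\sqrt{z^2+h^2}$, so $g_K(z)-z$ has branch points at $\pm ih\notin\R$, whereas any function of the form $\int_{S_K}\frac{d\nu(x)}{z-x}$ is holomorphic off $\R$. The Schwarz-reflection/Herglotz argument behind Lemma~\ref{integral} works for $f_K$ precisely because $f_K$ is holomorphic on the complement of the \emph{real} segment $S_K$; that structural hypothesis fails for $g_K$, so the argument does not transfer ``verbatim''.

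The paper does go through the $f_K$ representation, and the degeneracy you flagged at $|z-x_0|=5r$ is avoided by a short bootstrap rather than by the crude bound from Lemma~\ref{small}. After translating to $x_0=0$: from $|f_K(w)-w|\le r^2/(|w|-2r)$ one sees that $f_K$ maps $\{|w|=4r\}$ into the annulus $\{3.5r\le|z|\le 4.5r\}$, so by topology $|z|>5r$ forces $|g_K(z)|>4r$. Feeding this back into the $f_K$ bound gives $|g_K(z)-z|\le r^2/(4r-2r)=r/2$, hence $|g_K(z)|\ge|z|-r/2$ and therefore $|g_K(z)-x|\ge |z|-2.5r\ge|z|/2$ for $x\in S_K$. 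One more application of the integral formula yields (\ref{1}); taking imaginary parts and differentiating the $f_K$ formula at $w=g_K(z)$, with the same lower bound $|w-x|\ge|z|/2$, gives (\ref{2}) and (\ref{3}). So the $f_K$ route does not degenerate; it just needs one preliminary step to locate $g_K(z)$ before plugging into the kernel estimate.
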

\begin{proof}
	Since $g_{K-x_0}(z-x_0)=g_K(z)-x_0$, we may assume that $x_0=0$. From the above two lemmas, we find that $|\mu_K|\le r^2$ and
	\BGE f_K(w)-w=\int_{-2r}^{2r} \frac{-1}{z-w}d\mu_K(w),\quad w\in\C\sem[-2r,2r].\label{integralw}\EDE
	Thus, if $|w|>2r$, then $|f_K(w)-w|\le \frac{r^2}{|w|-2r}$. So $f_K$ maps the circle $\{|z|=4r\}$ onto a Jordan curve that lies within the circles $\{|z|=3.5r\}$ and $\{|z|=4.5r\}$. Thus, if $|z|>5r$, then $|g_K(z)|>4r$, and $|z-g_K(z)|=|f(g_K(z))-g_K(z)|\le  \frac{r^2}{|g_K(z)|-2r}\le r/2 $, which implies $|z|\le |g_K(z)|+r/2$, and $| g_K(z)-z|\le  \frac{r^2}{|g_K(z)|-2r}\le  \frac{r^2}{|z|-2.5r}\le \frac{r^2}{|z|/2}$.  So we get (\ref{1}).
	
	Taking the imaginary part of (\ref{integralw}), we find that, if $w\in\HH$ and $|w|>2r$, then $|\Imm f_K(w)-\Imm w|\le |\Imm w| \frac{r^2}{(|w|-2r)^2}$. Letting $w=g_K(z)$ with $z\in\HH$ and $|z|>5r$, we find that
	$$|\Imm z-\Imm g_K(z)|\le |\Imm g_K(z)|  \frac{r^2}{(|g_K(z)|-2r)^2}\le |\Imm z|\frac{r^2}{(|z|-2.5r)^2}\le |\Imm z|\frac{r^2}{(|z|/2)^2},$$
	which implies (\ref{2}). Here we used that $|\Imm g_K(z)|\le |\Imm z|$ that can be seen from (\ref{integralw}).
	
	Differentiating (\ref{integralw}) w.r.t.\ $z$, we find that, if  $|w|>2r$, then $|f_K'(w)-1|\le \frac{r^2}{(|w|-2r)^2}$.  Letting $w=g_K(z)$ with $z\in\HH$ and $|z|>5r$, we find that
	$$|1/g_K'(z)-1|\le \frac{r^2}{(|g_K(z)|-2r)^2}\le \frac{r^2}{(|z|-2.5r)^2}\le \frac{r^2}{(|z|/2)^2},$$
	which then implies (\ref{3}).
	%
\end{proof}

\begin{lemma}
	Let $K$ be a nonempty $\HH$-hull. Suppose $z\in\HH$ satisfies that $\dist(z,S_K)\ge 4\diam(S_K)$. Then $\dist(f_K(z),K)\ge 2 \diam(K)$. \label{SK-lemma}
\end{lemma}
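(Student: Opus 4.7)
The plan is to combine the integral representation for $f_K$ given by Lemma \ref{integral} with the geometric comparison $\diam(K) \le \diam(S_K)$, and then estimate how $f_K$ displaces a point that is far from $S_K$. Write $D = \diam(K)$ and $L = \diam(S_K)$. I will first establish that $[a_K, b_K] \subset S_K$ (hence $a_K \in S_K$): along $\R$, $g_K$ sends $(-\infty, a_K)$ onto $(-\infty, c_K)$ and $(b_K, \infty)$ onto $(d_K, \infty)$, and the sign of the integrand in \eqref{f-z} forces $f_K(w) < w$ for real $w > d_K$ and $f_K(w) > w$ for real $w < c_K$; taking the one-sided limits $w \downarrow d_K$ and $w \uparrow c_K$ yields $d_K \ge b_K$ and $c_K \le a_K$. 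Since $K \subset \lin{D}^+_{a_K, D}$, Lemma \ref{small} also yields $\hcap(K) \le D^2$.

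The crucial geometric input is $D \le L$. The map $g_K$ is a conformal equivalence from $\lin{\C} \sem K^{\doub}$ onto $\lin{\C} \sem S_K$ fixing $\infty$ with derivative $1$ there, so the logarithmic capacities of $K^{\doub}$ and $S_K$ coincide; for the interval $S_K$ this common value is $L/4$. Moreover $K^{\doub}$ is compact and connected (each connected component of $K$ meets $\R$ in closure and is therefore joined to $[a_K, b_K]$, and the upper and lower halves are glued along $[a_K, b_K]$), so P\'olya's inequality $\diam(E) \le 4\,\ccap(E)$ for compact connected $E$ gives $\diam(K^{\doub}) \le L$, hence $D \le L$. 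This is the step I expect to be the main obstacle: it lies outside what the $\HH$-hull lemmas of the paper give directly and has to be imported from classical function theory.

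Granting these inputs, since $a_K \in S_K$ we have $|z - a_K| \ge \dist(z, S_K) \ge 4L$, so for every $k \in K$,
\[
|z - k| \ge |z - a_K| - |a_K - k| \ge 4L - D,
\]
giving $\dist(z, K) \ge 4L - D$. On the other hand, Lemma \ref{integral} gives
\[
|f_K(z) - z| \le \frac{|\mu_K|}{\dist(z, S_K)} = \frac{\hcap(K)}{\dist(z, S_K)} \le \frac{D^2}{4L} \le \frac{D}{4},
\]
using $D \le L$ at the final step. Combining these two estimates yields
\[
\dist(f_K(z), K) \ge \dist(z, K) - |f_K(z) - z| \ge 4L - D - \tfrac{D}{4} \ge 4L - \tfrac{5L}{4} = \tfrac{11L}{4} \ge 2L \ge 2D = 2\diam(K),
\]
which is the desired bound. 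The rest of the argument past the capacity comparison is a mechanical combination of the integral-kernel bound with the triangle inequality.
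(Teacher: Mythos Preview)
Your proof is correct and follows essentially the same route as the paper's: both use the equality of whole-plane (logarithmic) capacity between $K^{\doub}$ and $S_K$ together with P\'olya's inequality to get $\diam(K)\le\diam(S_K)$, then combine the integral bound $|f_K(z)-z|\le |\mu_K|/\dist(z,S_K)$ with the inclusion $[a_K,b_K]\subset S_K$ and a triangle-inequality estimate. The only cosmetic differences are that the paper picks an arbitrary $x_0\in\lin K\cap\R$ (citing \cite[Lemma~5.2]{LERW} for $x_0\in S_K$) and bounds $\hcap(K)\le L^2$ rather than your sharper $\hcap(K)\le D^2$, leading to the slightly looser chain $\dist(f_K(z),K)\ge \dist(z,S_K)-2L>2L\ge 2\diam(K)$.
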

\begin{proof}
	Let $r=\diam(S_K)$. Since $g_K$ maps $\C\sem K^{\doub}$ conformally onto $\C\sem S_K$, fixes $\infty$, and satisfies that $g_K'(\infty)=1$, we see that $K^{\doub}$ and $S_K$ have the same whole-plane capacity. Thus, $\diam(K)\le \diam(K^{\doub})\le \diam(S_K)$. Take any $x_0\in\lin K\cap \R$. Then $K\subset  \lin{D}^+_{x_0,r}$. So $|\mu_K|=\hcap(K)\le r^2$. Since $\dist(z,S_K)\ge 4r$, from (\ref{f-z}) we get $|f_K(z)-z|\le r/4$.
	From \cite[Lemma 5.2]{LERW}, we know $x_0\in [a_K,b_K]\subset [c_K,d_K]=S_K$. Thus, $\dist(f_K(z),K)\ge |f_K(z)-x_0|-r\ge |z-x_0|-|f_K(z)-z|-r\ge \dist(z,S_K)-2r>2r\ge 2\diam(K)$.
\end{proof}

\begin{lemma}
	Let $K$ be an $\HH$-hull, and $w_0$ be a prime end of $\HH\sem K$ that sits on $\pa K$. Let $z_0\in\HH\sem K$ and $R=\dist(z_0,K)>0$. Let $g$ be any conformal map from $\HH\sem K$ onto $\HH$ that fixes $\infty$ and sends $w_0$ to $0$. Then for $z_1\in\HH\sem K$, we have
	\BGE \frac{|g(z_1)-g(z_0)|}{|g(z_0)|}=O\Big(\frac{|z_1-z_0|}R\Big);\label{g-g/g-U}\EDE
	\BGE \frac{|\Imm g(z_1)-\Imm g(z_0)|}{\Imm g(z_0)}=O\Big( \frac{|\Imm z_1-\Imm z_0|}{\Imm z_0}\Big)+O\Big(\frac{|z_1-z_0|}R\Big)^{1/2}.\label{Im/Im}\EDE \label{gTvar}
\end{lemma}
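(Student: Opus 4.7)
The plan is to apply Koebe's distortion and $1/4$ theorems to $g$ on disks in $\HH\sem K$, augmented with the Schwarz reflection $\tilde g:\C\sem K^{\doub}\to\C\sem S_K$ so as to work with larger univalent disks when $z_0$ is close to $\R$. Set $d(z_0):=\min(R,\Imm z_0)$; this equals $\dist(z_0,K^{\doub})$, and hence is the radius of the largest disk centered at $z_0$ on which $\tilde g$ is univalent. Koebe's $1/4$ theorem applied to $g$ on $B(z_0,d(z_0))$, using $g(B(z_0,d(z_0)))\subset\HH$, gives $|g'(z_0)|\le 4\Imm g(z_0)/d(z_0)$; applied to $\tilde g$, using $\tilde g(w_0)=0$ together with $w_0\notin B(z_0,d(z_0))$ (since $|w_0-z_0|\ge R\ge d(z_0)$), it gives $|g'(z_0)|\le 4|g(z_0)|/d(z_0)$. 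Koebe distortion on the same disk then yields $|g(z_1)-g(z_0)|\le C|z_1-z_0||g'(z_0)|$ for $|z_1-z_0|\le d(z_0)/2$, and the $O$-convention lets us restrict attention to this regime.

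For \eqref{g-g/g-U}, the above bounds directly yield the claim when $\Imm z_0\ge R$ (Case A, $d(z_0)=R$). When $\Imm z_0<R$ (Case B), the small-disk Koebe bound gives only the too-weak $|g'(z_0)|\lesssim|g(z_0)|/\Imm z_0$. To upgrade to $|g'(z_0)|\lesssim|g(z_0)|/R$, I would apply Koebe's $1/4$ theorem to $\tilde g$ centered at a convenient point near $\R$ but at distance of order $R$ from $K^{\doub}$ (such as $\Ree z_0$ when it lies outside $[a_K,b_K]$, or a suitable nearby auxiliary point otherwise) so that the available univalent disk has radius $\asymp R$, and then use Koebe distortion across that disk to transfer the derivative bound back to $z_0$.

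For \eqref{Im/Im}, split $z_0\to z_1$ into a vertical step $z_0\to z_0':=\Ree z_0+i\Imm z_1$ followed by a horizontal step $z_0'\to z_1$. The vertical piece is handled by integrating $|\partial_y\Imm g|\le|g'|\lesssim\Imm g(z)/\Imm z$ along the segment (using Koebe along the path together with a Harnack comparison for $\Imm g$), producing the $O(|\Imm z_1-\Imm z_0|/\Imm z_0)$ contribution. The horizontal step produces the $O(\sqrt{|z_1-z_0|/R})$ term; here the $1/2$-exponent genuinely arises because a naive combination of \eqref{g-g/g-U} with $|\Imm(g(z_1)-g(z_0'))|\le|g(z_1)-g(z_0')|$ leaves the potentially unbounded ratio $|g(z_0')|/\Imm g(z_0')$, so the gain must come from the positivity and boundary-vanishing of the harmonic function $\Imm g$. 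Representing the relative change of $\Imm g$ as a ratio of harmonic measures on $\HH\sem K$ and invoking a Beurling-type projection estimate then yields the $\sqrt{\cdot}$ modulus of continuity along horizontal displacements.

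The main obstacle is the horizontal step of \eqref{Im/Im}: obtaining the $1/2$-exponent cannot be done by Koebe distortion alone and requires a genuine harmonic-measure argument exploiting the fact that $\Imm g$ vanishes on all of $\partial(\HH\sem K)$. Case B of \eqref{g-g/g-U} is also nontrivial, since one must choose the Koebe-disk center carefully so that its radius is of order $R$ rather than $\Imm z_0$, and verify that the resulting transfer via Koebe distortion yields an estimate uniform in the geometry of $K$.
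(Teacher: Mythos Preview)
Your plan takes a harder route than necessary on both estimates, and the parts you flag as obstacles are in fact avoidable.

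For \eqref{g-g/g-U} you split into Cases A and B according to whether $\Imm z_0\gtrless R$, and call Case B nontrivial. In fact no split is needed: the reflected map $\tilde g$ is univalent not merely on $\C\setminus K^{\doub}$ but on the larger set $\Omega:=\C\setminus(\overline K\cup\overline{\bar K})$. (One reflects $g$ across every real interval in $\partial(\HH\setminus K)$, including any sub-intervals of $(a_K,b_K)\setminus\overline K$, and checks that these local reflections are consistent with the standard one and that the result is injective.) For $z_0\in\HH$ one has $\dist(z_0,\overline{\bar K})\ge\dist(z_0,\overline K)=R$, so $B(z_0,R)\subset\Omega$. Koebe's $1/4$ theorem on this full disk gives $B(g(z_0),|g'(z_0)|R/4)\subset\tilde g(\Omega)$; since $w_0$ is a prime end on $\partial K$, the point $0=g(w_0)$ lies in $\C\setminus\tilde g(\Omega)$, whence $|g(z_0)|\ge|g'(z_0)|R/4$. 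Koebe distortion on the same disk gives $|g(z_1)-g(z_0)|\lesssim|g'(z_0)||z_1-z_0|$ for $|z_1-z_0|<R/5$, and \eqref{g-g/g-U} follows directly. Your auxiliary-point manoeuvre is unnecessary. (Incidentally, $\min(R,\Imm z_0)\le\dist(z_0,K^{\doub})$ is in general strict, but this is moot once one works on $\Omega$.)

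For \eqref{Im/Im} the Beurling/harmonic-measure argument you propose for the ``horizontal'' step is a genuine obstacle in your outline---and it is not needed. The paper's proof uses nothing beyond Koebe. The key observation is that when $\Imm z_0\ll R$, applying the second-order Koebe distortion estimate to the short segment $[\overline{z_0},z_0]\subset B(z_0,R)$ (using $g(z_0)-g(\overline{z_0})=2i\Imm g(z_0)$) yields
\[
|\Imm g(z_0)-g'(z_0)\Imm z_0|\lesssim|g'(z_0)|\,(\Imm z_0)^2/R,
\]
hence in particular $\Imm g(z_0)\gtrsim|g'(z_0)|\Imm z_0$. The exponent $1/2$ then falls out of a simple dichotomy. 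If $\Imm z_0\ge\sqrt{R|z_1-z_0|}$, combine $\Imm g(z_0)\gtrsim|g'(z_0)|\Imm z_0$ with $|\Imm g(z_1)-\Imm g(z_0)|\le|g(z_1)-g(z_0)|\lesssim|g'(z_0)||z_1-z_0|$ to bound the ratio by $|z_1-z_0|/\Imm z_0\le(|z_1-z_0|/R)^{1/2}$. If $\Imm z_0\le\sqrt{R|z_1-z_0|}$, write the same approximation at $z_1$ as well, and decompose $\Imm g(z_1)-\Imm g(z_0)$ through $g'(z_j)\Imm z_j$; the pieces are then controlled by $|g'(z_1)-g'(z_0)|\lesssim|g'(z_0)||z_1-z_0|/R$, by $|\Imm z_1-\Imm z_0|/\Imm z_0$, and by the approximation errors of size $(\Imm z_0)^2/R\le|z_1-z_0|$. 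No harmonic-measure or projection estimate enters.
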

\begin{proof} By scaling invariance, we may assume that $g=g_K-x_0$, where $x_0=g_K(w_0)\in[c_K,d_K]$. From Koebe's $1/4$ theorem, we know that $$|g(z_0)|=|g_K(z_0)-x_0|\ge \dist(g_K(z_0),[c_K,d_K])\gtrsim {|g'(z_0)|R}.$$
	Applying Koebe's distortion theorem and Cauchy's estimate, we find that, if $|z_1-z_0|<R/5$, then
	\BGE |g'(z_1)-g'(z_0)|\lesssim |g'(z_0)|\frac{|z_1-z_0|}R.\label{g'z1}\EDE
	\BGE |g'(z_1)|\asymp |g'(z_0)|,\quad |g(z_1)-g(z_0)|\lesssim |g'(z_0)||z_1-z_0|.\label{g-g}\EDE
	Combining the second formula with the lower bound of $|g(z_0)|$, we get (\ref{g-g/g-U}).
	
	To derive (\ref{Im/Im}), we assume $\frac{|\Imm z_1-\Imm z_0|}{\Imm z_0}$ and $\frac{|z_1-z_0|}R$ are sufficiently small,  and consider several cases. First, assume that $\Imm z_0\ge \frac RC$ for some big constant $C$. From Koebe's $1/4$ theorem, we know that $\Imm g(z_0)\gtrsim {|g'(z_0)|R}$. This together with the inequalities $|\Imm g(z_1)-\Imm g(z_0)|\le |g(z_1)-g(z_0)|$ and (\ref{g-g}) implies (\ref{Im/Im}).
	
	Now assume that $\Imm z_0\le \frac RC$. Note that $z_0-\lin{z_0}=2i\Imm z_0$ and $g(z_0)-g(\lin{z_0})=2i\Imm g(z_0)$. From Koebe's distortion theorem, we see that when $C$ is big enough, \BGE |\Imm g(z_0)-g'(z_0)\Imm z_0|\lesssim |g'(z_0)|\Imm z_0\frac{\Imm z_0}R,\label{ImgT}\EDE
	which implies that
	\BGE \Imm g(z_0)\gtrsim |g'(z_0)|\Imm z_0.\label{lower-ImgT}\EDE
	Now we assume that $\Imm z_0\ge \sqrt{R|z_1-z_0|}$.
	Combining (\ref{lower-ImgT}) with (\ref{g-g}) and the inequalities $|\Imm g(z_1)-\Imm g(z_0)|\le |g(z_1)-g(z_0)|$ and $\frac{|z_1-z_0|}{\Imm z_0}\le (\frac{|z_1-z_0|}R)^{1/2}$, we get (\ref{Im/Im}).
	
	Finally, we assume that $\Imm z_0\le \sqrt{R|z_1-z_0|}$.
	Let $R_1=R-|z_1-z_0|\gtrsim R$. Then $\{|z-z_1|<R_1\}\subset \{|z-z_0|<R\}$.
	From Koebe's distortion theorem and (\ref{g'z1}), we get
	\BGE |\Imm g(z_1)-g'(z_1)\Imm z_1|\lesssim |g'(z_1)|\Imm z_1\frac{\Imm z_1}{R_1}\lesssim |g'(z_0)|\Imm z_0\frac{\Imm z_0}R .\label{ImgT1}\EDE
	Now we have
	\begin{align*}
	|\Imm g(z_1)-\Imm g(z_0)|\le& |\Imm g(z_0)-g'(z_0)\Imm z_0|+|\Imm g(z_1)-g'(z_1)\Imm z_1|\\
	+&|g'(z_1)-g'(z_0)|\Imm z_0+|g'(z_1)||\Imm z_1-\Imm z_0|.
	\end{align*}
	Combining the above inequality with the inequalities (\ref{g'z1}-\ref{ImgT1}) and $\frac {\Imm z_0}R\le (\frac{|z_1-z_0|}R)^{1/2}$, we get  (\ref{Im/Im}) in the last case.
\end{proof}

\subsection{Lemmas on extremal length}
We will need some lemmas on extremal length, which is a nonnegative quantity $\lambda(\Gamma)$ associated with a family $\Gamma$ of rectifiable curves (\cite[Definition 4-1]{Ahl}). One remarkable property of extremal length is its conformal invariance (\cite[Section 4-1]{Ahl}), i.e., if every $\gamma\in\Gamma$ is contained in a domain $\Omega$, and $f$ is a conformal map defined on $\Omega$, then $\lambda(f(\Gamma))=\lambda(\Gamma)$.
We use $d_\Omega(X,Y)$ to denote the extremal distance between $X$ and $Y$ in $\Omega$, i.e., the extremal length of the family of curves in $\Omega$ that connect $X$ with $Y$. It is known that in the special case when $\Omega$ is an annulus with radii $R_1<R_2$, and $X$ and $Y$ are the two boundary components of $\Omega$, $d_\Omega(X,Y)=\log(R_2/R_1)/(2\pi)$ (\cite[Section 4-2]{Ahl}). We will use the comparison principle (\cite[Theorem 4-1]{Ahl}): if every $\gamma\in\Gamma$ contains a $\gamma'\in\Gamma'$, then $\lambda(\Gamma)\ge \lambda(\Gamma')$. Thus, if every curve in $\Omega$ connecting $X$ with $Y$  intersects a pair of concentric circles with radii $R_2>R_1$, then $d_\Omega(X,Y)\ge \log(R_2/R_1)/(2\pi)$. We will also use the composition law (\cite[Theorem 4-2]{Ahl}): if for $j=1,2$, every $\gamma_j$ in a family $\Gamma_j$ is contained in $\Omega_j$, where $\Omega_1$ and $\Omega_2$ are disjoint open sets, and if every $\gamma$ in another family $\Gamma$ contains a $\gamma_1\in\Gamma_1$ and a $\gamma_2\in\Gamma_2$, then $\lambda(\Gamma)\ge \lambda(\Gamma_1)+\lambda(\Gamma_2)$. In addition, we need the following lemma.

\begin{lemma}
	Let $S_1$ and $S_2$ be a disjoint pair of connected bounded closed subsets of $\lin\HH$ that intersect $\R$. Then
	$$\prod_{j=1}^2 \Big(\frac{\diam(S_j)}{\dist(S_1 ,S_2 )}\wedge 1\Big)\le 144 e^{-\pi d_{\HH}(S_1,S_2)}.$$ \label{lem-extremal2}
\end{lemma}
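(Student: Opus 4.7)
The plan is to reduce the extremal-distance estimate to a modulus bound on the Riemann sphere via reflection across $\R$, and then invoke a Teichm\"uller-type modulus inequality.

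First I will symmetrize. Let $r_j=\diam S_j$, $d=\dist(S_1,S_2)$, and pick $x_j\in S_j\cap\R$. Set $\tilde S_j=S_j\cup\overline{S_j}$: each is a connected compact subset of $\C$, symmetric about $\R$, and $\dist(\tilde S_1,\tilde S_2)=d$ (using the elementary inequality $|p-\bar q|\ge|p-q|$ for $p,q\in\overline\HH$). By the standard symmetric-admissible-metric argument---admissible metrics on $\hat\C\setminus(\tilde S_1\cup\tilde S_2)$ correspond, up to a factor of $2$ in area and with the same shortest length, to symmetric admissible metrics descending to $\HH\setminus(S_1\cup S_2)$---the modulus $M$ of the doubly connected domain $\hat\C\setminus(\tilde S_1\cup\tilde S_2)$ satisfies $d_\HH(S_1,S_2)=2M$.

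Next, by the intermediate value theorem applied to $z\mapsto|z-x_j|$ on the connected set $S_j$, I choose $z_j\in S_j$ with $|z_j-x_j|=r_j'/2$, where $r_j':=r_j\wedge d$. Apply a real M\"obius transformation $\psi$ of $\hat\C$ sending $x_1\mapsto 0$, $x_2\mapsto\infty$, and normalized so that $|\psi(z_1)|=1$; set $P=|\psi(z_2)|$. The cross-ratio identity
\[
P=\frac{|z_1-x_2|\,|z_2-x_1|}{|z_1-x_1|\,|z_2-x_2|},
\]
together with $|z_j-x_j|=r_j'/2$ and $|x_1-x_2|\le d+r_1+r_2$, yields $P\gtrsim d^2/(r_1'r_2')$. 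Teichm\"uller's modulus theorem (applied after an additional rotation of $\hat\C$ placing $\psi(z_2)$ on the positive real axis) then bounds $M$ above by the modulus $M_T(P)$ of the Teichm\"uller ring $\hat\C\setminus([-1,0]\cup[P,\infty))$, which in turn is at most $\tfrac{1}{2\pi}\log(16(1+P))$ via the Gr\"otzsch estimate $\mu(r)\le\log(4/r)$.

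Combining the above, $d_\HH(S_1,S_2)=2M\le\tfrac{1}{\pi}\log\bigl(C\,d^2/(r_1'r_2')\bigr)$ for an absolute constant $C$, which rearranges to $r_1'r_2'/d^2\le C\,e^{-\pi d_\HH(S_1,S_2)}$. Since $\diam S_j/d\wedge 1=r_j'/d$, the left-hand side of the lemma equals $r_1'r_2'/d^2$, so the stated inequality follows once I verify $C\le 144$ by careful bookkeeping through the Gr\"otzsch/Teichm\"uller estimates. The main obstacle will be (i) keeping the multiplicative constant tight enough to hit the prescribed $144$, and (ii) verifying that Teichm\"uller's inequality applies in the (generally complex) configuration after M\"obius normalization---this is where the extra rotation enters, permissible because the symmetrized setup is invariant under $z\mapsto\bar z$. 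A secondary subtlety is the case $r_j>d$, handled by the capped choice $r_j'=d$, which ensures the cross-ratio bound $P\gtrsim d^2/(r_1'r_2')$ remains uniformly valid and matches the capped LHS of the lemma.
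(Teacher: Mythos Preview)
Your reflection step and the Teichm\"uller set-up are fine and match the paper's approach. The gap is in the direction of the cross-ratio inequality you need.

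From Teichm\"uller you get $M\le\Lambda(P)$, hence $e^{-\pi d_\HH(S_1,S_2)}=e^{-2\pi M}\ge\frac{1}{16(P+1)}$. To deduce $\frac{r_1'r_2'}{d^2}\le C\,e^{-\pi d_\HH}$ you therefore need an \emph{upper} bound $P\lesssim d^2/(r_1'r_2')$. What your cross-ratio computation actually yields is the \emph{lower} bound $P\ge 4d^2/(r_1'r_2')$ (since $|z_j-x_j|=r_j'/2$ sits in the denominator while $|z_1-x_2|,|z_2-x_1|\ge d$ sit in the numerator). Your ``Combining'' step silently reverses this.

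Moreover, the needed upper bound is genuinely false for your choice of anchors $x_j\in S_j\cap\R$. Take $S_1=[0,i]$ and $S_2$ an L-shape from $100$ up to $100+i$ then across to $\epsilon+i$. Then $d=\dist(S_1,S_2)=\epsilon$, $r_1'=r_2'=\epsilon$, but $x_1=0$, $x_2=100$, so $|z_1-x_2|,|z_2-x_1|\asymp 100$ and $P\asymp 100^2/\epsilon^2$, while $d^2/(r_1'r_2')=1$. Your bound then only gives $144\,e^{-\pi d_\HH}\gtrsim 1/P\asymp\epsilon^2$, which fails to dominate the LHS (which equals $1$).

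The paper avoids this by anchoring at a \emph{closest pair} $z_1\in S_1$, $z_2\in S_2$ with $|z_1-z_2|=d$ (not required to lie on $\R$), and taking $r_j=\max_{z\in S_j^{\doub}}|z-z_j|$. After translating $z_1$ to $0$ and rotating $z_2$ onto the positive axis, one compares directly to the segments $[-r_1,0]$ and $[d,d+r_2]$; the Teichm\"uller parameter $R$ then satisfies $\frac{1}{1+R}=\prod_j\frac{r_j}{d+r_j}$, which gives the correct upper bound with constant $144$. Your M\"obius/cross-ratio framework can be repaired the same way---send the closest points to $0$ and $\infty$ rather than arbitrary real points---but then it becomes essentially the paper's argument.
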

\begin{proof} For $j=1,2$, let $S_j^{\doub}$ be the union of $S_j$ and its reflection about $\R$. By reflection principle (\cite[Exercise 4-1]{Ahl}), $d_{\HH}(S_1,S_2)=2d_{\C}(S_1^{\doub},S_2^{\doub})$.  Choose $z_j\in S_j $, $j=1,2$, such that $|z_2-z_1|=d_S:=\dist(S_1 ,S_2 )$.  Let $r_j=\max_{z\in S_j^{\doub}} |z-z_j|$, $j=1,2$. From Teichm\"uller Theorem (\cite[Theorem 4-7]{Ahl}) and conformal invariance of extremal distance (\cite{Ahl}), we find that
	$$d_{\C}(S_1^{\doub},S_2^{\doub})\le d_{\C}([-r_1,0],[d_S,d_S+r_2]) =d_{\C}([-1,0],[R,\infty))=\Lambda(R),$$
	where $R>0$ satisfies that $\frac 1{1+R}=\prod_{j=1}^2 \frac{r_j}{d_S+r_j}$, and $\Lambda(R)$ is the modulus of the Teichm\"uller domain $\C\sem([-1,0],[R,\infty))$. From \cite[Formula (4-21)]{Ahl} and the above computation, we get
	$$e^{-\pi d_{\HH}(S_1,S_2)}=e^{-2\pi \Lambda(R)}\ge \frac{1}{16(R+1)}=\frac 1{16} \prod_{j=1}^2 \frac{r_j}{d_S+r_j}.$$
	Since $\diam(S_j)\le 2r_j$ and $\frac{2r_j}{d_S}\wedge 1\le \frac{3r_j}{d_S+r_j}$, the proof is now complete.
\end{proof}

\begin{remark}
The lower bound of Lemma \ref{lem-extremal2} also holds (with a different constant), and the proof does not need Teichm\"uller Theorem. But it is not needed for our  purposes.
\end{remark}

\subsection{Lemmas on two-sided radial SLE}
For $z\in\HH$, and $r>0$, we use $\PP^r_z$ to denote the conditional law $\PP[\cdot|\tau^z_r<\infty]$, and use $\PP^*_z$ to denote the law of a two-sided radial SLE$_\kappa$ curve through $z$. For $z\in \R\sem\{0\}$, we use $\PP^*_z$ to denote the law of a two-sided chordal SLE$_\kappa$ curve through $z$. Let $\EE^r_z$ and $\EE^*_z$ denote the corresponding expectation. In any case, we have $\PP^*_z$-a.s., $T_z<\infty$. See \cite{LW,LZ} for definitions and more details on these measures. For a random chordal Loewner curve $\gamma$, we use $(\F_t)$ to denote the filtration generated by $\gamma$.

\begin{lemma}
	Let $z\in\HH$ and $R\in(0,|z|)$. Then $\PP_z^*$ is absolutely continuous w.r.t.\ $\PP_z^R$ on $\F_{\tau^z_R}\cap\{\tau^z_R<\infty\}$, and the Radon-Nikodym derivative is uniformly bounded. \label{RN<1}
\end{lemma}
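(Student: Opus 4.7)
The plan is to use the Doob $h$-transform description of two-sided radial SLE to identify the Radon-Nikodym derivative explicitly, then bound it uniformly using Proposition~\ref{RZ-Thm1.1} together with Koebe-type estimates on the centered Loewner map.

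By the $h$-transform construction (cf.\ \cite{LW,LZ}), the process $M_t:=G(Z_t(z))|g_t'(z)|^{2-d}/G(z)$ is a positive $(\F_t)$-martingale under $\PP$ with $M_0=1$, and $\PP^*_z=M_t\cdot\PP$ on $\F_t\cap\{t<T_z\}$. On $\{\tau^z_R<\infty\}$ the curve has reached only distance $R>0$ from $z$, hence $\tau^z_R<T_z$ on this event, and optional stopping gives $\PP^*_z=M_{\tau^z_R}\cdot\PP$ on $\F_{\tau^z_R}\cap\{\tau^z_R<\infty\}$. Since $\PP^*_z[\tau^z_R<\infty]=1$, dividing by $\PP[\tau^z_R<\infty]$ yields
$$\frac{d\PP^*_z}{d\PP[\cdot\mid\tau^z_R<\infty]}\Big|_{\F_{\tau^z_R}\cap\{\tau^z_R<\infty\}}=M_{\tau^z_R}\cdot\PP[\tau^z_R<\infty],$$
so the task reduces to bounding this product uniformly in the realization of $\gamma[0,\tau^z_R]$.

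By conformal covariance of the one-point Green's function, $M_{\tau^z_R}=G_{(H_t;\gamma(t),\infty)}(z)/G(z)$ at $t=\tau^z_R$. Using the explicit formula \eqref{G(z)} together with the Koebe-type identity $|Z_t'(z)|\asymp\Imm Z_t(z)/\dist(z,\pa H_t)$, a short computation gives
$$G_{(H_t;\gamma(t),\infty)}(z)\asymp \Bigl(\frac{\Imm Z_t(z)}{|Z_t(z)|}\Bigr)^{\alpha}\dist(z,\pa H_t)^{d-2}.$$
Combining with $\PP[\tau^z_R<\infty]\lesssim F(z;R)=P_y(R)/|z|^\alpha$ from Proposition~\ref{RZ-Thm1.1}, the formula $G(z)\asymp y^{d-2+\alpha}/|z|^\alpha$ (where $y=\Imm z$), and the identity $\dist(z,\pa H_t)=\min(y,R)$ at $t=\tau^z_R$, the problem splits into two cases: when $R\le y$, the resulting bound collapses to $(\Imm Z_t(z)/|Z_t(z)|)^\alpha\le 1$; when $R>y$, it becomes $(\Imm Z_t(z)/|Z_t(z)|)^\alpha(R/y)^\alpha$, so it suffices to prove
$$\Imm Z_t(z)/|Z_t(z)|\lesssim y/R.$$

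This angular estimate is the main obstacle. I would prove it by interpreting the quantity $\Imm Z_t(z)/|Z_t(z)|=\sin\arg Z_t(z)$ as being bounded by $\pi\min(\omega_{H_t}(z,Z_t^{-1}(\R_+)),\omega_{H_t}(z,Z_t^{-1}(\R_-)))$. Each set $Z_t^{-1}(\R_\pm)\subset\pa H_t$ is a subset of $\R$ together with one side of $\gamma[0,t]$. Since $z$ lies at distance $y$ from $\R$ while the curve is at distance $R>y$, the harmonic measure of the ``far'' side at $z$ is small: the $\R$-part contributes $\lesssim y/R$ by the Poisson formula, and the curve-side contribution is controlled by an extremal-length argument using Lemma~\ref{lem-extremal2} applied to a small neighborhood of the projection of $z$ onto $\R$ and to the hull $K_t$. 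The genuine subtlety is carrying out this bound uniformly over all realizations of $K_t$, which is where the geometric work is concentrated.
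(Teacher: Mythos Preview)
Your reduction is correct and matches the paper exactly: the Radon--Nikodym derivative is $M_{\tau}\cdot\PP[\tau<\infty]$ with $\tau=\tau^z_R$, and after inserting the one-point bound $\PP[\tau<\infty]\lesssim P_y(R)/P_y(|z|)$ the problem collapses, in the regime $y<R$, to the angular estimate
\[
\frac{\Imm Z_\tau(z)}{|Z_\tau(z)|}\;\lesssim\;\frac{y}{R}.
\]

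The difference is only in how this last line is proved. The paper does it in two strokes of Koebe: since the Loewner map $g_\tau$ extends by Schwarz reflection to $\C\setminus K_\tau^{\doub}$ and $\dist(z,K_\tau^{\doub})=R$ (here one uses $\lin{K_\tau}\cap\R=[a_{K_\tau},b_{K_\tau}]$, so $\Ree z\notin[a_{K_\tau},b_{K_\tau}]$ because $\dist(z,\lin{K_\tau})=R>y$), Koebe distortion on the disk $\{|w-z|<R\}$ with $|\bar z-z|=2y\le R/5$ gives $\Imm g_\tau(z)\asymp|g_\tau'(z)|\,y$; and Koebe $1/4$ gives $|Z_\tau(z)|\gtrsim|g_\tau'(z)|\,R$. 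Dividing yields the estimate immediately. No harmonic measure, no extremal length, no case analysis over realizations of $K_\tau$.

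Your harmonic-measure route also leads to the same inequality, but the decomposition into an ``$\R$-part'' and a ``curve-part'' is a detour, and the appeal to Lemma~\ref{lem-extremal2} for the curve-part is not the right tool (that lemma bounds products of diameter ratios by $e^{-\pi d}$, which does not obviously produce the linear factor $y/R$). The clean version of your idea is: all of $Z_\tau^{-1}(\R_-)$ (say) lies in $\{|w-z|\ge R\}$, while $H_\tau\cap\{|w-z|<R\}=\HH\cap\{|w-z|<R\}$ because $K_\tau$ stays outside that disk; hence
\[
\omega_{H_\tau}\bigl(z,Z_\tau^{-1}(\R_-)\bigr)\;\le\;\omega_{\HH\cap\{|w-z|<R\}}\bigl(z,\{|w-z|=R\}\cap\HH\bigr)\;\lesssim\;\frac{y}{R},
\]
the last bound being an elementary half-disk estimate. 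So your approach is salvageable, but the paper's Koebe-distortion argument is shorter and avoids the ``genuine subtlety'' you flagged.
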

\begin{proof}
	It is known (\cite{LW,LZ}) that $\PP_z^*$ is obtained by weighting $\PP$ using $M^z_t/G(z)$, where $M^z_t=|g_t'(z)|^{2-d}G(Z_t(z))$ and $G(z)$ is given by (\ref{G(z)}). Since $\PP_z^R$ is obtained by weighting the restriction of $\PP$ to $\{\tau^z_R<\infty\}$ using $1/\PP[\tau^z_R<\infty]$, it suffices to prove that $\frac{M^z_\tau}{G(z)} \cdot\PP[\tau<\infty]$ is uniformly bounded, where $\tau=\tau^z_R$.
	
	Let $y=\Imm z$.   From \cite[Lemma 2.6]{RZ} we have $\PP[\tau<\infty]\lesssim \frac{P_{y}(R)}{P_{y}(|z|)}$. Let $\til z=g_\tau(z)$ and $\til y=\Imm \til z$. It suffices to show that
	\BGE \frac{|\til z|^{-\alpha} \til y^{\alpha-(2-d)}}{|z|^{-\alpha}y^{\alpha-(2-d)}}\cdot |g_\tau'(z)|^{2-d}\cdot  \frac{P_{y}(R)}{P_{y}(|z|)}\lesssim 1.\label{<1}\EDE
	We consider two cases. First, suppose $y\ge R/10$. From Lemma \ref{Py}, we get $\frac{P_{y}(R)}{P_{y}(|z|)}\lesssim (\frac{y}{|z|})^\alpha (\frac Ry)^{2-d}$. Applying Koebe's $1/4$ theorem, we get $\til y\gtrsim |g_\tau'(z)| R$. Thus,
	$$\mbox{LHS of }(\ref{<1})\lesssim
	\frac{(y/|\til z|)^{\alpha} (|g_\tau'(z)| R)^{-(2-d)}}{|z|^{-\alpha}y^{\alpha-(2-d)}}\cdot |g_\tau'(z)|^{2-d}\cdot  \Big(\frac{y}{|z|}\Big)^\alpha \Big(\frac Ry\Big)^{2-d}=\Big(\frac{\til y}{|\til z|}\Big)^\alpha\le 1.$$
	So we get (\ref{<1}) in the first case. Second, assume that $y\le R/10$. Then we have $\frac{P_{y}(R)}{P_{y}(|z|)}= (\frac{R}{|z|})^\alpha$. Applying Koebe's distortion theorem, we get $\til y\asymp  |g_\tau'(z)| y$. Applying Koebe's $1/4$ theorem, we get $|\til z|\gtrsim |g_\tau'(z)| R$. Thus,
	$$\mbox{LHS of }(\ref{<1})\lesssim \frac{(|g_\tau'(z)| R)^{-\alpha} (|g_\tau'(z)| y)^{\alpha-(2-d)}}{|z|^{-\alpha} y^{\alpha-(2-d)}}\cdot |g_\tau'(z)|^{2-d}\cdot  \Big(\frac{R}{|z|}\Big)^\alpha.$$
	So we get (\ref{<1}) in the second case. The proof is now complete.
\end{proof}

\begin{lemma}
	Let $z\in \HH$ and $R\in(0,|z|)$. Then for any $w\in\HH$ such that $\frac{|w-z|}R$ is sufficiently small, $\PP_z^*$ and $\PP_w^*$ restricted to $\F_{\tau^z_R}$ are absolutely continuous w.r.t.\ each other, and
	$$ \log\Big(\frac{d\PP_w^*|_{\F_{\tau^z_R}}}{d\PP_z^*|_{\F_{\tau^z_R}}}\Big) =O\Big(\frac{|z-w|}R\Big).$$\label{continuity-two-sided}
\end{lemma}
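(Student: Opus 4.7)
The plan is to exploit the Girsanov description of two-sided SLE. Since $\PP_z^*$ arises from $\PP$ by weighting by the martingale $M^z_t/G(z)$ with $M^z_t=|g_t'(z)|^{2-d}G(Z_t(z))$ (see the proof of Lemma~\ref{RN<1}), on $\F_\tau$ with $\tau=\tau^z_R$ we have
\[
\frac{d\PP_w^*|_{\F_\tau}}{d\PP_z^*|_{\F_\tau}}=\frac{M^w_\tau\,G(z)}{M^z_\tau\,G(w)}.
\]
This is a.s.\ positive and finite once $|w-z|<R$, since $\tau<T_z$ and $\dist(w,\gamma[0,\tau])\ge R-|w-z|>0$ force $\tau<T_w$. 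Letting $L(\zeta):=\log(M^\zeta_\tau/G(\zeta))$, it then suffices to show that $L$ is Lipschitz in $\zeta$ with constant $\lesssim 1/R$ on the disc $\{|\zeta-z|<R/2\}$; this gives $|L(w)-L(z)|\lesssim|w-z|/R$, which yields both the mutual absolute continuity and the claimed log-derivative bound.

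First observe that $\dist(z,K_\tau)=R$ and in fact $\dist(z,K_\tau^{\doub})\asymp R$: bounded components of $\HH\sem\gamma[0,\tau]$ are separated from $z$ by $\gamma[0,\tau]$; $\dist(z,[a_{K_\tau},b_{K_\tau}])\ge R$ by a connectedness argument (a straight segment from $z$ to a gap in $K_\tau\cap\R$ must cross $K_\tau$); and $\dist(z,K_\tau^*)\ge R$ by $|z-\bar k|\ge|z-k|$ for $k\in K_\tau\cap\HH$. Extend $g_\tau$ and $Z_\tau$ via Schwarz reflection to the doubled domain $\C\sem K_\tau^{\doub}$. Using $G(\zeta)=\hat c(\Imm\zeta)^{d-2+\alpha}|\zeta|^{-\alpha}$,
\[
L(w)=(2-d)\log|g_\tau'(w)|+(d-2+\alpha)\log\frac{\Imm Z_\tau(w)}{\Imm w}-\alpha\log\frac{|Z_\tau(w)|}{|w|}.
\]
Koebe distortion on the doubled domain gives $|g_\tau''(w)/g_\tau'(w)|\lesssim 1/R$, so $|\nabla\log|g_\tau'(w)||\lesssim 1/R$. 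Koebe's $1/4$ theorem applied to the extended $Z_\tau$ yields $|Z_\tau(w)|\gtrsim|Z_\tau'(w)|R$ (using that $0\in S_{K_\tau}-U_\tau$, the boundary of the image, since $U_\tau\in S_{K_\tau}$), so $|\nabla\log|Z_\tau(w)||\lesssim 1/R$; and $|\nabla\log|w||=1/|w|\lesssim 1/R$ since $|w|\ge|z|-R/2>R/2$ (as $|z|>R$).

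The main obstacle is the middle term, because $|\nabla\log\Imm w|=1/\Imm w$ and $|\nabla\log\Imm Z_\tau(w)|\lesssim 1/\Imm w$ are each only $O(1/\Imm w)$ when $\Imm w\ll R$; only the combined gradient is $O(1/R)$. Writing $Z_\tau=P+iQ$ and $Z_\tau'=a+bi$ on the doubled domain, Schwarz reflection gives $Q(u,0)=0$ and $b(u,0)=0$, while Koebe distortion yields $|a_v|=|Q_{vv}|,|b_u|\lesssim|Z_\tau'(w)|/R$. A Taylor expansion in $v=\Imm w$ at $u=\Ree w$ then produces $|av-Q|\lesssim|Z_\tau'(w)|v^2/R$ and $|b|\lesssim|Z_\tau'(w)|v/R$ when $v\ll R$; combined with $Q\asymp|Z_\tau'(w)|v$ in this regime, the direct computation
\[
\nabla\log(Q/v)=\bigl(b/Q,\ (av-Q)/(Qv)\bigr)
\]
has magnitude $\lesssim 1/R$. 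When $v\gtrsim R$, $|\nabla\log Q|$ and $|\nabla\log v|$ are each $\lesssim 1/R$ separately. This cancellation bypasses the $\sqrt{\cdot}$-loss that a term-by-term application of Lemma~\ref{gTvar} would incur, and completes the Lipschitz estimate.
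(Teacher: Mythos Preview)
Your argument is correct and follows essentially the same route as the paper: both write the Radon--Nikodym derivative via the martingale $M^\zeta_\tau/G(\zeta)$, factor $\log$ of this into the $|g_\tau'|$-, $|Z_\tau|$-, and $\Imm Z_\tau/\Imm$-pieces, bound the first two directly by Koebe, and treat the third by Schwarz reflection with a case split according to whether $\Imm z$ is comparable to $R$ or much smaller.

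The only genuine difference is presentational: you bound the gradient of $\zeta\mapsto L(\zeta)$ on a disc (a Lipschitz formulation), whereas the paper compares $L(z)$ and $L(w)$ directly. Your Taylor expansion of $Q=\Imm Z_\tau$ at the real axis makes the cancellation in $\nabla\log(Q/v)$ very explicit and arguably cleaner than the paper's compressed line ``$\log(\Imm\til z/|g_\tau'(z)|\Imm z)=O(|z-w|/R)$'', which really only holds after taking the \emph{difference} between the $z$- and $w$-expressions. One small point to tighten: your sentence ``$\dist(z,[a_{K_\tau},b_{K_\tau}])\ge R$ by a connectedness argument'' tacitly uses that $\overline{K_\tau}\cap\R$ has no gaps, i.e.\ equals $[a_{K_\tau},b_{K_\tau}]$; this is the standard monotonicity of real swallowing times for chordal Loewner chains (so that $\dist(z,[a,b])=\dist(z,\overline{K_\tau}\cap\R)\ge\dist(z,\overline{K_\tau})=R$), and is worth stating rather than leaving implicit.
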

\begin{proof}
	Let $G$ and $M^\cdot_t$ be as in the above proof.  Let $\tau=\tau^z_R$. It suffices to show that
	$$  \log\Big(\frac{M^z_\tau}{G(z)}\Big /\frac{M^w_\tau}{G(w)}\Big)=O\Big(  \frac{|z-w|}R \Big).$$ 
	Since $||z|-|w||\le |z-w|$ and $|z|\ge R$, we get $\log\frac{|w|}{|z|}| =O(\frac{|z-w|}R)$.
	Let $\til z=g_\tau(z)-U_\tau$ and $\til w=g_\tau(w)-U_\tau$. From Koebe's $1/4$ theorem and distortion theorem, we get $|\til z|\gtrsim |g_\tau'(z)|R$ and $|\til z-\til w|\lesssim |g_\tau'(z)||z-w|$. So we get $\log\frac{|\til w|}{|\til z|}=O(\frac{|z-w|}R)$.
	From Koebe's distortion theorem, we get $\log\frac{|g_\tau'(w)|}{|g_\tau'(z)|}=O( \frac{|z-w|}R)$.  So it suffices to show that
	\BGE \log\Big(\frac{\Imm \til w}{\Imm w}\Big /\frac{\Imm \til z}{\Imm z}\Big)=O\Big( \frac{|z-w|}R\Big) .\label{M/M}\EDE
	
	Now we consider two cases. First, suppose that $\Imm z\ge R/8$. Since $|\Imm w-\Imm z| \le |w-z|$ we get $\log \frac{\Imm w}{\Imm z}=O(\frac{|z-w|}R)$.  Applying Koebe's $1/4$ theorem, we get $\Imm \til z\gtrsim |g_\tau'(z)|R$. Since $|\Imm \til w-\Imm \til z|\le |\til w-\til z|\lesssim |g_\tau'(z)| |z-w|$, from the above argument, we get $\log\frac{\Imm \til w}{\Imm \til z}=O(\frac{|z-w|}R)$, which implies (\ref{M/M}). Second, suppose that $\Imm z\le R/8$. Then $\Imm w<R/4$ if $|z-w|<R/8$. Applying Koebe's distortion theorem, we get $\log(\frac{\Imm \til z}{|g_\tau'(z)|\Imm z}), \log(\frac{\Imm \til w}{|g_\tau'(w)|\Imm w}) =O(\frac{|z-w|}R)$, which together with $ \log\frac{|g_\tau'(w)|}{|g_\tau'(z)|}=O(\frac{|z-w|}R)$ imply (\ref{M/M}) in the second case.
\end{proof}

\begin{remark}
The above two lemmas still hold if $z$ or $w$ lies on $\R\sem\{0\}$, and the two-sided radial measure is replaced by the two-sided chordal measure.
\end{remark}

\section{Main Estimates} \label{mainestimates}
In this section, we will provide some useful estimates for the proofs of the main theorems. As before,   $\gamma$ denotes a chordal Loewner curve; when $\gamma$ is fixed in the context, for each $t$ in the domain of $\gamma$,  $H_t$   denotes the unbounded domain of $\HH\sem \gamma[0,t]$; $\PP$ denotes the law of a chordal SLE$_\kappa$ curve in $\HH$ from $0$ to $\infty$. For $z_0\in\HH$, and $r>0$, $\tau^{z_0}_r$ denotes the first time that the relative curve hits the circle $\{|z-z_0|=r\}$; $\PP^r_{z_0}$ denotes the conditional law $\PP[\cdot|\tau^{z_0}_r<\infty]$; and $\PP^*_{z_0}$ denotes the law of a two-sided radial SLE$_\kappa$ curve in $\HH$ from $0$ to $\infty$ passing through $z_0$. A crosscut in a domain $D$ is an open simple curve in $D$, whose two ends approach to two boundary points of $D$.

\begin{theorem}
	Let $z_1,\dots,z_{n}$ be distinct points in $\lin\HH\sem\{0\}$, where $n\ge 2$. Let $r_j\in(0,d_j/8)$, $1\le j\le n$. 
	Then we have a constant $\beta>0$ such that for any $k_0\in\{2,\dots,n\}$ and $s_{k_0}\ge 0$,
	$$\PP[\tau^{z_1}_{r_1}<\cdots<\tau^{z_n}_{r_n}<\infty;{\rm {inrad}}_{H_{\tau^{z_1}_{r_1}}}(z_{k_0})\le s_{k_0}]\lesssim  F(z_1,\dots,z_{n};r_1,\dots,r_{n})\Big(\frac{s_{k_0}}{|z_{k_0}-z_1|\wedge |z_{k_0}|}\Big)^{\beta}.$$ \label{RZ-Thm3.1}
\end{theorem}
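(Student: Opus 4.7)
The plan is to peel off the first hitting using the SLE domain Markov property at $\tau_1:=\tau^{z_1}_{r_1}$, apply Proposition~\ref{RZ-Thm1.1} to the $n-1$ pushed-forward points in $\HH$ to handle the remaining visits, and then switch to the two-sided radial (or chordal, when $z_1\in\R$) SLE$_\kappa$ measure $\PP^*_{z_1}$ through $z_1$ in order to extract the extra decay $(s_{k_0}/L)^{\beta}$ coming from the small-inradius constraint, where $L:=|z_{k_0}-z_1|\wedge|z_{k_0}|$.

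First, I would condition on $\F_{\tau_1}$. Write $\tilde z_j:=g_{\tau_1}(z_j)-U_{\tau_1}$ and $\tilde r_j:=|g'_{\tau_1}(z_j)|r_j$. Since $r_j<d_j/8$, Koebe's $1/4$ and distortion theorems show that $g_{\tau_1}(\{|z-z_j|\le r_j\})$ is comparable to a Euclidean disc around $\tilde z_j$ of radius $\tilde r_j$, and Proposition~\ref{RZ-Thm1.1} applied in the image plane yields
\[
\PP\bigl[\tau^{z_2}_{r_2}<\cdots<\tau^{z_n}_{r_n}\,\bigm|\,\F_{\tau_1}\bigr]\;\lesssim\;F(\tilde z_2,\dots,\tilde z_n;\tilde r_2,\dots,\tilde r_n).
\]
It therefore suffices to bound
\[
I\;:=\;\EE\bigl[\mathbf 1_{\tau_1<\infty,\ \inrad_{H_{\tau_1}}(z_{k_0})\le s_{k_0}}\,F(\tilde z_2,\dots,\tilde z_n;\tilde r_2,\dots,\tilde r_n)\bigr]
\]
by $F(z_1,\dots,z_n;r_1,\dots,r_n)(s_{k_0}/L)^{\beta}$. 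Using the Girsanov identity $d\PP^*_{z_1}/d\PP|_{\F_{\tau_1}}=M^{z_1}_{\tau_1}/G(z_1)$ coming from the defining martingale $M^{z_1}_t=|g'_t(z_1)|^{2-d}G(Z_t(z_1))$, valid on $\{\tau_1<\infty\}\subset\{\tau_1<T_{z_1}\}$, this rewrites as
\[
I\;=\;G(z_1)\,\EE^*_{z_1}\!\bigl[\mathbf 1_{\inrad_{H_{\tau_1}}(z_{k_0})\le s_{k_0}}\,F(\tilde z_2,\dots,\tilde z_n;\tilde r_2,\dots,\tilde r_n)\,/\,M^{z_1}_{\tau_1}\bigr],
\]
now expressed under the more tractable $\PP^*_{z_1}$-measure, under which the curve is a.s.\ absorbed at $z_1$.

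On $\{\tau_1<\infty\}$ the hull $K_{\tau_1}$ lies in a neighborhood of $\gamma(\tau_1)$ of diameter $\asymp r_1$, so Lemmas~\ref{small}, \ref{lemma-gK}, \ref{SK-lemma}, and \ref{gTvar} give good control of the conformal map $g_{\tau_1}$ at each $z_j$, $j\ne 1$. Combining these with Koebe's theorems and the explicit form of $G$, the prefactor $G(z_1)/M^{z_1}_{\tau_1}$ becomes comparable to $F(z_1;r_1)/r_1^{\,2-d}$, and for $j\ne k_0$ each factor of $F(\tilde z_2,\dots,\tilde z_n;\tilde r_2,\dots,\tilde r_n)$ is comparable to the corresponding factor of $F(z_2,\dots,z_n;r_2,\dots,r_n)$. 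Assembling via Lemma~\ref{perm-lem}, the contributions for $j\ne k_0$ produce the expected leading bound $F(z_1,\dots,z_n;r_1,\dots,r_n)$, and only the factor at index $k_0$ remains to be tightened by the inradius event.

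The hard part will be extracting the extra polynomial gain from the small-inradius event at index $k_0$. By Koebe's $1/4$ theorem, $\inrad_{H_{\tau_1}}(z_{k_0})\le s_{k_0}$ forces $\Imm\tilde z_{k_0}\lesssim|g'_{\tau_1}(z_{k_0})|\,s_{k_0}$, and Lemma~\ref{Py} then depresses the $k_0$-th factor $P_{\Imm\tilde z_{k_0}}(\tilde r_{k_0})/P_{\Imm\tilde z_{k_0}}(\tilde l_{k_0})$ by a power of the ratio of $\Imm\tilde z_{k_0}$ to its ``typical'' size. To upgrade this algebraic gain to the clean $(s_{k_0}/L)^{\beta}$ one needs a Beffara-type decay estimate under $\PP^*_{z_1}$: the $\PP^*_{z_1}$-probability that the hull at $\tau_1$ constricts $z_{k_0}$'s inradius below $s_{k_0}$ must itself decay polynomially in $s_{k_0}/L$. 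Establishing this uniformly in the positions of the other $n-2$ points, and threading it through the conformal comparison above without dissipating the small factor, is the technical heart of the argument and is what makes the appendix proof delicate.
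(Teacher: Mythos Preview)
Your framework has a concrete error and a structural gap. The sentence ``On $\{\tau_1<\infty\}$ the hull $K_{\tau_1}$ lies in a neighborhood of $\gamma(\tau_1)$ of diameter $\asymp r_1$'' is false: $K_{\tau_1}$ contains the entire trace $\gamma[0,\tau_1]$, which runs from $0$ to the circle $\{|z-z_1|=r_1\}$ and therefore has diameter at least $|z_1|-r_1$. Consequently Lemmas~\ref{small}--\ref{gTvar}, which concern small hulls, give you no global control of $g_{\tau_1}$ at the points $z_j$, and the claimed uniform comparisons between the $\tilde z_j$-factors and the original $F$-factors do not follow. Relatedly, $G(z_1)/M^{z_1}_{\tau_1}$ is \emph{not} comparable to the deterministic quantity $F(z_1;r_1)/r_1^{2-d}$: when $\Imm Z_{\tau_1}(z_1)\ll |Z_{\tau_1}(z_1)|$ the martingale $M^{z_1}_{\tau_1}$ is arbitrarily small, so this ratio is unbounded above. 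Lemma~\ref{RN<1} only gives the opposite inequality. Finally, your last paragraph openly defers the ``hard part'' to an unproved Beffara-type decay of $\PP^*_{z_1}[\inrad_{H_{\tau_1}}(z_{k_0})\le s_{k_0}]$; but the version of that statement actually available in the paper (Lemma~\ref{RZ-Thm3.1-lim}) is \emph{deduced from} the present theorem, so invoking it here would be circular.

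The paper's proof proceeds along an entirely different line, with no change of measure and no DMP reduction to $n-1$ points. The observation is that $\inrad_{H_{\tau_1}}(z_{k_0})\le s_{k_0}$ means $\dist(z_{k_0},K_{\tau_1})\le s_{k_0}$, i.e.\ $\tau^{z_{k_0}}_{s_{k_0}}\le\tau^{z_1}_{r_1}<\tau^{z_{k_0}}_{r_{k_0}}$: the curve has already come within $s_{k_0}$ of $z_{k_0}$, then gone to $z_1$, and must later return to $z_{k_0}$. After relabeling this becomes the ``back-and-forth'' event $\tau^{z_1}_{s_1}<\tau^{z_2}_{r_2}<\tau^{z_1}_{r_1}$ in (\ref{perm-Thm}). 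The cost of such an excursion is extracted by building nested families of circles around each $z_j$, and combining the one-point and boundary (extremal-length) estimates through the combinatorial Lemmas~\ref{key-lem} and~\ref{key-lem2}; the annulus separating scale $s_1$ from scale $|z_1-z_2|\wedge|z_1|$ supplies the factor $e^{-c\,\pi d_{\C}(J_1,J_2)}\asymp (s_1/L)^{\beta}$. This argument works directly under $\PP$ and handles all $n$ points simultaneously, avoiding the conformal-comparison difficulties your approach runs into.
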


This theorem is similar to \cite[Theorem 1.1]{RZ}, in which there do not exist the condition ${\rm {inrad}}_{H_{\tau^{z_1}_{r_1}}}(z_{k_0})\le s_{k_0}$ on the LHS or the factor $(\frac{s_{k_0}}{|z_{k_0}-z_1|\wedge |z_{k_0}|} )^{\beta}$ on the RHS. If $s_{k_0}\ge |z_{k_0}-z_1|\wedge |z_{k_0}|$, it follows from \cite[Theorem 1.1]{RZ}; otherwise we do not find a simple way to prove it using \cite[Theorem 1.1]{RZ}. The proof will follow the argument in \cite{RZ}, and take into account the additional condition ${\rm {inrad}}_{H_{\tau^{z_1}_{r_1}}}(z_{k_0})\le s_{k_0}$ during the course. Since the proof  is long and quite different from other proofs of this paper, we postpone  it to the Appendix.

\begin{lemma}
	Let $z_1 \in\HH$ and $0<r<\eta<R$. Let $Z$ be a connected subset of $\HH$. Further suppose that $r<\Imm z_1$ and $\dist(z_1,Z)>R$. Let $\ha\xi_{\tau^{z_1}_\eta}$ be the union of connected components of $H_{\tau^{z_1}_\eta}\cap\{|z-z_1|=R\}$, which disconnect $z_1$ from any point of $Z$ in $H_{\tau^{z_1}_\eta}$.  Then
	\begin{enumerate}
		\item [(i)]  $\PP_{z_1}^r[Z\subset H_{\tau^{z_1}_\eta}, \gamma[\tau^{z_1}_\eta,\tau^{z_1}_r]\cap \ha\xi_{\tau^{z_1}_\eta}\ne\emptyset ]\lesssim (\frac{\eta}{R})^{\alpha/4}$.
		\item [(ii)] $\PP_{z_1}^*[Z\subset H_{\tau^{z_1}_\eta},\gamma[\tau^{z_1}_\eta,T_{z_1}]\cap \ha\xi_{\tau^{z_1}_\eta}\ne\emptyset] \lesssim (\frac{\eta}{R} )^{\alpha/4}$.
	\end{enumerate}\label{stayin}
\end{lemma}
\begin{proof}
	(i)  From \cite[Theorem 2.3]{LR1}, we know that there are constants $C,\delta>0$ such that, if $r<\delta \Imm z_1$, then $\PP[\tau^{z_1}_r<\infty]\ge C G(z_1) r^{2-d}$. Thus, for any $r<\Imm z_1$,
	\BGE \PP[\tau^{z_1}_r<\infty]\ge C\delta^{2-d} G(z_1) r^{2-d}\gtrsim F(z_1) r^{2-d}=F(z_1;r).\label{lower-bd}\EDE
	When $Z\subset H_{\tau^{z_1}_\eta}$, by \cite[Lemma 2.1]{RZ}, there is a unique connected component of $\ha\xi_{\tau^{z_1}_\eta}$, denoted by $\xi_{\tau^{z_1}_\eta}$, which disconnects $z_1$ from $Z$ and any other connected component of $\ha\xi_{\tau^{z_1}_\eta}$ in $H_{\tau^{z_1}_\eta}$. Given that $Z\subset H_{\tau^{z_1}_\eta}$, modulo the event that $\gamma$ passes through an end point of $\xi_{\tau^{z_1}_\eta}$, which has probability zero, the event that $\gamma$ up to any time visits $\ha\xi_{\tau^{z_1}_\eta}$ coincide with the event that the same part of $\gamma$ visits $\xi_{\tau^{z_1}_\eta}$.
	We will show that
	\BGE \PP[Z\subset H_{\tau^{z_1}_\eta},\gamma[\tau^{z_1}_\eta,\tau^{z_1}_r]\cap \xi_{\tau^{z_1}_\eta}\ne\emptyset;\tau^{z_1}_r<\infty ]\lesssim F(z_1;r) \Big(\frac{\eta}{R}\Big)^{\alpha/4},\label{eta-eps-R}\EDE
	which together with (\ref{lower-bd}) implies (i).
	
	To prove (\ref{eta-eps-R}), using Lemma \ref{Py}, we may assume that $r=\eta e^{-n}$ for some $n\in\N$. Let $r_k=\eta e^{-k}$, $0\le k\le n$. Let $E$ denote the event in (\ref{eta-eps-R}). Then $E=\bigcup_{k=1}^n E_k$, where
	$$E_k=\{Z\subset H_{\tau^{z_1}_\eta},\xi_{\tau^{z_1}_\eta}\subset H_{\tau^{z_1}_{r_{k-1}}}; \gamma[\tau^{z_1}_{r_{k-1}},\tau^{z_1}_{r_k}]\cap \xi_{\tau^{z_1}_\eta}\ne\emptyset;\tau^{z_1}_{r_n}<\infty\}.$$
	Let $y_1=\Imm z_1$.
	From \cite[Lemma 2.6]{RZ} we know that
	\BGE \PP[\tau^{z_1}_{r_{k-1}}<\infty]\lesssim \frac{P_{y_1}(r_{k-1})}{P_{y_1}(|z_1|)};\quad \PP[\tau^{z_1}_{r_n}<\infty|\F_{\tau^{z_1}_{r_k}},\tau^{z_1}_{r_k}< \infty]\lesssim \frac{P_{y_1}(r_n)}{P_{y_1}(r_k)}.\label{one-point}\EDE
	Suppose $\tau^{z_1}_{r_{k-1}}<\infty$ and $\xi_{\tau^{z_1}_\eta}\subset H_{\tau^{z_1}_{r_{k-1}}}$. Then $\xi_{\tau^{z_1}_\eta}$ is a crosscut of $H_{\tau^{z_1}_{r_{k-1}}}$.
	By \cite[Lemma 2.1]{RZ}, there is a unique connected component of $\{|z-z_1|=\sqrt{r_{k-1} R}\}\cap H_{\tau^{z_1}_{r_{k-1}}}$,  denoted by $\rho$, which (i) separates $z_1$ from $\xi_{\tau^{z_1}_\eta}$ in $H_{\tau^{z_1}_{r_{k-1}}}$, and (ii) also separates $z_1$ from any other connected component of $\{|z-z_1|=\sqrt{r_{k-1} R}\}\cap H_{\tau^{z_1}_{r_{k-1}}}$ that satisfies (i). Such $\rho$ is a crosscut of $H_{\tau^{z_1}_{r_{k-1}}}$, and divides $H_{\tau^{z_1}_{r_{k-1}}}$ into a bounded domain and an unbounded domain.  Let $E_b$ (resp.\ $E_u$) denote the events that $\xi_{\tau^{z_1}_\eta}$ lies in the bounded (resp.\ unbounded) domain. See Figure \ref{Fig-Ebu}.

\begin{figure}
	\hfill
	\includegraphics[width=0.45\textwidth]{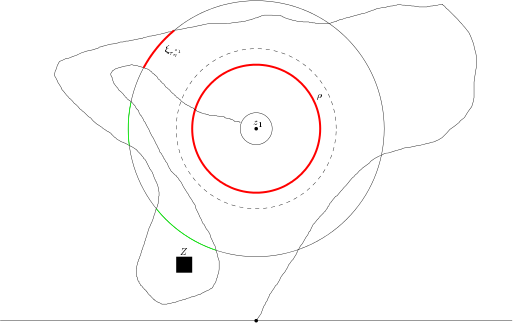}%
	\hfill
	\includegraphics[width=0.45\textwidth]{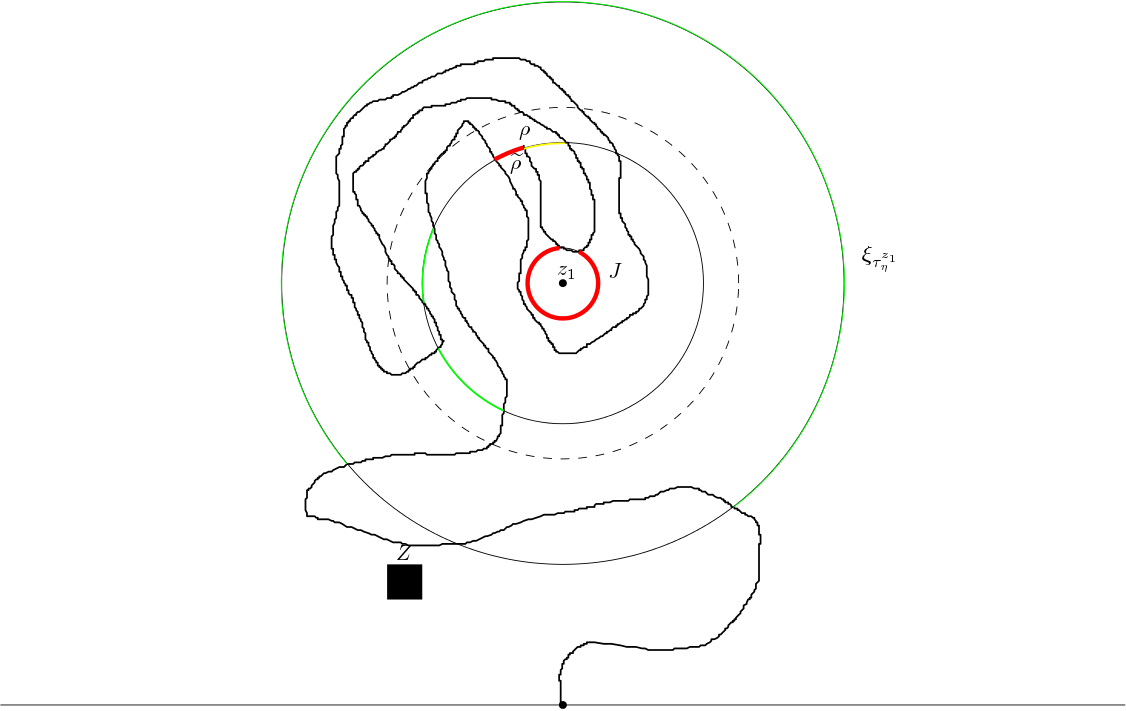}%
	\hfill
	\caption{The two pictures above illustrate the events $E_b$ (left) and $E_u$ (right). In both pictures, the circles are  all centered at $z_1$; the solid circles have radii $R>\sqrt{r_{k-1}R}>r_{k-1}$, respectively, and the dotted circle has radius $\eta$. The zigzag curves are $\gamma$ up to $\tau^{z_1}_{r_{k-1} }$ and $T_\rho$, respectively. In both pictures, the pair of arcs that contribute the factor from the boundary estimate ($\xi_{\tau^{z_1}_\eta}$ and $\rho$ on the left, $\til\rho$ and $J$ on the right) are labeled and colored red. Note that on the left, $\ha\xi_{\tau^{z_1}_\eta}$ has three components, and so is different from $\xi_{\tau^{z_1}_\eta}$; and on the right, $\ha\xi_{\tau^{z_1}_\eta}$ agrees with $\xi_{\tau^{z_1}_\eta}$. On the right, there are three connected components that satisfy the first separation property of $\rho$. The components other than $\rho$ are colored green.}
	\label{Fig-Ebu}
\end{figure}

For the event $E_b$, we apply \cite[Lemma 2.5]{RZ} to the crosscuts $\rho$ and $\xi_{\tau^{z_1}_\eta}$ to get
	\begin{align*}
	&\PP[Z\subset H_{\tau^{z_1}_\eta}, \gamma[\tau^{z_1}_{r_{k-1}},\tau^{z_1}_{r_k}]\cap \xi_{\tau^{z_1}_\eta}\ne\emptyset;E_b|\F_{\tau^{z_1}_{r_{k-1}}},\tau^{z_1}_{r_{k-1}}<\infty,\xi_{\tau^{z_1}_\eta}\subset H_{\tau^{z_1}_{r_{k-1}}}]\\ \lesssim& e^{-\alpha \pi d_{\C}(\rho,\xi_{\tau^{z_1}_\eta})}\lesssim \Big(\frac{r_{k-1}}{R} \Big)^{\alpha/4}.
	\end{align*}
	Combining this estimate with (\ref{one-point}) and Lemma \ref{Py}, we get
	\BGE \PP[E_k\cap E_b]\lesssim F(z_1;r)\Big(\frac{r_{k-1}}{R} \Big)^{\alpha/4}\Big(\frac{r_{k-1}}{r_k}\Big)^\alpha .\label{Ekb}\EDE
	
	If $E_u$ happens, then $\rho$ separates $z_1$ from $\infty$ in $H_{\tau^{z_1}_{r_{k-1}}}$. Let $T_\rho$ denote the first time after $\tau^{z_1}_{r_{k-1}}$ that $\gamma$ visits $\rho$, and let $\til\rho$ (resp.\ $J$) be a connected component of $\rho\cap H_{T_\rho}$ (resp.\ $\{|z-z_1|=r_{k-1}\}\cap H_{T_\rho}$ that separates $z_1$ from $\infty$ in $H_{T_\rho}$. Applying \cite[Lemma 2.5]{RZ} to $\til\rho$ and $J$, we get
	$$\PP[\tau^{z_1}_{r_k}<\infty;E_u| \F_{T_\rho},T_\rho<\infty,\tau^{z_1}_{r_{k-1}}<\infty, \xi_{\tau^{z_1}_\eta}\subset H_{\tau^{z_1}_{r_{k-1}}}]\lesssim e^{-\alpha \pi d_{\C}(\til\rho,J)}\lesssim \Big(\frac{r_{k-1}}{R} \Big)^{\alpha/4}.$$
	Combining this estimate with (\ref{one-point}) and Lemma \ref{Py}, we get
	\BGE \PP[E_k\cap E_u]\lesssim F(z_1;r)\Big(\frac{r_{k-1}}{R} \Big)^{\alpha/4} \Big(\frac{r_{k-1}}{r_k}\Big)^\alpha .\label{Eku}\EDE
	Since $E=\bigcup_{k=1}^n E_k$, using (\ref{Ekb}) and (\ref{Eku}), we get
	$$\PP[E]\lesssim F(z_1;r) \Big(\frac{r_{k-1}}{r_k}\Big)^\alpha \sum_{k=1}^n  \Big(\frac{r_{k-1}}{R} \Big)^{\alpha/4}
	\le F(z_1;r)\Big(\frac{\eta}{R} \Big)^{\alpha/4}\frac{ e^\alpha}{1-e^{-\alpha/{4}}}.$$
	From this we get (\ref{eta-eps-R}) and finish the proof of (i).
	
	(ii) From Lemma \ref{RN<1} and (i), we get $\PP_{z_1}^*[Z\subset H_{\tau^{z_1}_\eta},\gamma[\tau^{z_1}_\eta,\tau^{z_1}_r]\cap \ha\xi_{\tau^{z_1}_\eta}\ne\emptyset] \lesssim (\frac{\eta}{R} )^{\alpha/4}$ for any $r>0$ smaller than $\eta$ and $\Imm z_1$. We then complete the proof by sending $r\to 0$.
\end{proof}

\begin{corollary}
	Let $z_1,z_0 \in\HH$ and $0<r<\eta<R$. Let $Z$ be a connected subset of $\HH$. Further suppose that  $R-\eta,\eta-r>2|z_1-z_0|$, $r<\Imm z_0$ $r<\Imm z_1$, and $\dist(z_1,Z)>R$.  Let $\ha\xi_{\tau^{z_1}_\eta}$ be the union of connected components of $H_{\tau^{z_1}_\eta}\cap\{|z-z_1|=R\}$, which disconnect $z_1$ from any point of $Z$ in $H_{\tau^{z_1}_\eta}$.  Then
	\begin{enumerate}
		\item [(i)]  $\PP_{z_0}^r[Z\subset H_{\tau^{z_1}_\eta},\gamma[\tau^{z_1}_\eta,\tau^{z_0}_r]\cap \ha\xi_{\tau^{z_1}_\eta}\ne\emptyset ]\lesssim (\frac{\eta}{R})^{\alpha/4}$.
		\item [(ii)] $\PP_{z_0}^*[Z\subset H_{\tau^{z_1}_\eta},\gamma[\tau^{z_1}_\eta,T_{z_0}]\cap \ha\xi_{\tau^{z_1}_\eta}\ne\emptyset] \lesssim (\frac{\eta}{R} )^{\alpha/4}$.
	\end{enumerate}\label{stayin-cor}
\end{corollary}
\begin{proof}
	(i) Let $\eta'=\eta+|z_1-z_0|$ and $R'=R-|z_1-z_0|$. Then $\tau^{z_0}_{\eta'}\le \tau^{z_1}_{\eta}$, and $\{|z-z_0|=R'\}$ disconnects $z_1,z_0$ from $\{|z-z_1|=R\}$. Let $\ha\xi'_{\tau^{z_1}_\eta}$ be the union of connected components of $H_{\tau^{z_1}_\eta}\cap\{|z-z_1|=R'\}$, which disconnect  $z_1,z_0$ from $Z$ in $H_{\tau^{z_0}_{\eta'}}$. Then $\ha\xi'_{\tau^{z_1}_\eta}$ separates $z_1,z_0$ from $\ha\xi_{\tau^{z_1}_\eta}$ as well. If $Z\subset H_{\tau^{z_1}_\eta}$ and $\gamma[\tau^{z_1}_\eta,\tau^{z_0}_r]\cap \ha\xi_{\tau^{z_1}_\eta}\ne\emptyset$, then a.s.\ $\gamma[\tau^{z_0}_{\eta'},\tau^{z_0}_r]\cap \ha\xi'_{\tau^{z_1}_\eta}\ne\emptyset$.	Thus, by Lemma \ref{stayin},
	$$\PP_{z_0}^r[Z\subset H_{\tau^{z_1}_\eta},\gamma[\tau^{z_1}_\eta,\tau^{z_0}_r]\cap \ha\xi_{\tau^{z_1}_\eta}\ne\emptyset|\tau^{z_0}_r<\infty]  \lesssim \Big(\frac{\eta'}{R'}\Big)^{\alpha/4}\lesssim  \Big(\frac{\eta}{R}\Big)^{\alpha/4}.$$
	(ii) This follows from Lemma \ref{RN<1} and (i) by sending $r\to 0$.
\end{proof}

The next lemma will be frequently used.

\begin{lemma}
	Let $z_1,\dots,z_{n}$ be distinct points in $\HH$, where $n\ge 2$. Let $K$ be an $\HH$-hull such that $0\in\lin K$ and $\HH\sem K$ contains $z_1,\dots,z_{n}$. Let $w_0$ be a prime end of $\HH\sem K$ that sits on $\pa K$.
	Suppose that $\dist(z_k, K)\ge  s_k$, $2\le k\le n$, where $s_k\in (0,|z_k|\wedge |z_k-z_1|)$. Then
	\begin{align*}
	& F(z_1)F_{(\HH\sem K;w_0,\infty)}(z_2,\dots,z_{n})\\ \lesssim  & 
	F(z_1,\dots,z_{n}) \prod_{k=2}^{n} \Big(\frac{|z_k|\wedge |z_k-z_1|}{s_k}\Big)^\alpha \min_{2\le k\le n}\Big(\frac{\dist(g_K(z_k),S_K)}{|g_K(z_k)-g_K(w_0) |}\Big)^\alpha \\
	\lesssim  &   F(z_1,\dots,z_{n}) \prod_{k=2}^{n} \Big(\frac{|z_k|\wedge |z_k-z_1|}{s_k}\Big)^\alpha.
	\end{align*}
	\label{FF-F}
\end{lemma}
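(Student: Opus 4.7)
Set $u := g_K(w_0) \in S_K$, and let $g := g_K - u$, a conformal map from $(\HH\sem K; w_0,\infty)$ onto $(\HH; 0,\infty)$. For $2 \le k \le n$, set $\til z_k := g(z_k)$, $\til y_k := \Imm \til z_k$, and $\til L_k := \min\bigl(|\til z_k|,\, \min_{2\le j < k}|\til z_k - \til z_j|\bigr)$. Unpacking the definition of $F_{(D;w_0,\infty)}$,
\[ F_{(\HH\sem K; w_0,\infty)}(z_2,\dots,z_n) = \prod_{k=2}^n |g_K'(z_k)|^{2-d}\,\frac{\til y_k^{\alpha-(2-d)}}{P_{\til y_k}(\til L_k)}, \]
and since $F(z_1) = y_1^{\alpha-(2-d)}/P_{y_1}(|z_1|)$ coincides with the $k=1$ factor of $F(z_1,\dots,z_n)$, the lemma reduces to showing
\[ \prod_{k=2}^n A_k \lesssim \prod_{k=2}^n \Big(\frac{|z_k|\wedge|z_k-z_1|}{s_k}\Big)^\alpha \min_{2\le k\le n}\Big(\frac{\dist(\til z_k+u, S_K)}{|\til z_k|}\Big)^\alpha, \]
where $A_k := |g_K'(z_k)|^{2-d}(\til y_k/y_k)^{\alpha-(2-d)}\,P_{y_k}(l_k)/P_{\til y_k}(\til L_k)$.

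\textbf{Koebe upper bounds.} Since $\dist(z_k, K) \ge s_k$ and $K \subset \lin\HH$ (so $\dist(z_k, K^\doub) \ge y_k\wedge s_k$), applying Koebe's $1/4$ theorem to $g$ on $\HH\sem K$ and to $g_K$ on $\C\sem K^\doub$ yields
\[ |g_K'(z_k)| \lesssim \frac{\til y_k}{y_k\wedge s_k}, \qquad |g_K'(z_k)| \lesssim \frac{\dist(\til z_k+u, S_K)}{y_k\wedge s_k}. \]
The integral representation of $f_K-\id$ in Lemma \ref{integral} gives $\Imm f_K(w) \ge \Imm w$, hence $\til y_k \le y_k$. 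Combining,
\[ A_k \lesssim \Big(\frac{\dist(\til z_k+u, S_K)}{y_k\wedge s_k}\Big)^\alpha \Big(\frac{y_k\wedge s_k}{y_k}\Big)^{\alpha-(2-d)} \frac{P_{y_k}(l_k)}{P_{\til y_k}(\til L_k)}. \]

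\textbf{Bounding the $P$-ratio.} The remaining task is to control the $P$-function ratio using Lemma \ref{Py}. By Koebe distortion, for $z_j$ at distance $\lesssim y_k\wedge s_k$ from $z_k$ we have $|\til z_k - \til z_j| \asymp |g_K'(z_k)||z_k - z_j|$; for $z_j$ farther away, one uses the comparison $|\til z_k - \til z_j| \lesssim |z_k - z_j|$ coming from Lemma \ref{lemma-gK}. Splitting by cases on (i) $y_k$ vs.\ $s_k$ and (ii) which term attains the minimum defining $\til L_k$, and using the monotonicity identities of Lemma \ref{Py} (in particular $P_{y_1}(x_1)/P_{y_1}(x_2) \le P_{y_2}(x_1)/P_{y_2}(x_2)$ for $y_1 \le y_2$), one obtains in each case
\[ A_k \lesssim \Big(\frac{|z_k|\wedge|z_k-z_1|}{s_k}\Big)^\alpha \Big(\frac{\dist(\til z_k+u, S_K)}{|\til z_k|}\Big)^\alpha. \]
Since each boundary ratio $\dist(\til z_k+u, S_K)/|\til z_k| \le 1$ (because $u \in S_K$), taking the product over $k$ we may retain only the smallest such ratio and bound the remaining ones by $1$, yielding the first inequality. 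The second is immediate by dropping the minimum.

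\textbf{Main obstacle.} The subtle step is the case analysis for the $P$-ratio, because $P_y(r)$ has two regimes — proportional to $r^{2-d}$ for $r \le y$ and to $r^\alpha$ for $r \ge y$ — and the breakpoints for the pairs $(y_k, l_k)$ and $(\til y_k, \til L_k)$ need not coincide under the distortion of $g_K$. Moreover, in the regime where $\til L_k = |\til z_k|$ (the nearest "earlier point" for the mapped configuration is the image of $w_0$), one is forced to pay the boundary factor $(\dist(\til z_k + u, S_K)/|\til z_k|)^\alpha$; this is precisely what produces the minimum over $k$ in the stronger bound.
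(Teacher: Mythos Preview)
Your reduction to bounding $\prod_{k=2}^n A_k$ is correct, but the claimed per-$k$ inequality
\[
A_k \lesssim \Big(\frac{|z_k|\wedge|z_k-z_1|}{s_k}\Big)^\alpha \Big(\frac{\dist(g_K(z_k),S_K)}{|\til z_k|}\Big)^\alpha
\]
is \emph{false} in general, so the argument does not go through. To see this, take $K=\lin D^+_{0,L}$ (so $g_K(z)=z+L^2/z$, $S_K=[-2L,2L]$), $w_0=iL$ (so $u=0$), $z_1=2iL$, $z_2=(L+s)+i\eps$, $z_3=z_2+\delta$ with $0<\delta\ll\eps\ll s\ll L$, and $s_2=s_3=s/2$. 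A direct computation gives $|g_K'(z_2)|\asymp s/L$, $\til y_3\asymp s\eps/L$, $|\til z_3-\til z_2|\asymp s\delta/L$, and hence $A_3\asymp 1$. On the other hand $|z_3|\wedge|z_3-z_1|\asymp L$ and $\dist(g_K(z_3),S_K)/|\til z_3|\asymp (s/L)^2$, so your right-hand side is $\asymp (L/s)^\alpha\cdot(s/L)^{2\alpha}=(s/L)^\alpha\ll 1$. The point is that when $\til L_k$ is attained by $|\til z_k-\til z_j|$ for some nearby $j$, no boundary factor can be squeezed out of $A_k$; your ``Main obstacle'' paragraph acknowledges the regime split but then asserts the uniform bound anyway.

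The paper avoids this by introducing the intermediate quantities
\[
l^K_k=\dist(z_k,K\cup\{z_j:2\le j<k\}),\qquad \til l^S_k=\dist(\til z_k,S\cup\{\til z_j:2\le j<k\}),
\]
proving via a two-case Koebe argument the pointwise equivalence $\frac{P_{\til y_k}(|g'(z_k)|\eps)}{P_{\til y_k}(\til l^S_k)}\asymp\frac{P_{y_k}(\eps)}{P_{y_k}(l^K_k)}$ for small $\eps$, and then factoring
\[
\prod_k A_k\asymp \prod_k \frac{P_{y_k}(l_k)}{P_{y_k}(l^K_k)}\cdot\prod_k\frac{P_{\til y_k}(\til l^S_k)}{P_{\til y_k}(\til l_k)}.
\]
The first product is bounded by $\prod_k((|z_k|\wedge|z_k-z_1|)/s_k)^\alpha$. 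In the second product each factor is $\le 1$, and the boundary ratio $(\dist(\til z_2,S)/|\til z_2|)^\alpha$ appears exactly for $k=2$; the minimum over $k$ is then obtained by invoking Lemma~\ref{perm-lem} to permute which point plays the role of $z_2$. This permutation step is what replaces your (untrue) per-$k$ boundary factor.
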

\begin{proof}
	Since $w_0\in\pa K$, we get $g_K(w_0)\in S_K$. So the first inequality immediately implies the second. Let $y_k$ and $l_k$, $1\le k\le n$, be defined by (\ref{ldyR}). Let $g=g_K-g_K(w_0)$.  Let $\til z_k=g(z_k)$, $2\le k\le n$; and define $\til y_k$ and $\til l_k$ using (\ref{ldyR}) for the $n-1$ points: $\til z_k$, $2\le k\le n$. In particular, $\til l_2=|\til z_2|$. Let $S=S_K-g_K(w_0) \ni 0$. Define for $2\le k\le n$,
	$$\til l^S_k=\dist(\til z_k,S\cup \{\til z_j:2\le j<k\}),\quad l^K_k=\dist(z_k,K\cup \{z_j:2\le j<k\}).$$
	From Koebe's $1/4$ theorem, we get $|g'(z_k)| l^K_k\asymp \til l^S_k$. We claim that when $\eps$ is small,
	\BGE  \frac{P_{\til y_k}(|g'(z_k)| \eps)}{P_{\til y_k}(\til l_k^S)}\asymp \frac{P_{y_k}(\eps)}{P_{y_k}(l_k^K)},\quad \mbox{if }\eps\le \dist(z_k,K).\label{lesssim0}\EDE
	We consider two cases. If $y_k\le \dist(z_k,K)/10$, applying Koebe's distortion theorem, we get $\til y_k\asymp |g'(z_k)| y_k$. Then we have (\ref{lesssim0}) because $\frac{P_{ay}(ar)}{P_{ay}(aR)}=\frac{P_y(r)}{P_y(R)}$.  If $y_k\geq \dist(z_k,K)/10$, then $y_k\gtrsim l_k^K$. Applying Koebe's $1/4$ theorem, we get $\til y_k\gtrsim |g'(z_k)| \dist(z_k,K)\gtrsim \til l^K_k$. Thus, when $\eps\le \dist(z_k,K)$, we have (\ref{lesssim0}) because both sides of it are comparable to $(\frac{\eps}{l_k^K})^{2-d}$.

 Recall that
	$$F(z_1)=\lim_{\eps\to 0^+} \eps^{d-2} \frac{P_{y_1}(\eps)}{P_{y_1}(l_1)};\quad F(z_1,\dots,z_{n})=\lim_{\eps\to 0^+} \eps^{n(d-2)}\prod_{k=1}^{n} \frac{P_{y_k}(\eps)}{P_{y_k}(l_k)}.$$
	Since $g$ is a conformal map from $D$ onto $\HH$ that fixes $\infty$ and takes $w_0$ to $0$, we have
	$$F_{(D;w_0,\infty)}(z_2,\dots,z_{n})=\prod_{k=2}^{n} |g'(z_k)|^{2-d} \lim_{\eps\to 0^+} \eps^{(n-1)(d-2)}\prod_{k=2}^{n} \frac{P_{\til y_k}(\eps)}{P_{\til y_k}(\til l_k)}.$$
	From (\ref{lesssim0}), we get
	$$F(z_1)F_{(D;w_0,\infty)}(z_2,\dots,z_{n})\asymp  \prod_{k=2}^n \Big(\frac{P_{y_k}(l_k)}{P_{y_k}(l^K_k)}\cdot \frac{P_{\til y_k}(\til l_k^S)}{P_{\til y_k}(\til l_k)}\Big)\cdot F(z_1,\dots,z_{n}).$$
	Since $l^K_k=\dist(z_k,K)\wedge \dist(z_k:\{z_j:2\le j<k\})\ge s_k\wedge \dist(z_k:\{z_j:2\le j<k\})$, $l_k=|z_k|\wedge |z_k-z_1|\wedge \dist(z_k:\{z_j:2\le j<k\})$, and $|z_k|\wedge |z_k-z_1|\ge s_k$, we get
	$$\frac{P_{y_k}(l_k)}{P_{y_k}(l^K_k)}\le \Big(\frac{|z_k|\wedge |z_k-z_1|\wedge \dist(z_k:\{z_j:2\le j<k\})}{s_k\wedge \dist(z_k:\{z_j:2\le j<k\})}\Big)^\alpha\le \Big(\frac{|z_k|\wedge |z_k-z_1|}{s_k}\Big)^\alpha.$$
	Note that $\frac{P_{\til y_k}(\til l_k^S)}{P_{\til y_k}(\til l_k)}\le 1$, $2\le k\le n$, and  $\frac{P_{\til y_2}(\til l_2^S)}{P_{\til y_2}(\til l_2)}=\frac{P_{\til y_2}(\dist(\til z_2,S))}{P_{\til y_2}(|\til z_2|)}=(\frac{\dist(\til z_2,S)}{|\til z_2|})^\alpha$. From Lemma \ref{perm-lem}, we get $\prod_{k=2}^n \frac{P_{\til y_k}(\til l_k^S)}{P_{\til y_k}(\til l_k)}\lesssim  \min_{2\le k\le n} \Big(\frac{\dist(\til z_k,S)}{|\til z_k|}\Big)^\alpha$.
	Then the proof is completed.
\end{proof}
The next two lemmas are useful when we want to prove the lower bound.

\begin{lemma} \label{help-lower*}
	Let $z_1,\dots,z_{n}$ be distinct points in $\lin\HH\sem\{0\}$. Let $r_j\in(0,d_j)$, $1\le j\le n$, where $d_j$'s are given by (\ref{ldyR}). Let $K$ be an $\HH$-hull such that $0\in\lin K$, and let $U_0\in S_K$. Suppose that $z_k\not\in\lin K$ and
\BGE  \dist(g_K(z_j),S_K)\asymp |\til z_j|:=|g_K(z_j)-U_0|, \quad 1\le j\le n. \label{angle*}\EDE
Suppose $I=\{1=j_1<\dots<j_{|I|}\}\subset\{1,\dots,n\}$ satisfies that $r_j\lesssim \dist(z_j,K)$. Then we have
	\begin{align*}
	&F(z_1;\dist(z_1,K))\cdot F(\til z_{j_1},\dots,\til z_{j_{|I|}};|g_K'(z_{j_1})|r_{j_1},\dots,|g_K'(z_{j_{|I|}})|r_{j_{|I|}})\\
	\gtrsim  & F(z_1,z_2,\dots,z_n;r_1,r_2,\dots,r_n).
	\end{align*}
	The implicit constant in the conclusion depends on the implicit constants in the assumption.
\end{lemma}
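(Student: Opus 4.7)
The plan is to establish the inequality factor by factor after two simplifying reductions. First, by Lemma \ref{perm-lem} applied with $S=\{0\}$, $F(z_1,\dots,z_n;r_1,\dots,r_n)$ is invariant up to multiplicative constants under permutations of the indices, so I may reorder and assume $I=\{1,\dots,|I|\}$ with $j_m=m$ (this is compatible with the requirement $j_1=1$ since the permutation can fix $1$). Second, since $r_k\le d_k\le l_k$ and $P_{y_k}$ is increasing, $\frac{P_{y_k}(r_k)}{P_{y_k}(l_k)}\le 1$ for every $k\notin I$. Writing $L_k=\dist(z_k,K)$, $c_k=|g_K'(z_k)|$, and $\tilde l_k=\min_{0\le j<k}|\tilde z_k-\tilde z_j|$ (with $\tilde z_0=0$), the claim then reduces to
\BGEN \prod_{k=1}^{|I|}\frac{P_{y_k}(r_k)}{P_{y_k}(l_k)}\lesssim \frac{P_{y_1}(L_1)}{P_{y_1}(|z_1|)}\cdot \prod_{k=1}^{|I|}\frac{P_{\tilde y_k}(c_kr_k)}{P_{\tilde y_k}(\tilde l_k)}. \EDEN

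The conformal-distortion inputs come from applying Koebe's distortion and $1/4$ theorems to $g_K$ (extended via Schwarz reflection to $\C\sem K^{\doub}$) together with the angular assumption \eqref{angle*}: for every $k\in I$ one gets $|\tilde z_k|\asymp c_kL_k$ and $\tilde y_k\asymp c_k(y_k\wedge L_k)$, and for any pair within the Koebe radius $\dist(z_k,K^{\doub})$ also $|\tilde z_k-\tilde z_j|\asymp c_k|z_k-z_j|$. I then introduce the auxiliary quantity $\hat l_k:=L_k\wedge\min_{1\le j<k}|z_k-z_j|$. Since $0\in\lin K$ forces $L_k\le|z_k|$, I have $\hat l_k\le l_k$; and the above distortion estimates yield $\tilde l_k\lesssim c_k\hat l_k$, splitting according to whether the minimum defining $\hat l_k$ equals $L_k$ (then $\tilde l_k\le|\tilde z_k|\asymp c_kL_k$) or some $|z_k-z_{j^*}|$ (then Koebe distortion applies to the pair $(z_k,z_{j^*})$).

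The main engine is the scaling identity $\frac{P_{ay}(ar)}{P_{ay}(aR)}=\frac{P_y(r)}{P_y(R)}$ combined with $\tilde y_k\asymp c_k(y_k\wedge L_k)$. Since the hypothesis yields $r_k,\hat l_k\le L_k$, a short case analysis on the piecewise form of $P$ (comparing $y_k$ with $L_k$, and $r_k,\hat l_k$ with $y_k$) produces $\frac{P_{y_k}(r_k)}{P_{y_k}(\hat l_k)}\asymp\frac{P_{\tilde y_k}(c_kr_k)}{P_{\tilde y_k}(c_k\hat l_k)}$. For $k\ge 2$, chaining $\hat l_k\le l_k$, this equivalence, and $\tilde l_k\lesssim c_k\hat l_k$ (together with monotonicity of $P$) gives $\frac{P_{y_k}(r_k)}{P_{y_k}(l_k)}\lesssim\frac{P_{\tilde y_k}(c_kr_k)}{P_{\tilde y_k}(\tilde l_k)}$. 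For $k=1$, where $l_1=|z_1|$, $\hat l_1=L_1$, and $\tilde l_1=|\tilde z_1|\asymp c_1L_1$, the extra factor $F(z_1;L_1)=\frac{P_{y_1}(L_1)}{P_{y_1}(|z_1|)}$ exactly absorbs the gap $|z_1|/L_1$ and the same scaling gives $\frac{P_{y_1}(r_1)}{P_{y_1}(|z_1|)}\asymp F(z_1;L_1)\cdot\frac{P_{\tilde y_1}(c_1r_1)}{P_{\tilde y_1}(|\tilde z_1|)}$. Multiplying the factorwise inequalities closes the argument.

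The main technical hurdle will be the upper bound $\tilde l_k\lesssim c_k\hat l_k$ in the subcase where the minimum is realized by a pair $(z_k,z_{j^*})$ separated by more than the Koebe radius about $z_k$; this is handled by the same refined Koebe analysis (splitting the regimes $y_k\le L_k/10$ and $y_k\ge L_k/10$) that appears in the proof of Lemma \ref{FF-F}.
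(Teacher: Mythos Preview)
Your proposal is correct and follows essentially the same route as the paper's proof. Both arguments reorder via (\ref{Frasymp}) so that $I=\{1,\dots,m\}$, drop the trivial factors $\frac{P_{y_k}(r_k)}{P_{y_k}(l_k)}\le 1$ for $k>m$, and then compare factor by factor using the Koebe-based equivalence that underlies (\ref{lesssim0}); your $\hat l_k$ is exactly the paper's $l_k^K=\dist(z_k,K\cup\{z_j:j<k\})$, and the special treatment of $k=1$ with the extra factor $F(z_1;L_1)=\frac{P_{y_1}(l_1^K)}{P_{y_1}(l_1)}$ is identical. The only cosmetic difference is that the paper routes the comparison through the intermediate quantity $\tilde l_k^{\,S}=\dist(\tilde z_k,S\cup\{\tilde z_j:j<k\})$ (with $S=S_K-U_0$), first getting $c_k l_k^K\asymp\tilde l_k^{\,S}$ by Koebe and then $\tilde l_k^{\,S}\asymp\tilde l_k$ from the angular hypothesis (\ref{angle*}), whereas you bound $\tilde l_k\lesssim c_k\hat l_k$ directly.

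One small remark: the ``technical hurdle'' you flag in your last paragraph is milder than you suggest. If $\hat l_k=|z_k-z_{j^*}|$ exceeds the Koebe radius $\dist(z_k,K^{\doub})$, you do not need distortion on the pair at all: simply use $\tilde l_k\le|\tilde z_k|\asymp\dist(g_K(z_k),S_K)\asymp c_k\dist(z_k,K^{\doub})<c_k|z_k-z_{j^*}|=c_k\hat l_k$, where the middle $\asymp$ combines (\ref{angle*}) with Koebe $1/4$. The $y_k\lessgtr L_k/10$ split is needed only for the $P$-ratio equivalence (your ``main engine''), exactly as in the proof of Lemma~\ref{FF-F}.
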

\begin{proof}
By reordering the points and using (\ref{Frasymp}), we may assume that $I=\{1,\dots,m\}$.
Let $y_k$ and $l_k$, $1\le k\le n$, be defined by (\ref{ldyR}). Also take $\til y_k$ and $\til l_k$ be the corresponding quantities for $\til z_k$, $1\le k\le m$. Let $S=S_K-U_0 \ni 0$.
For $1 \le k \le m$ define.
$$\til l^{S}_k=\dist(\til z_k,S\cup \{\til z_j:1\le j<k\}),\quad l^{K}_k=\dist(z_k,K\cup \{z_j:1\le j<k\}).$$
It is clear that $l^K_k \le l_k$.
By Koebe's $1/4$ theorem we have
$|g'_K(z_k)|l^{K}_k \asymp \til l^{S}_k$.
From (\ref{angle*}) we know that $\til l^{S}_k \asymp \til l_k$. Since $r_k\lesssim \dist(z_k,K)$, $1\le k\le m$, the argument of (\ref{lesssim0}) gives us
\BGE \frac{P_{\til y_k}(|g_K'(z_k)|r_k)}{P_{\til y_k}(\til l_k)}\asymp\frac{P_{y_k}(r_k)}{P_{y_k}(l_k^K)},\quad 1\le k\le m. \label{P/P1*}
\EDE
 Since $l^K_k \le l_k$,  we have
\BGE
\frac{P_{\til y_k}(|g_K'(z_k)|r_k)}{P_{\til y_k}(\til l_k)} \gtrsim  \frac{P_{y_k}(r_k)}{P_{y_k}(l_k)}, \quad 1\le k\le m. \label{P/P2*}
\EDE

Multiplying (\ref{P/P1*}) for $k=1$, (\ref{P/P2*}) for $2\le k\le m$, the equality $F(z_1;\dist(z_1,K))=\frac{P_{y_1}(l_1^K)}{P_{y_1}(l_1)}$, and the inequalities $1\ge \frac{P_{y_k}(r_k)}{P_{y_k}(l_k)}$ for $m+1\le k\le n$, we get the desired inequality.
\end{proof}

\begin{lemma} \label{multiplicative}
Suppose we have set of distinct points $z_1,\dots,z_n$ in $\Half$. Let $l_j$, $1 \le j \le n$, be defined by \eqref{ldyR}.
Let $m\in\{1,\dots,n-1\}$. Take $w_j=z_{m+j}$, $1 \le j \le n-m$. Let $l^w_j$, $1 \le j \le n-m $, be the corresponding quantity for $w_j$'s. Suppose $l_{m+j} \asymp l^w_j$, $1\le j\le n-m$. Then
\[
F(z_1,\dots,z_m;r_1,\dots,r_m)F(z_{m+1},\dots,z_n;r_{m+1},\dots,r_n)\asymp F(z_1,\dots,z_n;r_1,\dots,r_n).
\]   	
The implicit constant in the result depends on the implicit constants in the assumption.
\end{lemma}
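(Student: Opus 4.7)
The plan is to unfold the definitions of $F$ on both sides and reduce the claim to a one-variable comparison handled by Lemma \ref{Py}. Splitting the product defining $F(z_1,\dots,z_n;r_1,\dots,r_n)$ into the first $m$ and the last $n-m$ factors yields
\[
F(z_1,\dots,z_n;r_1,\dots,r_n) = F(z_1,\dots,z_m;r_1,\dots,r_m) \prod_{k=1}^{n-m} \frac{P_{y_{m+k}}(r_{m+k})}{P_{y_{m+k}}(l_{m+k})},
\]
while by the definition applied to $w_1,\dots,w_{n-m}$ (with $w_0=0$),
\[
F(z_{m+1},\dots,z_n;r_{m+1},\dots,r_n) = \prod_{k=1}^{n-m} \frac{P_{y_{m+k}}(r_{m+k})}{P_{y_{m+k}}(l^w_k)}.
\]
Taking the ratio every $r_j$ cancels, and the desired $\asymp$ reduces to showing
\[
\prod_{k=1}^{n-m} \frac{P_{y_{m+k}}(l^w_k)}{P_{y_{m+k}}(l_{m+k})} \asymp 1.
\]

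Next I would observe that $l_{m+k}$ is the minimum of $|z_{m+k}-z_j|$ taken over $0\le j\le m+k-1$, whereas $l^w_k$ is the minimum of the same quantity over the strictly smaller index set $\{0\}\cup\{m+1,\dots,m+k-1\}$. Hence one automatically has $l_{m+k}\le l^w_k$ for every $k$, so by the monotonicity of $P_y$ each factor in the above product is $\ge 1$, giving the lower bound in the claim with constant $1$ (in particular, no hypothesis is needed in this direction).

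For the matching upper bound, the hypothesis provides a constant $C>0$ with $l^w_k\le C\,l_{m+k}$ for $1\le k\le n-m$. Combining this with the second inequality of Lemma \ref{Py} gives
\[
\frac{P_{y_{m+k}}(l^w_k)}{P_{y_{m+k}}(l_{m+k})}\le\left(\frac{l^w_k}{l_{m+k}}\right)^{2-d}\le C^{2-d},
\]
and the product over the $n-m$ indices is therefore bounded by $C^{(n-m)(2-d)}$. This yields the required two-sided estimate, with an implicit constant depending only on $C$ and on $n$, exactly as asserted. There is no genuine obstacle in this argument: it is pure bookkeeping, and the only thing to track is that the one-sided comparison $l_{m+k}\le l^w_k$ is free from the definitions, so the hypothesis $l_{m+k}\asymp l^w_k$ is actually used only in the converse direction.
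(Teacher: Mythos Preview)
Your approach is exactly the one the paper takes: unfold the definition of $F$ and observe that the claim reduces to $P_{y_{m+k}}(l_{m+k})\asymp P_{y_{m+k}}(l^w_k)$, which follows from $l_{m+k}\asymp l^w_k$ together with Lemma~\ref{Py}. Your additional observation that $l_{m+k}\le l^w_k$ holds automatically (so only one direction of the hypothesis is really used) is correct and a nice remark.

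There is one small slip in your application of Lemma~\ref{Py}. With $x_1=l_{m+k}\le x_2=l^w_k$, the lemma gives $\bigl(\tfrac{x_1}{x_2}\bigr)^\alpha\le \tfrac{P_y(x_1)}{P_y(x_2)}$, which upon inverting yields
\[
\frac{P_{y_{m+k}}(l^w_k)}{P_{y_{m+k}}(l_{m+k})}\le\Big(\frac{l^w_k}{l_{m+k}}\Big)^{\alpha}\le C^{\alpha},
\]
not the exponent $2-d$ you wrote. (Indeed, when $l_{m+k}\le y_{m+k}\le l^w_k$ the ratio can exceed $(l^w_k/l_{m+k})^{2-d}$.) This does not affect the conclusion; simply replace $C^{(n-m)(2-d)}$ by $C^{(n-m)\alpha}$.
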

\begin{proof}
Just write the definition of $F$ and note that $P_{\Imm z_{m+j}}(l_{m+j}) \asymp P_{\Imm w_j}(l^w_j)$.
\end{proof}


\section{Main Theorems} \label{maintheorems}
We state the main theorems of the paper in this section. It is clear that the existence and the continuity of the  (unordered) Green's function follows from the existence and the continuity of ordered Green's function, i.e., the limit
$$\lim_{r_1,\dots,r_n\downarrow 0} \prod_{j=1}^n r_j^{d-2} \PP[\tau^{z_1}_{r_1}<\dots<\tau^{z_n}_{r_n}<\infty].$$
So the statements of Theorems \ref{main} and \ref{main2} are about ordered Green's functions.

For that purpose we define functions $\ha G(z_1,\dots,z_n)$ by induction on $n$. For $n=1$, let $\ha G(z)=G(z)$ given by (\ref{G(z)}). Suppose $n\ge 2$ and $\ha G$ has been defined for $n-1$ points. Now we define $\ha G$ for distinct $n$ points $z_1,\dots,z_n\in\HH$. Given a chordal Loewner curve $\gamma$, for any $t\ge 0$, if $z_2,\dots,z_n\in H_t$, we define
$$\ha G_t(z_2,\dots,z_n)=\prod_{j=1}^n |g_t'(z_j)|^{2-d} \ha G(Z_t(z_2) ,\dots,Z_t(z_n));$$
otherwise define $\ha G_t(z_2,\dots,z_n)=0$. Recall that $Z_t=g_t-U_t$ is the centered Loewner map at time $t$. Now we define $\ha G(z_1,\dots,z_n)$ by
$$\ha G (z_1,\dots,z_n)=G(z_1)\EE_{z_1}^*[\ha G_{T_{z_1}}(z_2,\dots,z_n)].$$ Recall that $\EE_{z_1}^*$ is the expectation w.r.t.\ the two-sided radial SLE$_\kappa$ curve through $z_1$.

The authors of \cite{LW} proved that the two-point
(conformal radius version) Green's function exists and agrees with the $\ha G(z_1,z_2)$ defined above (up to a constant). Their proof used the closed-form formula of one-point Green's function (\ref{G(z)}). We will show their result is also true for arbitrary number of points. The difficulty is that there is no closed-form formula known for two-point Green's function. We find a way to prove the above statement without knowing the exact formula of the Green's functions. Below is our first main theorem



\begin{theorem}
	There are finite constants $C_n,B_n>0$ and $\beta_n,\delta_n\in(0,1)$  such that the following holds.
	Let $z_1,\dots,z_n$ be distinct points in $\HH$. Let $R_j$, $1\le j\le n$, $Q$ and $F$ be defined by (\ref{ldyR},\ref{Q}). Then for any $r_1,\dots,r_n>0$ that satisfy
	\BGE Q^{B_n}\frac{r_j}{R_j}<\delta_n,\quad 1\le j\le n,\label{Cond-main}\EDE we have
	\BGE  \Big |\prod_{j=1}^n r_j^{d-2} \PP[\tau^{z_1}_{r_1}<\cdots<\tau^{z_n}_{r_n}<\infty]-\ha G(z_1,\dots,z_n)\Big |
	\le   C_n  F   \sum_{j=1}^n  \Big(Q^{B_n} \frac{r_j}{R_j}\Big)^{\beta_n}. \label{RHS-main}
	\EDE
	As an immediate consequence, the $G(z_1,\dots,z_n)$ defined by \eqref{mlti-green} exists and is equal to $\sum_{\sigma } \ha G(z_{\sigma(1)},\dots,z_{\sigma(n)})$, where the summation is over all permutations of $\{1,\dots,n\}$. \label{main}
\end{theorem}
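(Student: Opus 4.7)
The plan is to prove Theorem \ref{main} by induction on $n$, in the spirit of \cite{LW}, establishing the quantitative rate together with local Hölder continuity of $\ha G$ at the same time. The base case $n=1$ is the quantitative one-point Green's function \eqref{G(z)}, in the form extracted from \cite{Law4,LR1}. For the inductive step I condition on $\tau^{z_1}_{r_1}$ and rely on two approximations. The first, combining the $n=1$ rate with Lemmas \ref{RN<1} and \ref{continuity-two-sided}, asserts that for any bounded $\F_{\tau^{z_1}_{r_1}}$-measurable $X$,
\[
\EE\bigl[\mathbf{1}\{\tau^{z_1}_{r_1}<\infty\}\,X\bigr]=r_1^{2-d} G(z_1)\,\EE_{z_1}^*[X]\,(1+\textrm{error}),
\]
with error polynomial in $r_1/R_1$. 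The second, combining the domain Markov property and conformal covariance of chordal SLE$_\kappa$ with Koebe's theorem, identifies $\PP[\tau^{z_2}_{r_2}<\cdots<\tau^{z_n}_{r_n}<\infty\mid\F_{\tau^{z_1}_{r_1}}]$ with the corresponding ordered $(n{-}1)$-point probability in $\HH$ from $0$ to $\infty$ at the image points $Z_{\tau^{z_1}_{r_1}}(z_j)$, $2\le j\le n$, with radii comparable to $|g'_{\tau^{z_1}_{r_1}}(z_j)|r_j$; to this I apply the inductive hypothesis.

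Composing the two approximations and using the inductive local Hölder continuity of $\ha G$ to replace $\ha G_{\tau^{z_1}_{r_1}}(z_2,\dots,z_n)$ by $\ha G_{T_{z_1}}(z_2,\dots,z_n)$ under $\PP_{z_1}^*$, I arrive at
\[
\PP\bigl[\tau^{z_1}_{r_1}<\cdots<\tau^{z_n}_{r_n}<\infty\bigr]=\prod_{j=1}^n r_j^{2-d}\,\ha G(z_1,\dots,z_n)+\textrm{error},
\]
since by definition $\ha G(z_1,\dots,z_n)=G(z_1)\EE_{z_1}^*[\ha G_{T_{z_1}}(z_2,\dots,z_n)]$. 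The unordered statement then follows by partitioning the event $\bigcap_j\{\tau^{z_j}_{r_j}<\infty\}$ according to the order of first visits: for $\kappa<8$ the SLE curve visits distinct points at distinct times almost surely, so the overlap events have $\PP$-measure zero and summing the ordered identity over permutations yields $G(z_1,\dots,z_n)=\sum_\sigma \ha G(z_{\sigma(1)},\dots,z_{\sigma(n)})$.

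The main obstacle is propagating the scale-invariant hypothesis \eqref{Cond-main} through the map $g_{\tau^{z_1}_{r_1}}$. The image configuration carries its own $R^{\rm new}_j$, $l^{\rm new}_j$, and $Q^{\rm new}$, and I must check (i) that \eqref{Cond-main} for the image data holds with only a polynomial-in-$Q$ loss, to be absorbed into $B_n$, and (ii) that $\prod_{j\ge 2}|g'_{\tau^{z_1}_{r_1}}(z_j)|^{2-d}\,F^{\rm new}$ is comparable to $F(z_1,\dots,z_n)/F(z_1)$, so that the $(n{-}1)$-point inductive error aggregates into $C_n F\sum_j(Q^{B_n}r_j/R_j)^{\beta_n}$. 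To enforce this I restrict to the good event $E_{\rm good}$ that, for each $j\ge 2$, the inradius of $H_{\tau^{z_1}_{r_1}}$ at $z_j$ is at least a fixed fraction of $R_j$. On $E_{\rm good}$, Koebe distortion together with Lemmas \ref{lemma-gK}--\ref{gTvar} bound the new quantities as needed, and an analogue of Lemma \ref{help-lower*} matches $F^{\rm new}$ with $F$; the complementary bad event is controlled by Theorem \ref{RZ-Thm3.1} (applied with $k_0=j$ after reordering via Lemma \ref{perm-lem}), whose contribution is subsumed into the declared error. Assembling the one-point error, the two-sided Radon--Nikodym error, the inductive error, the Koebe remainder, and the straying penalty—and readjusting $B_n,\beta_n,C_n,\delta_n$ at each step—yields \eqref{RHS-main}.
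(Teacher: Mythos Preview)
Your outline matches the paper's inductive strategy, but two essential mechanisms are missing, and without them the error bounds do not close.

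First, controlling the image quantity $\ha Q=\max_k |\ha z_k|/\ha d_k$ after the map $Z_{\tau^{z_1}_{r_1}}$ is not a consequence of Koebe distortion and Lemmas~\ref{lemma-gK}--\ref{gTvar} on your event $E_{\rm good}$. On $E_{\rm good}$ one indeed has $\ha d_k\gtrsim |g'(z_k)|(d_k\wedge s_k)$ and $\dist(g(z_k),S_K)\asymp |g'(z_k)|\dist(z_k,K)$, but $|\ha z_k|=|g(z_k)-U_\tau|$ can exceed $\dist(g(z_k),S_K)$ by an arbitrarily large factor (the tip can sit far along $S_K$ from the nearest point to $g(z_k)$), so $\ha Q$ is uncontrolled and the inductive hypothesis~\eqref{Cond-main} for the image points cannot be verified. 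The paper introduces a separate \emph{angle event} $E_{r_1;\theta}=\{\dist(g(z_j),S_K)\ge\theta|\ha z_j|,\,2\le j\le n\}$; on it one gets $\ha Q\le C\theta^{-1}Q$ (formula~\eqref{zkdk}), while on its complement the refined factor $\min_k(\dist(g(z_k),S_K)/|\ha z_k|)^\alpha$ in Lemma~\ref{FF-F} yields an extra $\theta^\alpha$ that makes the contribution small. The variable $\theta$ is then optimized against the other errors.

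Second, the replacement $\ha G_{\tau^{z_1}_{r_1}}\to\ha G_{T_{z_1}}$ under $\PP^*_{z_1}$ does not follow from H\"older continuity of $\ha G$ alone: between $\tau^{z_1}_{r_1}$ and $T_{z_1}$ the curve may make large excursions, moving the image points $Z_t(z_j)$ out of the range where continuity gives anything useful. The paper inserts intermediate radii $r_1<\eta_1<\eta_2<d_1$ and uses the stay-in estimate Lemma~\ref{stayin} to show that, with $\PP^*_{z_1}$-probability $1-O((\eta_1/\eta_2)^{\alpha/4})$, the curve after $\tau^{z_1}_{\eta_1}$ does not cross the relevant arcs of $\{|z-z_1|=\eta_2\}$; on that event the hull increment is confined and Lemma~\ref{e4-lemma} (which packages the $(n{-}1)$-point continuity with the hull estimates of Lemmas~\ref{lemma-gK}--\ref{gTvar} and Lemma~\ref{lem-extremal2}) gives the quantitative bound on $|\ha G_b-\ha G_a|$. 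Relatedly, the measure switch from $\EE[\,\cdot\mid\tau^{z_1}_{r_1}<\infty]$ to $\EE^*_{z_1}$ must be performed on an $\F_{\tau^{z_1}_{\eta_1}}$-measurable random variable, where the Radon--Nikodym derivative is $1+O((r_1/\eta_1)^{\beta_0})$ by \cite[Proposition~2.13]{LW}; on $\F_{\tau^{z_1}_{r_1}}$ it is merely bounded (Lemma~\ref{RN<1}), not close to $1$, so your first approximation as stated does not carry a usable error. The final step optimizes $s_j,\theta,\eta_1,\eta_2$ simultaneously --- which is why they must be free variables rather than fixed fractions of $R_j$.
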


Proving the convergence of $n$-point Green's function requires certain modulus of continuity of $(n-1)$-point Green's functions, which is given by the following theorem.

\begin{theorem}
	There are finite constants $C_n,B_n>0$ and $\beta_n,\delta_n\in(0,1)$  such that the following holds.
	Let $z_1,\dots,z_n$ be distinct points in $\HH$. Let $d_j$, $1\le j\le n$, $Q$ and $F $ be defined by (\ref{ldyR},\ref{Q}).  If  $z_1',\dots,z_n'\in \HH$ satisfy that
	\BGE Q^{B_n}\frac{|z_j'-z_j|}{d_j}<\delta_n,\quad \frac{|\Imm  z_j' -\Imm z_j|}{\Imm z_j}<\delta_n,\quad 1\le j\le n,\label{Cond-main2} \EDE then
	\BGE |\ha G(z_1',\dots, z_n')-\ha G(z_1,\dots,z_n)|
	\le  C_n F \sum_{j=1}^n \Big( Q^{B_n }  \frac{| z_j'-z_j|}{d_j}\Big)^{\beta_n}+  \Big(\frac{|\Imm  z_j' -\Imm z_j|}{\Imm z_j}\Big)^{\beta_n}  .\label{RHS-main2}\EDE
	Moreover, the same inequality holds true (with bigger $C_n$)  if $\ha G$ is replaced by $G$. \label{main2}
\end{theorem}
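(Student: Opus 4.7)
The plan is to prove Theorem \ref{main2} by induction on $n$, carried out jointly with Theorem \ref{main}: when establishing the $n$-point modulus of continuity I may assume Theorem \ref{main2} for $\le n-1$ points and Theorem \ref{main} for $n$ points. The base case $n=1$ is immediate from the closed form $\ha G(z)=\hat c(\Imm z)^{d-2+\alpha}|z|^{-\alpha}$; the separate hypothesis on $|\Imm z_1'-\Imm z_1|/\Imm z_1$ in \eqref{Cond-main2} accommodates the sensitivity of the $(\Imm z)^{d-2+\alpha}$ factor when $\Imm z\ll|z|$.

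For the inductive step, I use the recursive definition
$$\ha G(z_1,\dots,z_n)=G(z_1)\,\EE^*_{z_1}\!\Big[\prod_{j=2}^n |g'_{T_{z_1}}(z_j)|^{2-d}\,\ha G\bigl(Z_{T_{z_1}}(z_2),\dots,Z_{T_{z_1}}(z_n)\bigr)\Big],$$
whose integrand involves only an $(n-1)$-point $\ha G$. I compare the two expectations by coupling the two-sided radial SLEs through $z_1$ and through $z_1'$: fix an intermediate scale $\eta$ with $|z_1'-z_1|\ll\eta\ll R_1$, stop at $\tau:=\tau^{z_1}_\eta<T_{z_1}$, and invoke Lemma \ref{continuity-two-sided} to conclude that the Radon--Nikodym derivative between $\PP^*_{z_1}|_{\F_\tau}$ and $\PP^*_{z_1'}|_{\F_\tau}$ is $1+O(|z_1-z_1'|/\eta)$. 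Lemma \ref{stayin}(ii) then confines the remaining portion of each curve to a disc of radius $R_1/2$ around $z_1$ except on an event of probability $O((\eta/R_1)^{\alpha/4})$, which when combined with Proposition \ref{RZ-Thm1.1} contributes a negligible $F$-weighted quantity. On the good event the post-$\tau$ hull lives in a small disc far from $z_2,\dots,z_n$, so Lemma \ref{lemma-gK} (applied in the image under $g_\tau$) allows me to replace $g_{T_{z_1}}(z_j)$ and $|g'_{T_{z_1}}(z_j)|$ by $g_\tau(z_j)$ and $|g'_\tau(z_j)|$ up to multiplicative errors of order $(\eta/d_j)^2$. Lemma \ref{gTvar} controls the discrepancies $|Z_\tau(z_j)-Z_\tau(z_j')|/d_j$ and $|\Imm Z_\tau(z_j)-\Imm Z_\tau(z_j')|/\Imm Z_\tau(z_j)$ in terms of the two inputs in \eqref{Cond-main2}, and the inductive hypothesis supplies the modulus of continuity of the $(n-1)$-point $\ha G$ that multiplies the rest. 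Reducing the $n$-coordinate comparison to one index at a time and optimizing $\eta$ so that $|z_1-z_1'|/\eta$ balances $(\eta/R_1)^{\alpha/4}$ yields an intermediate H\"older exponent at each step, from which $\beta_n$ is obtained as the minimum of $\beta_{n-1}$ and this balanced exponent.

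For the ``moreover'' statement, Theorem \ref{main} (established for $n$ points in the same induction) gives $G(z_1,\dots,z_n)=\sum_\sigma \ha G(z_{\sigma(1)},\dots,z_{\sigma(n)})$, a sum of $n!$ functions all satisfying the just-proved modulus-of-continuity bound; by \eqref{Frasymp} the factors $F(z_{\sigma(1)},\dots,z_{\sigma(n)})$ are all $\asymp F(z_1,\dots,z_n)$ and $Q,d_j$ are permutation-invariant, so the bound for $G$ follows with an enlarged $C_n$. The hardest step I anticipate is propagating the two distinct continuity inputs (one in $|z_j'-z_j|/d_j$ and one in $|\Imm z_j'-\Imm z_j|/\Imm z_j$) cleanly through the induction: the imaginary-part estimate \eqref{Im/Im} in Lemma \ref{gTvar} degrades a real-direction perturbation into a square-root error in the imaginary direction, so both parameters must be tracked simultaneously and one must verify that the constants $B_n,C_n,\beta_n,\delta_n$ can be chosen to close the induction uniformly in $n$ rather than worsening unboundedly.
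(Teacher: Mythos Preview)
Your high-level strategy---induction via the recursive definition of $\ha G$, measure coupling through Lemma~\ref{continuity-two-sided}, confinement via Lemma~\ref{stayin}(ii), and the $(n-1)$-point hypothesis---matches the paper. But two essential mechanisms are absent, and without them the argument does not close.

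\textbf{Controlling $\dist(z_k,K_T)$.} You assert that on the bad event (curve escapes the $\eta$-disc after $\tau^{z_1}_\eta$) the contribution is small by ``Proposition~\ref{RZ-Thm1.1}''. This does not work: what must be bounded is $G(z_1)\,\EE^*_{z_1}[{\bf 1}_{\mathrm{bad}}\,\ha G_{T_{z_1}}(z_2,\dots,z_n)]$, and the integrand is controlled by Lemma~\ref{FF-F}, which requires $\dist(z_k,K_{T_{z_1}})\ge s_k$ for some fixed $s_k>0$. Even on your \emph{good} event the pre-$\tau$ hull $K_\tau$ is completely unconstrained and may pass arbitrarily close to some $z_k$, so neither the good nor the bad contribution is bounded by $F$ times a small quantity. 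The paper fixes this by introducing the events $E_{r;\vec s}=\bigcap_{k\ge 2}\{\dist(z_k,K_{\tau^{z_1}_r})\ge s_k\}$ and invoking Lemma~\ref{RZ-Thm3.1-lim} (a consequence of Theorem~\ref{RZ-Thm3.1}, the ordered inrad estimate) to show that $G(z_1)\EE^*_{z_1}[{\bf 1}_{E_{0;\vec s}^c}\,\ha G_{T_{z_1}}]\lesssim F\sum_k(s_k/(|z_k|\wedge|z_k-z_1|))^\beta$. Only on $E_{0;\vec s}$ does Lemma~\ref{FF-F} yield the $F$-weighted bound you need, with the price $\prod_k(|z_k|\wedge|z_k-z_1|/s_k)^\alpha$ that must later be balanced against the other errors.

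\textbf{Controlling $\ha Q$.} To invoke the $(n-1)$-point modulus of continuity you need the hypothesis $\ha Q^{B_{n-1}}|\ha z_j'-\ha z_j|/\ha d_j<\delta_{n-1}$, where $\ha Q$ is the $Q$-quantity for the image points $\ha z_k=Z_T(z_k)$. There is no a~priori bound on $\ha Q$: if some $g_T(z_k)$ lands near $S_{K_T}$ relative to its separation from the other image points, $\ha Q$ blows up. The paper introduces a second restriction $E_{r;\theta}=\{\dist(g_T(z_k),S_{K_T})\ge\theta|Z_T(z_k)|,\ k\ge 2\}$; on this event one has $\ha Q\lesssim Q/\theta$ (formula~\eqref{zkdk}), while on its complement Lemma~\ref{FF-F} supplies an extra factor $\theta^\alpha$. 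Balancing $\theta$ against the other parameters is what produces a finite exponent $\beta_n$. Your proposal never introduces $\theta$, so the induction hypothesis cannot be applied on any event of positive probability.

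As a secondary point, the paper organizes the inductive step by separating the perturbation of $z_1$ (Lemma~\ref{main2B}: measure change plus a time change from $T_{z_1}$ to $\tau^{z_1}_{\eta_1}$ via Lemma~\ref{e4-lemma}) from the perturbation of $z_2,\dots,z_n$ (Lemma~\ref{main2A}: no measure change, comparison of integrands at $T_{z_1}$ via Lemma~\ref{gTvar}). Your sketch blends the two, which is not wrong in principle but makes the bookkeeping of the $\vec s$, $\theta$, $\eta_1$, $\eta_2$ parameters harder to carry out correctly.
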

The sharp lower bound for the Green's function is provided in the theorem below. The reader may compare it with Proposition \ref{RZ-Thm1.1}.

\begin{theorem} \label{strong-lower}
	There are finite constants $C_n>0$ and $V_n>1$ such that for any distinct points $z_1,\dots,z_n\in\lin\HH\sem\{0\}$ and any $r_j\in(0,d_j)$, $1\le j\le n$, we have
	$$ \PP[\tau^{z_j}_{r_j}<\tau_{\{|z|=V_n\sum_{i=1}^{n}|z_i|\}},1\le j\le n] \geq C_n F(z_1,\dots,z_n;r_1,\dots,r_n). 
$$
\end{theorem}


We have a local martingale related with the Green's function.

\begin{corollary} \label{martingale}
	For fixed distinct $z_1,\dots,z_n\in\HH$, $M_t:=\ha G_t(z_1,\dots,z_n)$ is a local martingale up to the first time any $z_j$, $1\le j\le n$, is swallowed by $\gamma$.
\end{corollary}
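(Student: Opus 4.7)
The plan is to realize $M_t=\ha G_t(z_1,\dots,z_n)$ as a locally uniform limit of bounded $(\F_t)$-martingales and to promote this to the local martingale property via a suitable localization. For each fixed tuple of radii $r=(r_1,\dots,r_n)$ with $r_j>0$, set
$$M^{(r)}_t:=\prod_{j=1}^n r_j^{d-2}\,\PP\bigl[\tau^{z_1}_{r_1}<\cdots<\tau^{z_n}_{r_n}<\infty\mid\F_t\bigr].$$
Being a constant multiple of a conditional probability of a fixed event, each $M^{(r)}_t$ is automatically a bounded $(\F_t)$-martingale. By the domain Markov property and conformal invariance of chordal SLE$_\kappa$, on $\{t<\tau^*\}$, where $\tau^*:=\inf\{s:\{z_1,\dots,z_n\}\cap\lin K_s\neq\emptyset\}$, the conditional probability above coincides with the probability that a chordal SLE$_\kappa$ in $H_t$ from $\gamma(t)$ to $\infty$ visits the discs $\{|z-z_j|\le r_j\}$ in the prescribed order. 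Pushing forward by $Z_t=g_t-U_t$, together with standard Koebe comparison to discs around $Z_t(z_j)$ of radii close to $|g_t'(z_j)|r_j$, Theorem \ref{main} applied in $\HH$ from $0$ to $\infty$ and the conformal covariance built into the definition of $\ha G_t$ yield, as $\max_j r_j\downarrow 0$,
$$M^{(r)}_t\longrightarrow \prod_{j=1}^n|g_t'(z_j)|^{2-d}\,\ha G(Z_t(z_1),\dots,Z_t(z_n))=\ha G_t(z_1,\dots,z_n)=M_t.$$

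To promote this pointwise convergence to what is needed for the martingale relation, introduce the localizing stopping times
$$\tau_k:=k\wedge\inf\bigl\{t\ge 0:\min_j\dist(z_j,K_t)\le 1/k\bigr\},\qquad k\in\N.$$
On $\{t\le\tau_k\}$, Koebe's distortion and $1/4$ theorems keep the image points $Z_t(z_1),\dots,Z_t(z_n)$ inside a compact subset of $\HH$ with pairwise distances and imaginary parts bounded below by a constant depending only on $k$ and the original configuration. Consequently the moving-frame quantities $R_j^{(t)}, Q^{(t)}$ and $F^{(t)}$ attached to $Z_t(z_1),\dots,Z_t(z_n)$ are uniformly bounded above and away from zero on $[0,\tau_k]$, so that for $\max_j r_j$ small enough the hypothesis \eqref{Cond-main} of Theorem \ref{main} is satisfied in the moving frame and the error term on the right-hand side of \eqref{RHS-main} is uniformly $O(\max_j r_j^{\beta_n})$. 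Moreover, Proposition \ref{RZ-Thm1.1} applied in the moving frame together with \eqref{F<F} dominates $M^{(r)}_{t\wedge\tau_k}$ uniformly in $r$ by a deterministic constant depending only on $k$.

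With these ingredients the argument concludes quickly. For stopping times $\sigma\le\tau$, optional stopping for the bounded martingale $M^{(r)}$ gives $\EE[M^{(r)}_{\tau\wedge\tau_k}\mid\F_\sigma]=M^{(r)}_{\sigma\wedge\tau_k}$; the uniform domination above permits dominated convergence for the conditional expectation, letting me send $\max_j r_j\downarrow 0$ to obtain $\EE[M_{\tau\wedge\tau_k}\mid\F_\sigma]=M_{\sigma\wedge\tau_k}$. Thus $M_{t\wedge\tau_k}$ is a martingale for every $k$, and since $\tau_k\uparrow\tau^*$ almost surely, $M_t$ is a local martingale on $[0,\tau^*)$, which is exactly the claim. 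The main technical point is not the martingale manipulation but the uniform control on $[0,\tau_k]$ of the moving-frame data entering Theorem \ref{main} and Proposition \ref{RZ-Thm1.1}; this reduces to verifying via Koebe-type estimates applied to $g_t$ that the image configuration remains in a compact set of $\HH$ uniformly on the localizing interval.
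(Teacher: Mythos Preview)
Your overall strategy is exactly that of the paper: approximate $M_t$ by the bounded martingales $M^{(r)}_t$, use Theorem \ref{main} in the moving frame to get convergence, localize, and pass to the limit by dominated convergence. The only substantive difference is the choice of localizing sequence, and there your argument has a gap.

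Your stopping time $\tau_k=k\wedge\inf\{t:\min_j\dist(z_j,K_t)\le 1/k\}$ bounds $\dist(z_j,K_t)$ from below but does \emph{not} control the diameter of $K_t$ or $|U_t|$. On $\{t\le\tau_k\}$ the driving function $U_t=\sqrt\kappa B_t$ can be arbitrarily large in modulus, and then $|Z_t(z_j)|=|g_t(z_j)-U_t|$ is large while the pairwise distances $|Z_t(z_j)-Z_t(z_k)|$ stay bounded; hence $Q_t$ is \emph{not} deterministically bounded on $[0,\tau_k]$, and the image configuration does not stay in a fixed compact subset of $\HH$. Koebe's theorems alone cannot rescue this, since they only relate image distances to $|g_t'(z_j)|\cdot\dist(z_j,K_t)$ and say nothing about $|U_t|$. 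Consequently the hypothesis \eqref{Cond-main} cannot be verified uniformly in $\omega$, and your claimed deterministic domination of $M^{(r)}_{t\wedge\tau_k}$ is not justified.

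The paper avoids this by localizing differently: it fixes an $\HH$-hull $K$ with $0\in\lin K$ and $z_1,\dots,z_n\in\HH\sem K$, sets $\tau=\inf\{t:\gamma[0,t]\not\subset K\}$, and then uses that $K_t\in\mathcal H(K):=\{L:L\text{ an }\HH\text{-hull},\ L\subset K\}$, which is compact by \cite[Lemma 5.4]{LERW}. Continuity of $L\mapsto g_L$ on this compact space yields the needed uniform two-sided bounds on $|g_t'(z_j)|$, $|Z_t(z_j)|$, $|Z_t(z_j)-Z_t(z_k)|$, $Q_t$, $R_{j,t}$ and $F_t$. Your argument is easily repaired either by adopting this localization, or by adding to $\tau_k$ a clause such as $\sup_{s\le t}|\gamma(s)|\le k$, which forces $K_t\subset\{|z|\le k\}$ and restores compactness; after that fix the rest of your proof goes through unchanged.
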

\begin{proof}
	It suffices to prove the following. Let $K$ be any $\HH$-hull such that $0\in K$ and $z_1,\dots,z_n\in\HH\sem K$. Let $\tau=\inf\{t>0:\gamma[0,t]\not\subset K\}$. Then $M_{t\wedge \tau}$ is a martingale. To prove this, we pick a small $r>0$, and consider the martingale
	$$M^{(r)}_t:=r^{n(d-2)}\PP[\tau^{z_1}_r<\cdots<\tau^{z_n}_r<\infty|\F_{t\wedge \tau}].$$
	By the convergence theorem and Koebe's distortion theorem, we have $M^{(r)}_t\to M_{t\wedge\tau}$ as $r\to 0$. In order to have the desired result, we need uniform convergence. This can be done using the the convergence rate in Theorem \ref{main} and a compactness result from \cite{LERW}. Let $z_{j;t}=g_t(z_j)-U_t$; let $Q_t$ and $R_{j;t}$ be the $Q$ and $R_j$ for $z_{1;t},\dots,z_{n;t}$; let $F_t=\prod_{j=1}^n |g_t'(z_j)|^{2-d} F(z_{1;t},\dots,z_{n;t})$. It suffices to show that $|g_t'(z_j)|,Q_t,R_{j;t},F_t$, $1\le j\le n$, $0\le t\le\tau$, are all bounded from both above and below by a finite positive constant depending only on  $\kappa$, $K$, and $z_1,\dots,z_n$. The existence of these bounds all follow directly or indirectly from  \cite[Lemma 5.4]{LERW}. For example, to prove that $F_t$, $0\le t\le \tau$, are bounded above, we need to
	prove that $|z_{j;t}-z_{k;t}|$, $j\ne k$, and $|z_{j,t}|$, $0\le t\le \tau$, are all bounded below. It suffices to show that $|g_L(z_j)-g_L(z_k)$, $j\ne k$, and $\dist(g_L(z_j),S_L)$ for all $L$ in ${\cal H}(K)$, the set of $\HH$-hulls $L$ with $L\subset K$, are bounded below. Suppose $|g_L(z_j)-g_L(z_k)|$, $j\ne k$, $L\in{\cal H}(K)$, are not bounded below by a constant. Then there are $z_j\ne z_k$ and a sequence $(L_n)\subset {\cal H}(K)$ such that $|g_{L_n}(z_j)-g_{L_n}(z_k)|\to 0$. Since ${\cal H}(K)$ is a compact metric space (\cite[Lemma 5.4]{LERW}), by passing to a subsequence, we may assume that $L_{n}\to L_0\in{\cal H}(K)$. This then implies that $g_{L_0}(z_j)=\lim g_{L_n}(z_j)= \lim g_{L_n}(z_k)=g_{L_0}(z_k)$, which contradicts that $g_{L_0}$ is injective on $\HH\sem K$. To prove that $\dist(g_L(z_j),S_L)$ , $1\le j\le n$, $L\in{\cal H}(K)$, are bounded from below, one may choose a pair of disjoint Jordan curve $J_1,J_2$ in $\HH\sem K$, both of which disconnects $K$ from all of $z_j$'s. Then $\dist(g_L(z_j),S_L)\ge \dist(g_L(J_1),g_L(J_2))$, and the same argument as above shows that $\dist(g_L(J_1),g_L(J_2))$, $L\in{\cal H}(K)$, are bounded from below by a positive constant.
\end{proof}

\begin{remark}
  We may write $M_t=\prod_{j=1}^n|g_t'(z_j)|^{2-d} \ha G(g_t(z_1)-U_t,\dots,g_t(z_n)-U_t)$.
If we know that $\ha G$ is smooth, then using It\^o's formula and Loewner's equation (\ref{Loewner}), one can easily get a second order PDE for $\ha G$. More specifically, if we view $\ha G$ as a function on $2n$ real variables: $x_1,y_1,\dots,x_n,y_n$, then it should satisfy
$$\frac{\kappa}{2}\Big(\sum_{j=1}^n \pa_{x_j}\Big)^2 \ha G
+\sum_{j=1}^n \pa_{x_j} \ha G \cdot \frac{2x_j}{x_j^2+y_j^2}
+\sum_{j=1}^n \pa_{y_j} \ha G \cdot \frac{-2y_j}{x_j^2+y_j^2}
+(2-d)\ha G\cdot \sum_{j=1}^n \frac{-2(x_j^2-y_j^2)}{(x_j^2+y_j^2)^2}=0.$$
Since the PDE does not depend on the order of points, it is also satisfied by the unordered Green's function $G$.

We expect that the smoothness of $\ha G$ can be proved by H\"ormander's theorem because the differential operator in the above displayed formula satisfies H\"ormander's condition.
\end{remark}

\section{Proof of Theorems \ref{main} and \ref{main2}} \label{proof1}
At the beginning, we know that Theorems \ref{main} and \ref{main2} hold for $n=1$ with $\delta_1=1/2$ thanks to \cite[Theorem 2.3]{LR1} and the explicit formulas for $F(z)$ and $G(z)$. 
We will prove Theorems \ref{main} and \ref{main2} together using induction. Let $n\ge 2$. Suppose that Theorems \ref{main} and \ref{main2} hold for $n-1$ points. We now prove that they also hold for $n$ points. We will frequently apply the Domain Markov Property (DMP) of SLE (c.f.\ \cite{Law1}) without reference, i.e., if $\gamma$ is a chordal SLE$_\kappa$ curve in $\HH$ from $0$ to $\infty$, and $\tau$ is a finite stopping time, then $Z_\tau(\gamma(\tau+\cdot))$ has the same law as $\gamma$, and is independent of $\F_\tau$.

Fix distinct points $z_1,\dots,z_{n}\in\HH$. Let $l_j$, $d_j$, $R_j$, $y_j$, $1\le j\le n$, $Q$, and $F$ be as defined in (\ref{ldyR},\ref{Q}). Throughout this section, a variable is a real number that depends on $\kappa,n$ and $z_1,\dots,z_n$.
From  the induction hypothesis, Proposition \ref{RZ-Thm1.1}, and (\ref{F}), we see that $\ha G\lesssim F$ holds for $(n-1)$ points.
We write $F_t$ for $F_{(H_t;\gamma(t),\infty)}$. Then Lemma \ref{FF-F} holds with $K=K_t$, $G(z_1)$ in place of $F(z_1)$, and $\ha G_{t}$ in place of $F_{(\HH\sem K_t;w_0,\infty)}$. We will use the following lemma.

\begin{lemma}
	There is some constant $\beta>0$ depending only on $\kappa$ and $n$ such that for any $k_0\in\{2,\dots,n\}$ and $s_{k_0}\ge 0$,
	\[
	G(z_1)E^*_{z_1}[\ha G_{T_{z_1}}(z_2,\dots,z_{n}){\bf 1}\{\inrad_{H_{T_{z_1}}}(z_{k_0})\leq s_{k_0}\}]\lesssim  F \cdot \Big(\frac{s_{k_0}}{|z_{k_0}-z_1|\wedge |z_{k_0}|}\Big)^{\beta}.
	\]\label{RZ-Thm3.1-lim}
\end{lemma}
\begin{proof} This lemma essentially follows from  the induction hypothesis, Theorem \ref{RZ-Thm3.1}, and (\ref{F}). Below are the details. Let $r_j\in(0,R_j/8)$, $1\le j\le n$. From Theorem \ref{RZ-Thm3.1}, there is a constant $\beta>0$ such that
	$$\PP[\tau^{z_1}_{r_1}<\infty]\cdot\EE[{\bf 1}\{\inrad_{H_{\tau^{z_1}_{r_1}}}(z_{k_0})\le s_{k_0}\}\PP[\tau^{z_1}_{r_1}<\cdots<\tau^{z_n}_{r_n}<\infty|\F_{\tau^{z_1}_{r_1}},\tau^{z_1}_{r_1}<\infty]]$$
	$$\lesssim  F(z_1,\dots,z_{n};r_1,\dots,r_{n})\Big(\frac{s_{k_0}}{|z_{k_0}-z_1|\wedge |z_{k_0}|}\Big)^{\beta}.$$
	By the  convergence of $(n-1)$-point Green's function, we know that
	$$\lim_{r_2,\dots,r_n\to 0} \prod_{k=2}^n r_k^{d-2} \PP[\tau^{z_1}_{r_1}<\cdots<\tau^{z_n}_{r_n}<\infty|\F_{\tau^{z_1}_{r_1}},\tau^{z_1}_{r_1}<\infty]=\ha G_{\tau^{z_1}_{r_1}}(z_2,\dots,z_n).$$
	Applying Fatou's lemma with $r_2,\dots,r_n\to 0$ and using the above displayed formulas, we get
	$$\PP[\tau^{z_1}_{r_1}<\infty]\cdot\EE[{\bf 1}\{\inrad_{H_{\tau^{z_1}_{r_1}}}(z_{k_0})\le s_{k_0}\} \ha G_{\tau^{z_1}_{r_1}}(z_2,\dots,z_n)|\tau^{z_1}_{r_1}<\infty] $$
	$$ \lesssim  \lim_{r_2,\dots,r_n\to 0} \prod_{k=2}^n r_k^{d-2} F(z_1,\dots,z_{n};r_1,\dots,r_{n})\Big(\frac{s_{k_0}}{|z_{k_0}-z_1|\wedge |z_{k_0}|}\Big)^{\beta},$$
	which together with Lemma \ref{RN<1} implies that
	$$\PP[\tau^{z_1}_{r_1}<\infty]\cdot \EE_{z_1}^*[{\bf 1}\{\inrad_{H_{\tau^{z_1}_{r_1}}}(z_{k_0})\le s_{k_0}\} \ha G_{\tau^{z_1}_{r_1}}(z_2,\dots,z_n)] $$
	$$\lesssim  \lim_{r_2,\dots,r_n\to 0} \prod_{k=2}^n r_k^{d-2} F(z_1,\dots,z_{n};r_1,\dots,r_{n})\Big(\frac{s_{k_0}}{|z_{k_0}-z_1|\wedge |z_{k_0}|}\Big)^{\beta}.$$
	By the continuity two-sided radial SLE at its end point and the continuity of $(n-1)$ point Green's function, we see that, under the law $\PP_{z_1}^*$, as $r_1\to 0$, $\inrad_{H_{\tau^{z_1}_{r_1}}}(z_{k_0})\to \inrad_{H_{T_{z_1}}}(z_{k_0})$ and $\ha G_{\tau^{z_1}_{r_1}}(z_2,\dots,z_n)\to \ha G_{T_{z_1}}(z_2,\dots,z_n)$. Since $\lim_{r_1\to 0} r_1^{d-2} \PP[\tau^{z_1}_{r_1}<\infty]=G(z_1)$, applying Fatou's lemma with $r_1\to 0$, we get the conclusion.
\end{proof}

\subsection{Convergence of Green's functions} \label{Convergence-Sec}
In this subsection, we work on the inductive step for Theorem \ref{main}. Let $0<r_j<R_j/8$, $1\le j\le n$.   
Consider the event $\{\tau^{z_1}_{r_1}<\cdots<\tau^{{z_n}}_{r_{n}}<\infty\}$.
We will transform the scaled probability $\prod_{j=1}^n r_j^{d-2} \PP[\tau^{z_1}_{r_1}<\cdots<\tau^{z_n}_{r_n}<\infty]$ in a number of steps into the ordered $n$-point Green's function $\ha G(z_1,\dots,z_n)$ defined by the expectation of ordered $(n-1)$-point Green's function w.r.t.\ the two-sided radial SLE. In each step we get an error term, and we define a (good) event such that we have a good control of the error when the event happens, and the complement of the event (bad event) has small probability.

Fix  $\vec s=( s_2,\dots, s_{n})$ with $0\le s_j\le|z_j-z_1|\wedge |z_j|$ being variables to be determined later. We define events
\BGE E_{r;\vec{ s}}=\bigcap_{j=2}^{n}\{\dist(z_j,K_{\tau^{z_1}_r})\geq  s_j\},\quad r\ge 0.\label{Ers}\EDE
Here the bad event $E_{r_1;\vec{ s}}^c$ is  the event that $\gamma$ approaches $z_{k_0}$ by distance $s_{k_0}$ for some $2\le k_0\le n$ before it approaches $z_1$ by distance $r_1$. If it also happens that $\tau^{z_1}_{r_1}<\cdots<\tau^{{z_n}}_{r_{n}}<\infty$, then  $\gamma$ goes back and forth between $z_1$ and such $z_{k_0}$. Now we decompose the main event according to $E_{r_1;\vec{ s}}$, and write
\[
\PP[\tau^{z_1}_{r_1}<\cdots<\tau^{{z_n}}_{r_{n}}<\infty]=
\PP[\tau^{z_1}_{r_1}<\cdots<\tau^{ {z_n}}_{r_{n}}<\infty;E_{r_1;\vec s}]+e_1^*.
\]
By Theorem \ref{RZ-Thm3.1} and (\ref{F}), the term $e_1^*$ satisfies that, for some constant $\beta >0$,
$$
0\le e_1^*\lesssim  \prod_{k=1}^n r_k^{2-d} F  \sum_{j= 2}^n \Big(\frac{ s_j}{|z_j|\wedge |z_j-z_1|}\Big)^{\beta } . 
$$

We express
\begin{align*}
&    \PP[\tau^{z_1}_{r_1}<\cdots<\tau^{ {z_n}}_{r_{n}}<\infty;E_{r_1;\vec{ s}}]\\
=&  \PP[\tau^{z_1}_{r_1}<\infty]\cdot  \EE[{\bf 1}_{E_{r_1;\vec{ s}}}\PP[\tau^{z_2}_{r_2}<\cdots<\tau^{{z_n}}_{r_{n}}<\infty| \F_{\tau^{z_1}_{r_1}};E_{r_1;\vec{ s}}]|\tau^{z_1}_{r_1}<\infty].
\end{align*}
From Proposition \ref{RZ-Thm1.1} and Koebe's distortion theorem, we see that, if
\BGE \frac{r_k}{s_k\wedge R_k}<\frac 16,\quad 2\le k\le n,\label{Cond1}\EDE
then
\BGE \PP[\tau^{z_2}_{r_2}<\cdots<\tau^{{z_n}}_{r_{n}}<\infty| \F_{\tau^{z_1}_{r_1}};E_{r_1;\vec{ s}}]\lesssim  \prod_{k=2}^n r_k^{2-d} F_{\tau^{z_1}_{r_1}}(z_2,\dots,z_n).\label{ubzn}\EDE
Since Theorem \ref{main} holds for $n=1$, we see that, if
\BGE \frac{r_1}{R_1}<\delta_1,\label{Cond2}\EDE then
$$ |\PP[\tau^{z_1}_{r_1}<\infty]-r_1^{2-d} G(z_1)|\lesssim r_1^{2-d} F(z_1)O(r_1/R_1)^{\beta_1}.$$
Now we express
\begin{align*}
&\PP[\tau^{z_1}_{r_1}<\infty]\cdot  \EE[{\bf 1}_{E_{r_1;\vec{ s}}}\PP[\tau^{z_2}_{r_2}<\cdots<\tau^{{z_n}}_{r_{n}}<\infty| \F_{\tau^{z_1}_{r_1}};E_{r_1;\vec{ s}}]|\tau^{z_1}_{r_1}<\infty]\\
=& r_1^{2-d}  G(z_1) \EE[{\bf 1}_{E_{r_1;\vec{ s}}}\PP[\tau^{z_2}_{r_2}<\cdots<\tau^{{z_n}}_{r_{n}}<\infty| \F_{\tau^{z_1}_{r_1}};E_{r_1;\vec{ s}}]|\tau^{z_1}_{r_1}<\infty]+e_2^*.
\end{align*}
From Lemma \ref{FF-F} and (\ref{ubzn}) we see that, if (\ref{Cond1}) and (\ref{Cond2}) hold, then
$$|e_2^*|\lesssim  \prod_{k=1}^n r_k^{2-d} F\cdot \Big (\frac{r_1}{R_1}\Big)^{\beta_1} \prod_{k=2}^n \Big(\frac{|z_k|\wedge |z_k-z_1|}{ s_k }\Big)^\alpha.$$

Define the events
\BGE E_{r;\theta}=\{\dist(g_{\tau^{z_1}_{r}}(z_j),S_{K_{\tau^{z_1}_{r}}} )\ge \theta |g_{\tau^{z_1}_{r}}(z_j)-U_{\tau^{z_1}_{r}}|,2\le j\le n\},\quad r,\theta>0.\label{Erth}\EDE
We understand the bad event $E_{r;\theta}^c$ as the event that  for some $2\le j\le n$ the ``angle'' of $z_j$ is small in terms of $\theta$ viewed from the tip of $\gamma$ at the time $\tau^{z_1}_{r}$. We use the term ``angle'' because ${\dist(g_{\tau^{z_1}_{r}}(z_j),S_{K_{\tau^{z_1}_{r}}} )}\ge \Imm g_{\tau^{z_1}_{r}}(z_j)$, and
$\frac {\Imm g_{\tau^{z_1}_{r}}(z_j)}{|g_{\tau^{z_1}_{r}}(z_j)-U_{\tau^{z_1}_{r}}|}$ equals the sine of the argument of $g_{\tau^{z_1}_{r}}(z_j)-U_{\tau^{z_1}_{r}}$. If the bad event occurs, the argument must be close to $0$ or $\pi$. On the other hand, the bad event may not occur even if the argument is close to $0$ or $\pi$. In the extreme case that $g_{\tau^{z_1}_{r}}(z_j)\in\R$ and $g_{\tau^{z_1}_{r}}(z_j)>U_{\tau^{z_1}_{r}}$, the argument is $0$, and the ratio becomes $\frac {g_{\tau^{z_1}_{r}}(z_j)-\max S_{\tau^{z_1}_{r}}}{g_{\tau^{z_1}_{r}}(z_j)-U_{\tau^{z_1}_{r}}}$, which plays an important role in the proof of the convergence of boundary Green's function (\cite{Law3}). See also the third factor of the second line of the displayed formula in Lemma \ref{FF-F} and Condition (iii) in Proposition \ref{L-shape}.

Fix a variable $\theta\in(0,1)$ to be determined later. According to the occurrence of $E_{r_1;\theta}$, we express
\begin{align*}
& r_1^{2-d}  G(z_1)  \EE[{\bf 1}_{E_{r_1;\vec{ s}}}\PP[\tau^{z_2}_{r_2}<\cdots<\tau^{{z_n}}_{r_{n}}<\infty| \F_{\tau^{z_1}_{r_1}};E_{r_1;\vec{ s}}]|\tau^{z_1}_{r_1}<\infty]\\ =& r_1^{2-d}  G(z_1)\EE[{\bf 1}_{E_{r_1;\vec{ s}}\cap E_{r_1;\theta}}\PP[\tau^{z_2}_{r_2}<\cdots<\tau^{{z_n}}_{r_{n}}<\infty| \F_{\tau^{z_1}_{r_1}};E_{r_1;\vec{ s}}\cap E_{r_1;\theta}]|\tau^{z_1}_{r_1}<\infty]+e_3^*.
\end{align*}
From Lemma \ref{FF-F} and (\ref{ubzn}), we see that
$$0\le e_3^*\lesssim  \prod_{k=1}^n r_k^{2-d} F \prod_{k=2}^n \Big(\frac{|z_k|\wedge |z_k-z_1|}{ s_k }\Big)^\alpha\theta^\alpha.$$

Let $Z=Z_{\tau^{z_1}_{r_1}}$ and $\ha z_k=Z(z_k)$, $2\le k\le n$. Define $\ha d_k$, $2\le k\le n$, and $\ha Q$, for the $(n-1)$ points $\ha z_k$, $2\le k\le n$,  using (\ref{ldyR}) and (\ref{Q}), which are random quantities  measurable w.r.t.\ $\F_{\tau^{z_1}_{r_1}}$. Since Theorem \ref{main} holds for $(n-1)$ points, using Koebe's distortion theorem, we conclude that, for some constants $B_{n-1}>0$ and $\beta_{n-1}, \delta_{n-1}\in(0,1)$, if
$$ \ha Q^{B_{n-1}}\cdot\frac{r_j}{s_j\wedge R_j}<\frac{\delta_{n-1}}{8},\quad 2\le j\le n,$$
then
\begin{align*}
&\Big|\prod_{k=2}^n r_k^{d-2}\PP[\tau^{z_2}_{r_2}<\cdots<\tau^{{z_n}}_{r_{n}}<\infty| \F_{\tau^{z_1}_{r_1}};E_{r_1;\vec{ s}}]-\ha G_{\tau^{z_1}_{r_1}}(z_2,\dots,z_n)\Big|\\
\lesssim & F_{\tau^{z_1}_{r_1}}(z_2,\dots,z_n)  \sum_{j=2}^n\Big( \ha Q^{B_{n-1} }  \frac{r_j}{s_j\wedge R_j}\Big)^{\beta_{n-1}}.
\end{align*}
Suppose $E_{r_1;\theta}$ happens. Let $S=S_{K_{\tau^{z_1}_{r_1}}}$. Since $U_{\tau^{z_1}_{r_1}}\in S$, from Koebe's $1/4$ theorem, we get $ \ha d_k\gtrsim |g'(z_k)|(d_k\wedge \dist(z_k,\gamma[0,\tau^{z_1}_{r_1}])$ and
$$|\ha z_k|\le \dist(g_{\tau^{z_1}_{r_1}}(z_k),S)/\theta\asymp |g'(z_k)|\dist(z_k,\gamma[0,\tau^{z_1}_{r_1}])/\theta,$$
which together imply that
$$ \frac{|\ha z_k|}{\ha d_k}\le \frac{\dist(z_k,\gamma[0,\tau^{z_1}_{r_1}])/\theta}{d_k\wedge \dist(z_k,\gamma[0,\tau^{z_1}_{r_1}])}=\theta^{-1}\Big(\frac{\dist(z_k,\gamma[0,\tau^{z_1}_{r_1}])/\theta}{d_k}\vee 1\Big)\le \theta^{-1}\frac{|z_k|}{d_k},$$
where the last inequality holds because $d_k,\dist(z_k,\gamma[0,\tau^{z_1}_{r_1}])\le |z_k|$.
So, on the event $E_{r_1;\theta}$, for some constant $C>1$,
\BGE \ha Q\le \frac{C}{\theta} Q.\label{zkdk}\EDE
Thus, if $E_{r_1;\theta}$ happens, and
\BGE Q^{B_{n-1}}\cdot\frac{r_j}{s_j\wedge R_j}<\frac{\theta^{B_{n-1}}\delta_{n-1}}{8C^{B_{n-1}}},\quad 2\le j\le n,\label{Cond3}\EDE then
\begin{align*}
&\Big|\prod_{k=2}^n r_k^{d-2}\PP[\tau^{z_2}_{r_2}<\cdots<\tau^{{z_n}}_{r_{n}}<\infty| \F_{\tau^{z_1}_{r_1}};E_{r_1;\vec{ s}}\cap E_{r_1;\theta}]-\ha G_{\tau^{z_1}_{r_1}}(z_2,\dots,z_n)\Big|\\
\lesssim &  F_{\tau^{z_1}_{r_1}}(z_2,\dots,z_n)\sum_{j=2}^n \Big(\theta^{-B_{n-1} } Q^{B_{n-1} } \frac{r_j}{s_j\wedge R_j}\Big)^{\beta_{n-1}} .
\end{align*}
Now we express
\begin{align*}
& r_1^{2-d}G(z_1)  \EE[{\bf 1}_{E_{r_1;\vec{ s}}\cap E_{r_1;\theta}}\PP[\tau^{z_2}_{r_2}<\cdots<\tau^{{z_n}}_{r_{n}}<\infty| \F_{\tau^{z_1}_{r_1}};E_{r_1;\vec{ s}}\cap E_{r_1;\theta}]|\tau^{z_1}_{r_1}<\infty]\\
=& r_1^{2-d}G(z_1)  \EE[{\bf 1}_{E_{r_1;\vec{ s}}\cap E_{r_1;\theta}}\prod_{k=2}^n r_k^{2-d}\ha G_{\tau^{z_1}_{r_1}}(z_2,\dots,z_n) |\tau^{z_1}_{r_1}<\infty]+e_4^*.
\end{align*}
Using Lemma \ref{FF-F}, we see that, when (\ref{Cond3}) holds,
$$|e_4^*|\lesssim   \prod_{k=1}^n r_k^{2-d} F \prod_{k=2}^n \Big(\frac{|z_k|\wedge |z_k-z_1|}{ s_k }\Big)^\alpha \sum_{j=2}^n \Big(\theta^{-B_{n-1}} Q^{B_{n-1}}\frac{r_j}{s_j\wedge R_j}\Big)^{\beta_{n-1}}.
$$

Next, we express
\begin{align*}
& r_1^{2-d}G(z_1)  \EE[{\bf 1}_{E_{r_1;\vec{ s}}\cap E_{r_1;\theta}}\prod_{k=2}^n r_k^{2-d}\ha G_{\tau^{z_1}_{r_1}}(z_2,\dots,z_n) |\tau^{z_1}_{r_1}<\infty]\\
=&\prod_{k=1}^n r_k^{2-d} G(z_1) \EE[{\bf 1}_{E_{r_1;\vec{ s}}}\ha G_{\tau^{z_1}_{r_1}}(z_2,\dots,z_n) |\tau^{z_1}_{r_1}<\infty]-e_5^*.
\end{align*}
The estimate on $e_5^*$ is the same as that on $e_3^*$ by Lemma \ref{FF-F}.

To simplify the notation, we define for $r>0$ and $\vec s\in \R_+^{n-1}$,
$$\EE_{z_1}^r=\EE[\cdot |\tau^{z_1}_r<\infty];\quad    \ha G_{r; {\vec{ s}}}={\bf 1}_{E_{r;\vec{ s}}}\ha G_{{\tau^{z_1}_{r}}}.$$
So far we have
$$ \PP[\tau^{z_1}_{r_1}<\cdots<\tau^{{z_n}}_{r_{n}}<\infty]=\prod_{k=1}^n r_k^{2-d} G(z_1) \EE_{z_1}^{r_1}[\ha G_{r_1;\vec s}(z_2,\dots,z_n)]+e_1^*+e_2^*+e_3^*+e_4^*-e_5^*.$$

For $R>r>s\ge 0$, define $E_{r,s;R}$ to be the event
\begin{align}
E_{r,s;R}=&\{\gamma[\tau^{z_1}_r,\tau^{z_1}_s]\mbox{ does not intersect any connected component of }\nonumber \\
& \{|z-z_1|=R\}\cap H_{\tau^{z_1}_r}  \mbox{ that separates }z_1\mbox{ from any }z_k, 2\le k\le n\}.\label{ErsR}
\end{align}
Here the bad event $E_{r,s;R}^c$ is the event that between the times visiting smaller circles $\{|z-z_1|=r\}$ and $\{|z-z_1|=s\}$, $\gamma$ crosses some arc on the bigger circle $\{|z-z_1|=R\}$, which is needed in order for $\gamma$ to approaches some $z_j$, $2\le j\le n$, after $\tau^{z_1}_r$.

Fix variables $\eta_1<\eta_2\in (r_1,d_1)$ to be determined later.
According to whether $E_{\eta_1,r_1;\eta_2}$ occurs, we have the following decomposition:
\[
G(z_1)\EE_{z_1}^{{r_1}}[\ha G_{{r_1};\vec{ s}}(z_2,\dots,z_{n})]=G(z_1)\EE_{z_1}^{{r_1}}[{\bf 1}_{E_{\eta_1,r_1;\eta_2}} \ha G_{{r_1};\vec{ s}}(z_2,\dots,z_{n})]+e_6.
\]	
By Lemma \ref{stayin} (applied to $Z=\{z_j\}$, $2\le j\le n$) and Lemma \ref{FF-F},  we have
\[
0\le e_6\lesssim F  \prod_{j=2}^{n}\Big(\frac{|z_j|\wedge |z_j-z_1|}{s_j}\Big)^{\alpha} \Big(\frac{\eta_1}{\eta_2}\Big)^{\alpha/4} .
\]
Changing the time from $\tau^{z_1}_{r_1}$ to $\tau^{z_1}_{\eta_1}$, we get another error term $e_7$:
\[
G(z_1)\EE_{z_1}^{{r_1}}[{\bf 1}_{E_{\eta_1,{r_1};\eta_2}} \ha G_{{r_1};\vec{ s}}(z_2,\dots,z_{n})]=G(z_1)\EE_{z_1}^{{r_1}}[{\bf 1}_{E_{\eta_1,{r_1};\eta_2}} \ha G_{{\eta_1};\vec{ s}}(z_2,\dots,z_{n})]+e_7
\]

To derive an estimate for $e_7$, we use the following lemma, whose proof is postponed to the end of this subsection.

\begin{lemma}
	There exist constants $B_*>0$ and $\beta_*,\delta_*\in(0,1)$ such that the following holds. Let $0\le a<b$ be such that $z_1\in H_a$, $\dist(z_1,K_a)<|z_j-z_1|$ and $\dist(z_j,K_b)\ge s_j$, $2\le j\le n$. For $2\le j\le n$, let $\rho_j$ be the connected component of $\{|z-z_1|=|z_j-z_1|\}\cap H_a$ that contains $z_j$; and let $\xi_j$ be a crosscuts of $H_a$, which is disjoint from $\rho_j$, and disconnects $\rho_j$ from $K_b\sem K_a$  in $H_a$. Let $d_*=\min_{2\le j\le n} d_{H_a}(\rho_j,\xi_j)$. If
	\BGE Q^{B_*}\cdot  e^{-2\pi  d_*}<\delta_*, \label{Cond-Lemma}\EDE
	then
	$$G(z_1)|\ha G_{b}(z_2,\dots,z_{n})-\ha G_{a}(z_2,\dots,z_{n})|
	\lesssim  F \prod_{k=2}^n \Big(\frac{|z_k|\wedge |z_k-z_1|}{ s_k }\Big)^\alpha (Q^{B_*}   e^{-2\pi d_*} )^{\beta_*}.$$
	\label{e4-lemma}
\end{lemma}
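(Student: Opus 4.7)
The plan is to conjugate by $Z_a = g_a - U_a$, reduce the comparison $\ha G_b - \ha G_a$ to comparing $(n-1)$-point Green's functions in $\HH$ (one for the original configuration, one for the configuration perturbed by the extra $\HH$-hull $\til K := Z_a(\gamma[a,b])$), and then invoke the inductive hypothesis (Theorem \ref{main2} for $n-1$ points).

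First, write $\til z_j^a = Z_a(z_j)$ and $\til z_j^b = Z_b(z_j)$ for $2\le j\le n$. Letting $\til U \in S_{\til K}$ be the image of $\gamma(b)$ under $Z_a$, we have $\til z_j^b = g_{\til K}(\til z_j^a) - \til U$ and $g_b'(z_j) = g_{\til K}'(\til z_j^a)\,g_a'(z_j)$ by the chain rule. Conformal covariance of $\ha G$ then gives
\[
\ha G_b - \ha G_a = \prod_{j=2}^n |g_a'(z_j)|^{2-d}\Big[\prod_{j=2}^n |g_{\til K}'(\til z_j^a)|^{2-d}\,\ha G(\til z_2^b,\dots,\til z_n^b) - \ha G(\til z_2^a,\dots,\til z_n^a)\Big].
\]
Next, under $Z_a$ the crosscut pairs $(\rho_j,\xi_j)$ map to crosscut pairs $(\til\rho_j,\til\xi_j)$ of $\HH$ with the same extremal distance (at least $d_*$), and each $\til\xi_j$ separates $\til z_j^a$ from $\til K$. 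Combining Lemma \ref{lem-extremal2} with the comparison principle for extremal distance, one produces a semidisc $\lin D^+_{x_0,r} \supset \til K$ with $r/|\til z_j^a - x_0| \lesssim e^{-\pi d_*/2}$ simultaneously for all $j$.

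Given such $(x_0,r)$, Lemma \ref{lemma-gK} (applied at each $\til z_j^a$) yields
\[
|g_{\til K}'(\til z_j^a) - 1|,\ \ \frac{|\Imm \til z_j^b - \Imm \til z_j^a|}{\Imm \til z_j^a},\ \ \frac{|\til z_j^b - \til z_j^a|}{|\til z_j^a - x_0|} \ \lesssim\ e^{-\pi d_*}.
\]
Let $\til d_j,\til Q$ be the quantities defined by (\ref{ldyR},\ref{Q}) for the $(n-1)$ points $\til z_2^a,\dots,\til z_n^a$ in $\HH$. Using Koebe's theorem at $\gamma(a)$ and the geometric hypothesis $\dist(z_j,K_b)\ge s_j$, one checks that $\til Q\lesssim Q\cdot C$ for some constant $C$, so the assumption $Q^{B_*} e^{-2\pi d_*} < \delta_*$ with $B_*$ large enough implies the hypotheses (\ref{Cond-main2}) of Theorem \ref{main2} (for $n-1$ points) at $\til z_j^a$ perturbed to $\til z_j^b$. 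That theorem yields
\[
|\ha G(\til z_2^a,\dots,\til z_n^a) - \ha G(\til z_2^b,\dots,\til z_n^b)| \lesssim F(\til z_2^a,\dots,\til z_n^a)\cdot (Q^{B_*} e^{-2\pi d_*})^{\beta_*},
\]
after absorbing the $(n-1)$-point constants into new $B_*,\beta_*$. The prefactor $\prod|g_{\til K}'(\til z_j^a)|^{2-d}$ differs from $1$ by $O(e^{-\pi d_*})$, so its contribution is of the same order (using the induction-hypothesis upper bound $\ha G\lesssim F$ for $n-1$ points). Finally, multiplying by $G(z_1)\prod_j |g_a'(z_j)|^{2-d}$ and applying Lemma \ref{FF-F} with $K=K_a$ converts the right-hand side into $F\prod_{k=2}^n(|z_k|\wedge|z_k-z_1|/s_k)^\alpha$ times $(Q^{B_*} e^{-2\pi d_*})^{\beta_*}$, giving the claim.

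The main obstacle is the translation in the second paragraph: extracting from a collection of extremal-distance lower bounds $d_{H_a}(\rho_j,\xi_j)\ge d_*$ a single enclosing semidisc $\lin D^+_{x_0,r}\supset\til K$ such that $r/|\til z_j^a - x_0|$ is small for every $j$ simultaneously. Handling this requires care with the Teichm\"uller extremal-distance bounds when different $\til z_j^a$'s lie on very different scales. A secondary difficulty is verifying that $\til Q$ can indeed be controlled by $Q$ (up to a constant) uniformly in the configuration, which is needed so that choosing $B_*$ large in (\ref{Cond-Lemma}) suffices to trigger the inductive Theorem \ref{main2}.
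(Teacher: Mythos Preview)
Your overall architecture is the same as the paper's: push forward by $Z_a$, write $\ha G_b-\ha G_a$ as a difference of $(n-1)$-point $\ha G$'s (plus a derivative-prefactor correction), and invoke the inductive Theorem~\ref{main2}. Where your proposal breaks is precisely at the two ``obstacles'' you flag, and the paper's resolution of them is an idea you are missing.

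The claims that (a) one can enclose $\til K$ in a semidisc with $r/|\til z_j^a-x_0|\lesssim e^{-\pi d_*/2}$ for all $j$, and (b) $\til Q\lesssim C\,Q$ for a universal $C$, are \emph{not} consequences of the extremal-distance hypothesis alone. Lemma~\ref{lem-extremal2} only gives the product bound
\[
\Big(\frac{\diam(\til\rho_j)}{\dist(\til\rho_j,\til K)}\wedge 1\Big)\Big(\frac{\diam(\til K)}{\dist(\til\rho_j,\til K)}\wedge 1\Big)\lesssim e^{-\pi d_*},
\]
and if the first factor happens to be small (which occurs exactly when the ``harmonic angle'' $\dist(g_a(z_j),S_{K_a})/|Z_a(z_j)|$ is small), the second factor can be of order $1$, so $\diam(\til K)$ need not be small relative to $|\til z_j^a|$. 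The same bad-angle scenario ruins the $\til Q\lesssim Q$ bound: the paper's argument for (\ref{zkdk}) explicitly uses the angle event and produces $\til Q\le C\phi^{-1}Q$, not $\til Q\le CQ$.

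The missing device is to introduce an auxiliary parameter $\phi\in(0,1)$ and the angle event $E_{a;\phi}=\{\dist(g_a(z_j),S_{K_a})\ge\phi\,|Z_a(z_j)|,\ 2\le j\le n\}$, and split into two cases. On $E_{a;\phi}^c$, one does not compare $\ha G_a$ and $\ha G_b$ at all: Lemma~\ref{FF-F} (via the factor $\min_j(\dist(g_K(z_j),S_K)/|Z(z_j)|)^\alpha$ in its sharper form) bounds each of $G(z_1)\ha G_a$ and $G(z_1)\ha G_b$ individually by $F\prod_k(\cdots)^\alpha(\phi^\alpha+e^{-\alpha\pi d_*})$. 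On $E_{a;\phi}$, the first factor in the product above is $\ge\phi$, which forces $r_K/\dist(\til z_j^a,\til K)\lesssim\phi^{-1}e^{-\pi d_*}$; then Lemma~\ref{lemma-gK} applies with the natural choice $x_0=U_a$, $r=r_K$, and the $\til Q$ bound becomes $\til Q\lesssim\phi^{-1}Q$. One finishes by optimizing $\phi$ as a power of $e^{-2\pi d_*}$, which is what generates the exponents $B_*,\beta_*$.

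So your sketch is on the right track but incomplete: without the $\phi$-split, neither of your two ``difficulties'' can be overcome, and the bounds you assert in the second and third paragraphs are false in general.
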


We now apply Lemma \ref{e4-lemma} with $a=\tau^{z_1}_{\eta_1}$, $b=\tau^{z_1}_{r_1}$,  and $\xi_k$ being a connected component of $\{|z-z_1|=\eta_2\}\cap H_{\tau^{z_1}_{\eta_1}}$ that separates $z_k$ from $z_1$. By comparison principle of extremal length, we have
$$d_{H_a}(\rho_k,\xi_k)\ge  \log(|z_k-z_1|/\eta_2)/(2\pi)\ge \log(d_1/\eta_2)/(2\pi),\quad 2\le k\le n.$$
Assume that \BGE \eta_2+s_k<|z_k-z_1|,\quad 2\le k\le n.\label{Cond4}\EDE
Then $E_{\eta_1,r_1;\eta_2}\cap E_{r_1;\vec s}=E_{\eta_1,r_1;\eta_2}\cap E_{\eta_1;\vec s}$.
Thus, for some constants $B_*>0$ and $\beta_*,\delta_*\in(0,1)$, if
\BGE Q^{B_*}\cdot  \frac{\eta_2}{d_1}<\delta_*,\label{Cond4'}\EDE
and (\ref{Cond4}) holds, then
$$|e_7|\lesssim F\prod_{k=2}^n \Big(\frac{|z_k|\wedge |z_k-z_1|}{ s_k }\Big)^\alpha \Big(Q^{B_*}   \frac{\eta_2}{d_1}\Big)^{\beta_*}.$$

Removing the restriction of the event $E_{\eta_1,r_1;\eta_2}$, we get another error term $e_8$:
\[
G(z_1)\EE_{z_1}^{{r_1}}[{\bf 1}_{E_{\eta_1,{r_1};\eta_2}} \ha G_{\eta_1;\vec{ s}}(z_2,\dots,z_{n})]=G(z_1)\EE_{z_1}^{{r_1}}[ \ha G_{{\eta_1};\vec{ s}}(z_2,\dots,z_{n})]-e_8.
\]
Here the estimate on $e_8$ is same as that on $e_6$ by Lemmas \ref{stayin} and  \ref{FF-F}.

Changing the probability measure from the conditional chordal $\EE^{r_1}_{z_1}$ to the two-sided radial $\EE_{z_1}^*$, we get another error term $e_9$:
\[
G(z_1)\EE_{z_1}^{{r_1}}[ \ha G_{{\eta_1};\vec{ s}}(z_2,\dots,z_{n})]=G(z_1)\EE^*_{z_1}[ \ha G_{{\eta_1};\vec{ s}}(z_2,\dots,z_{n})]+e_9.
\]
From \cite[Proposition 2.13]{LW} and Lemma \ref{FF-F}, we find that for some constant $\beta_0>0$,
\[
|e_9|\lesssim   F \prod_{k=2}^{n}\Big(\frac{|z_k-z_1|\wedge |z_k|}{s_k}\Big)^{\alpha} \Big(\frac{r_1}{\eta_1}\Big)^{\beta_0}.
\]

Let the event $E_{\eta_1,0;\eta_2}$ be defined by (\ref{ErsR}). We now express
\[
G(z_1) \EE^*_{z_1}[ G_{{\eta_1};\vec{ s}}(z_2,\dots,z_n)]=G(z_1)\EE^*_{z_1}[{\bf 1}_{E_{\eta_1,0;\eta_2}}  \ha G_{{\eta_1};\vec{ s}}(z_2,\dots,z_n)]+e_{10}
\]
Here the estimate on $e_{10}$ is same as that on $e_6$ by Lemmas \ref{stayin} and  \ref{FF-F}.

Changing the time from $\tau^{z_1}_{\eta_1}$ to $\tau^{z_1}_{0}=T_{z_1}$, we get another error term $e_{11}$:
\[
G(z_1)\EE^*_{z_1}[{\bf 1}_{E_{\eta_1,0;\eta_2}}  \ha G_{{\eta_1};\vec{ s}}(z_2,\dots,z_n)]
=G(z_1)\EE^*_{z_1}[{\bf 1}_{E_{\eta_1,0;\eta_2}}  \ha G_{0;\vec{ s}}(z_2,\dots,z_n)]+e_{11}.
\]
If (\ref{Cond4}) holds, then $E_{\eta_1,0;\eta_2}\cap E_{\eta_1;\vec s}=E_{\eta_1,0;\eta_2}\cap E_{0;\vec s}$.
Apply Lemma \ref{e4-lemma} with $a=\tau^{z_1}_{\eta_1}$, $b=\tau^{z_1}_{0}=T_{z_1}$,  and $\xi_k$ being a connected component of $\{|z-z_1|=\eta_2\}\cap H_{\tau^{z_1}_{\eta_1}}$ that separates $z_k$ from $z_1$, we get an estimate on $e_{11}$, which is the same as that on $e_7$, provided that (\ref{Cond4'}) holds. Note that the constants $B_*,\beta_*,\delta_*$ here may be different from those for $e_7$. But by taking the bigger $B_*$ and smaller $\beta_*$ and $\delta_*$, we may make both estimates hold for the same set of constants.

Removing the restriction of the event $E_{\eta_1,0;\eta_2}$, we get another error term $e_{12}$:
\[
G(z_1)\EE^*_{z_1}[{\bf 1}_{E_{\eta_1,0;\eta_2}}  \ha G_{0;\vec{ s}}(z_2,\dots,z_n)]=G(z_1)\EE^*_{z_1}[  \ha G_{0;\vec{ s}}(z_2,\dots,z_n)]-e_{12}.
\]
Here the estimate on $e_{12}$ is same as that $e_6$ by Lemmas \ref{stayin} and  \ref{FF-F}.

Finally, note that
$\ha G_{0;\vec s}={\bf 1}_{E_{0;\vec s}} \ha G_{T_{z_1}}$. Removing the restriction of the event $E_{0;\vec s}$, we get the last error term $e_{13}$:
\[
G(z_1)\EE^*_{z_1}[  \ha G_{0;\vec{ s}}(z_2,\dots,z_n)]=G(z_1)\EE^*_{z_1}[\ha G_{T_{z_1}}(z_2,\dots,z_n)]-e_{13}=\ha G(z_1,\dots,z_n)+e_{13}.
\]
where by Lemma \ref{RZ-Thm3.1-lim}, the estimate on $e_{13}$ is the same as that on $e_1^*/\prod_{k=1}^n r_k^{2-d}$.

At the end, we need to choose the variables $s_2,\dots,s_n$ and $\eta_1,\eta_2,\theta$, and constants $C_n,B_n>0$ and $\beta_n,\delta_n\in(0,1)$, such that if (\ref{Cond-main}) holds, then  (\ref{Cond1},\ref{Cond2},\ref{Cond3},\ref{Cond4},\ref{Cond4'}) all hold, $r_j<R_j/8$, $1\le j\le n$, and the upper bounds for $|e_s|:=|e_s^*|/\prod_{k=1}^n r_k^{2-d}$, $1\le s\le 5$, and $|e_s|$, $6\le s\le 13$, are all bounded above by the RHS of (\ref{RHS-main}).

We take $X\in(0,1)$ to be determined later, and choose $s_2,\dots,s_n$ such that
\BGE \frac{s_j}{|z_j|\wedge |z_j-z_1|}=X,\quad 2\le j\le n.\label{s_j}\EDE
Then we have
\BGE \frac{r_j}{s_j\wedge R_j}=\Big(1\vee \frac{R_j}{s_j}\Big)\cdot\frac{r_j}{R_j}\le X^{-1} \cdot\frac{r_j}{R_j},\quad 2\le j\le n. \label{rsR}\EDE
In the argument below, we assume that (\ref{Cond1},\ref{Cond2},\ref{Cond3},\ref{Cond4},\ref{Cond4'},\ref{s_j},\ref{rsR}) all hold so that we can freely use the estimates we have obtained.

From the estimate on $|e_4^*|$, we get
$$|e_4|\lesssim F Q^{B_{n-1}\beta_{n-1}} X^{-n\alpha -\beta_{n-1}} {\theta} ^{-B_{n-1}\beta_{n-1}} \max_{2\le j\le n} \Big(\frac{r_j}{R_j}\Big)^{\beta_{n-1}}.$$
From the estimates on $e_3^*$ and $e_5^*$, we get
$$|e_s|\lesssim F X^{-n\alpha}\theta^\alpha,\quad s\in\{3,5\}.$$
If we take $\theta$ such that $\theta^{\alpha}=\theta^{-B_{n-1}\beta_{n-1}}\max_{2\le j\le n}(\frac{r_j}{R_j} )^{\beta_{n-1}}$, then we get
$$|e_s|\lesssim FQ^{B_{n-1}\beta_{n-1}}X^{-n\alpha -\beta_{n-1}}\max_{2\le j\le n} \Big(\frac{r_j}{R_j}\Big)^{\frac{\alpha\beta_{n-1}}{\alpha+B_{n-1}\beta_{n-1}}},\quad 3\le s\le 5.$$
Choose $\eta_1$ and $\eta_2$ such that $\frac{r_1}{\eta_1}=\frac{\eta_1}{\eta_2}=\frac{\eta_2}{d_1}$.  Then we find that
$$|e_s|\lesssim F Q^{B_*\beta_*} X^{-n\alpha}\Big(\frac{r_1}{d_1}\Big)^{\frac 13(\frac{\alpha}{4}\wedge \beta_*\wedge \beta_0)},\quad 6\le s\le 12.$$
Since $R_1\le d_1$, combining with the estimate on $e_2^*$, we get
$$|e_s|\lesssim F Q^{B_*\beta_*} X^{-n\alpha}\Big(\frac{r_1}{R_1}\Big)^{\frac 13(\frac{\alpha}{4}\wedge \beta_*\wedge \beta_0)\wedge\beta_1},\quad s\in\{2,6,7,8,9,10,11,12\}.$$
Combining this with the estimates on $|e_s|$, $3\le s\le 5$, we get
$$|e_s|\lesssim F Q^{B_{n-1}\beta_{n-1}+B_*\beta_*}X^{-n\alpha -\beta_{n-1}}\max_{1\le j\le n} \Big(\frac{r_j}{R_j}\Big)^{\beta_{\#}},\quad 2\le s\le 12,$$
where $\beta_{\#}:=\frac 13(\frac{\alpha}{4}\wedge \beta_*\wedge \beta_0)\wedge\beta_1\wedge \frac{\alpha\beta_{n-1}}{\alpha+B_{n-1}\beta_{n-1}}$. Since $|e_1|,|e_{13}|\lesssim F X^\beta$,
if we choose $X$ such that $X^\beta=X^{-n\alpha-\beta_{n-1}}\max_{1\le j\le n}  (\frac{r_j}{R_j} )^{\beta_{\#}}$, then with $\beta_n:=\frac{\beta\beta_\#}{\beta+n\alpha+\beta_{n-1}} $, we get
\BGE |e_s|\lesssim F  Q^{B_{n-1}\beta_{n-1}+B_*\beta_*} \max_{1\le j\le n} \Big(\frac{r_j}{R_j}\Big)^{\beta_n },\quad 1\le s\le 13.\label{es}\EDE

Now we check Conditions (\ref{Cond1},\ref{Cond2},\ref{Cond3},\ref{Cond4},\ref{Cond4'}) and $r_j<R_j/8$, $1\le j\le n$. Clearly, (\ref{Cond3}) implies (\ref{Cond1}). The LHS of (\ref{Cond4'}) equals to $Q^{B_*} (\frac{r_1}{d_1})^{1/3}\le  Q^{B_*} (\frac{r_1}{R_1})^{1/3}$, and so it holds if $Q^{3B_*} \frac{r_1}{R_1}<\delta_*^3$. Thus, (\ref{Cond2}) and (\ref{Cond4'}) both hold if $Q^{3B_*} \frac{r_1}{R_1}<\delta_*^3\wedge \delta_1$. Condition (\ref{Cond4}) holds if $\eta_2<\frac{d_1}2$ and $s_k<\frac 12|z_k-z_1|\wedge |z_k|$, which are equivalent to $\frac{r_1}{d_1}<\frac 18$ and $X<\frac 12$, respectively, which further follow from
$$\max_{1\le j\le n}   \frac{r_j}{R_j}  <  \Big( \frac 12\Big)^{3+  \frac{\beta+n\alpha +\beta_{n-1}}{\beta_\#}}.$$
From (\ref{rsR}) and the choices of $X$ and $\theta$, we see that (\ref{Cond3}) follows from
$$Q^{B_{n-1}} \max_{1\le j\le n}  \frac{r_j}{R_j} <\frac{X\theta^{B_{n-1}}\delta_{n-1}}{8C^{B_{n-1}}}
=\frac{ \delta_{n-1}}{8C^{B_{n-1}}}\max_{1\le j\le n} \Big(\frac{r_j}{R_j}\Big)^{\frac{ \beta_\#}{\beta+n\alpha+\beta_{n-1}}+ \frac{B_{n-1}\beta_{n-1}}{\alpha+B_{n-1}\beta_{n-1}} }. $$
Let $\beta_\&=1-\frac{ \beta_\#}{\beta+n\alpha+\beta_{n-1}}- \frac{B_{n-1}\beta_{n-1}}{\alpha+B_{n-1}\beta_{n-1}} $.
Since $\beta_\#\le \frac{\alpha\beta_{n-1}}{\alpha+B_{n-1}\beta_{n-1}}$, we get $\beta_\&>0$. So (\ref{Cond1}) and (\ref{Cond3}) hold if $Q^{B_{n-1}/\beta_\&}  \max_{1\le j\le n}  \frac{r_j}{R_j} <(\frac{ \delta_{n-1}}{8C^{B_{n-1}}})^{1/\beta_\&}$. Thus, (\ref{Cond1},\ref{Cond2},\ref{Cond3},\ref{Cond4},\ref{Cond4'}) all hold if
$$Q^{3B_*+\frac{B_{n-1}}{\beta_\&}}  \max_{1\le j\le n}  \frac{r_j}{R_j}<\delta_n,$$
where $\delta_n:=\delta_*^3\wedge \delta_1\wedge   ( \frac 12 )^{3+  \frac{\beta+n\alpha +\beta_{n-1}}{\beta_\#}}\wedge (\frac{ \delta_{n-1}}{8C^{B_{n-1}}} )^{\frac 1 {\beta_\&}}$. Combining this with (\ref{es}), we see that, if we set $B_n=3B_*+\frac{B_{n-1}}{\beta_\&}+\frac{B_{n-1}\beta_{n-1}+B_*\beta_*}{\beta_n}$, then whenever (\ref{Cond-main}) holds, (\ref{Cond1},\ref{Cond2},\ref{Cond3},\ref{Cond4},\ref{Cond4'}) and $r_j<R_j/8$, $1\le j\le n$, all hold, and   the upper bounds for $|e_s|$, $1\le s\le 13$, are all bounded above by the RHS of (\ref{RHS-main}). It remains to prove Lemma \ref{e4-lemma} to finish this subsection.

\begin{proof}[Proof of Lemma \ref{e4-lemma}]
Since $K_{a}\subset K_{b}$ we also have $\dist(z_j,K_a)\ge s_j$, $2\le j\le n$.
Let $K=g_{a}(K_{b}\sem K_{a})$. Then $K$ is an $\HH$-hull, and $g_{b}=g_K\circ g_{a}$. Since $g_{a}(\gamma(a))=  U_{a}$, we have $U_{a}\in\lin K\cap\R$. Since $g_{b}(\gamma(b))=  U_{b}$, we have $U_{b}\in S_K$. Let $r_K=\sup\{|z-U_{a}|:z\in K\}$. From Lemma \ref{small}, we get  $ S_K\subset [U_{a}-2r_K,U_{a}+2r_K]$. Thus, $|U_{b}-U_{a}|\le 2 r_K$.

Define $z^a_j=g_{a}(z_j)$, $\rho^a_j=g_a(\rho_j)$, $\xi^a_j=g_a(\xi_j)$, $z^b_j=g_b(z_j)$, $\rho^b_j=g_b(\rho_j)$, $2\le j\le n$. Then $\rho^a_j,\rho^b_j,\xi^a_j$ are crosscuts of $\HH$, $z^a_j\in \rho^a_j$, $z^b_j\in\rho^b_j$, and $\xi^a_j$ disconnects $K$ from $\rho^a_j$.
By conformal invariance of extremal distance, we get
$$d_{\HH}( \rho^b_j,S_K)=d_{\HH}( \rho^a_j,K)=d_{H_{a}}(\rho_j,K_{b}\sem K_{a})\ge d_{H_a}(\rho_j,\xi_j)\ge d_*.$$
Applying Lemma \ref{lem-extremal2} to $\lin{\rho^a_j}$ and $\lin K$, and to $\rho^b_j$ and $S_K$, respectively, we get
\BGE \Big(\frac{\diam(\rho^a_j)}{\dist(\rho^a_j,K)}\wedge 1\Big)\cdot  \Big(\frac{\diam(K)}{\dist(\rho^a_j,K)}\wedge 1\Big)
\le 144 e^{-\pi d_*},\quad 2\le j\le n;\label{product}\EDE
\BGE \Big(\frac{\diam(\rho^b_j)}{\dist(\rho^b_j,S_K)}\wedge 1\Big)\cdot  \Big(\frac{\diam(S_K)}{\dist(\rho^b_j,S_K)}\wedge 1\Big)
\le 144 e^{-\pi d_*},\quad 2\le j\le n.\label{productb}\EDE

Fix a variable $\phi\in(0,1)$ to be determined later. Define the event $E_{a;\phi}$ using (\ref{Erth}) but with $\tau^{z_1}_r$ replaced by $a$ (instead of $\tau^{z_1}_a$). First, suppose $E_{a;\phi}$ does not occur. Since $\dist(z_j,K_a)\ge s_j$, $2\le j\le n$, from Lemma \ref{FF-F} we get
\BGE G(z_1)\ha G_{a}(z_2,\dots,z_n)\lesssim F \prod_{k=2}^n \Big(\frac{|z_k|\wedge |z_k-z_1|}{ s_k }\Big)^\alpha \phi^\alpha.\label{GG-eta}\EDE
Fix some $j\in\{2,\dots,n\}$ for a while. Applying Koebe's $1/4$ theorem, we get
$$\dist(z^b_j,S_{K_{b}})\asymp |g_{b}'(z_j)|\dist(z_j,K_{b})\le|g_{b}'(z_j)|\dist(z_j,K_{a})$$
$$= |g_K'(z^a_j)||g_{a}'(z_j)|\dist(z_j,K_{a})\asymp |g_K'(z^a_j)| \dist(z^a_j,S_{K_{a}}) $$
and
$$|z^b_j-U_{b}|\ge \dist(z^b_j,S_K)\asymp |g_K'(z^a_j)| \dist(z^a_j,K).$$
Now we consider two cases.

Case 1. $ \diam(S_K)\le \dist(z^b_j,S_K)/ 4$. In this case, since $z^a_j=f_K(z^b_j)$, applying Lemma \ref{SK-lemma}, we get $\dist(z^a_j,K)\ge 2 \diam(K)$, which implies that $\dist(z^a_j,K)\asymp |z^a_j-U_a|$ since $U_a\in \lin K$. From the above two displayed formulas, we get $\frac{\dist(z^b_j,S_{K_{b}})}{|z^b_j-U_{b}|}\lesssim \frac{\dist(z^a_j,S_{K_{a}})}{|z^a_j-U_{a}|}$.

Case 2. $ \diam(S_K)\ge \dist(z^b_j,S_K)/ 4$. From (\ref{productb}), we have
\BGE \frac{\diam(\rho^b_j)}{\dist(\rho^b_j,S_K)}\le 576e^{-\pi d_*},\label{dd}\EDE
if
\BGE 144e^{-\pi d_*}< 1/4.\label{Cond4.5}\EDE
Since $\dist(z_1,K_a)<|z_j-z_1|$, and $\rho_j\subset \{|z-z_1|=|z_j-z_1|\}$, we see that either  $\rho_j$ disconnects $K_b$ from $\infty$, or ${\rho_j}$ touches $K_b$. The former case implies that $\diam(\rho^b_j)\ge \dist(\rho^b_j,S_K)$ because $\rho^b_j$ disconnects $K$ from $\infty$, which is impossible by (\ref{dd}) if (\ref{Cond4.5}) holds. In the latter case, $\rho^b_j:=g_b(\rho_j)$ touches $S_{K_b}$, and so $\dist(z^b_j,S_{K_b})\le \diam(\rho^b_j)$. On the other hand, since $U_b\in S_K$ and $z^b_j\in \rho^b_j$, we get $|z^b_j-U_b|\ge \dist(\rho^b_j,S_K)$. Thus by (\ref{dd}), we have  $\dist(z^b_j,S_{K_b})\le 576e^{-\pi d_*} |z^b_j-U_b|$ if (\ref{Cond4.5}) holds.

Combining Case 1 with Case 2, we see that, if (\ref{Cond4.5}) holds and $E_{a;\phi}$ does not occur, then for some $2\le j\le n$,
$\dist(z^b_j,S_{K_{b}})\lesssim (\phi+e^{-\pi d_*}) |z^b_j-U_{b}|$.
This together with Lemmas \ref{FF-F} and that $\dist(z_j,K_b)\ge s_j$, $2\le j\le n$, implies that
\BGE G(z_1)\ha G_{b}(z_2,\dots,z_n)\lesssim F  \prod_{k=2}^n \Big(\frac{|z_k|\wedge |z_k-z_1|}{ s_k }\Big)^\alpha(\phi^\alpha+e^{-\alpha\pi d_*}).\label{GG-r}\EDE

Now suppose that $E_{a;\phi}$ occurs.
Since $z^a_j\in\rho^a_j$ and $U_{a}\in\lin K$, we have $|z^a_j-U_{a}|\ge \dist(\rho^a_j,K)$. We claim that $\diam(\rho^a_j)\ge \dist(z^a_j,S_{K_{a}})$. If this is not true, then the region bounded by $\rho^a_j$ in $\HH$ is disjoint from $S_{K_{a}}$, which implies that $\rho_j=g_{a}^{-1}(\rho^a_j)$ is also a crosscut of $\HH$, and the region bounded by $\rho_j$ in $\HH$ is disjoint from $K_{a}$. Since $\rho_j$ is an arc on the circle $\{|z-z_1|=|z_j-z_1|\}$, this would imply that $\dist(z_1,K_{a})\ge |z_j-z_1|$, which is a contradiction. So the claim is proved. Thus, we have
\BGE\frac{\diam(\rho^a_j)}{\dist(\rho^a_j,K)}\ge \frac{\dist(z^a_j,S_{K_{a}})}{|z^a_j-U_{a}|}\ge \phi.\label{Ejtheta}\EDE

From (\ref{product}), (\ref{Ejtheta}), $r_K\le \diam(K)$ and $z^a_j\in\rho^a_j$, we see that
\BGE \frac{r_K}{\dist(z^a_j,K)}\le \frac{144}\phi e^{-\pi d_*},\quad 2\le j\le n,\label{rK<}\EDE
as long as the RHS is less than $1$. Applying Lemma \ref{lemma-gK} with $x_0=U_{a}$, $r=r_K$, and $z=z^a_j$, from $z^b_j=g_K(z^a_j)$, we see that, if
\BGE \frac{144}\phi e^{-\pi d_*}<\frac 15,\label{Cond5}\EDE then
\BGE |z^b_j-z^a_j|\le  r_K,\quad \frac{|\Imm z^b_j-\Imm z^a_j|}{\Imm z^a_j}\le  4\Big(\frac{r_K}{\dist(z^a_j,K)}\Big)^2;\label{z-z1}\EDE
\BGE |g_K'(z^a_j)-1|\le  5\Big(\frac{r_K}{\dist(z^a_j,K)}\Big)^2.\label{g'-1}\EDE
Let $\ha z^a_j=z^a_j-U_{a}$ and $\ha z^b_j=z^b_j-U_{b}$, $2\le j\le n$. Since $|U_{b}-U_{a}|\le 2r_K$, from (\ref{z-z1}), we find that, if (\ref{Cond5}) holds, then
\BGE \frac{|\ha z^b_j-\ha z^a_j|}{|\ha z^a_j|}\le  3\frac{r_K}{\dist(z^a_j,K)},\quad \frac{|\Imm \ha z^b_j-\Imm \ha z^a_j|}{\Imm \ha z^a_j}\le 4\Big(\frac{r_K}{\dist(z^a_j,K)}\Big)^2.\label{z-z2}\EDE
By definition, we have
\begin{align*}
\ha G_{a}(z_2,\dots,z_{n})=&\prod_{j=2}^n |  g_{a}'(z_j)|^{2-d} \ha G(\ha z^a_2,\dots,\ha z^a_n);\\
\ha G_{b}(z_2,\dots,z_{n})=&\prod_{j=2}^n |  g_{b}'(z_j)|^{2-d} \ha G(\ha z^b_2,\dots,\ha z^b_n)\\
= &\prod_{j=2}^n |  g_K'(z^a_j)|^{2-d} \prod_{j=2}^n |  g_{a}'(z_j)|^{2-d} \ha G(\ha z^b_2,\dots,\ha z^b_n).
\end{align*}
Define $\ha G_{a,b}(z_2,\dots,z_{n})=\prod_{j=2}^n |  g_{a}'(z_j)|^{2-d} \ha G(\ha z^b_2,\dots,\ha z^b_n)$.
From (\ref{g'-1}) we see that there is a constant $\delta\in(0,1)$ (depending on $n$) such that, if
\BGE \max_{2\le j\le n} \frac{r_K}{\dist(z^a_j,K)} <\delta,\label{Cond6}\EDE
then
\BGE|\ha G_{b}(z_2,\dots,z_{n})-\ha G_{a,b}(z_2,\dots,z_{n})|
\lesssim \Big(\max_{2\le j\le n}\frac{r_K}{\dist(z^a_j,K)}\Big)^2 \ha G_{a,b}(z_2,\dots,z_{n}).\label{Gr-Gr'}\EDE
Define $\ha d_k$, $2\le k\le n$, and $\ha Q$ using (\ref{ldyR}) and (\ref{Q}) for the $(n-1)$ points $\ha z_2^a,\dots,\ha z_n^a$.
Since Theorem \ref{main2} holds for $(n-1)$ points, from (\ref{z-z2}) we see that, for some constants $B_{n-1}>0$ and $\beta_{n-1},\delta_{n-1}\in(0,1)$, if
$$\ha Q^{B_{n-1}}\cdot \frac{|\ha z^b_j-\ha z^a_j|}{\ha d_j}<\delta_{n-1},\quad \frac{|\Imm \ha z^b_j-\Imm \ha z^a_j|}{\Imm \ha z^a_j}<\delta_{n-1},$$
then
\begin{align*}
&|\ha G_{a,b}(z_2,\dots,z_n)-\ha G_{a}(z_2,\dots,z_n)|/F_{a}(z_2,\dots,z_n)\\
\lesssim & \sum_{j=2}^n \Big(\ha Q^{B_{n-1} }  \frac{|\ha z^b_j-\ha z^a_j|}{\ha d_j}\Big)^{\beta_{n-1}}+\Big(\frac{|\Imm \ha z^b_j-\Imm \ha z^a_j|}{\Imm \ha z^a_j}\Big)^{\beta_{n-1}}.
\end{align*}
Since $E_{a;\phi}$ occurs, (\ref{zkdk}) holds here with $\phi$ in place of $\theta$ by the same argument.
Let $B_0=B_{n-1}+1$. Then, for some constant $C>1$, if
\BGE Q^{B_0}\cdot \frac{|\ha z^b_j-\ha z^a_j|}{|\ha z^a_j|}<\frac{\phi^{B_0} \delta_{n-1}}{C^{B_0}},\quad \frac{|\Imm \ha z^b_j-\Imm \ha z^a_j|}{\Imm \ha z^a_j}<\delta_{n-1},\label{Cond7}\EDE
then
\begin{align}
&|\ha G_{a,b}(z_2,\dots,z_n)-\ha G_{a}(z_2,\dots,z_n)|/F_{a}(z_2,\dots,z_n)\nonumber\\
\lesssim & \sum_{j=2}^n   \Big(\phi^{-B_0 }Q^{B_0 } \frac{|\ha z^b_j-\ha z^a_j|}{|\ha z^a_j|}\Big)^{\beta_{n-1}}+\Big(\frac{|\Imm \ha z^b_j-\Imm \ha z^a_j|}{\Imm \ha z^a_j}\Big)^{\beta_{n-1}} .
\label{Gr'-Geta}
\end{align}
From (\ref{Cond7}) we see that the RHS of (\ref{Gr'-Geta}) is bounded above by a constant. Since $\ha G_a\lesssim F_a$ by induction hypothesis, we get $\ha G_{a,b}\lesssim F_a$ as well. From  (\ref{Gr-Gr'}) and (\ref{Gr'-Geta}),  we see that if (\ref{Cond6}) and (\ref{Cond7} ) both hold, then
\begin{align*}
&|\ha G_{b}(z_2,\dots,z_n)-\ha G_{a}(z_2,\dots,z_n)|/F_{a}(z_2,\dots,z_n)\nonumber \\
\lesssim & \Big(\max_{2\le j\le n}\frac{r_K}{\dist(z^a_j,K)}\Big)^2+ \sum_{j=2}^n \Big(\phi^{-B_0 }Q^{B_0 } \frac{|\ha z^b_j-\ha z^a_j|}{|\ha z^a_j|}\Big)^{\beta_{n-1}}+\Big(\frac{|\Imm \ha z^b_j-\Imm \ha z^a_j|}{\Imm \ha z^a_j}\Big)^{\beta_{n-1}} \\
\lesssim &\phi^{-2} e^{-2\pi d_*}+
(\phi^{-B_0 -1} Q^{B_0} e^{-\pi d_*} )^{\beta_{n-1}}+ (\phi^{-2} e^{-2\pi d_*} )^{\beta_{n-1}}\lesssim  (\phi^{-B_0 -1} Q^{B_0} e^{-\pi d_*} )^{\beta_{n-1}}  .
\end{align*}
where the second last inequality follows from (\ref{rK<}), (\ref{z-z2}), and that $|z_j-z_1|\ge d_1$, and the last inequality holds provided that
\BGE \phi^{-2}e^{-2\pi d_*}<1.\label{Cond8}\EDE
Since $\dist(z_j,K_a)\ge s_j$, $2\le j\le n$, from Lemma \ref{FF-F}, we get
$$G(z_1)|\ha G_{b}(z_2,\dots,z_n)-\ha G_{a}(z_2,\dots,z_n)|
\lesssim   F \prod_{k=2}^n \Big(\frac{|z_k|\wedge |z_k-z_1|}{ s_k }\Big)^\alpha  (\phi^{-B_0 -1} Q^{B_0} e^{-\pi d_*} )^{\beta_{n-1}}.$$

Combining the above with (\ref{GG-eta},\ref{GG-r}), which holds when $E_{a;\phi}$ does not occur, we find that, as long as Conditions (\ref{Cond4.5},\ref{Cond5},\ref{Cond6},\ref{Cond7},\ref{Cond8}) all hold, no matter whether $E_{a;\phi}$ happens, we have
\begin{align*}
&G(z_1)|\ha G_{b}(z_2,\dots,z_n)-\ha G_{a}(z_2,\dots,z_n)|\\
\lesssim & F \prod_{k=2}^n \Big(\frac{|z_k|\wedge |z_k-z_1|}{ s_k }\Big)^\alpha  [e^{-\alpha\pi d_*}+\phi^\alpha+ (\phi^{-B_0 -1} Q^{B_0}e^{-\pi d_*} )^{\beta_{n-1}} ].
\end{align*}

Finally, we may find constants $b*,B_*>0$ and $\beta_*,\delta_*\in(0,1)$, such that, with $\phi=e^{-b_*\pi d_*}$, if (\ref{Cond-Lemma}) holds, then (\ref{Cond4.5},\ref{Cond5},\ref{Cond6},\ref{Cond7},\ref{Cond8}) all hold, and the quantity in the square bracket of the above displayed formula is bounded above by a constant times $(Q^{B_*}  e^{-\pi d_*} )^{\beta_*}$. This is analogous to the argument after the estimate on $e_{13}$ and before this proof.
\end{proof}

\subsection{Continuity of Green's functions}
We work on the inductive step for Theorem \ref{main2} in this subsection. Suppose $ z_1',\dots, z_n'$ are distinct points in $\HH$ such that $z_j'$ is close to $z_j$, $1\le j\le n$. The strategy of the proof is similar to that of Theorem \ref{main}. We will transform $\ha G(z_1',\dots,z_n')$ into $\ha G(z_1,\dots,z_n)$ in a number of steps. In each step we get an error term, and we define a (good) event such that we have a good control of the error when the event happens, and the complement of the event (bad event) has small probability. These events actually have already appeared in the proof of Theorem \ref{main}. In addition, we find that it suffices to prove two special cases, which are the two lemmas below.

\begin{lemma}
	With the induction hypothesis, Theorem \ref{main2} holds if $z_1'=z_1$. \label{main2A}
\end{lemma}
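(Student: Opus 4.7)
Since $z_1'=z_1$, the two-sided radial SLE$_\kappa$ measures $\PP^*_{z_1}$ and $\PP^*_{z_1'}$ coincide, as do the endpoints $T:=T_{z_1}=T_{z_1'}$. By the definition of $\ha G$,
\[
\ha G(z_1,z_2',\dots,z_n')-\ha G(z_1,z_2,\dots,z_n)=G(z_1)\,\EE^*_{z_1}\bigl[\ha G_T(z_2',\dots,z_n')-\ha G_T(z_2,\dots,z_n)\bigr].
\]
The plan is to control this integrand using the same machinery developed in Section \ref{Convergence-Sec}. Write $\ha G_t:=\ha G_t(z_2,\dots,z_n)$ and $\ha G_t':=\ha G_t(z_2',\dots,z_n')$. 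Introduce parameters $\theta\in(0,1)$, $\eta_1<\eta_2\in(0,R_1)$, and $\vec s=(s_2,\dots,s_n)$ with $s_j\in(0,|z_j|\wedge|z_j-z_1|)$, all to be optimized at the end. Set $\tau:=\tau^{z_1}_{\eta_1}$ and decompose
\[
\ha G_T-\ha G_T'\;=\;[\ha G_T-\ha G_\tau]\;-\;[\ha G_T'-\ha G_\tau']\;+\;[\ha G_\tau-\ha G_\tau'].
\]

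The two outer brackets are handled exactly as the $e_{11}$-type error in Section \ref{Convergence-Sec}. On the event $E_{\eta_1,0;\eta_2}\cap E_{0;\vec s}$ (notation from \eqref{Ers}, \eqref{ErsR}), Lemma \ref{e4-lemma} applied with $a=\tau$, $b=T$, and $\xi_j$ a connected component of $\{|z-z_1|=\eta_2\}\cap H_\tau$ separating $z_1$ from $z_j$ yields, after $G(z_1)\EE^*_{z_1}[\cdot]$, a contribution $\lesssim F\prod_{k=2}^n\bigl(\tfrac{|z_k|\wedge|z_k-z_1|}{s_k}\bigr)^\alpha(Q^{B_*}\eta_2/d_1)^{\beta_*}$ for each outer bracket. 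The complementary bad events are dealt with by Corollary \ref{stayin-cor}(ii) (for crossings of $\{|z-z_1|=\eta_2\}$ by $\gamma[\tau,T]$) and Lemma \ref{RZ-Thm3.1-lim} (for the violations of $E_{0;\vec s}$), combined with the a priori bound $\ha G\lesssim F$ from the inductive hypothesis together with Proposition \ref{RZ-Thm1.1}; these produce corrections that are fractional powers of $\eta_1/\eta_2$ and $s_j/(|z_j|\wedge|z_j-z_1|)$ times the same prefactor.

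For the middle bracket, further restrict to $E_{\tau;\theta}\cap E_{\tau;\vec s}$ (from \eqref{Erth}, \eqref{Ers}). On this event, Koebe's quarter and distortion theorems convert estimates on the originals into estimates on the images $\til z_j:=g_\tau(z_j)-U_\tau$ and $\til z_j':=g_\tau(z_j')-U_\tau$: one obtains $|\til z_j-\til z_j'|\asymp|g_\tau'(z_j)||z_j-z_j'|$, that $\tfrac{|\Imm\til z_j-\Imm\til z_j'|}{\Imm\til z_j}$ is comparable to $\tfrac{|\Imm z_j-\Imm z_j'|}{\Imm z_j}$ after the usual split on whether $\Imm z_j\gtrless\dist(z_j,K_\tau)$, the lower bound $\ha d_j\gtrsim\theta\,|g_\tau'(z_j)|(d_j\wedge\dist(z_j,K_\tau))$, and $\ha Q\lesssim Q/\theta$ exactly as in \eqref{zkdk}. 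Hence Theorem \ref{main2} for $n-1$ points applies to $(\til z_j)$ versus $(\til z_j')$ once $(\theta^{-1}Q)^{B_{n-1}}|z_j-z_j'|/d_j$ stays below $\delta_{n-1}$ up to constants, giving a bound on $|\ha G_\tau-\ha G_\tau'|$ in terms of $F_\tau$ and the exponents $\beta_{n-1}$. After multiplying by $G(z_1)$, taking $\EE^*_{z_1}$, and invoking Lemma \ref{FF-F} to replace $G(z_1)F_\tau$ by $F\prod_{k=2}^n(|z_k|\wedge|z_k-z_1|/s_k)^\alpha$, one obtains the middle contribution.

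Finally, I take $s_j=X(|z_j|\wedge|z_j-z_1|)$, $\eta_2=\sqrt{\eta_1 d_1}$, and express $\theta,\eta_1,X$ as suitable fractional powers of $\max_j\bigl(\tfrac{|z_j-z_j'|}{d_j}\vee\tfrac{|\Imm z_j-\Imm z_j'|}{\Imm z_j}\bigr)$ so that the various error terms balance, in direct analogy with the end of Section \ref{Convergence-Sec}. The output constants $B_n,\beta_n,\delta_n$ depend only on the inductive constants $B_{n-1},\beta_{n-1},\delta_{n-1}$, on $B_*,\beta_*,\delta_*$ from Lemma \ref{e4-lemma}, and on $\alpha$. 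The main obstacle, essentially the same one already overcome in Section \ref{Convergence-Sec}, is verifying that the hypothesis enabling the inductive step (which forces $\eta_1\theta^{-B_{n-1}}/R_j$ below a small constant) is simultaneously compatible with the extremal-distance error from Lemma \ref{e4-lemma} (which pushes $\eta_2/d_1$ to be small); the same fractional-power bookkeeping used there carries over with only cosmetic modifications.
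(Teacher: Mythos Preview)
Your approach is correct in spirit but takes an unnecessarily long detour. The paper's proof works \emph{directly at the terminal time} $T=T_{z_1}$ and never introduces an intermediate time $\tau=\tau^{z_1}_{\eta_1}$, nor does it invoke Lemma~\ref{e4-lemma} at all. The point is that, since $z_1'=z_1$, both sides of the difference are expectations under the \emph{same} measure $\PP^*_{z_1}$ evaluated at the \emph{same} time $T$, so one may compare $\ha G_T(z_2',\dots,z_n')$ with $\ha G_T(z_2,\dots,z_n)$ inside the single expectation. Restricting to $E_{0;\vec s}\cap E_{0;\theta}$ (with error terms controlled by Lemma~\ref{RZ-Thm3.1-lim} and Lemma~\ref{FF-F}), one then applies Theorem~\ref{main2} for $n-1$ points directly to the images $\ha z_j=Z_T(z_j)$ versus $\ha z_j'=Z_T(z_j')$, using Lemma~\ref{gTvar} to convert distortion of moduli and imaginary parts. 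Only the parameters $\theta$ and $X$ (for $s_j$) need to be optimized; there are no $\eta_1,\eta_2$.

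Your decomposition $[\ha G_T-\ha G_\tau]-[\ha G_T'-\ha G_\tau']+[\ha G_\tau-\ha G_\tau']$ with the outer brackets handled by Lemma~\ref{e4-lemma} is precisely the machinery needed in Lemma~\ref{main2B}, where $z_1'\ne z_1$ forces one to reconcile two different two-sided radial measures via an intermediate $\F_{\tau}$-measurable quantity. Here that reconciliation is free, so the two time-change steps are redundant (though not wrong). A minor slip: your final remark about the constraint ``$\eta_1\theta^{-B_{n-1}}/R_j$ small'' conflates the convergence argument of Section~\ref{Convergence-Sec} (where $r_j$ enters) with the continuity argument here (where the relevant smallness parameter is $|z_j'-z_j|$, and $\eta_1$ does not appear in the inductive hypothesis for the middle bracket).
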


\begin{lemma}
	With the induction hypothesis, Theorem \ref{main2} holds if $z_k'=z_k$, $2\le k\le n$. \label{main2B}
\end{lemma}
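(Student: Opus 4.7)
The plan is to compare $\ha G(z_1, z_2, \dots, z_n) = G(z_1)\,\EE^*_{z_1}[\ha G_{T_{z_1}}(z_2,\dots,z_n)]$ with $\ha G(z_1', z_2, \dots, z_n) = G(z_1')\,\EE^*_{z_1'}[\ha G_{T_{z_1'}}(z_2,\dots,z_n)]$. By the closed form (\ref{G(z)}), $G(z_1')/G(z_1) = 1 + O(|z_1'-z_1|/|z_1|) + O(|\Imm z_1' - \Imm z_1|/\Imm z_1)$, and by the inductive bound $\ha G \lesssim F$ for $(n-1)$ points together with Lemma \ref{FF-F}, one has $G(z_1)\,\EE^*_{z_1}[\ha G_{T_{z_1}}(z_2,\dots,z_n)] \lesssim F$. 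Since $d_1 \le |z_1|$, this reduces the problem to estimating the difference of the two expectations; write $\ha G_t$ for $\ha G_t(z_2,\dots,z_n)$ below.

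Fix an intermediate radius $\eta \in (|z_1 - z_1'|, d_1)$ to be optimized later, and let $\tau = \tau^{z_1}_\eta$. Then $\tau$ is a.s.\ finite under both $\PP^*_{z_1}$ and $\PP^*_{z_1'}$, since $|z_1-z_1'|<\eta$ forces $\tau \leq T_{z_1'}$ by continuity of $\gamma$. Decompose
\begin{align*}
\EE^*_{z_1}[\ha G_{T_{z_1}}] - \EE^*_{z_1'}[\ha G_{T_{z_1'}}] &= \bigl(\EE^*_{z_1}[\ha G_\tau] - \EE^*_{z_1'}[\ha G_\tau]\bigr) \\ &\quad + \EE^*_{z_1}[\ha G_{T_{z_1}} - \ha G_\tau] - \EE^*_{z_1'}[\ha G_{T_{z_1'}} - \ha G_\tau].
\end{align*}
By Lemma \ref{continuity-two-sided}, $\PP^*_{z_1}|_{\F_\tau}$ and $\PP^*_{z_1'}|_{\F_\tau}$ are mutually absolutely continuous with log-density $O(|z_1-z_1'|/\eta)$, so the first bracket is bounded by $(|z_1-z_1'|/\eta)\EE^*_{z_1}[\ha G_\tau]$, and after multiplication by $G(z_1)$ contributes an error of order $F \cdot |z_1-z_1'|/\eta$.

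For the two tail terms, first restrict to $\bigcap_{k=2}^n\{\dist(z_k,K_\tau) \ge s_k\}$ with $s_k$ a small fraction of $|z_k| \wedge |z_k - z_1|$; the complement is handled by Lemma \ref{RZ-Thm3.1-lim} and its $\PP^*_{z_1'}$-analog. On this event, further restrict (via Lemma \ref{stayin}(ii) under $\PP^*_{z_1}$ and Corollary \ref{stayin-cor} under $\PP^*_{z_1'}$) to the event that $\gamma$ stays inside $\{|z-z_1| < \sqrt{\eta d_1}\}$ between time $\tau$ and the hitting time of $z_1$ (resp.\ $z_1'$). On that event, applying Lemma \ref{e4-lemma} with $a=\tau$, $b=T_{z_1}$ or $T_{z_1'}$, crosscuts $\xi_j$ along $\{|z-z_1|=\sqrt{\eta d_1}\}$, and $\rho_j$ the arc through $z_j$ gives extremal distance $d_* \asymp \log(d_1/\eta)/(2\pi)$, yielding a tail error of order $F \cdot (\eta/d_1)^{\beta_*}$ for some $\beta_*>0$. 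Summing the three errors and balancing $|z_1-z_1'|/\eta$ against $(\eta/d_1)^{\beta_*}$ produces a total bound $F \cdot (|z_1-z_1'|/d_1)^{\beta_*/(1+\beta_*)}$, matching the form in Theorem \ref{main2} after absorbing $Q$-factors into $B_n$. The hard step will be the tail control: one must patch together Lemma \ref{stayin}, Corollary \ref{stayin-cor}, Lemma \ref{e4-lemma}, and Lemma \ref{RZ-Thm3.1-lim} while tracking carefully how the small parameters (including $\theta$-type angular factors from Lemma \ref{FF-F}) interact with the $Q$-dependence, very much in the spirit of the layered decomposition in Section \ref{Convergence-Sec}; once those pointwise tail bounds are in hand, Lemma \ref{continuity-two-sided} closes the argument cleanly.
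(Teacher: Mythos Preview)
Your approach is essentially the paper's: change measure via Lemma \ref{continuity-two-sided} at an intermediate radius $\tau^{z_1}_\eta$, and bridge the gap from $T_{z_1}$ (resp.\ $T_{z_1'}$) down to $\tau$ using Lemma \ref{stayin}/Corollary \ref{stayin-cor} together with Lemma \ref{e4-lemma}. The paper uses two intermediate radii $\eta_1<\eta_2$ (with $|z_1'-z_1|/\eta_1=\eta_1/\eta_2=\eta_2/d_1$) where you use $\eta$ and $\sqrt{\eta d_1}$; this is an inessential reparametrization.

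There is one genuine slip. Your bound on the first bracket presumes $G(z_1)\,\EE^*_{z_1}[\ha G_\tau]\lesssim F$, but this is not directly available: Lemma \ref{FF-F} only gives the pointwise estimate $G(z_1)\ha G_\tau\lesssim F\prod_{k=2}^n\bigl(\tfrac{|z_k|\wedge|z_k-z_1|}{s_k}\bigr)^\alpha$ on the event $E_{\eta;\vec s}=\{\dist(z_k,K_\tau)\ge s_k,\,2\le k\le n\}$, and there is no analog at time $\tau$ of the identity $G(z_1)\EE^*_{z_1}[\ha G_{T_{z_1}}]=\ha G(z_1,\dots,z_n)\lesssim F$ that you invoke in the first paragraph. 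The paper's remedy is to carry the indicator ${\bf 1}_{E_{\eta;\vec s}}$ through the measure-change step, so the error picks up the $\prod_k$ factor (balanced later against the $s_k$-cost). The complement of $E_{\eta;\vec s}$ is not handled by Lemma \ref{RZ-Thm3.1-lim} directly either, since that lemma is stated at $T_{z_1}$; the clean device is to observe that once the staying-in event is imposed and $\eta_2+s_k<|z_k-z_1|$ (condition (\ref{Cond4})), one has $E_{\eta;\vec s}\cap E_{\eta,0;\eta_2}=E_{0;\vec s}\cap E_{\eta,0;\eta_2}$, so one can restrict at the terminal time first (where Lemma \ref{RZ-Thm3.1-lim} applies), then impose staying-in, and only then shift to $\tau$. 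With this reorganization your optimization over $s_k$, $\eta$ and the absorbed $Q$-factors goes through as you describe.
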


Before proving these lemmas, we first show how they can be used to prove the inductive step for Theorem \ref{main2} from $n-1$ to $n$. We have
\begin{align*}
&|\ha G(z_1',z_2',\dots,z_n')-\ha G(z_1,z_2,\dots,z_n)|\\ \le &|\ha G(z_1',z_2',\dots,z_n')-\ha G(z_1',z_2,\dots,z_n)|+|\ha G(z_1',z_2,\dots,z_n)-\ha G(z_1,z_2,\dots,z_n)|=:I_1+I_2.
\end{align*}
By Lemma \ref{main2B}, for some constants $B_n^{(2)}>0$ and $\beta_n^{(2)},\delta_n^{(2)}\in(0,1)$, $I_2$ is bounded by the RHS of (\ref{RHS-main2}) when (\ref{Cond-main2}) holds for $j=1$. We need to use Lemma \ref{main2A} to estimate $I_1$ with the assumption that $z_1'$ is close to $z_1$ but may not equal to $z_1$. Define $d_k'$ and $l_k'$, $1\le k\le n$, $Q'$ and $F'$ using (\ref{ldyR}) and (\ref{Q}) for the $n$ points $z_1',z_1,\dots,z_n$. From Lemma \ref{main2A}, we know that, for some constants $B_n'>0$ and $\beta_n',\delta_n'\in(0,1)$, $I_1$ is bounded by the RHS of (\ref{RHS-main2}) when (\ref{Cond-main2}) holds for $2\le j\le n$, with $d_j'$, $Q'$ and $F'$ in place of $d_j$, $Q$ and $F$, respectively. Suppose
\BGE  {|z_1'-z_1|} <d_1/2,\quad \Imm z_1'\asymp \Imm z_1.\label{Cond2AB}\EDE
Then we have $|z_1'|\asymp |z_1$ and $|z_k-z_1'|\asymp|z_k-z_1|$, $2\le k\le n$, which imply that
$d_k'\asymp d_k$ and $l_k'\asymp l_k$, $1\le k\le n$, which in turn imply that $Q'\asymp Q$ and $F'\asymp F$.

Thus, there are constants $B_n^{(1)}>0$ and $\beta_n^{(1)},\delta_n^{(1)}\in(0,1)$, such that $I_1$ is bounded by the RHS of (\ref{RHS-main2}) when (\ref{Cond-main2}) holds for $2\le j\le n$. Finally, taking $B_n=B_n^{(1)}\vee B_n^{(2)}$, $\beta_n=\beta_n^{(1)}\wedge \beta_n^{(2)}$ and $\delta_n=\delta_n^{(1)}\wedge \delta_n^{(2)}\wedge 1/8$, we then finish the inductive step for Theorem \ref{main2} from $n-1$ to $n$.

\begin{proof}[Proof of Lemma \ref{main2A}.]
	Define $E_{0;\vec s}$ and $E_{0;\theta}$ using (\ref{Ers}) and (\ref{Erth})  for $z_1,z_2,\dots,z_n$; and define $E_{0;\vec s}'$ and $E_{0;\theta}'$ using (\ref{Ers}) and (\ref{Erth})  for $z_1,z_2',\dots,z_n'$. Let $T=T_{z_1}=\tau^{z_1}_0$.
	
	Fix  $\vec s=( s_2,\dots, s_{n})$ with $s_j\in(|z_j'-z_j|,|z_j-z_1|\wedge |z_j|)$ and $\theta\in(0,1)$ being variables to be determined later. From Koebe's $1/4$ theorem and distortion theorem, we see that there is a constant $\delta\in(0,1/10)$ such that,  if
	\BGE \frac{|z_j'-z_j|}{s_j}<\delta, \quad 2\le j\le n,\label{CondA}\EDE
	and $E_{0;\vec s}$ occurs, then $$4|g_T(z_j')-g_T(z_j)|<\dist(g_T(z_j),S_{K_T})\le |g_T(z_j)-U_T|,\quad 2\le j\le n,$$
	which implies that
	\BGE E_{0;\vec s}\cap E_{0;2\theta}'\subset E_{0;\vec s}\cap E_{0;\theta}\subset E_{0;\vec s}\cap E_{0;\theta/2}'.\label{E-Etheta}\EDE
	Since $\delta<1/2$,  (\ref{CondA}) clearly implies that
	\BGE E_{0;2\vec s}'\subset E_{0;\vec s}\subset E_{0;\vec s/2}'.\label{E-Es}\EDE
	
	Suppose (\ref{CondA}) holds. First, we express
	$$\ha G(z_1,z_2,\dots,z_n)=G(z_1)\EE_{z_1}^*[\ha G_{T}(z_2,\dots,z_n)]=G(z_1)\EE_{z_1}^*[{\bf 1}_{E_{0;\vec s}}\ha G_{T}(z_2,\dots,z_n)]+e_1;$$
	$$\ha G(z_1,z_2'\dots,z_n')=G(z_1)\EE_{z_1}^*[\ha G_{T}(z_2',\dots,z_n')]=G(z_1)\EE_{z_1}^*[{\bf 1}_{E_{0;\vec s}}\ha G_{T}(z_2',\dots,z_n')]+e_1'.$$
	Using Lemma \ref{RZ-Thm3.1-lim} and (\ref{E-Es}), we find that there is a constant $\beta>0$ such that
	$$0\le e_1,e_1'\lesssim F \sum_{j= 2}^n \Big(\frac{ s_j}{|z_j|\wedge |z_j-z_1|}\Big)^{\beta } .$$
	
	Second, we express
	$$G(z_1)\EE_{z_1}^*[{\bf 1}_{E_{0;\vec s}}\ha G_{T}(z_2,\dots,z_n)]=G(z_1)\EE_{z_1}^*[{\bf 1}_{E_{0;\vec s}\cap E_{0;\theta}}\ha G_{T}(z_2,\dots,z_n)]+e_2;$$
	$$G(z_1)\EE_{z_1}^*[{\bf 1}_{E_{0;\vec s}}\ha G_{T}(z_2',\dots,z_n')]=G(z_1)\EE_{z_1}^*[{\bf 1}_{E_{0;\vec s}\cap E_{0;\theta}}\ha G_{T}(z_2',\dots,z_n')]+e_2'.$$
	From Lemma \ref{FF-F},  (\ref{E-Etheta},\ref{E-Es}), and that $\ha G\lesssim F$ holds for $(n-1)$ points, we get
	$$0\le e_2,e_2'\lesssim F  \prod_{j=2}^n \Big(\frac{|z_j|\wedge |z_j-z_1|}{ s_j}\Big)^{\alpha }\theta^\alpha.$$

	Now suppose $E_{0;\vec s}$ and $E_{0;\theta}$ both occur.
	Let $Z=Z_T$, $\ha z_j=Z(z_j)$ and $\ha z_j'=Z(z_j')$, $2\le j\le n$.
	By definition, we have
	\begin{align*}
	\ha G_{T}(z_2,\dots,z_{n})=&\prod_{j=2}^n |  g_{T}'(z_j)|^{2-d} \ha G(\ha z _2,\dots,\ha z _n);\\
	\ha G_{T}(z_2',\dots,z_{n}')=&\prod_{j=2}^n |  g_{T}'(z_j')|^{2-d} \ha G(\ha z_2',\dots,\ha z_n').
	\end{align*}
	Define $\ha G_{T}'(z_2',\dots,z_{n}')=\prod_{j=2}^n |  g_{T}'(z_j)|^{2-d} \ha G(\ha z_2',\dots,\ha z_n')$. From Koebe's distortion theorem, there is a constant $\delta'\in(0,1)$ such that, if
	\BGE \frac{|z_j'-z_j|}{s_j}<\delta', \quad 2\le j\le n,\label{CondB}\EDE
	then
	\BGE|\ha G_T(z_2',\dots,z_{n}')-\ha G_T'(z_2',\dots,z_{n}')|
	\lesssim \sum_{j=2}^n \frac{|z_j'-z_j|}{s_j} \cdot \ha G_{T}'(z_2',\dots,z_{n}').\label{GT-GT'}\EDE

	Define $\ha d_k$, $2\le k\le n$, and $\ha Q$ using (\ref{ldyR}) and (\ref{Q}) for the $(n-1)$ points $\ha z_2,\dots,\ha z_n$.
	Since Theorem \ref{main2} holds for $(n-1)$ points, we see that, for some constants $B_{n-1}>0$ and $\beta_{n-1},\delta_{n-1}\in(0,1)$, if
	$$\ha Q^{B_{n-1}}\cdot \frac{|\ha z_j'-\ha z_j|}{\ha d_j}<\delta_{n-1},\quad \frac{|\Imm \ha z_j'-\Imm \ha z_j|}{\Imm \ha z_j}<\delta_{n-1},$$
	then
	\begin{align*}
	&|\ha G(\ha z_2',\dots,\ha z_n')-\ha G(\ha z_2,\dots,\ha z_n)|/F(\ha z_2,\dots,\ha z_n)\\
	\lesssim & \sum_{j=2}^n \Big(\ha Q^{B_{n-1} }  \frac{|\ha z_j'-\ha z_j|}{\ha d_j}\Big)^{\beta_{n-1}}+\Big(\frac{|\Imm \ha z_j'-\Imm \ha z_j|}{\Imm \ha z_j}\Big)^{\beta_{n-1}}.
	\end{align*}
	If $E_{0;\theta}$ occurs,(\ref{zkdk}) holds here by the same argument. Let $B_0=B_{n-1}+1$. Then, for some constant $C>1$, if
	\BGE Q^{B_0}\cdot \frac{|\ha z_j'-\ha z_j|}{|\ha z_j|}<\frac{\theta^{B_0} \delta_{n-1}}{C^{B_0}},\quad \frac{|\Imm \ha z_j'-\Imm \ha z_j|}{\Imm \ha z_j}<\delta_{n-1},\label{Cond-temp}\EDE
	then
	\begin{align}
	&|\ha G_T'(z_2',\dots,z_n')-\ha G_T(z_2,\dots,z_n)|/F_{T}(z_2,\dots,z_n)\nonumber\\
	\lesssim & \sum_{j=2}^n \Big( \Big(\theta^{-B_0 }Q^{B_0 } \frac{|\ha z_j'-\ha z_j|}{|\ha z_j|}\Big)^{\beta_{n-1}}+\Big(\frac{|\Imm \ha z_j'-\Imm \ha z_j|}{\Imm \ha z_j}\Big)^{\beta_{n-1}} \Big).
	\label{Gr'-Geta-c}
	\end{align}
	
	From (\ref{Cond-temp}) we see that the RHS of (\ref{Gr'-Geta-c}) is bounded above by a constant. Since $\ha G_T\lesssim F_{T}$, we get $\ha G_T'(z_2',\dots,z_n')\lesssim F_{T}(z_2,\dots,z_n)$. From (\ref{GT-GT'}) and (\ref{Gr'-Geta-c}), we see that, if (\ref{CondB}) and (\ref{Cond-temp}) both hold, then
	\begin{align}
	&|\ha G_T(z_2',\dots,z_n')-\ha G_T(z_2,\dots,z_n)|/F_T(z_2,\dots,z_n) \nonumber\\
	\lesssim& \sum_{j=2}^n \Big(\frac{|z_j'-z_j|}{s_j}+ \Big(\theta^{-B_0 }Q^{B_0 } \frac{|\ha z_j'-\ha z_j|}{|\ha z_j|}\Big)^{\beta_{n-1}}+\Big(\frac{|\Imm \ha z_j'-\Imm \ha z_j|}{\Imm \ha z_j}\Big)^{\beta_{n-1}} \Big).\label{G-G/F}
	\end{align}
	Applying Lemma \ref{gTvar} to $K=K_T$ and using $Z=g_T-U_T$ and $U_T\in S_{K_T}$, we find that, if (\ref{CondA}) holds, then for $2\le j\le n$,
	\BGE \frac{|\ha z_j'-\ha z_j|}{|\ha z_j|}\lesssim \frac{|z_j'-z_j|}{s_j},\quad \frac{|\Imm \ha z_j'-\Imm \ha z_j|}{\Imm \ha z_j}\lesssim \frac{|\Imm z_j'-\Imm z_j|}{\Imm z_j}+\Big(\frac{|z_j'-z_j|}{s_j}\Big)^{1/2}.\label{Imz<}\EDE
	Thus, there is a constant $C_0>0$, such that if
	\BGE Q^{B_0}\cdot \frac{| z_j'- z_j|}{s_j}<\frac{\theta^{B_0} \delta_{n-1}^2}{C_0},\quad \frac{|\Imm   z_j'-\Imm   z_j|}{\Imm   z_j}<\frac{\delta_{n-1}}{C_0},\label{CondC}\EDE
	then (\ref{Cond-temp}) holds.
	
	Now we express
	$$G(z_1)\EE_{z_1}^*[{\bf 1}_{E_{0;\vec s}\cap E_{0;\theta}}\ha G_{T}(z_2',\dots,z_n')]=G(z_1)\EE_{z_1}^*[{\bf 1}_{E_{0;\vec s}\cap E_{0;\theta}}\ha G_{T}(z_2,\dots,z_n)]+e_3.$$
	From (\ref{G-G/F},\ref{Imz<}) and Lemma \ref{FF-F}, we find that, if (\ref{CondA},\ref{CondB},\ref{CondC}) all hold, then
	$$|e_3|\lesssim  F  \prod_{j=2}^n \Big(\frac{|z_j|\wedge |z_j-z_1|}{ s_j}\Big)^{\alpha }
	\sum_{j=2}^n \Big(\Big(\theta^{-B_0 }Q^{B_0 } \frac{|z_j'-z_j|}{s_j}\Big)^{\beta_{n-1}/2}+\Big(\frac{|\Imm z_j'-\Imm z_j|}{\Imm z_j}\Big)^{\beta_{n-1}}\Big).$$

	At the end, we follow the argument after the estimate on $e_{13}$ in Section \ref{Convergence-Sec}. First suppose that $\frac{s_j}{|z_j|\wedge |z_j-z_1|}=X$, $2\le j\le n$, for some $X\in(0,1)$ to be determined. Then we have $\frac{|z_j'-z_j|}{s_j}\le X^{-1}\cdot \frac{|z_j'-z_j|}{d_j}$, $2\le j\le n$. Then we may set $$\theta=\max_{2\le j\le n}\Big(\frac{|z_j'-z_j|}{d_j}\Big)^a,\quad X=\max_{2\le j\le n}\Big(\frac{|z_j'-z_j|}{d_j}\Big)^b \bigvee \max_{2\le j\le n} \Big(\frac{|\Imm z_j'-\Imm z_j|}{\Imm z_j}\Big)^c$$
	for some suitable constants $a,b,c>0$. It is easy to find those $a,b,c$ and some
	constants $B_n>0$ and $\beta_n,\delta_n\in(0,1)$ such that the upper bounds for $|e_1|,|e_1'|,|e_2|,|e_2'|,|e_3|$ are all bounded by the RHS of (\ref{RHS-main2}) with $z_1'=z_1$, and if (\ref{Cond-main2}) holds, then (\ref{CondA},\ref{CondB},\ref{CondC}) all hold. The proof is now complete.
\end{proof}

\begin{proof}[Proof of Lemma \ref{main2B}.]
	Fix $s_j\in(|z_1'-z_1|,|z_j-z_1|\wedge |z_j|)$, $2\le j\le n$, and $\eta_2>\eta_1>|z_1'-z_1|$ depending on $\kappa,n,z_1,z_1',z_2,\dots,z_n$ to be determined later.  Define $E_{0;\vec s}$, $E_{\eta_1;\vec s}$, and $E_{\eta_1,0;\eta_2}$  using (\ref{Ers}), (\ref{Ers}), and (\ref{ErsR}), respectively,  for $z_1,z_2,\dots,z_n$. Define $E_{0;\vec s}'$ using (\ref{Ers}) for $z_1',z_2,\dots,z_n$, let $E_{\eta_1,\vec s}'=E_{\eta_1;\vec s}$, and define
	\begin{align*}
	E'_{\eta_1,0;\eta_2}=&\{\gamma[\tau^{z_1}_{\eta_1},T_{z_1'}]\mbox{ does not intersect any connected component of } \\
	& \{|z-z_1|=\eta_2\}\cap H_{\tau^{z_1}_{\eta_1}}  \mbox{ that separates }z_1'\mbox{ from any } z_k, 2\le k\le n\}.
	\end{align*}
	
	First, we express
	$$\ha G(z_1,z_2,\dots,z_n)=G(z_1)\EE_{z_1}^*[{\bf 1}_{E_{0;\vec s}} \ha G_{T_{z_1}}(z_2,\dots,z_n)]+e_1;$$
	$$\ha G(z_1',z_2,\dots,z_n)=G(z_1')\EE_{z_1'}^*[{\bf 1}_{E_{0;\vec s}'} \ha G_{T_{z_1'}}(z_2,\dots,z_n)]+e_1'.$$
	Now suppose (\ref{Cond2AB}) holds. Recall that we have $|z_j-z_1'|\asymp|z_j-z_1|$, $2\le j\le n$, $Q'\asymp Q$ and $F'\asymp F$.
	By Lemma \ref{RZ-Thm3.1-lim}, we see that there is a constant $\beta>0$ such that
	$$0\le e_1,e_1'\lesssim F  \sum_{j= 2}^n \Big(\frac{ s_j}{|z_j|\wedge |z_j-z_1|}\Big)^{\beta }.$$
	
	Second, we express
	$$ G(z_1)\EE_{z_1}^*[{\bf 1}_{E_{0;\vec s}} \ha G_{T_{z_1}}(z_2,\dots,z_n)]=G(z_1)\EE_{z_1}^*[{\bf 1}_{E_{0;\vec s}\cap E_{\eta_1,0;\eta_2}} \ha G_{T_{z_1}}(z_2,\dots,z_n)]+e_2;$$
	$$ G(z_1')\EE_{z_1'}^*[{\bf 1}_{E_{0;\vec s}'} \ha G_{T_{z_1'}}(z_2,\dots,z_n)]=G(z_1')\EE_{z_1'}^*[{\bf 1}_{E_{0;\vec s}'\cap E'_{\eta_1,0;\eta_2}}  \ha G_{T_{z_1'}}(z_2,\dots,z_n)]+e_2'.$$
	From Lemma \ref{stayin}, Corollary \ref{stayin-cor} (applied to $Z=\{z_j\}$, $2\le j\le n$), Lemma \ref{FF-F}, and that $|z_j-z_1'|\asymp |z_j-z_1|$ and $F'\asymp F$, we get
	$$0\le e_2,e_2'\lesssim F   \prod_{j=2}^{n}\Big(\frac{|z_j|\wedge |z_j-z_1|}{s_j}\Big)^{\alpha} \Big(\frac{\eta_1}{\eta_2}\Big)^{\alpha/4} .$$
	
	Third, we change the times in the two expressions from $T_{z_1}$ and $T_{z_1'}$, respectively, to the same time $\tau^{z_1}_{\eta_1}$, and express
	$$ G(z_1)\EE_{z_1}^*[{\bf 1}_{E_{0;\vec s}\cap E_{\eta_1,0;\eta_2}} \ha G_{T_{z_1}}(z_2,\dots,z_n)]=G(z_1)\EE_{z_1}^*[{\bf 1}_{E_{\eta_1;\vec s}\cap E_{\eta_1,0;\eta_2}} \ha G_{\tau^{z_1}_{\eta_1}}(z_2,\dots,z_n)]+e_3 ;$$
	$$  G(z_1')\EE_{z_1'}^*[{\bf 1}_{E_{0;\vec s}'\cap E'_{\eta_1,0;\eta_2}}  \ha G_{T_{z_1'}}(z_2,\dots,z_n)] =G(z_1')\EE_{z_1'}^*[{\bf 1}_{E_{\eta_1;\vec s}'\cap E'_{\eta_1,0;\eta_2}}  \ha G_{\tau^{z_1}_{\eta_1}}(z_2,\dots,z_n)]+e_3' .$$
	Now suppose (\ref{Cond4}) holds.
	Then $E_{\eta_1,0;\eta_2}\cap E_{\eta_1;\vec s}=E_{\eta_1,0;\eta_2}\cap E_{0;\vec s}$ and $E'_{\eta_1,0;\eta_2}\cap E'_{\eta_1;\vec s}=E'_{\eta_1,0;\eta_2}\cap E'_{0;\vec s}$. Applying Lemma \ref{e4-lemma} with $a=\tau^{z_1}_{\eta_1}$, $b=T_{z_1}$ or $b=T_{z_1'}$, and using $Q'\asymp Q$, $F'\asymp F$ and $|z_j-z_1'|\asymp |z_j-z_1|$, we find that, for some constants $B_*>0$ and $\beta_*,\delta_*\in(0,1)$, if (\ref{Cond4'}) holds, then
	$$|e_3|,|e_3'|\lesssim F \prod_{j=2}^n \Big(\frac{|z_j|\wedge |z_j-z_1|}{ s_k }\Big)^\alpha \Big(Q^{B_*}   \frac{\eta_2}{d_1}\Big)^{\beta_*}.$$
	
	Note that the proof of Lemma \ref{e4-lemma} uses Theorem \ref{main2} for $n-1$ points so we can use it here by induction hypothesis. Removing the restriction of the events $E_{\eta_1,0;\eta_2}$ and $E'_{\eta_1,0;\eta_2}$, we express
	$$ G(z_1)\EE_{z_1}^*[{\bf 1}_{E_{\eta_1;\vec s}\cap E_{\eta_1,0;\eta_2}} \ha G_{\tau^{z_1}_{\eta_1}}(z_2,\dots,z_n)]=G(z_1)\EE_{z_1}^*[{\bf 1}_{E_{\eta_1;\vec s} } \ha G_{\tau^{z_1}_{\eta_1}}(z_2,\dots,z_n)]-e_4 ;$$
	$$  G(z_1')\EE_{z_1'}^*[{\bf 1}_{E_{\eta_1;\vec s}'\cap E'_{\eta_1,0;\eta_2}}  \ha G_{\tau^{z_1}_{\eta_1}}(z_2,\dots,z_n)] =G(z_1')\EE_{z_1'}^*[{\bf 1}_{E_{\eta_1;\vec s}' }  \ha G_{\tau^{z_1}_{\eta_1}}(z_2,\dots,z_n)]-e_4' .$$
	The estimates on $e_4,e_4'$ are the same as that on $e_2,e_2'$ by Lemma \ref{stayin}, Corollary \ref{stayin-cor}, Lemma \ref{FF-F},  and that $F'\asymp F$ and $|z_j-z_1'|\asymp |z_j-z_1|$.
	
	Changing $G(z_1')$ to $G(z_1)$ on the RHS of the second displayed formula, we express
	$$G(z_1')\EE_{z_1'}^*[{\bf 1}_{E_{\eta_1;\vec s}' }  \ha G_{\tau^{z_1}_{\eta_1}}(z_2,\dots,z_n)]=G(z_1)\EE_{z_1'}^*[{\bf 1}_{E_{\eta_1;\vec s}' }  \ha G_{\tau^{z_1}_{\eta_1}}(z_2,\dots,z_n)]+e_5.$$
	From (\ref{G(z)}) and Lemma \ref{FF-F} we see that there is a constant $\delta>0$ such that, if
	\BGE \frac{|z_1'-z_1|}{|z_1|}<\delta,\quad \frac{|\Imm z_1'-\Imm z_1|}{\Imm z_1}<\delta,\label{condb}\EDE
	then
	$$|e_5|\lesssim F \prod_{k=2}^n \Big(\frac{|z_k|\wedge |z_k-z_1|}{ s_k }\Big)^\alpha\Big(\frac{|z_1'-z_1|}{|z_1|}+\frac{|\Imm z_1'-\Imm z_1|}{\Imm z_1}\Big).$$
	
	Finally, we express
	$$G(z_1)\EE_{z_1'}^*[{\bf 1}_{E_{\eta_1;\vec s}' }  \ha G_{\tau^{z_1}_{\eta_1}}(z_2,\dots,z_n)]=G(z_1)\EE_{z_1}^*[{\bf 1}_{E_{\eta_1;\vec s}}  \ha G_{\tau^{z_1}_{\eta_1}}(z_2,\dots,z_n)]+e_6.$$
	Since $E_{\eta_1,\vec s}'=E_{\eta_1;\vec s}$, the random variables in the two square brackets are the same, which is $\F_{\tau^{z_1}_{\eta_1}}$-measurable. By Lemmas \ref{continuity-two-sided} and \ref{FF-F}, we see that there is a constant $\delta$ such that, if
	\BGE \frac{|z_1'-z_1|}{\eta_1}<\delta,\label{Condc}\EDE
	then
	$$|e_6|\lesssim F \prod_{k=2}^n \Big(\frac{|z_k|\wedge |z_k-z_1|}{ s_k }\Big)^\alpha\Big(\frac{|z_1'-z_1|}{\eta_1}\Big).$$
	
	At the end, we follow the argument after the estimate on $e_{13}$ in Section \ref{Convergence-Sec}. Suppose that $\frac{s_j}{|z_j|\wedge |z_j-z_1|}=X$, $2\le j\le n$, for some $X\in(0,1)$ to be determined. Pick $\eta_1,\eta_2$ such that $|z_1'-z_1|/\eta_1=\eta_1/\eta_2=\eta_2/d_1$. It is easy to find  constants $a,B_n>0$ and $\beta_n,\delta_n\in(0,1)$  such that with $X=(\frac{|z_1'-z_1|}{d_1})^a$, if (\ref{Cond-main2}) holds for $j=1$, then Conditions (\ref{Cond2AB},\ref{Cond4},\ref{Cond4'},\ref{condb},\ref{Condc}) all hold, and the upper bounds for $|e_j|$, $1\le j\le 6$, and $|e_j'|$, $1\le j\le 4$, are all bounded by the RHS of (\ref{RHS-main2}). The proof is now complete.
\end{proof}

\section{Proof of Theorem \ref{strong-lower}} \label{proof2}

In this section we want to show the desired lower bound for the multi-point Green's function. The method of the proof is based on the generalization of the method used  in \cite{LZ} and \cite{LR2} to show the lower bound. We find the best point (almost means the nearest point but we make it precise) to go near first and we consider the event to go near that point before going near other points (as much as possible). This can be done by staying in a L-shape as defined in \cite{LZ}. It is possible that we can not go all the way to a specific given point since couple of points are very near each other. In this case we can stop in an earlier time and separate points by a conformal map. We will go through the details about this general strategy in this section.
Following Lawler and Zhou in \cite{LZ}, we define for $z\in\lin\HH$ and $\rho\in(0,1)$,
\[
L_z=[0,\Ree z] \cup [\Ree z,z] ,
\]
and
\[
L_{z,\rho}=\{z' \in \lin\Half| \dist(z',L_z) \leq \rho|z|\}.
\]

A simple geometry argument shows that, for any $z_0\in\lin\HH\sem \{0\}$ and $\rho\in(0,1)$,
\BGE L_{z_0,\rho}\cap\{z\in\lin\HH:|z|\ge |z_0|\}\subset \{|z-z_0|\le \sqrt{2\rho} |z_0|\}.\label{L-ineq}\EDE

Now we state a lemma which shows what happens to points which are not in the L-shape when we flatten the domain.

\begin{lemma} \label{farpoint}
	Suppose $0<\rho \leq \frac{1}{4}$. Then the following equations hold with implicit constants depending only on $\kappa$ and $\rho$. Suppose $z \in \Half,$  $z_1,z_2 \in \Half \setminus L_{z,2\rho},$ and $\gamma(t)$, $0\le t\le T$, is a chordal Loewner curve such that $\gamma(0)=0$, $\gamma(T)=z$, and $\gamma[0,T] \subset L_{z,\rho}$. Let $Z=Z_T$ be the centered Loewner map at time $T$. Then we have the following.
	\[
	|Z'(z_1)| \asymp 1.
	\]
	\[
	\Im(Z(z_1)) \asymp \Im(z_1).
	\]
	\[
	|Z(z_1)| \asymp |z_1|.
	\]
	\[
	|Z(z_1)-Z(z_2)| \lesssim |z_1-z_2|.
	\]
	 Finally if $z_1,z_2,\dots,z_n$ are distinct points in $ \Half \setminus L_{z,2\rho}$ and $r_1,\dots,r_n >0$ we have
	\[
	F(Z(z_1),\dots,Z(z_n);|Z'(z_1)|r_1,\dots,|Z'(z_n)|r_n) \gtrsim F(z_1,\dots,z_n;r_1,\dots,r_n).
	\]

\end{lemma}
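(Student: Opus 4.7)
The plan is to observe that $L_z\subset\{|w|\le|z|\}$, hence $K_T\subset L_{z,\rho}\subset\lin{D}^+_{0,R_0}$ where $R_0:=(1+\rho)|z|$. Since $0\in L_z\cap \R$ we have $0\in\lin{K_T}\cap\R$, so Lemma \ref{small} gives $S_{K_T}\subset[-2R_0,2R_0]$, whence $|U_T|\le 2R_0\asymp|z|$. For any $z'\in\HH\setminus L_{z,2\rho}$, using $0,z\in L_z$, I get $|z'|\ge\dist(z',L_z)>2\rho|z|$ and $\dist(z',K_T)\ge\dist(z',L_{z,\rho})\ge\rho|z|$; in particular these bounds apply to $z_1$ and $z_2$.

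\textbf{Estimates (1)--(3).} I split into two cases. If $|z_j|\ge 5R_0$, Lemma \ref{lemma-gK} immediately gives $|Z'(z_j)|\asymp 1$, $\Imm Z(z_j)\asymp\Imm z_j$, and (since $|U_T|\le 2R_0\le 2|z_j|/5$ and $|g_T(z_j)-z_j|\le 2R_0/5$) also $|Z(z_j)|\asymp|z_j|$. In the complementary case $|z_j|\le 5R_0$, one has $|z_j|\asymp|z|\asymp R_0$ and $\dist(z_j,K_T)\asymp|z|$. Picking the reference point $w_*:=10R_0 i$ (to which the first case applies, giving $|Z'(w_*)|\asymp 1$), I connect $z_j$ and $w_*$ by a short path in $\HH\setminus L_{z,\rho}\subset H_T$ that is covered by a uniformly bounded number (depending only on $\rho$) of overlapping Euclidean discs of radius $\asymp\rho|z|$, each contained in $H_T$; iterated Koebe distortion along this chain yields $|Z'(z_j)|\asymp|Z'(w_*)|\asymp 1$. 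The upper bound $|Z(z_j)|\le|g_T(z_j)|+|U_T|\lesssim|z_j|$ is immediate from Lemma \ref{small}, and the matching lower bound follows from Koebe's $1/4$ theorem applied to $Z^{-1}\colon\HH\to H_T$ together with $|Z'(z_j)|\asymp 1$. For $\Imm Z(z_j)\asymp\Imm z_j$, if $\Imm z_j\ge\rho|z|$ the disc $\{|w-z_j|<\rho|z|/2\}$ lies in $H_T$ and Koebe distortion suffices; if $\Imm z_j<\rho|z|$, the $L_{z,2\rho}$ exclusion forces $\Ree z_j$ to lie outside a neighborhood of $[a_{K_T},b_{K_T}]$, so $g_T$ extends conformally across a segment of $\R\setminus[a_{K_T},b_{K_T}]$ by Schwarz reflection, and Koebe distortion on the extended disc gives $\Imm g_T(z_j)\asymp g_T'(\Ree z_j)\Imm z_j\asymp\Imm z_j$.

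\textbf{Estimate (4) and the $F$-inequality.} For (4), if $|z_1-z_2|\le\tfrac14\min(|z_1|,|z_2|,\dist(z_1,K_T),\dist(z_2,K_T))$, the Euclidean segment from $z_1$ to $z_2$ lies in a single disc on which Koebe distortion gives $|Z(z_1)-Z(z_2)|\asymp|Z'(z_1)||z_1-z_2|\asymp|z_1-z_2|$; otherwise $|z_1-z_2|\gtrsim\min(|z_1|,|z_2|)$, and the triangle inequality together with (3) yields $|Z(z_1)-Z(z_2)|\le|Z(z_1)|+|Z(z_2)|\lesssim|z_1|+|z_2|\lesssim|z_1-z_2|$. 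For the final $F$-inequality, set $w_k=Z(z_k)$, $r_k'=|Z'(z_k)|r_k$, and let $l_k^w$ be defined via \eqref{ldyR} for $(w_1,\dots,w_n)$. From (1)--(4) I have $\Imm w_k\asymp\Imm z_k$, $r_k'\asymp r_k$, $|w_k|\asymp|z_k|$, and $|w_k-w_j|\lesssim|z_k-z_j|$; hence $l_k^w\lesssim l_k$. Lemma \ref{Py} then gives $P_{\Imm w_k}(r_k')\asymp P_{\Imm z_k}(r_k)$ and $P_{\Imm w_k}(l_k^w)\lesssim P_{\Imm z_k}(l_k)$, so taking the ratio and multiplying over $k$ completes the proof. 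The main technical obstacle is the small-$\Imm z_j$ subcase of step (2), which requires the Schwarz-reflection extension of $g_T$ across the appropriate piece of $\R\setminus[a_{K_T},b_{K_T}]$ rather than the naive Koebe disc inside $\HH$.
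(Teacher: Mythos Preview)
Your proof is correct and more self-contained than the paper's, which simply cites \cite[Proposition~3.2]{LZ} for the first three estimates and proves the fourth by a curve argument: connect $z_1,z_2$ with a path $\eta\subset\HH\setminus L_{z,2\rho}$ of length $\lesssim|z_1-z_2|$ (the straight segment, detoured along $\partial L_{z,2\rho}$ if it would otherwise enter $L_{z,2\rho}$) and bound $|Z(z_1)-Z(z_2)|$ by the length of $Z(\eta)$ via $|Z'|\asymp 1$. Your large-$|z_j|$/small-$|z_j|$ dichotomy together with a Harnack chain anchored at $w_*=10R_0 i$, and your size-of-$|z_1-z_2|$ dichotomy for~(4), reach the same conclusions with more explicit distortion bookkeeping; the paper's curve argument for~(4) is slightly slicker in that it avoids the case split. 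One caveat: the Schwarz-reflection extension you invoke for the $\Imm$-estimate is equally needed in your chain argument for $|Z'(z_j)|\asymp1$ and in the first case of~(4), since whenever $\Imm z_j$ is small your Koebe discs of radius $\asymp\rho|z|$ are not contained in $H_T$. The uniform fix is to run all Koebe steps on the extension $g_T\colon\C\setminus K_T^{\doub}\to\C\setminus S_{K_T}$ and note that $\dist(w,K_T^{\doub})>\rho|z|$ for every $w\in\HH\setminus L_{z,2\rho}$, because $K_T^{\doub}\subset L_{z,\rho}\cup\overline{L_{z,\rho}}$ and $\dist(w,L_z\cup\overline{L_z})=\dist(w,L_z)>2\rho|z|$ for $w\in\HH$.
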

\begin{proof}
	The proofs for first 3 equations above are in \cite[Proposition 3.2]{LZ}. For the second to last one, suppose $\eta$ is a curve in $ \Half \setminus L_{z,2\rho}$ which connects $z_1$ and $z_2$ and has length at most $c_1|z_1-z_2|$. If the closed line $l$ passing through $z_1$ and $z_2$ does not pass through $L_{z,2\rho}$ then it works otherwise we go on the $l$ until we hit $L_{z,2\rho}$ then we go up on $L_{z,2\rho}$ to modify pass such that it does not pass through $L_{z,2\rho}$. Then the length of the image of $\eta$ under $Z$ is at most $c_2|z_1-z_2|$ by derivative estimate. 
The last statement is a result of the definition of $F$ and the previous equations.
\end{proof}
\begin{remark}
We expect that $|Z(z_1)-Z(z_2)| \asymp |z_1-z_2|$ holds in the statement of the lemma. We do not try to prove it since it is not needed.

The same proof gives us the following modification of Lemma \ref{farpoint}. Suppose the chordal Loewner curve $\gamma$ satisfies that $\gamma[0,T] \subset \{|z| \leq R\}$. Suppose $z_1,\dots,z_n \not \in  \{|z| \leq 2R\}$. Then all the results of the Lemma \ref{farpoint} holds for $z_1,\dots,z_n$ as well. These results also follow from \cite[Lemma 5.4]{LERW}. See, e.g., the proof of Corollary \ref{martingale}.
\end{remark}

Now we   strengthen  \cite[Proposition 3.1]{LZ}. We quantify the chance that we stay in the L-shape and at the same time the tip of the curve behaves nicely. Among those estimates, (iii) means that the ``angle'' of $z_0$ (see the description after the definition (\ref{Erth}) of $E_{r;\theta}$) viewed from the tip of $\gamma$ at $\tau_0$ is not small.

\begin{proposition} \label{L-shape}
	There are uniform constants $C_0,C_1>0,N>2,b_2>1>b_1>0$ such that for every $0<\delta<1$, there is  $C_\delta>0$ such that for every $z_0 \in \lin{\Half} \setminus \{0\}$ and $0<r\le \frac{\delta |z_0|}{N}$ there exists stopping time $\tau_0=\tau_0^\delta(z_0,r)$ such that the event $E_{\tau_0}$ defined by $\tau_0<\infty$ and
	\begin{itemize}
		\item[(i)] $\dist(z_0,\gamma[0,{\tau_0}]) \in (b_1 r, b_2r)$,
		
		\item[(ii)] $\gamma[0,{\tau_0}] \subset L_{z_0,\delta}$,
		
		\item[(iii)] $\dist(g_{\tau_0}(z_0),S_{K_{\tau_0}}) \geq C_0|g_{\tau_0}(z_0)-U_{\tau_0}|=C_0|Z_{\tau_0}(z_0)|$,

\item[(iv)]		$|Z_{\tau_0}(z_0)| \leq C_1\sqrt{r|z_0|}$,
	\end{itemize}
satisfies that
\BGE \PP_{z_0}^*[E_{\tau_0}]\ge C_\delta;\label{P*ge}\EDE
\BGE \PP[E_{\tau_0}]\ge C_\delta F(z_0;r).\label{Pge}\EDE
\end{proposition}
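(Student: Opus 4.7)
The plan is to adapt the L-shape construction of \cite[Proposition 3.1]{LZ} and then pass from the two-sided measure to the chordal measure via a Radon--Nikodym computation.

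\textbf{Step 1: Produce $\tau_0$ under $\PP_{z_0}^*$.} I would take $\tau_0$ to be a hitting time $\tau^{z_0}_\rho$ for an appropriate $\rho\in(b_1 r,b_2 r)$. The work of \cite{LZ} shows that, under $\PP_{z_0}^*$, the event that $\gamma[0,\tau_0]\subset L_{z_0,\delta}$ with $\dist(\gamma,z_0)$ of the right order has probability at least a constant depending only on $\delta$. I would refine this by requiring additionally that the curve approach the disc $\{|z-z_0|<\rho\}$ transversally, i.e., through the middle portion of the L-shape's short arm near $z_0$; this can be arranged with positive probability by SLE reversibility (reversing time and running from $z_0$ outward) combined with standard crossing estimates for chordal SLE.

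\textbf{Step 2: Verify (iii)--(iv) on this event.} By the L-shape confinement, $K_{\tau_0}$ is a continuum joining $0$ to a point at distance $\asymp r$ from $z_0$, contained in a region of diameter $\asymp|z_0|$. Comparing with the explicit slit map $z\mapsto\sqrt{z^2-L^2}$ (with $L\asymp|z_0|$) via Koebe's distortion theorem gives the sharp ``square-root'' estimates
\[
|g_{\tau_0}'(z_0)|\,\asymp\,\sqrt{|z_0|/r},\qquad |Z_{\tau_0}(z_0)|\,\asymp\,\Imm Z_{\tau_0}(z_0)\,\asymp\,|g_{\tau_0}'(z_0)|\,r\,\asymp\,\sqrt{r|z_0|}.
\]
The upper bound for $|Z_{\tau_0}(z_0)|$ gives (iv). The transversal-approach condition from Step 1 controls the horizontal component of $Z_{\tau_0}(z_0)$ relative to $S_{K_{\tau_0}}$ and, together with the comparison above, yields (iii).

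\textbf{Step 3: Pass to the chordal measure.} With $M^{z_0}_t=|g_t'(z_0)|^{2-d}G(Z_t(z_0))$, the two-sided radial density gives
\[
\PP[E_{\tau_0}]\,=\,G(z_0)\,\EE_{z_0}^*\bigl[\mathbf{1}_{E_{\tau_0}}/M^{z_0}_{\tau_0}\bigr].
\]
Substituting (\ref{G(z)}) and the Step~2 estimates (which force $\Imm Z_{\tau_0}(z_0)\asymp|Z_{\tau_0}(z_0)|\asymp\sqrt{r|z_0|}$, and hence $G(Z_{\tau_0}(z_0))\asymp|Z_{\tau_0}(z_0)|^{d-2}$), a direct computation yields $M^{z_0}_{\tau_0}\asymp r^{d-2}$ on $E_{\tau_0}$. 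Combined with $G(z_0)\asymp F(z_0)$ from (\ref{G(z)}) and (\ref{F}), and $F(z_0;r)\asymp r^{2-d}F(z_0)$ in the regime $r\le\Imm z_0$ (the opposite regime being handled by the analogous scaling of $P_{y_0}$), this converts (\ref{P*ge}) into (\ref{Pge}).

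The main obstacle is Step~2: converting the qualitative L-shape confinement into quantitative conformal-map estimates, with constants that are uniform in $z_0$ and $r$. The heart of it is the square-root behavior of Loewner maps near the tip of a long hull, and the careful choice of the ``transversal approach'' subevent so that (iii) holds simultaneously with (i), (ii), and (iv); the passage from $\PP_{z_0}^*$ to $\PP$ in Step~3 is then essentially algebra once the geometric estimates are in hand.
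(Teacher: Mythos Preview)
Your Step~3 is essentially what the paper does, but Steps~1--2 contain a genuine gap.

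The difficulty is property~(iii). If you take $\tau_0=\tau^{z_0}_\rho$ as a circle-hitting time, then at $\tau_0$ the argument of $Z_{\tau_0}(z_0)$ can be arbitrarily close to $0$ or $\pi$, and nothing in the L-shape confinement prevents this. Your proposed fix via ``SLE reversibility'' and a ``transversal approach'' sub-event is not a workable argument: reversibility of two-sided radial SLE in the sense you would need is not established, and the statement is too vague to carry constants. The paper avoids this entirely by taking $\tau_0$ to be a \emph{conformal radius} stopping time (the first time $\Upsilon_t(z_0)$ drops to a fixed fraction of $r$), and then invoking \cite[Lemma~2.2]{LZ}, which says that after a unit decrease in $\log\Upsilon$ the angle $\Imm Z_t(z_0)/|Z_t(z_0)|$ is bounded below by some $\theta_0$ with probability at least any prescribed $\rho<1$, \emph{independently of the past}. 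This is the mechanism that delivers~(iii), and it requires the conformal-radius parametrization; a hitting-time $\tau^{z_0}_\rho$ does not give access to it. In the boundary regime $r\gtrsim\Imm z_0$ the paper replaces $\Upsilon_t(z_0)$ by its boundary analogue and uses the corresponding result from \cite{Law3}; your sketch does not address this case.

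Your Step~2 derivation of~(iv) via comparison with the slit map $z\mapsto\sqrt{z^2-L^2}$ is also not correct as written: $K_{\tau_0}$ sits in the L-shape, not in a horizontal slit, and Koebe distortion does not let you transfer derivative estimates between maps defined on different domains. The paper obtains~(iv) by a direct Beurling/harmonic-measure argument: Brownian motion from $z_0$ reaches height $\asymp|z_0|$ before hitting $\gamma[0,\tau_0]\cup\R$ with probability $\lesssim\sqrt{r/|z_0|}$, and conformal invariance together with a gambler's-ruin bound on the image side forces $\Imm Z_{\tau_0}(z_0)\lesssim\sqrt{r|z_0|}$; then~(iii) upgrades this to $|Z_{\tau_0}(z_0)|\lesssim\sqrt{r|z_0|}$. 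This argument does not need the hull to be slit-like.
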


\begin{proof}
By scaling we may assume $\max\{|x_0|,y_0\}=1$, where $x_0=\Ree z_0$ and $y_0=\Imm z_0$. Then $|z_0|\asymp 1$. 
We first prove (\ref{P*ge}), and consider two different cases to prove this. First we consider the interior case when $r$ is smaller or comparable to $y_0$, and then we consider the boundary case when $r$ is bigger or comparable to $y_0$. Also throughout the proof we consider $N$ as a fixed number (greater than $2$) which we will determine at the end.
\smallbreak
	
\no{\bf Interior Case:} Suppose for this case that $r<10y_0$. Define the stopping time $\tau$ by
	\[
	\tau=\inf\{t:  \dist(\gamma(t),z_0)=\frac{y_0}{10} \wedge r \}.
	\]
By \cite[Proposition 3.1]{LZ}, we know that there is $u>0$ depending only on $\kappa$ and $\frac\delta N$ such that for every $z_0 \in \Half$, $\PP^*_{z_0}[\gamma[0,T_{z_0}] \subset L_{z_0,\frac{\delta}{N}}] \geq u$.
	By this we know that
	\[
	\PP^*_{z_0}[\gamma[0,\tau] \subset L_{z_0,\frac{\delta}{N}}] \geq u.
	\]
 Let $\til E$ denote  the event $\gamma[0,\tau] \subset L_{z_0,\frac{\delta}{N}}$. 
 Now define $\tau_0$ by
	\[
	\tau_0=\inf\{t: \Upsilon_t(z_0)=\frac{y_0}{100} \wedge \frac{r}{10}\},
	\]
where $\Upsilon_t(z_0)$ is the conformal radius of $z_0$ in $H_t$.

 Now we want to show $\PP^*_{z_0}[E_{\tau_0}|\til{E}] \geq u_0$ for some constant $u_0>0$. 
 Since $\PP^*_{z_0}$-a.s.\ $T_{z_0}<\infty$, we have $\PP_{z_0}^*[\tau_0<\infty]=1$. By Koebe's $1/4$ theorem, we immediately have Property (i).
		
	For Property (ii) let $E_{\tau_0}^1$ denote the event that after time $\tau,$ $\gamma$ stays in $L_{z_0,\delta}$ till $T_z$. From Lemma \ref{stayin} applied to $Z=\pa L_{z_0,\delta}$, we get $\PP^*_{z_0}[(E_{\tau_0}^1)^c] \lesssim N^{-c}$ for some constant $c>0$. Since $\PP[\til E]\ge u$, there is a constant $C>0$ such that
  $\PP^*_{z_0}[E_{\tau_0}^1|\til{E}] \geq 1-CN^{-c}$.
		
	For Property (iii) we use \cite[Lemma 2.2]{LZ}. By Koebe's $1/4$ theorem we know that $\log(\Upsilon_{\tau_0})-\log (\Upsilon_\tau) \le  -1$.  By \cite[Lemma 2.2]{LZ}, for any $\rho<1$ we have $\theta_0>0$ such that
	\[
	\PP^*_{z_0}[\Imm Z_{\tau_0}(z_0)/|Z_{\tau_0}(z_0)| \geq \theta_0|\mathcal{F}_\tau] \geq \rho.
	\]
	Call the event $\Imm Z_{\tau_0}(z_0)/|Z_{\tau_0}(z_0)|  \geq \theta_0$ as $E^2_{\tau_0}$. If 
	$E^2_{\tau_0}$ occurs then Property (iii) is satisfied (with the constant depending on $\theta_0$) because  $\dist(g_{\tau_0}(z_0),S_{K_{\tau_0}})\ge \Imm Z_{\tau_0}(z_0)$.

	If we choose $\rho\in(0,1)$ and $N>2$ such that $u_0=\rho-CN^{-c}>0$ then we have
	\[
	\PP^*_{z_0}[E^1_{\tau_0} \cap E^2_{\tau_0}|\til{E}]\geq \PP^*_{z_0}[E_{\tau_0}^1|\til{E}]+\PP^*_{z_0}[E_{\tau_0}^2|\til{E}]-1 \geq \rho-CN^{-c}=u_0>0.
	\]
So $\PP^*_{z_0}[E^1_{\tau_0} \cap E^2_{\tau_0}]\ge u u_0>0$.  We have seen that Properties (i)-(iii) are satisfied on the event $E^1_{\tau_0} \cap E^2_{\tau_0}$. For Property (iv), set $Z=Z_{\tau_0}$, and let $\Pi=\{z \in \Half: \Im(z)= 10\}$. Then $\Imm Z(z)\le \Imm z=10$ for $z\in\Pi$. Consider the event that Brownian motion starting at $z_0$ hits $\Pi$ before hitting $\gamma[0,{\tau_0}] \cup \R$. By Property (i) and Beurling estimate it has chance less than $c\sqrt{r}$ for some fixed constant $c$. After map $Z$, the chance that Brownian motion starting at $Z(z_0)$ hits $Z(\Pi)$ before hitting $\R$ is at least $\Im(Z(z_0))/10$ by gambler's ruin estimate which has the same order as $|Z(z_0)|$ when $E_{\tau_0}^2$ happens. So we have Property (iv) on the event $E^1_{\tau_0} \cap E^2_{\tau_0}$. Thus, $E^1_{\tau_0} \cap E^2_{\tau_0}\subset E_{\tau_0}$. This finishes the proof of (\ref{P*ge}) in the interior case.
	
\smallbreak
\no{\bf Boundary Case:} For this case assume that $1>r\geq 10y_0$. Without loss of generality we assume $x_0=1$. Then $z_0=1+iy_0$.
We follow the steps as in the interior case just we have to modify some definitions for the boundary case. First, following \cite{Law3} we consider
	\[
	x_t=\inf\{x>0: T_x>t\},\quad
D_t=H_t \cup \{\bar{z}: z \in H_t\} \cup (x_t,\infty),
	\]
	\[
	X_t=Z_t(1)=g_t(1)-U_t,\quad 	O_t=g_t(x_t)-U_t,
	\]
	\[
 J_t=\frac{X_t-O_t}{X_t},\quad  \Upsilon_t(1)=\frac{X_t-O_t}{X_t}{g_t'(1)}.
	\]
Note that $\Upsilon_t$ is $1/4$ times the conformal radius of $1$ in $D_t$. So we have
\BGE
	\frac{1}{4}\dist(1,\partial D_t) \leq \Upsilon_t(1) \leq \dist(1,\partial D_t).\label{UpDt}
\EDE
		
Take
	\[
	\tau=\inf\{t: \dist(\gamma(t),1)=100r \}.
	\]
	By \cite[Proposition 3.1]{LZ}, we know that there is $u>0$ depending on $\kappa$ and $\frac \delta N$ such that
$\PP^*_1[\gamma[0,T_1] \subset L_{1,\frac{\delta}{N}}] \geq u$.
Let $\til E$ denote the event that $\gamma[0,\tau]\subset L_{1,\frac{\delta}{N}}$. Then $\PP^*_1[\til E]\ge u$.
Now take $\tau_0$ as
	\[
	\tau_0=\inf\{ t: \Upsilon_t(1)=8r\}.
	\]

Since $\PP^*_{1}$-a.s.\ $T_{1}<\infty$, we have $\PP^*_1[\tau_0<\infty]=1$. By (\ref{UpDt}), we immediately have Property (i). Let $E^{\tau_0}_1$ denote the event that after $\tau$, the curve stays in $L_{1,\delta}$ till $T_1$. Using Lemma \ref{stayin} as in the interior case, we get  $\PP^*_1[E^1_{\tau_0}|\til E]\ge 1-CN^{-c}$ for some constants $C,c>0$. If $E^1_{\tau_0}$ happens, since $L_{1,\delta}\subset L_{z_0,\delta}$, we have Property (ii).

By Koebe's $1/4$ theorem we know that $\log(\Upsilon_{\tau_0})-\log (\Upsilon_\tau) \le  -1$. By \cite[Section 4]{Law3} we have that for any $\rho<1$ there is $\theta_0>0$ such that
	\[
	\PP^*_1[J_{\tau_0} \geq \theta_0| \mathcal{F}_\tau ] \geq \rho.
	\]
Call the event $J_{\tau_0} \geq \theta_0$ as $E^2_{\tau_0}$. Since $|z_0-1|=y_0$ and $\dist(z_0,K_{\tau_0})\ge 2r\ge 20 y_0$, by Koebe's $1/4$ theorem and distortion theorem, we get $|g_{\tau_0}(z_0)- g_{\tau_0}(1)|\le \frac 29 \dist(g_{\tau_0}(z_0),S_{K_{\tau_0}})$. Thus, by triangle inequality, $\dist(g_{\tau_0}(z_0),S_{K_{\tau_0}})\asymp \dist(g_{\tau_0}(1),S_{K_{\tau_0}})$. Since $U_{\tau_0}\in S_{K_{\tau_0}}$, we have $|g_{\tau_0}(z_0)- g_{\tau_0}(1)|\le \frac 29 |g_{\tau_0}(z_0)-U_{\tau_0}|$. So we also get $|g_{\tau_0}(z_0)-U_{\tau_0}|\asymp |g_{\tau_0}(1)-U_{\tau_0}|$. If $E^2_{\tau_0}$ happens then the Property (iii) is satisfied at the point $1$ with $C_0=\theta_0$, and so is also satisfied at the point $z_0$ with a bigger constant by the above estimates.

If we choose $\rho\in(0,1)$ and $N>2$ such that $u_0=\rho-CN^{-c}>0$ then we have $\PP_1^*[E^1_{\tau_0} \cap E^2_{\tau_0}|\til{E}]\ge u_0$.
So $\PP^*_{1}[E^1_{\tau_0} \cap E^2_{\tau_0}]\ge u u_0>0$. Since $\dist(z_0,\gamma[0,{\tau_0}]) \geq 2r$, until time $\tau_0$ the two probability measures $\PP^*_{z_0}$ and $\PP^*_{1}$ are comparable by a universal constant $c$ by \cite[Proposition 2.9]{LZ}. So we get $\PP^*_{z_0}[E^1_{\tau_0} \cap E^2_{\tau_0}]\ge u u_0/c>0$.

We have seen that Properties (i)-(iii) are satisfied on the event $E^1_{\tau_0} \cap E^2_{\tau_0}$. For Property (iv), similar to  the interior case, we use Beurling estimate. Take $D=D_{\tau_0}$. Brownian motion starting at $1$ has chance less than $c\sqrt{r}$ to hit $\Pi=\{\Imm z=10\}$ before exiting $D$. By conformal invariance of Brownian motion, this implies that distance between  $(-\infty,O_{\tau_0})$ and $Z_{\tau_0}(1)$ which is $X_{\tau_0}-O_{\tau_0}$ is not more than $c\sqrt{r}$, which then implies $g'_{\tau_0}(1)\lesssim \frac{1}{\sqrt{r}}$ because $\Upsilon_{\tau_0}\asymp r$. Since $J_{\tau_0}\ge \theta_0$, we have $|Z_{\tau_0}(1)|\lesssim \sqrt r$. By Koebe's distortion theorem we get  $|Z _{\tau_0}(z_0)-Z_{\tau_0} (1)| \lesssim g'_{\tau_0}(1)|z_0-1|\lesssim \sqrt r$. So we get $|Z_{\tau_0}(z_0)|\lesssim \sqrt r$, as desired. So we get $E^1_{\tau_0} \cap E^2_{\tau_0}\subset E_{\tau_0}$. This finishes the proof of (\ref{P*ge}) in the boundary case.

Finally, we prove (\ref{Pge}). From \cite{LW,LZ} we know that $\PP$ is absolutely continuous with respect to $\PP^*_{z_0}$ on $\F_{\tau_0}\cap\{\tau_0<\infty\}$, and the Radon-Nikodym derivative is
$$R=\left\{ \begin{array}{ll} \frac{|Z_{\tau_0}(z_0)|^\alpha \Imm(Z_{\tau_0}(z_0))^{(2-d)-\alpha}}{|g_{\tau_0}'(z_0)|^{2-d}|z_0|^\alpha y_0^{(2-d)-\alpha}}, &z_0\in\HH;\\
\\
\frac{|Z_{\tau_0}(z_0)|^\alpha}{|g_{\tau_0}'(z_0)|^\alpha |z_0|^\alpha}, &z_0\in\R\sem\{0\}.
\end{array}\right.$$
Recall that in both of the above two cases, we defined events $E^1_{\tau_0}$ and $E^2_{\tau_0}$ such that $E^1_{\tau_0} \cap E^2_{\tau_0}\subset E_{\tau_0}$ and $\PP^*_{z_0}[E^1_{\tau_0} \cap E^2_{\tau_0}]\gtrsim 1$. So it suffices to show that $R\asymp F(z_0;r)$ on $E^2_{\tau_0}$.

In the interior case, suppose $E^2_{\tau_0}$ happens. Then $\Imm Z_{\tau_0}(z_0)\asymp |Z_{\tau_0}(z_0)|$. They are also comparable to $\dist(g_{\tau_0}(z_0),S_{K_{\tau_0}})$ because $\Imm Z_{\tau_0}(z_0)\le \dist(g_{\tau_0}(z_0),S_{K_{\tau_0}})\le |Z_{\tau_0}(z_0)|$. By Koebe's $1/4$ theorem we get
$$R\asymp \frac{\dist(g_{\tau_0}(z_0),S_{K_{\tau_0}})^{2-d}}{|g_{\tau_0}'(z_0)|^{2-d}|z_0|^\alpha y_0^{(2-d)-\alpha}}
\asymp \frac{\dist(z_0,{K_{\tau_0}})^{2-d}}{|z_0|^\alpha y_0^{(2-d)-\alpha}}\asymp  \frac{r^{2-d}}{|z_0|^\alpha y_0^{(2-d)-\alpha}}=F(z_0;r). $$

In the boundary case, by Koebe's distortion theorem, we get $R\asymp \frac{|Z_{\tau_0}(z_0)|^\alpha}{|g_{\tau_0}'(z_0)|^\alpha |z_0|^\alpha}$. Suppose $E^2_{\tau_0}$ happens. Then $|Z_{\tau_0}(z_0)|\asymp \dist(g_{\tau_0}(z_0),S_{K_{\tau_0}})$. By Koebe's $1/4$ theorem we get
$$R\asymp \frac{\dist(g_{\tau_0}(z_0),S_{K_{\tau_0}})^\alpha}{|g_{\tau_0}'(z_0)|^\alpha |z_0|^\alpha}\asymp \frac{\dist( z_0,{K_{\tau_0}})^\alpha}{ |z_0|^\alpha}\asymp \frac{r^\alpha}{|z_0|^\alpha}=F(z_0;r).$$
So we get $R\asymp F(z_0;r)$ on $E^2_{\tau_0}$ in both cases. The proof is now complete.
\end{proof}

\no{\bf Remark.} Since $F(z_0;r)$  is comparable to the probability that SLE goes to distance $r$ of $z_0$, we showed that there is a good chance to  go to distance $r$ of $z_0$ in a ``good way''. Once we have this we can prove Theorem \ref{strong-lower}.

\begin{proof} [Proof of Theorem \ref{strong-lower}]
We prove the theorem by induction on $n$. For $n=1$ it is a corollary of Proposition \ref{L-shape}. Suppose that $n\ge 2$ and the theorem is true for $1,\dots,n-1$ with constants $C_j>0$ and $V_j>1$, $1\le j\le n$, and we want to prove it for $n$. We consider different cases.

We now give a summary of the cases that will be considered.
The first case: Case A happens when $\{z_1,\dots,z_n\}$ can be divided into two nonempty groups such that the first group lie inside of a smaller semidisc, and the second group lie outside of a bigger semidisc, both centered at $0$. In this case a good strategy for $\gamma$ is to visit   neighbors of all points in the first group before leaving a semidisc centered at $0$. We then reduce Case A to the induction hypothesis. The second case: Case B happens when $\{z_1,\dots,z_n\}$ can be divided into two nonempty groups such that for a point, say $z_1$, with the smallest modulus, the first group  lie inside of a thin $L$-shape w.r.t.\ $z_1$ and the second group lie outside of a thick $L$-shape w.r.t.\ $z_1$. In this case we use Proposition \ref{L-shape} to $\gamma$ to reach some suitable distance from $z_1$ before leaving an $L$-shape w.r.t.\ $z_1$ such that the ``angle'' of $z_1$ viewed from the tip of $\gamma$ is not small. By mapping the complement domain conformally onto $\HH$, we reduce this case to Case A or the induction hypothesis. The third case: Case C happens when all of $z_j$'s lie inside of a thin $L$-shape w.r.t.\ $z_1$, which has the smallest modulus. By (\ref{L-ineq}) they lie in a small disc centered at $z_1$. In this case we use Proposition \ref{L-shape} again to let $\gamma$ approach this group while staying inside an $L$-shape such that the ``angle'' of $z_1$ viewed from the tip of $\gamma$ is not small. By applying a conformal map, we then reduce this case to Case B. See Figure \ref{ABC}

\begin{figure}
	\hfill
	\includegraphics[width=0.3\textwidth]{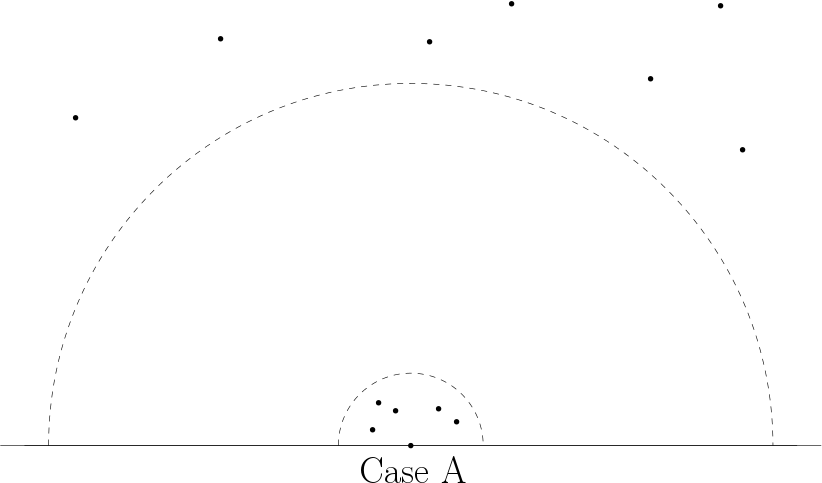}%
	\hfill
	\includegraphics[width=0.3\textwidth]{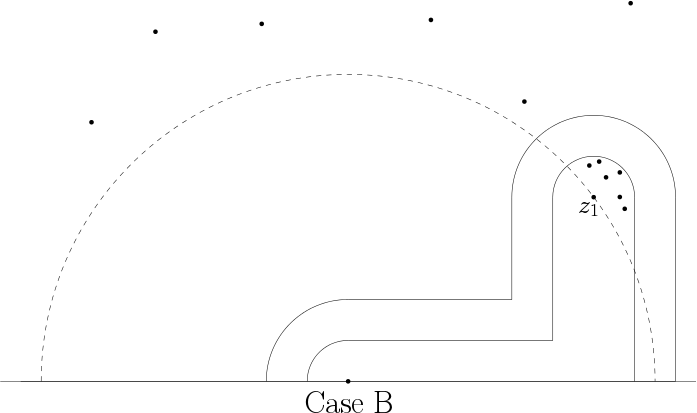}%
	\hfill
	\includegraphics[width=0.3\textwidth]{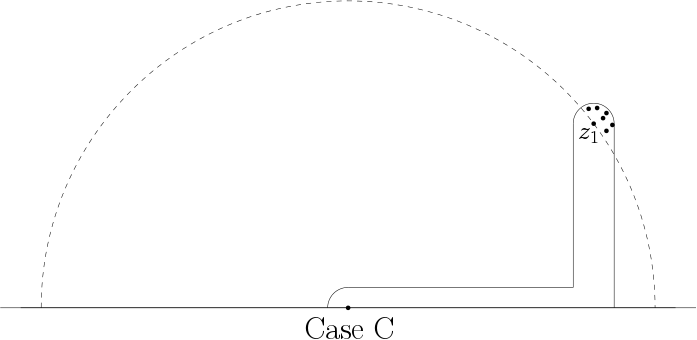}
	\hfill
	\caption{The three cases in the proof of Theorem \ref{strong-lower}.}
		\label{ABC}
\end{figure}

\smallbreak
\no{\bf Case A}: There exist $R,r>0$ and $m\in\N$ with
 $R\ge 2(\max_{1\le j\le n-1}V_j)r>0$ and $m\le n-1$ such that  $|z_j|< r$, $1\le j\le m$, and $|z_j|> R$, $m+1\le j\le n$. Let ${\tau_0}=\vee_{j=1}^m \tau^{z_j}_{r_j}$ and $r'=R/2$. From the induction hypothesis, we have $\PP[{\tau_0}<\tau_{\{|z|=r'\}}]\gtrsim F(z_1,\dots,z_m;r_1,\dots,r_m)$. Let $E_{\tau_0}$ denote the event ${\tau_0}<\tau_{\{|z|=r'\}}$. Let $\til\gamma(t)=Z_{\tau_0}(\gamma({\tau_0}+t))$, $\til z_j=Z_{\tau_0}(z_j)$, and $\til r_j=|Z_{\tau_0}'(z_{j})|r_{j}/4$, $m+1\le j\le n$. By DMP of SLE, conditionally on $\F_{\tau_0}$, $\til\gamma$ has the same law as $\gamma$. Let $\til\tau_S$ and $\til\tau^z_r$ be the stopping times that correspond to $\til\gamma$. By  induction hypothesis, we have
$$\PP[\til\tau^{\til z_j}_{\til r_j}<\til\tau_{\{|z|=V_{n-m}\sum_{j=m+1}^n|\til z_j|\}},m+1\le j\le n|\F_{\tau_0},E_{\tau_0}]\gtrsim F(\til z_{m+1},\dots,\til z_n;\til r_{m+1},\dots,\til r_n) .$$
Suppose $E_{\tau_0}$ happens. Then $K_{\tau_0}\subset\{|z|\le r'\}$. By Lemma \ref{small} and that $U_{\tau_0}\in S_{K_{\tau_0}}$ we have $|Z_{\tau_0}(z)-z|\le 5r'$ for any $z\not\in\lin{K_{\tau_0}}$. Let $\til E$ denote the event on the LHS of the above displayed formula.
By Koebe's $1/4$ theorem, we see that $E_{\tau_0}\cap\til E\subset \bigcap_{j=1}^n \{\tau^{z_j}_{r_j}<\tau_{\{|z|=r''\}}\}$, where $r''=6r'+V_{n-m}\sum_{j=m+1}^n(|z_j|+5r')$. Since $r'\le R\le |z_n|$, we can find a constant $V_n>1$ such that $r''\le V_n\sum_{j=1}^n |z_j|$. Thus,
\begin{align*}
  &\PP[\tau^{z_j}_{r_j}<\tau_{\{|z|=V_n\sum_{j=1}^n |z_j|\}}]\ge \PP[E_{\tau_0}\cap\til E]= \EE[E_{\tau_0}]\cdot\EE[\PP[\til E|\F_{\tau_0},E_{\tau_0}]]\\
  \gtrsim & F(z_1,\dots,z_m;r_1,\dots,r_m) \cdot\EE[F(\til z_{m+1},\dots,\til z_n;\til r_{m+1},\dots,\til r_n)|\F_{\tau_0},E_{\tau_0}] .\\
  \gtrsim & F(z_1,\dots,z_m;r_1,\dots,r_m)\cdot F(z_{m+1},\dots,z_n;r_{m+1},\dots,r_n)\\
  \asymp & F(z_1,\dots,z_n;r_1,\dots,r_n).
\end{align*}
where the second last estimate follows from the remark after Lemma \ref{farpoint}, and the last estimate follows from Lemma \ref{multiplicative} because $\dist(z_j,\{z_1,\dots,z_m\})\asymp |z_j|$, $m+1\le j\le n$. The proof of Case A is now complete.
\smallbreak

We will reduce other cases to Case A or the case of fewer points. By (\ref{Frasymp}) we may assume that $z_1$ has the smallest norm among  $z_j$, $1 \leq j \leq n$. 
Fix constants $\rho_j\in(0,1/2)$, $1\le j\le n$, with $\rho_1>\cdots>\rho_n$ to be determined later.

\smallbreak
\no{\bf Case B}: $\{z_1,\dots,z_n\}\sem L_{z_1,\rho_1}\ne\emptyset$. By pigeonhole principle, Case B is a union of subcases: Case B.$k$, $1\le k\le n-1$, where Case B.$k$ denotes the case that Case B happens and $\{z_1,\dots,z_n\}\cap (L_{z_1,\rho_k}\sem L_{z_1,\rho_{k+1}})=\emptyset$.
\smallbreak
\no{\bf Case B.$k$}: In this case we have $\{z_1,\dots,z_n\}\sem L_{z_1,\rho_k}\ne\emptyset$, $\{z_1,\dots,z_n\}\cap (L_{z_1,\rho_k}\sem L_{z_1,\rho_{k+1}})=\emptyset$, and $\{z_1,\dots,z_n\}\cap  L_{z_1,\rho_{k+1}}\ne\emptyset$ because $z_1\in L_{z_1,\rho_{k+1}}$. By (\ref{Frasymp}) we may assume that $z_1,\dots,z_m\in  L_{z_1,\rho_{k+1}}$ and $z_{m+1},\dots,z_n\not\in L_{z_1,\rho_k}$, where $1\le m\le n-1$.

We will apply Proposition \ref{L-shape}. Let $N,b_1,C_1$ be the constants there. Let $\delta=\frac{2N}{b_1}\sqrt{2\rho_{k+1}}$, and $r=\frac{\delta|z_1|}N$. Let $\tau_0=\tau_0^{\delta}(z_1,r)$ and $E_{\tau_0}$ be given by Proposition \ref{L-shape}.
For $1\le j\le m$, since $z_j\in L_{z_1,\rho_{k+1}}$ and $|z_j|\ge |z_1|$, by (\ref{L-ineq}), we have $|z_j-z_1|\le  \sqrt{2\rho_{k+1}}|z_1|\le \frac{b_1 r}2$. Suppose $E_{\tau_0}$ happens. By Koebe's $1/4$ theorem, we have
$$|g_{\tau_0}'(z_1)|b_1r\le |g_{\tau_0}'(z_1)|\dist(z_1,K_{\tau_0})\le 4\dist(g_{\tau_0}(z_1),S_{K_{\tau_0}})\le 4|Z_{\tau_0}(z_1)|\le 4C_1\sqrt{r|z_1|}.$$
For $1\le j\le m$, since $\dist(z_1,K_{\tau_0})\ge b_1r\ge 2|z_j-z_1|$, by Koebe's distortion theorem, we have
$$|Z_{\tau_0}(z_j)-Z_{\tau_0}(z_1)|\le 2|g_{\tau_0}'(z_1)||z_j-z_1|\le |g_{\tau_0}'(z_1)|b_1r\le 4C_1\sqrt{r|z_1|}.$$
Since $|Z_{\tau_0}(z_1)|\le C_1\sqrt{r|z_1|}$, we get
$$|Z_{\tau_0}(z_j)|\le 5C_1\sqrt{r|z_1|},\quad 1\le j\le m.$$
Suppose that
\BGE \delta\le  {\rho_k}/2.\label{Cond-Lower1}\EDE
Since $K_{\tau_0}\subset L_{z_1,\delta}$,  and $z_j\not\in L_{z_1,\rho_k}$, $m+1\le j\le n$, by Lemma \ref{farpoint}, we see that $|g_{\tau_0}'(z_j)|\ge C_{\rho_k}$, where $C_{\rho_k}>0$ depends only on $\kappa$ and $\rho_k$. By Koebe's $1/4$ theorem, we get
$$|Z_{\tau_0}(z_j)|\ge \dist(g_{\tau_0}(z_j),S_{K_{\tau_0}})\ge |g_{\tau_0}'(z_j)|\dist(z_j,K_{\tau_0})/4\ge C_{\rho_k}\rho_k|z_1|/8,\quad m+1\le j\le n.$$
Suppose now that
\BGE C_{\rho_k}\rho_k|z_1|/8\ge 2(\max_{1\le j\le n-1}V_j) 5C_1\sqrt{r|z_1|}.\label{Cond-Lower2}\EDE
Then we see that $Z_{\tau_0}(z_1),\dots,Z_{\tau_0}(z_n)$ satisfy the condition in Case A.

We will apply Lemma \ref{help-lower*} with $K=K_{\tau_0}$ and $U_0=U_{\tau_0}$. Let $I=\{1\}\cup \{1\le j\le n: r_j\le \dist(z_j,K_{\tau_0})\}$. We check the conditions of that lemma when $E_{\tau_0}$ happens. By the definition of $I$, we have $r_j\le \dist(z_j,K_{\tau_0})$ for $j\in I\sem\{1\}$. For $j=1$, since $\dist(z_1,K_{\tau_0})\ge b_1 r\gtrsim |z_1|$ and $r_1\le d_1\le |z_1|$, we have $r_1\lesssim \dist(z_1,K_{\tau_0})$. We have to check Condition (\ref{angle*}). First, (\ref{angle*}) holds for $j=1$ by Property (iii) of $E_{\tau_0}$. Second, for $2\le j\le m$, since $|z_j-z_1|\le \frac 12\dist(z_1,K_{\tau_0})$, by Koebe's $1/4$ theorem and distortion theorem, (\ref{angle*}) also holds for these $j$. Third, for $m+1\le j\le n$, by Lemma \ref{farpoint} and Koebe's $1/4$ theorem, we have $\dist(g_{\tau_0}(z_j),S_{K_{\tau_0}})\gtrsim \dist(z_j,L_{z_1,\delta})$. On the other hand, since $K_{\tau_0}\subset L_{z_1,\delta}\subset\{|z|\le r'\}$, where $r':=2|z_1|$, we have $|Z_{\tau_0}(z)-z|\le 5r'=10|z_1|$ for any $z\in\lin\HH\sem\lin{K_{\tau_0}}$ by Lemma \ref{small}. Thus, $|Z_{\tau_0}(z_j)|\lesssim |z_j|$. Since $\rho_k\ge 2\delta$, it is clear that $|z|\lesssim \dist(z,L_{z_1,\delta})$ for any $z\in\lin\HH\sem L_{z,\rho_k}$. So we see that (\ref{angle*}) also holds for $m+1\le j\le n$.

Let $\til\gamma$, $\til z_j$, $\til r_j$, $\til \tau_S$ and $\til \tau^z_r$ be as defined in Case A. Then $\til z_j=Z_{\tau_0}(z_j)$, $1\le j\le n$, satisfy the condition in Case A. By the result of Case A (if $|I|=n$) or the induction hypothesis (if $|I|<n$), we see that $$\PP[\til\tau^{\til z_j}_{\til r_j}<\til\tau_{\{|z|=V \sum_{j\in I} |\til z_j|\}},j\in I|\F_{\tau_0},E_{\tau_0}]\gtrsim F(\til z_{j_1},\dots,\til z_{j_{|I|}};\til r_{j_1},\dots,\til r_{j_{|I|}}),$$
where $V$ is the maximum of $V_j$, $1\le j\le n-1$, and the $V_n$  as in Case A. Let $\til E$ denote the event on the LHS of the above displayed formula. Since $|\til z_j-z_j|\le 5r'$, by Koebe's $1/4$ theorem, we see that $E_{\tau_0}\cap\til E\subset \bigcap_{j=1}^n \{\tau^{z_j}_{r_j}<\tau_{\{|z|=r''\}}\}$, where $r''=6r'+V \sum_{j\in I}(|z_j|+5r') \le V_n\sum_{j=1}^n |z_j|$ for some constant $V_n>1$. Thus,
\begin{align*}
  &\PP[\tau^{z_j}_{r_j}<\tau_{\{|z|=V_n\sum_{j=1}^n |z_j|\}}]\ge \PP[E_{\tau_0}\cap\til E]= \EE[E_{\tau_0}]\cdot\EE[\PP[\til E|\F_{\tau_0},E_{\tau_0}]]\\
  \gtrsim & F(z_1;r) \cdot\EE[F(\til z_{j_1},\dots,\til z_{j_{|I|}};\til r_{j_1},\dots,\til r_{j_{|I|}})|\F_{\tau_0},E_{\tau_0}] \gtrsim F(z_1,\dots,z_n;r_1,\dots,r_n),
\end{align*}
where the last inequality follows from  Lemma \ref{help-lower*} and that $\dist(z_1,K_{\tau_0})\le b_2 r$. We remark that the implicit constant in the above estimate depends on $\rho_k$ and $\rho_{k+1}$. This does not matter because $\rho_k$ and $\rho_{k+1}$ are constants once they are determined. Now we have finished the proof  of Case B$.k$ assuming Conditions (\ref{Cond-Lower1},\ref{Cond-Lower2}).

\smallbreak
\no{\bf Case C}: $z_1,\dots,z_n\in L_{z_1,\rho_1}$. This case is the complement of Case B, and we will reduce it to Case B.  Let
\[
e_n=\max_{1\le j\leq n}|z_j-z_1|.
\]
 From (\ref{L-ineq}) we know that $e_n\le \sqrt{2\rho_1}|z_1|$.

We apply Proposition \ref{L-shape} with $z_0=z_1$, $\delta= \frac{4N}{b_1}\sqrt{\rho_1}$ and $r=\frac{2e_n}{b_1}$. Let $\tau=\tau_0^\delta(z_1,r)$ and $E_{\tau_0}$ given by that proposition. Suppose $E_{\tau_0}$ happens. By Properties (i,iii) and Koebe's $1/4$ theorem, we have
$$|Z_{\tau_0}(z_1)|\le   \dist(g_{\tau_0}(z_1),S_{K_{\tau_0}})/C_0\le 4|g_{\tau_0}'(z_1)|\dist(z_1,K_{\tau_0})/C_0\le \frac{8b_2}{b_1C_n}   |g_{\tau_0}'(z_1)| e_n.$$
By Koebe's distortion theorem, we have
$$\max_{1\le j\le n} |Z_{\tau_0}(z_j)-Z_{\tau_0}(z_1)|\ge \frac 29
|g_{\tau_0}'(z_1)|e_n.$$
Thus, if $Z_{\tau_0}(z_s)$ has the smallest norm among $Z_{\tau_0}(z_j)$, $1\le j\le n$, then $$\max_{1\le j\le n} |Z_{\tau_0}(z_j)-Z_{\tau_0}(z_s)|\ge \frac{b_1 C_n}{72 b_2}|Z_{\tau_0}(z_s)|.$$ If $\rho_1$ satisfies that
\BGE \sqrt{2\rho_1}< \frac{b_1 C_n}{72 b_2},\label{Cond-Lower3}\EDE
then from (\ref{L-ineq}) we see that not all $Z_{\tau_0}(z_j)$, $1\le j\le n$, are contained in $L_{Z_{\tau_0}(z_s),\rho_1}$. After  reordering the points, we see that $Z_{\tau_0}(z_j)$, $1\le j\le n$, satisfy the condition in Case B.

We will apply Lemma \ref{help-lower*} with $K=K_{\tau_0}$ and $U_0=U_{\tau_0}$. Let $I=\{1,\dots,n\}$. We check the conditions of that lemma when $E_{\tau_0}$ happens.  Since $r_1\le |z_1-z_1|\le e_n$ and $\dist(z_1,K_{\tau_0})\ge 2e_1$, we have $r_1<\dist(z_1,K_{\tau_0})$. For $2\le j\le n$, since $r_j\le d_j\le |z_j-z_1|\le e_n$ and $\dist(z_1,K_{\tau_0})\ge 2e_n$, we see that $r_j\le \dist(z_j,K_{\tau_0})$. So $I$ satisfies the property there. We have to check Condition (\ref{angle*}). First, (\ref{angle*}) holds for $j=1$ by Property (iii) of $E_{\tau_0}$. Second, for $2\le j\le n$, since $|z_j-z_1|\le \frac 12\dist(z_1,K_{\tau_0})$, by Koebe's $1/4$ theorem and distortion theorem, (\ref{angle*}) also holds for these $j$.

Let $\til\gamma$, $\til z_j$, $\til r_j$, $\til \tau_S$ and $\til \tau^z_r$ be as defined in Case A. By the result of Case B  we see that $$\PP[\til\tau^{\til z_j}_{\til r_j}<\til\tau_{\{|z|=V \sum_{1\le j\le n} |\til z_j|\}},1\le j\le n|\F_{\tau_0},E_{\tau_0}]\gtrsim F(\til z_{1},\dots,\til z_{n};\til r_{1},\dots,\til r_{n}),$$
where $V$ is the $V_n$ as in Case B. Let $r'=2|z_1|$. Then $K_{\tau_0}\subset\{|z|\le r'\}$. So $|Z_{\tau_0}(z)-z|\le 5r'$ for $z\in\lin\HH\sem \lin{K_{\tau_0}}$. Let $\til E$ denote the event on the LHS of the above displayed formula. By Koebe's $1/4$ theorem, we see that $E_{\tau_0}\cap\til E\subset \bigcap_{j=1}^n \{\tau^{z_j}_{r_j}<\tau_{\{|z|=r''\}}\}$, where $r''=6r'+V \sum_{j=1}^n(|z_j|+5r')\le V_n \sum_{j=1}^n |z_j|$ for some constant $V_n>1$. Thus,
\begin{align*}
  &\PP[\tau^{z_j}_{r_j}<\tau_{\{|z|=V_n\sum_{j=1}^n |z_j|\}}]\ge \PP[E_{\tau_0}\cap\til E]= \EE[E_{\tau_0}]\cdot\EE[\PP[\til E|\F_{\tau_0},E_{\tau_0}]]\\
  \gtrsim & F(z_1;r) \cdot\EE[F(\til z_{1},\dots,\til z_{n};\til r_{1},\dots,\til r_{n})|\F_{\tau_0},E_{\tau_0}] \gtrsim F(z_1,\dots,z_n;r_1,\dots,r_n),
\end{align*}
where the last inequality follows from  Lemma \ref{help-lower*} and that $\dist(z_1,K_{\tau_0})\le b_2 r$. Now we have finished the proof  of Case C assuming Condition (\ref{Cond-Lower3}).

\smallbreak
In the end, we need to find $\rho_1,\dots,\rho_n$ such that Conditions (\ref{Cond-Lower1},\ref{Cond-Lower2},\ref{Cond-Lower3}) all hold. To do this, we may first use (\ref{Cond-Lower3}) to choose $\rho_1$. Once $\rho_k$ is chosen, we may use (\ref{Cond-Lower1},\ref{Cond-Lower2}) to choose $\rho_{k+1}$ because these two inequalities are satisfied when $\rho_{k+1}$ is sufficiently small given $\rho_k$.
\end{proof}

\appendixpage
\begin{appendices}
\section{Proof of Theorem \ref{RZ-Thm3.1}}
In order to prove Theorem \ref{RZ-Thm3.1}, we need some lemmas. The proof of Theorem \ref{RZ-Thm3.1} will be given after the proof of Lemma \ref{key-lem2}. We still let $\gamma$ be a chordal SLE$_\kappa$ curve in $\HH$ from $0$ to $\infty$. Throughout the appendix, we use $C$ (without subscript) to denote a positive constant depending only on $\kappa$, and use $C_x$ to denote a positive constant depending only on $\kappa$ and some variable $x$. The value of a constant may vary between occurrences.

First, let's recall the one-point estimate and the boundary estimate for chordal SLE$_\kappa$. (see Lemma 2.6 and Lemma 2.5 in \cite[Lemma 2.6, Lemma 2.5]{RZ}).

\begin{lemma}  [One-point Estimate]
	Let $T$ be a stopping time for $\gamma$. Let $z_0\in\lin\HH$, $y_0=\Imm z_0\ge0$, and $R\ge r>0$. Then
	$$\PP[\tau^{z_0}_r<\infty |\F_T,\dist(z_0,K_T)\ge R]\le C \frac{P_{y_0}(r)}{P_{y_0}(R)}.$$
\end{lemma}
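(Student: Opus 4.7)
The plan is to reduce the conditional estimate to an unconditional one-point estimate via the domain Markov property, and then prove the unconditional version by optional stopping applied to the Green's function local martingale. Throughout we consider separately the interior case $z_0 \in \HH$ and the boundary case $z_0 \in \R \setminus \{0\}$.

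First I would prove the unconditional version: for $z_0 \in \lin\HH$ and $0 < r \le |z_0|$,
$$\PP[\tau^{z_0}_r < \infty] \lesssim \frac{P_{y_0}(r)}{P_{y_0}(|z_0|)}.$$
In the interior case, use that $M_t := |g_t'(z_0)|^{2-d} G(Z_t(z_0))$, with $G$ given by \eqref{G(z)}, is a positive local martingale up to the time $z_0$ is swallowed by $K_t$. Apply optional stopping at $\tau^{z_0}_r$ (localized by a bounded stopping time and then pass to the limit using Fatou) to get $G(z_0) \ge \EE[\mathbf{1}\{\tau^{z_0}_r < \infty\} M_{\tau^{z_0}_r}]$. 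On the event $\tau^{z_0}_r < \infty$, Koebe's $1/4$ theorem and distortion theorem applied at $Z_{\tau^{z_0}_r}$ give $|Z_{\tau^{z_0}_r}(z_0)| \asymp |g_{\tau^{z_0}_r}'(z_0)| r$ when $y_0 \le r$, and $\Imm Z_{\tau^{z_0}_r}(z_0) \asymp |g_{\tau^{z_0}_r}'(z_0)| y_0$ when $y_0 \ge r$; in either case $M_{\tau^{z_0}_r}$ is bounded below by a constant times $r^{d-2} P_{y_0}(r)/|z_0|^\alpha$ up to the factor coming from $G(z_0)$, and the desired bound follows after rearranging. The boundary case $y_0 = 0$ is handled similarly using the boundary version of the Green's function local martingale $|g_t'(z_0)|^\alpha (Z_t(z_0))^{-\alpha}$ (for $z_0 \in \R$), which corresponds to $P_0(r)/P_0(|z_0|) = (r/|z_0|)^\alpha$.

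Next I would derive the conditional estimate. By the domain Markov property, conditionally on $\F_T$, the curve $\til\gamma := Z_T(\gamma(T+\cdot))$ is a standard chordal SLE$_\kappa$ in $\HH$ from $0$ to $\infty$. Let $\til z_0 = Z_T(z_0)$ and $\til y_0 = \Imm \til z_0$; then $\tau^{z_0}_r < \infty$ implies that $\til\gamma$ visits a set close to $\til z_0$. On the event $\dist(z_0, K_T) \ge R$, applying Koebe's $1/4$ theorem and distortion theorem centered at $z_0$ within the ball of radius $R$ gives $|\til z_0| \gtrsim |g_T'(z_0)| R$, and (in the interior case when $y_0 \le R$) $\til y_0 \asymp |g_T'(z_0)| y_0$, while (when $y_0 \ge R$) $\til y_0 \gtrsim |g_T'(z_0)| R$. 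Moreover $\tau^{z_0}_r < \infty$ forces $\til\gamma$ to enter the disc of radius $\til r := 4|g_T'(z_0)|r$ around $\til z_0$. Apply the unconditional estimate to $\til z_0$ with radius $\til r$:
$$\PP[\tau^{z_0}_r < \infty \mid \F_T] \le \PP[\til\tau^{\til z_0}_{\til r} < \infty \mid \F_T] \lesssim \frac{P_{\til y_0}(\til r)}{P_{\til y_0}(|\til z_0|)}.$$
Using the scaling identity $P_{ay}(ax)/P_{ay}(aX) = P_y(x)/P_y(X)$ with $a = |g_T'(z_0)|$, together with the monotonicity of $P_y$ in its parameter (Lemma~\ref{Py}), the right-hand side is bounded by a constant times $P_{y_0}(r)/P_{y_0}(R)$. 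This yields the claim.

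The main obstacle is the unified treatment across the interior/boundary regimes: the function $P_y$ is designed precisely so that the bounds from Koebe's theorems match across the thresholds $r \lessgtr y_0$ and $y_0 \lessgtr R$. The delicate point is to verify that the comparisons $|\til z_0| \asymp |g_T'(z_0)| R$ and the estimates for $\til y_0$ are uniform enough to produce exactly the quotient $P_{y_0}(r)/P_{y_0}(R)$, and that the transfer step from the estimate at $\til z_0$ back to $z_0$ uses only the monotonicity and scaling properties of $P_y$ listed in Lemma~\ref{Py}.
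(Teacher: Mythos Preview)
The paper does not prove this lemma; it simply recalls it from \cite[Lemma~2.6]{RZ}. Your second step---reducing the conditional estimate to the unconditional one via the domain Markov property and then transporting the bound back with Koebe's theorems and the scaling/monotonicity of $P_y$ from Lemma~\ref{Py}---is correct and is exactly how the conditional version is derived from the unconditional one in \cite{RZ}.

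Your first step, however, has a genuine gap. Optional stopping of the supermartingale $M_t=|g_t'(z_0)|^{2-d}G(Z_t(z_0))$ gives $G(z_0)\ge \EE[M_\tau\mathbf{1}\{\tau<\infty\}]$ with $\tau=\tau^{z_0}_r$; to convert this into an upper bound on $\PP[\tau<\infty]$ you need a uniform \emph{lower} bound on $M_\tau$, hence an \emph{upper} bound on $|Z_\tau(z_0)|$. Koebe's theorems do not furnish one: they yield $\Imm Z_\tau(z_0)\asymp |g_\tau'(z_0)|\,(r\wedge y_0)$ and therefore only the lower bound $|Z_\tau(z_0)|\ge \Imm Z_\tau(z_0)$. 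In angular form one has $M_t=c\,\Upsilon_t(z_0)^{d-2}(\sin\Theta_t)^\alpha$ with $\Theta_t=\arg Z_t(z_0)$, so optional stopping at $\tau$ controls $\EE[(\sin\Theta_\tau)^\alpha;\,\tau<\infty]$, not $\PP[\tau<\infty]$ itself; and $\sin\Theta_\tau$ can be arbitrarily small, since the fact that $|\gamma(\tau)-z_0|=r$ does not prevent the harmonic measure (seen from $z_0$ in $H_\tau$) of a neighborhood of the tip from being tiny when $\gamma$ has already laid down a lot of curve between $z_0$ and $\gamma(\tau)$. The argument in \cite{RZ} (and in the earlier literature, e.g.\ \cite{Law4}) closes this by analyzing the angle diffusion $\Theta_t$ in the radial time parametrization---equivalently, by working under $\PP^*_{z_0}$ where the moments of $(\sin\Theta_t)^{-\alpha}$ are controlled---rather than by a bare optional-stopping inequality.
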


\begin{lemma} [Boundary Estimate]
	Let $T$ be a stopping time. Let $\xi_1$ and $\xi_2$ be a disjoint pair of crosscuts of $H_T$ such that
	\begin{enumerate}
		\item either $\xi_1$ disconnects $\gamma(T)$ from $\xi_2$ in $H_T$, or $\gamma(T)$ is an end point of $\xi_1$;
		\item among the three bounded components of $H_T\sem (\xi_1\cup\xi_2)$, the boundary of the unbounded  component does not contain $\xi_2$.
	\end{enumerate}
	Then
	$$\PP[\tau_{\xi_2}<\infty|\F_T]\le C e^{-\alpha \pi d_{H_T}(\xi_1,\xi_2)}.$$
\end{lemma}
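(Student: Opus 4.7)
The plan is to reduce the statement to a canonical setting via the domain Markov property and conformal invariance, and then establish a boundary hitting estimate where the exponent $\alpha$ arises from an SLE martingale.

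First I would condition on $\F_T$ and apply the domain Markov property: the curve $(\gamma(T+t))_{t\ge 0}$, viewed in $H_T$, is a chordal SLE$_\kappa$ from $\gamma(T)$ to $\infty$. Pushing forward by the centered Loewner map $Z_T = g_T - U_T$ turns this into chordal SLE$_\kappa$ in $(\HH;0,\infty)$, and the crosscuts $\xi_j$ become crosscuts $\til\xi_j := Z_T(\xi_j)$ of $\HH$ with the same extremal distance. The topological hypotheses transfer: $\til\xi_1$ either has $0$ as an endpoint or separates $0$ from $\til\xi_2$ in $\HH$; and the unbounded component of $\HH\sem(\til\xi_1\cup\til\xi_2)$ (the one containing $\infty$) does not touch $\til\xi_2$. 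So it suffices to bound $\PP[\tau_{\til\xi_2}<\infty] \le C e^{-\alpha\pi d_{\HH}(\til\xi_1,\til\xi_2)}$ for chordal SLE$_\kappa$ in $\HH$ from $0$ to $\infty$.

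Next I would use the geometry to choose a cleaner target. Since $\til\xi_2$ is trapped in a pocket on the far side of $\til\xi_1$, I would pick a Jordan crosscut $\rho$ of $\HH$ sitting strictly between $\til\xi_1$ and $\til\xi_2$ and separating them, constructed so that $d_\HH(\rho,\til\xi_2)\gtrsim d_\HH(\til\xi_1,\til\xi_2)$ and $\rho$ is a circular arc or a polygonal curve with easily controlled geometry. Applying Lemma \ref{lem-extremal2} (or the Teichm\"uller extremal length bound it is based on) converts the exponential in extremal distance into a product of ratios of diameters and distances, so that $e^{-\pi d_\HH(\til\xi_1,\til\xi_2)}$ dominates a quantity of the form $\diam(A)/\dist(A,B)$ for suitable pieces $A,B$ sitting between the crosscuts.

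The heart of the proof is then a one-point boundary hitting estimate: if $I\subset\R\sem\{0\}$ is a bounded interval at distance $\gtrsim|I|\cdot e^{\pi d}$ from $0$, then $\PP[\tau_I<\infty]\lesssim e^{-\alpha\pi d}$. This is the classical boundary exponent for chordal SLE$_\kappa$ and is proved by applying It\^o's formula to the local martingale
\[
M_t = |g_t'(a)|^{\alpha}|g_t'(b)|^{\alpha}\bigl(g_t(b)-g_t(a)\bigr)^{2(2-d)-2\alpha}\bigl(g_t(b)-U_t\bigr)^{-\alpha}\bigl(g_t(a)-U_t\bigr)^{-\alpha}
\]
(or the more standard single-point boundary martingale $|g_t'(a)|^\alpha(g_t(a)-U_t)^{-2\alpha}$), optional stopping at $\tau_I\wedge t$, and letting $t\to\infty$. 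To handle a general crosscut $\til\xi_2$ rather than an interval of $\R$, I would apply the conformal map $\phi$ from the component of $\HH\sem\til\xi_2$ containing $0$ onto $\HH$ with $\phi(0)=0$, $\phi(\infty)=\infty$; the image of $\til\xi_2$ is an interval $I\subset\R$, while $\rho$ maps to a crosscut whose extremal distance from $I$ is preserved. The SLE is no longer an SLE in the new domain, but I would use the restriction/Girsanov-type weighting together with the forementioned martingale to compare and still pick up a factor $e^{-\alpha\pi d}$.

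The main obstacle is step three: getting the exponent $\alpha$ (not merely some positive constant) uniformly in the shape of the crosscuts. This requires the precise SLE martingale computation, and care must be taken when $\gamma(T)$ lies on the boundary of $H_T$ adjacent to $\xi_1$, where the conformal image of $\xi_1$ touches $0$ and the martingale degenerates; in that degenerate case I would first run $\gamma$ until it leaves a tiny neighborhood of $\gamma(T)$, losing only a bounded multiplicative constant, and then apply the martingale estimate in the regularized configuration.
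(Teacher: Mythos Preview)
The paper does not prove this lemma; it is quoted from \cite[Lemma~2.5]{RZ}. Your reduction in the first paragraph (conditioning on $\F_T$, applying DMP, and pushing forward by $Z_T$) is correct and is exactly how one begins.

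The genuine gap is in your third paragraph. Mapping the component of $\HH\setminus\til\xi_2$ containing $0$ and $\infty$ onto $\HH$ does \emph{not} turn the SLE into an SLE, and there is no ``restriction/Girsanov-type weighting'' that repairs this for general $\kappa\in(0,8)$: restriction is special to $\kappa=8/3$, and the absolute-continuity comparisons between SLE in $\HH$ and SLE in a subdomain involve nontrivial Radon--Nikodym derivatives that do not hand you the exponent~$\alpha$. (Incidentally, the single-point boundary local martingale is $|g_t'(a)|^{\alpha}(g_t(a)-U_t)^{-\alpha}$, not with exponent $-2\alpha$.) So the scheme of ``flatten $\til\xi_2$ to an interval, then compare laws'' does not close.

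The argument that works stays in $\HH$ and never changes the domain of the SLE. After your reduction, the two hypotheses force $\til\xi_2$ to lie in the bounded component of $\HH\setminus\til\xi_1$, with $0$ (and $\infty$) in the unbounded component; say the endpoints of $\til\xi_1$ are $c<d$ with $0\le c$. Then $\til\xi_1$ separates the boundary ray $(-\infty,0]$ from $\til\xi_2$, so by the serial law for extremal length $d_{\HH}((-\infty,0],\til\xi_2)\ge d_{\HH}(\til\xi_1,\til\xi_2)=d$. Applying Lemma~\ref{lem-extremal2} to $\til\xi_2$ and $[-N,0]$ and letting $N\to\infty$ gives
\[
\frac{\diam(\til\xi_2)}{\dist(\til\xi_2,(-\infty,0])}\wedge 1 \;\le\; 144\,e^{-\pi d}.
\]
Taking $x_0=a\in\R$ to be an endpoint of $\til\xi_2$ (so $a>0$ and $\dist(\til\xi_2,(-\infty,0])\le a$) and $r=\diam(\til\xi_2)$, we get $\til\xi_2\subset\{|z-x_0|\le r\}$ with $r/|x_0|\lesssim e^{-\pi d}$ whenever $d$ is larger than a fixed constant (and for bounded $d$ the conclusion is trivial). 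The one-point boundary estimate $\PP[\tau^{x_0}_r<\infty]\le C\,(r/|x_0|)^{\alpha}$ then yields the claim directly. No domain change, no Girsanov, and the exponent $\alpha$ comes straight from the one-point estimate.
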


\begin{lemma}
	Let $m\in\N$. Let $z_j\in\lin\HH$, $y_j=\Imm z_j$, and $R_j\ge r_j>0$ be such that $|z_j|>R_j$, $1\le j\le m$. Let $D_j=\{|z-z_j|<r_j\}$ and $\ha D_j=\{|z-z_j|<R_j\}$, $1\le j\le m$. Let $\ha J_0,J_0,J_0'$ be three mutually disjoint Jordan curves in $\C$, which bound Jordan domains $\ha D_0,D_0,D_0'$, respectively, such that $\ha D_0\supset D_0\supset D_0'$ and $0\not\in \lin {D_0}$. Let $A=\ha D_0\sem \lin{D_0}$ be the doubly connected domain bounded by $\ha J_0$ and $J_0$. Suppose that $A\cap \ha D_j=\emptyset$, $1\le j\le m$, and there is some $n_0\in\{1,\dots,m\}$ such that $\ha D_0\cap \ha D_{n_0}=\emptyset$. Let $\xi_j=\pa D_j\cap \HH$, $\ha\xi_j=\pa \ha D_j\cap \HH$, $0\le j\le m$, and $\xi_0'=\pa D_0'\cap\HH$.
	Let $$E=\{\tau_{\xi_0}<\tau_{\ha \xi_1}\le \tau_{\xi_1}<\tau_{\ha \xi_2}\le \tau_{\xi_2}< \cdots< \tau_{\ha\xi_m}\le \tau_{\xi_m}<\tau_{\xi_0'}<\infty\}.$$
	Then
	$$\PP[{E}|\F_{\tau_{\xi_0}}]\le C^m e^{-\alpha\pi d_{\C}(J_0,\ha J_0)/2} \prod_{j=1}^m  \frac{P_{y_j}(r_j)}{P_{y_j}(R_j)}.$$
	\label{key-lem}
\end{lemma}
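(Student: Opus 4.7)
The plan is to telescope $\PP[E\mid\F_{\tau_{\xi_0}}]$ through the successive hitting times $\tau_{\ha\xi_j}$ and $\tau_{\xi_j}$, applying the one-point estimate at each transition $\tau_{\ha\xi_j}\to\tau_{\xi_j}$ and the boundary estimate to the mandatory annulus crossing that must precede $\tau_{\xi_0'}$. The one-point estimates contribute the product $\prod_{j=1}^m P_{y_j}(r_j)/P_{y_j}(R_j)$ and the boundary estimate contributes $\exp(-\alpha\pi d_{\C}(J_0,\ha J_0)/2)$.

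\textbf{Iterated one-point estimates.} At time $\tau_{\ha\xi_j}$ the curve lies on the circle $\{|z-z_j|=R_j\}$, so $\dist(z_j,K_{\tau_{\ha\xi_j}})\ge R_j$, and the one-point estimate yields
$$\PP[\tau_{\xi_j}<\infty\mid\F_{\tau_{\ha\xi_j}}]\le C\,\frac{P_{y_j}(r_j)}{P_{y_j}(R_j)}.$$
Iterating the tower property along the strictly increasing chain $\tau_{\xi_0}<\tau_{\ha\xi_1}\le\tau_{\xi_1}<\cdots<\tau_{\ha\xi_m}\le\tau_{\xi_m}$ (using the domain Markov property of SLE$_\kappa$ at each stage) would produce the factor $C^m\prod_{j=1}^m P_{y_j}(r_j)/P_{y_j}(R_j)$, but ignores the final requirement $\tau_{\xi_m}<\tau_{\xi_0'}<\infty$.

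\textbf{Annulus crossing.} The hypothesis $\ha D_{n_0}\cap\ha D_0=\emptyset$ together with $0\not\in\lin{D_0}$ forces, on $E$, that $\gamma[\tau_{\xi_{n_0}},\tau_{\xi_0'}]$ traverses the annulus $A=\ha D_0\setminus\lin{D_0}$ from outside $\ha D_0$ to inside $D_0'$. Setting $T:=\tau_{\xi_{n_0}}$, I would select a connected component $\eta_1$ of $\ha J_0\cap H_T$ that separates $\gamma(T)$ from $\lin{D_0}$ and a connected component $\eta_2$ of $J_0\cap H_T$ that surrounds $D_0'$. After verifying the separation and unbounded-component hypotheses of the boundary estimate, it yields
$$\PP[\tau_{\eta_2}<\infty\mid\F_T]\le C\exp\bigl(-\alpha\pi\, d_{H_T}(\eta_1,\eta_2)\bigr).$$
The comparison principle of extremal distance, combined with a reflection of $H_T$ across $\R$ to pass from $A\cap H_T$ down to the doubly connected region $A$, should give $d_{H_T}(\eta_1,\eta_2)\ge\tfrac12\, d_{\C}(J_0,\ha J_0)$, producing the desired exponential factor.

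\textbf{Combining and main obstacle.} The two families of bounds are then combined in a single nested conditional expansion: for each $j$ I apply the one-point estimate between $\tau_{\ha\xi_j}$ and $\tau_{\xi_j}$, and at the stage $j=n_0$ I additionally insert the boundary-estimate bound for the eventual hit of $\eta_2$ (which must occur before $\tau_{\xi_0'}$). The tower property then delivers the claim with constant $C^m$. The chief obstacle is the boundary-estimate step: the crosscuts $\eta_1,\eta_2$ must be constructed correctly in the potentially complicated domain $H_T$, the separation and unbounded-component hypotheses must be checked, and the extremal-distance inequality $d_{H_T}(\eta_1,\eta_2)\ge\tfrac12\, d_{\C}(J_0,\ha J_0)$, which is the source of the $1/2$ in the exponent, must be justified by a careful reflection-and-comparison argument.
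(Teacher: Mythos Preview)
Your proposal correctly identifies the two ingredients—the iterated one-point estimates and a boundary/crossing estimate—but the step ``at the stage $j=n_0$ I additionally insert the boundary-estimate bound'' is not a valid operation in the tower property, and this is where the argument breaks. The tower property requires a totally ordered chain of stopping times; the boundary estimate must be applied at one specific stopping time sitting in that chain. The annulus crossing need not occur between $\tau_{\xi_{n_0}}$ and $\tau_{\ha\xi_{n_0+1}}$: the discs $\ha D_j$ with $j>n_0$ may also lie outside $\ha D_0$, so the crossing can happen much later, and worse, it may occur \emph{inside} an interval $(\tau_{\ha\xi_j},\tau_{\xi_j})$ rather than between two consecutive stages. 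In that situation you need both the one-point factor $P_{y_j}(r_j)/P_{y_j}(R_j)$ and the exponential crossing factor from the \emph{same} interval, and there is no way to simply multiply them. Your ``chief obstacle'' paragraph also underestimates the topological verification: at time $\tau_{\xi_{n_0}}$ the curve has already crossed the annulus at least once (since $\tau_{\xi_0}<\tau_{\xi_{n_0}}$), so the second hypothesis of the boundary estimate (that $\eta_2$ does not bound the unbounded component) can fail for the naive choice of crosscuts.

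The paper's proof resolves both issues with a substantially different structure. It introduces a Jordan curve $\rho$ bisecting the annulus $A$ (this is the source of the factor $1/2$ in the exponent), defines regions $U^\rho_t\subset\ha U^\rho_t\subset H_t$ tracking which crosscuts of $\rho$ separate $\xi_0'$ from $\infty$, and decomposes $E$ according to the first index at which $D_j\cap\HH$ becomes contained in $U^\rho$ (for $j<n_0$) or $\ha U^\rho$ (for $j\ge n_0$). The role of $\rho$ is that the transition time $\sigma_j$ into these regions is a stopping time at which $\gamma(\sigma_j)$ is an endpoint of a crosscut of $\rho$, so the boundary estimate applies with $\xi_1=\rho_{\sigma_j}$. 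When $\sigma_j$ falls between $\tau_{\ha\xi_j}$ and $\tau_{\xi_j}$, the paper subdivides the annulus $\{r_j\le|z-z_j|\le R_j\}$ into $N\asymp\log(R_j/r_j)$ concentric shells, localizes $\sigma_j$ to one shell, applies the boundary estimate at $\sigma_j$ and one-point estimates on either side, and sums the resulting geometric series to recover the full factor $P_{y_j}(r_j)/P_{y_j}(R_j)$ together with the exponential. None of this machinery is present in your proposal.
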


\begin{remark}
	The lemma is similar to and stronger than \cite[Theorem 3.1]{RZ},   which has the same conclusion but stronger assumption: $\ha D_j$, $1\le j\le m$, are all assumed to be disjoint from $\ha D_0$. Here we only require that  $\ha D_j$, $1\le j\le m$, are disjoint from $A$, and at least one of them: $\ha D_{n_0}$ is disjoint from $\ha D_0$. The condition that $\ha D_0\cap \ha D_{n_0}=\emptyset$ can not be removed. 
	The proof is similar to that of \cite[Theorem 3.1]{RZ}.
	The symbols such as $z_j,R_j,r_j$ in the statement of this lemma and the proof below are not related with the symbols with the same names in other parts of this paper, but are related with the symbols in \cite{RZ}.
\end{remark}

\begin{proof}
	We write $\tau_0=\tau_{\xi_0}$, $\ha\tau_j=\tau_{\ha\xi_j}$ and $\tau_j=\tau_{\xi_j}$, $1\le j\le m$, and $\tau_{m+1}=\tau_{\xi_0'}$.
	
	From the one-point estimate, we have
	\BGE \PP[\tau_j<\infty|\F_{\ha\tau_{j}}]\le  C\frac{P_{y_j}(r_j)}{P_{y_j}(R_j)},\quad 1\le j\le m.\label{1-pt*}\EDE
	Thus, $\PP[E|\F_{\tau_0}]\le C^m\prod_{j=1}^m \frac{P_{y_j}(r_j)}{P_{y_j}(R_j)}$. Now we need to derive the factor $e^{-\alpha\pi d_{\C}(J_0,\ha J_0)/2}$.
	
	By mapping $A$ conformally onto an annulus, we see that there is a Jordan curve $\rho$ in $A$ that disconnects $J_0$ from $\ha J_0$, such that
	\BGE d_{\C}(\rho,J_0 )=d_{\C}(\rho,\ha J_0 )= d_{\C}(J,\ha J_0)/2.\label{extremal}\EDE
	
	Let $T=\inf\{t\ge 0:\xi_0'\not\subset H_t\}$. Let $t\in[\tau_0,T)$. Each connected component $\eta$ of $\rho\cap H_t$ is a crosscut of $H_t$, and $H_t\sem\eta$ is the disjoint union of a bounded domain and an unbounded domain. We use $H_t^*(\eta)$ to denote the bounded domain. First, consider the connected components $\eta$ of $\rho\cap H_t$ such that $\xi_0'\subset H_t^*(\eta)$. If such $\eta$ is unique, we denote it by $\rho_t$. Otherwise, applying \cite[Lemma 2.1]{RZ}, we may find the unique component $\eta_0$, such that $H_t^*(\eta_0)$ is the smallest among all of these $H_t^*(\eta)$. Again we use $\rho_t$ to denote this $\eta_0$. Let $\ha U^\rho_t=H_t^*(\rho_t)$. Then $\xi_0'\subset \ha U^\rho_t$. Next, consider the connected components $\eta$ of $\rho\cap H_t$ such that $H_t^*(\eta)\subset \ha U^\rho_t\sem \xi_0'$. Let the union of $H_t^*(\eta)$ for these $\eta$ be denoted by $U^\rho_t$. Then we have $U^\rho_t\subset\ha U^\rho_t$ and $U^\rho_t\cap \xi_0'=\emptyset$.

	Now we define a family of events. 
	\begin{itemize}
		\item Let $A_{(0,1)}$ be the event that $\tau_0<\ha \tau_1\wedge T$ and $D_1\cap\HH\subset U^\rho_{\tau_0}$.
		\item For $1\le j\le n_0-1$, let $A_{(j,j)}$ be the event that $\tau_{j-1}<\tau_j<T$,  and $D_j\cap\HH\not\subset U^\rho_{\tau_{j-1}}$, but $D_j\cap\HH\subset U^\rho_{\tau_{j}}$.
		\item For $1\le j\le n_0-1$, let $A_{(j,j+1)}$ be the event that $\tau_j<\ha \tau_{j+1}\wedge T$,  and $D_j\cap\HH\not\subset U^\rho_{\tau_{j}}$, but $D_{j+1}\cap\HH\subset U^\rho_{\tau_{j}}$.
		\item For $n_0\le j\le m$, let $A_{(j,j)}$ be the event that $\tau_{j-1}<\tau_{j}<T$, and $D_j\cap\HH\not\subset \ha U^\rho_{\tau_{j-1}}$, but $D_j\cap\HH\subset \ha U^\rho_{\tau_{j}}$.
		\item For $n_0\le j\le m-1$, let $A_{(j,j+1)}$ be the event that $\tau_{j}<\ha \tau_{j+1}\wedge T$, and $D_j\cap\HH\not\subset \ha U^\rho_{\tau_{j}}$, but $D_{j+1}\cap\HH\subset \ha U^\rho_{\tau_{j}}$.
		\item Let $A_{(m,m+1)}$ be the event that $\tau_m<\tau_{m+1}\wedge T$ and $D_m\cap\HH\not\subset \ha U^\rho_{\tau_{m}}$.
	\end{itemize}
	
	Let $I=\{(j,j+1):0\le j\le m\}\cup\{(j,j):1\le j\le m\}$.  We claim that $E\subset\bigcup_{\iota\in I} A_\iota$. To see this, note that, if none of the events $A_{(j,j+1)}$, $0\le j\le n_0-1$, and $A_{(j,j)}$, $1\le j\le n_0-1$, happens, then $D_{n_0}\cap\HH\not\subset U^\rho_{\tau_{n_0}}$. Since $ D _{n_0} $ is disjoint from $\ha D_0$, we can conclude that $D _{n_0}\cap\HH\not\subset \ha U^\rho_{\tau_{n_0}}$. In fact, if $D _{n_0}\cap\HH \subset \ha U^\rho_{\tau_{n_0}}$, then from $D _{n_0}\cap\ha D_0=\emptyset$, $\rho\subset \ha D_0$, and $\rho$ surrounds $\xi_0'$, we may find a connected component $\eta$ of $\rho\cap H_{\tau_{n_0}}$ that disconnects $D _{n_0}\cap\HH$ from $\xi_0'$ in $H_{\tau_{n_0}}$. Since $D _{n_0}\cap\HH,\xi_0' \subset \ha U^\rho_{\tau_{n_0}}$, we have $\eta\subset \ha U^\rho_{\tau_{n_0}}$. From the definitions of $\rho_{n_0}$ and $\ha U^\rho_{n_0}$, we see that $\eta$ does not disconnect $\xi_0'$ from $\infty$ in $H_{\tau_{n_0}}$. Thus,
	$D _{n_0}\cap\HH\subset H_{\tau_{n_0}}^*(\eta)\subset \ha U^\rho_{\tau_{n_0}}$, and $\xi_0'\cap H_{\tau_{n_0}}^*(\eta)=\emptyset$. This shows that $D _{n_0}\cap\HH\subset U^\rho_{\tau_{n_0}}$, which is a contradiction. Since $D_{n_0}\cap\HH\not\subset \ha U^\rho_{\tau_{n_0}}$,  one of the events $A_{(j,j)}$ and $A_{(j,j+1)}$, $n_0\le j\le m$, must happen. So the claim is proved. We will finish the proof by showing that
	\BGE \PP[E\cap A_{\iota}|\F_{\tau_0}]  \le C^m e^{-\alpha\pi d_{\C}(J_0,\ha J_0)/2} \prod_{j=1}^m  \frac{P_{y_j}(r_j)}{P_{y_j}(R_j)}, \quad \iota\in I.\label{conclusion*}\EDE
	
	\no{\bf Case 1.} Suppose $A_{(0,1)}$ occurs. Then at time $\tau_0$, there is a connected component, denoted by $\til \rho_{\tau_0}$, of $\rho\cap H_{\tau_0}$, that disconnects $\ha \xi_1$ from both $\xi_0'$ and $\infty$ in $H_{\tau_0}$. Since $\xi_0'\subset D_0\cap\HH\subset H_{\tau_0}$ and $\gamma(\tau_0)\in\pa D_0$, we see that $\til\rho_{\tau_0}$ disconnects $\ha\xi_1$ also from $\gamma(\tau_0)$ in $H_{\tau_0}$. Since $\ha\xi_1$ is disjoint from $A$, it is  contained in either $D_0$ or $\C\sem \ha D_0$. If $\ha\xi_1$ is contained in $D_0$ (resp.\ $\C\sem \ha D_0$), then $J_0\cap H_{\tau_0}$ (resp.\ $\ha J_0\cap H_{\tau_0}$) contains a connected component, denoted by $\eta_{\tau_0}$, which disconnects $\ha\xi_1$ from $\til\rho_{\tau_0}$ and $\infty$ in $H_{\tau_0}$. Using the boundary estimate and (\ref{extremal}), we get
	$$ \PP[\ha\tau_1<\infty|\F_{\tau_0},A_{(0,1)}] \le C e^{-\alpha \pi d_{H_{\tau_0}}(\til\rho_{\tau_0}, \eta_{\tau_0})}\le   C e^{-\alpha\pi d_{\C}(J_0,\ha J_0)/2},$$
	which together with (\ref{1-pt*}) implies that (\ref{conclusion*}) holds for $\iota=(0,1)$.
	
	\no{\bf Case 2.} Suppose for some $1\le j\le n_0-1$, $A_{(j,j+1)}$ occurs. See Figure \ref{Case23}. Then at time $\tau_j$, there is a connected component, denoted by $\til \rho_{\tau_j}$, of $\rho\cap H_{\tau_j}$, that disconnects $\ha \xi_{j+1}$ from both $\xi_j$ and $\infty$ in $H_{\tau_j}$.  Since $\gamma(\tau_j)\in\xi_j$, we see that $\til\rho_{\tau_j}$ disconnects $\ha\xi_{j+1}$ also from $\gamma(\tau_j)$ in $H_{\tau_j}$. According to whether $\xi_{j+1}$ belongs to $D_0$ or $\C\sem\ha D_0$, we may find a connected component $\eta_{\tau_j}$ of $J_0\cap H_{\tau_0}$ or $\ha J_0\cap H_{\tau_0}$ that disconnects $\ha\xi_{j+1}$ from $\til\rho_{\tau_j}$ and $\infty$ in $H_{\tau_j}$. Using the boundary estimate and (\ref{extremal}), we get
	$$ \PP[\ha\tau_{j+1}<\infty|\F_{\tau_j},A_{(j,j+1)},\tau_j<\ha\tau_{j+1}] \le C e^{-\alpha \pi d_{H_{\tau_j}}(\til\rho_{\tau_j}, \eta_{\tau_j})}\le   C e^{-\alpha\pi d_{\C}(J_0,\ha J_0)/2},$$
	which together with (\ref{1-pt*}) implies that (\ref{conclusion*}) holds for $\iota=(j,j+1)$, $1\le j\le n_0-1$.
	
	\no{\bf Case 3.} Suppose for some $n_0\le j\le m$, $A_{(j,j+1)}$ occurs. See Figure \ref{Case23}. We write $\xi_{m+1}=\xi_0'$. Then $\rho_{\tau_j}$ disconnects $\xi_{j+1}$ from $\gamma(\tau_j)$ and $\infty$ in $H_{\tau_j}$. According to whether $\xi_{j+1}$ belongs to $D_0$ or $\C\sem\ha D_0$, we may find a connected component $\eta_{\tau_j}$ of $J_0\cap H_{\tau_0}$ or $\ha J_0\cap H_{\tau_0}$ that disconnects $\ha\xi_{j+1}$ from $\rho_{\tau_j}$ and $\infty$ in $H_{\tau_j}$. Using the boundary estimate and (\ref{extremal}), we get
	$$ \PP[\ha\tau_{j+1}<\infty|\F_{\tau_j},A_{(j,j+1)},\tau_j<\ha\tau_{j+1}] \le C e^{-\alpha \pi d_{H_{\tau_j}}(\rho_{\tau_j}, \eta_{\tau_j})}\le   C e^{-\alpha\pi d_{\C}(J_0,\ha J_0)/2},$$
	which together with (\ref{1-pt*}) implies that (\ref{conclusion*}) holds for $\iota=(j,j+1)$, $n_0\le j\le m$.
	
\begin{figure}
	\hfill
	\includegraphics[width=0.45\textwidth]{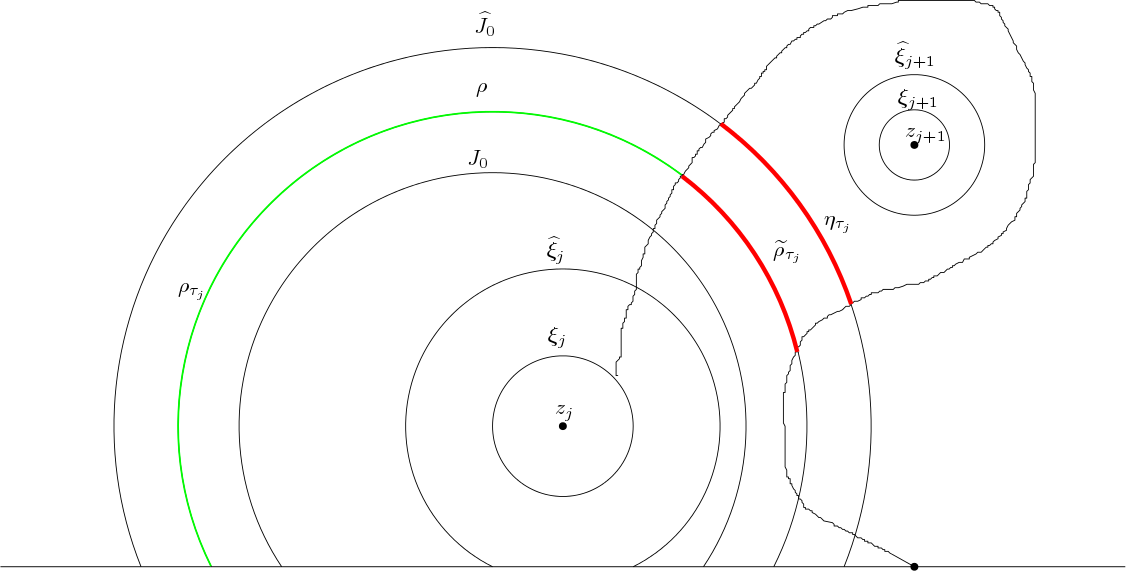}%
	\hfill
	\includegraphics[width=0.45\textwidth]{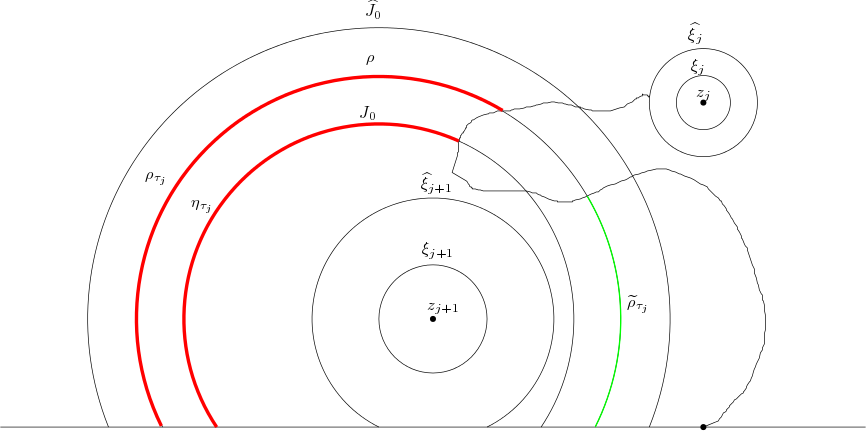}%
	\hfill
	\caption{The two pictures above illustrate Case 2 (left) and Case 3 (right), respectively. In both pictures, the zigzag curve is $\gamma$ up to $\tau_j$, and the three big arcs are $\ha J_0$, $\rho$ and $J_0$ restricted to $\HH$. But the positions of the two pairs of concentric circles $(\ha\xi_j,\xi_j)$ and $(\ha\xi_{j+1},\xi_{j+1})$ are swapped. In both pictures, the pairs of acs that contribute the factors from the boundary estimate ($\til\rho_{\tau_j}$ and $\eta_{\tau_j}$ on the left, $\rho_{\tau_j}$ and $\eta_{\tau_j}$ on the right) are labeled and colored red. We also labeled $\rho_{\tau_j}$ on the left and $\til\rho_{\tau_{j}}$ on the right, and colored both of them green. One can   see the difference between $\ha U_{\tau_j}$ and $U_{\tau_j}$ as they are bounded by $\rho_{\tau_{j}}$ and $\til \rho_{\tau_{j}}$, respectively. }
\label{Case23}
\end{figure}
	
	\no{\bf Case 4.} Suppose for some $n_0\le j\le m-1$, $A_{(j,j)}$ occurs. Define a stopping time
	$$\sigma_j=\inf\{t\ge \tau_{j-1}:D_j\cap\HH\subset \ha U^\rho_{t}\}.$$
	Then $\tau_{j-1}\le \sigma_j\le \tau_j$. From \cite[Lemma 2.2]{RZ}, we know that
	\begin{itemize}
		\item $\gamma(\sigma_j) $ is an endpoint of $\rho_{\sigma_j}$;
		\item $D_j\cap\HH\subset \ha U^\rho_{\sigma_j}$.
	\end{itemize}
	The second property implies that $\tau_{j-1}< \sigma_j< \tau_j$. Now we define two events.
	Let $F_<=\{\sigma_j<\ha\tau_j\}$ and $F_\ge=\{\ha\tau_j\le \sigma_j<\tau_j\}$. Then $A_{(j,j)}\subset F_<\cup F_\ge$.
	
	\no{\bf Case 4.1.} Suppose $F_\ge$ occurs. Let $N=\lceil \log(R_j/r_j)\rceil\in\N$. Let $\zeta_k=\{z\in\HH:|z-z_j|=(R_j^{N-k} r_j^k)^{1/N}\}$, $0\le k\le N$. Note that $\zeta_0=\ha\xi_j$ and $\zeta_N=\xi_j$. Then $F_{\ge }\subset \bigcup_{k=1}^N F_k$, where
	$$F_k:=\{\tau_{\zeta_{k-1}}\le \sigma_j<\tau_{\zeta_k}<\infty\},\quad 1\le k\le N.$$
	See Figure \ref{Case45} for an illustration of $F_k$.
	If $F_k$ occurs, then $\zeta_k\subset \ha U^\rho_{\sigma_j}$.  Since $\zeta_{k-1}\cap H_{\sigma_j}$ has a connected component $\zeta_{k-1}^{\sigma_j}$, which disconnects $\zeta_k$ from $\rho_{\sigma_j}$ in $H_{\sigma_j}$, by the boundary estimate, we get
	$$\PP[\tau_{\zeta_k}<\infty|\F_{\sigma_j},F_k]\le C e^{-\alpha\pi d_{H_{\sigma_j}}(\rho_{\sigma_j},\zeta_{k-1}^{\sigma_j})}. $$
	According to whether $\zeta_k$ belongs to $D_0$ or $\ha D_0$, we may find a connected component $\eta_{\sigma_j}$ of $J_0\cap H_{\sigma_j}$ or $\ha J_0\cap H_{\sigma_j}$ that disconnects $\zeta_{k-1}^{\sigma_j}$ from $\rho_{\sigma_j}$ and $\infty$ in $H_{\sigma_j}$. Moreover, we may find a connected component $\zeta_0^{\sigma_j}$ of $\zeta_0\cap H_{\sigma_j}$ that disconnects $\eta_{\sigma_j}$ from $\zeta_{k-1}^{\sigma_j}$.
	From the composition law of extremal length and (\ref{extremal}) we get
	$$d_{H_{\sigma_j}}(\rho_{\sigma_j},\zeta_{k-1}^{\sigma_j})\ge d_{H_{\sigma_j}}(\rho_{\sigma_j},\eta_{\sigma_j})+d_{H_{\sigma_j}}(\zeta_0^{\sigma_j},\zeta_{k-1}^{\sigma_j})
	\ge  \frac 12 d_{\C}(J_0,\ha J_0)+\frac{k-1}{2\pi N} \log\Big(\frac{R_j}{r_j}\Big)\Big .$$
	Thus, we get
	$$\PP[\tau_{\zeta_k}<\infty|\F_{\sigma_j},F_k] \le C e^{-\alpha\pi d_{\C}(J_0,\ha J_0)/2}\Big(\frac{r_j}{R_j}\Big)^{\frac\alpha2 \frac{k-1}N}. $$
	
	From the one-point estimate, we get
	$$ \PP[F_k|\F_{\tau_{j-1}}, \tau_{j-1}<\ha\tau_j]\le C \frac{P_{y_j}((R_j^{N-k+1} r_j^{k-1})^{1/N})}{P_{y_j}(R_j)};$$
	$$\PP[\tau_j<\infty |\F_{\tau_{\zeta_k}},F_k]\le C \frac{P_{y_j}(r_j)}{P_{y_j}((R_j^{N-k} r_j^{k})^{1/N})}.$$
	The above three displayed formulas together  imply that
	$$ \PP[\tau_j<\infty,F_k|\F_{\tau_{j-1}},\tau_{j-1}<\ha\tau_j]\le  C e^{-\alpha\pi d_{\C}(J_0,\ha J_0)/2} \Big(\frac{r_j}{R_j}\Big)^{\frac\alpha2 \frac{k-1}N}\Big(\frac{r_j}{R_j}\Big)^{-\alpha/N}\frac{P_{y_j}(r_j)}{P_{y_j}(R_j)} .$$ 
	Since $F_{\ge }\subset \bigcup_{k=1}^N F_k$, by summing up the above inequality over $k$, we get
	\begin{align}
		\PP[\tau_j<\infty,F_{\ge} |\F_{\tau_{j-1}},\tau_{j-1}<\ha\tau_j]&\le  C e^{-\alpha\pi d_{\C}(J_0,\ha J_0)/2} \frac{P_{y_j}(r_j)}{P_{y_j}(R_j)}\left[\Big(\frac{r_j}{R_j}\Big)^{-\alpha/N} \frac{1-(\frac{r_j}{R_j})^{\alpha/2}}{1-(\frac{r_j}{R_j})^{\alpha/(2N)}}\right]\nonumber\\
		&\le  C e^{-\alpha\pi d_{\C}(J_0,\ha J_0)/2} \frac{P_{y_j}(r_j)}{P_{y_j}(R_j)},\label{Fk*}
	\end{align}
	where the second inequality holds because the quantity inside the square bracket is bounded above by  $\frac{e^\alpha}{1-e^{-\alpha/4}}$. To see this, consider the cases $R_j/r_j\le e$ and $R_j/r_j>e$ separately.
	
	\no{\bf Case 4.2.} Suppose $F_{<}$ occurs. Then  $\ha\xi_j\subset \ha U^\rho_{\sigma_j}$. According to whether $\ha\xi_j$ belongs to $D_0$ or $\ha D_0$, we may find a connected component $\eta_{\sigma_j}$ of $J_0\cap H_{\sigma_j}$ or $\ha J_0\cap H_{\sigma_j}$ that disconnects $\ha\xi_j$ from $\rho_{\sigma_j}$ and $\infty$ in $H_{\sigma_j}$. By the boundary estimate,  we get
	$$\PP[\ha\tau_j <\infty|\F_{\sigma_j},F_{<}]\le C e^{-\alpha \pi d_{H_{\sigma_j}}(\rho_{\sigma_j}, \eta_{\sigma_j})}\le Ce^{-\alpha\pi d_{\C}(J_0,\ha J_0)/2}, $$
	which together with (\ref{1-pt*}) implies that
	\BGE \PP[\tau_j<\infty, F_{<}|\F_{\tau_{j-1}}]\le C e^{-\alpha\pi d_{\C}(J_0,\ha J_0)/2} \frac{P_{y_j}(r_j)}{P_{y_j}(R_j)}.\label{F0*}\EDE
	
	Combining (\ref{Fk*}) and (\ref{F0*}),  we get
	$$\PP[\tau_j<\infty, A_{(j,j)}|\F_{\tau_{j-1}},\tau_{j-1}<\ha\tau_j]\le C e^{-\alpha\pi d_{\C}(J_0,\ha J_0)/2} \frac{P_{y_j}(r_j)}{P_{y_j}(R_j)},$$
	which together with (\ref{1-pt*}) implies that (\ref{conclusion*}) holds for $\iota=(j,j)$, $n_0\le j\le m$.
	
	\no{\bf Case 5.} Suppose for some $1\le j\le n_0-1$, $A_{(j,j)}$ occurs.  Define a stopping time
	$$\sigma_j=\inf\{t\ge \tau_{j-1}:D_j\cap\HH\subset   U^\rho_{t}\}.$$
	To derive properties of $\sigma_j$, we claim that the following are true.
	\begin{itemize}
		\item[(i)] If $D_j\cap\HH \subset  H_{t_0}\sem U^\rho_{t_0}$, then there is  $\eps>0$ such that $D_j\cap\HH \subset H_t\sem  U^\rho_{t}$ for $t_0\le t<t_0+\eps$;
		\item[(ii)] If $D_j\cap\HH \subset   U^\rho_{t_0}$, and if $\gamma(t_0)$ is not an endpoint of a connected component of $\rho\cap H_{t_0}$ that disconnects $D_j\cap\HH $ from $\infty$ in $H_{t_0}$, then there is $\eps>0$ such that $D_j\cap\HH \subset   U^\rho_{t }$ for $t_0-\eps<t\le t_0$.
	\end{itemize}
	
	To see that (i) holds, we consider two cases. Case 1. $D_j\cap\HH \subset  H_{t_0}\sem \ha U^\rho_{t_0}$. From \cite[Lemma 2.2]{RZ}, there is $\eps>0$ such that for $t_0\le t<t_0+\eps$, $D_j\cap\HH \subset H_t\sem  \ha U^\rho_{t}$, which implies that $D_j\cap\HH \subset H_t\sem   U^\rho_{t}$. Case 2. $D_j\cap\HH \subset   \ha U^\rho_{t_0}\sem U^\rho_{t_0}$. Then there is a curve $\zeta$ in $H_{t_0}$, which connects $\xi_0'$ with $D_j$, and does not intersect $\rho$. In this case, there is $\eps>0$ such that for $t_0\le t<t_0+\eps$, $\zeta\subset H_t$ and $D_j\cap\HH\subset H_t$, which imply that  $D_j\cap\HH \subset H_t\sem  U^\rho_{t}$.
	
	Now we consider (ii). 
	Since $D_j\cap\HH \subset   U^\rho_{t_0}$, there is a connected component $\zeta$ of $\rho\cap H_{t_0}$, which is contained in $\ha U^\rho_{t_0}$,  and disconnects $D_j\cap\HH$ from $\xi_0'$ and $\infty$ in $H_{t_0}$. From the assumption, $\gamma(t_0)$ is not an end point of $\zeta$. By the continuity of $\gamma$, there is $\eps_1>0$ such that $\gamma[t_0-\eps_1,t_0]\cap \lin\zeta=\emptyset$. This implies that, for $t_0-\eps_1<t\le t_0$, $\zeta$ is also a crosscut of $H_t$. Since $H_t$ is simply connected, $\zeta$ also disconnects $D_j\cap\HH$ from $\xi_0'$ and $\infty$ in $H_{t}$.
	Since $\rho_{t_0}$ is a connected component of $\rho\cap H_{t_0}$ that disconnects $\ha U^\rho_{t_0}\supset U^\rho_{t_0}\supset D_j\cap \HH$ from $\infty$, $\gamma(t_0)$ is also not an endpoint of $\rho_{t_0}$. Since $\zeta\subset \ha U^\rho_{t_0}$, from \cite[Lemma 2.2]{RZ}, there is $\eps\in(0,\eps_1)$ such that for $t_0-\eps< t\le t_0$, $\zeta \subset    \ha U^\rho_{t }$, which implies that $D_j\cap \HH\subset U^\rho_t$.
	
	From (i) and (ii) we conclude that
	\begin{itemize}
		\item $\gamma(\sigma_j) $ is an endpoint of a connected component of $\rho\cap H_{\sigma_j}$ that disconnects $D_j\cap\HH $ from $\infty$ in $H_{\sigma_j}$. Let this crosscut be denoted by $\til\rho_{\sigma_j}$.
		\item $D(z_j,r_j)\cap\HH\subset   U^\rho_{\sigma_j}$.
	\end{itemize}
	Following the proof of Case 4 with $\til\rho_{\sigma_j}$ and $U^\rho_{\sigma_j}$ in place of $\rho_{\sigma_j}$ and $\ha U^\rho_{\sigma_j}$, respectively, we conclude that (\ref{conclusion*}) holds for $\iota=(j,j)$, $1\le j\le n_0-1$. See Figure \ref{Case45} for an illustration of the subcase $F_<$ of Case 5. The proof is now complete.
\end{proof}

\begin{figure}
	\hfill
	\includegraphics[width=0.45\textwidth]{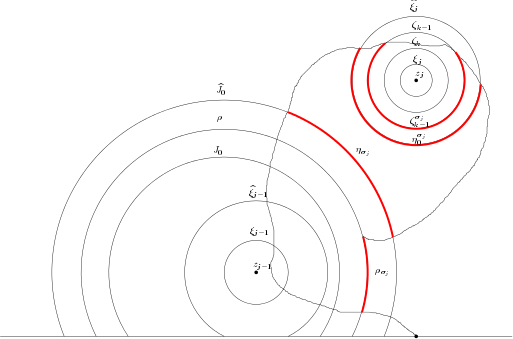}%
	\hfill
	\includegraphics[width=0.45\textwidth]{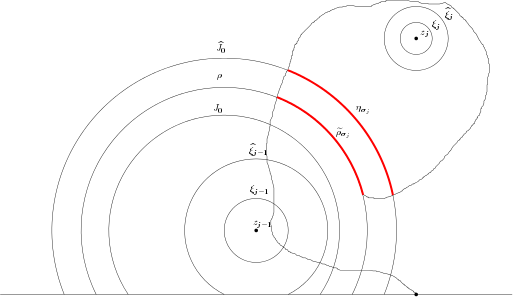}%
	\hfill
	\caption{The two pictures above illustrate the subcase $F_k\subset F_{\ge}$ of Case 4 (left) and the subcase $F_<$ of Case 5 (right), respectively. In both pictures, the zigzag curve is $\gamma$ up to $\sigma_j$, and the three big arcs are $\ha J_0$, $\rho$ and $J_0$ restricted to $\HH$. The  acs that contribute the factors from the boundary estimate ($\til\rho_{\tau_j}$, $\eta_{\tau_j}$, $\zeta^{\sigma_j}_{0}$ and $\zeta^{\sigma_j}_{k-1}$ on the left, $\rho_{\tau_j}$ and $\eta_{\tau_j}$ on the right) are labeled and colored red. }
	\label{Case45}
\end{figure}

Let $\Xi$ be a family of mutually disjoint circles with centers in $\lin\HH$, each of which does not pass through or enclose $0$. Define a partial order on $\Xi$ such that $\xi_1<\xi_2$ if $\xi_2$ is enclosed by $\xi_1$. One should keep in mind that a smaller element in $\Xi$ has bigger radius, but will be visited earlier (if it happens) by a curve started from $0$.

Suppose that $\Xi$ has a partition $\{\Xi_e\}_{e\in\cal E}$ with the following properties:
\begin{itemize}
	\item For each $e\in\cal E$, the elements in $\Xi_e$ are concentric circles with radii forming a geometric sequence with common ratio $1/4$.
	We denote the common center $z_e$, the biggest radius $R_e$, and the smallest radius $r_e$, and let $y_e=\Imm z_e$.
	\item Let $A_e=\{r_e\le |z-z_0|\le R_e\}$ be the closed annulus associated with $\Xi_e$, which is a single circle if $R_e=r_e$, i.e., $|\Xi_e|=1$. Then the annuli $A_e$, $e\in\cal E$, are mutually disjoint.
\end{itemize}
Note that every $\Xi_e$ is a totally ordered set w.r.t.\ the partial order on $\Xi$.

\begin{lemma}
	Suppose that $J_1$ and $J_2$ are disjoint Jordan curves in $\C$, which are disjoint from all $\xi\in\Xi$. Suppose that $0$ is not contained in or enclosed by $J_1$, $J_1$ is enclosed by $J_2$, and that every $\xi\in\Xi$ that lies in the doubly connected domain bounded by $J_1$ and $J_2$ disconnects $J_1$ from $J_2$. Suppose $\xi_a<\xi_b\in\Xi$ are both enclosed by $J_1$, and $\xi_c\in\Xi$  neither encloses $J_2$, or is enclosed by $J_2$.  Let $E$ denote the event that $\tau_\xi<\infty$ for all $\xi\in\Xi$, and $\tau_{\xi_a}<\tau_{\xi_c}<\tau_{\xi_b}$. Then
	$$\PP[E]\le C_{|\cal E|} e^{-\frac{\alpha}{4|\cal E|}\pi d_{\C}(J_1,J_2)} \prod_{e\in\cal E} \frac{P_{y_e}(r_e)}{P_{y_e}(R_e)},$$ \label{key-lem2}
	where $C_{|\cal E|}\in(0,\infty)$ depends only on $\kappa$ and $|\cal E|$.
\end{lemma}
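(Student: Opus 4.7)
The plan is to reduce Lemma \ref{key-lem2} to a careful application of Lemma \ref{key-lem} combined with iterated one-point estimates. Two ingredients are needed: the product $\prod_{e\in\mathcal{E}} P_{y_e}(r_e)/P_{y_e}(R_e)$ will come from telescoping one-point estimates within each concentric family, while the exponential decay $e^{-\frac{\alpha}{4|\mathcal{E}|}\pi d_\C(J_1,J_2)}$ will come from applying Lemma \ref{key-lem} to a "clean" sub-annulus isolated via a pigeonhole argument on extremal length.

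First, within each concentric family $\Xi_e$, the radii form a geometric sequence with common ratio $1/4$, so iterating the one-point estimate on successive nested pairs produces a factor $C^{|\Xi_e|}\, P_{y_e}(r_e)/P_{y_e}(R_e)$ for the probability of hitting all of $\Xi_e$ in the forced outermost-first order; taking the product over $e$ gives the desired multiplicative factor (the $C^{|\Xi_e|}$ can be absorbed into $C_{|\mathcal{E}|}$ since we get a geometric series with fixed ratio). Second, to extract the exponential factor, I would subdivide the doubly connected region $A^*$ bounded by $J_1$ and $J_2$ using the at most $2|\mathcal{E}|$ extreme circles $\xi_e^{\mathrm{out}}, \xi_e^{\mathrm{in}}$ (the outermost and innermost circles of $\Xi_e$ lying in $A^*$), yielding at most $2|\mathcal{E}|+1$ concentric sub-annuli. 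By subadditivity of extremal length, some sub-annulus $A'$ bounded by Jordan curves $K^-\subset K^+$ has $d_\C(K^-,K^+) \ge d_\C(J_1,J_2)/(2|\mathcal{E}|+1)\ge d_\C(J_1,J_2)/(4|\mathcal{E}|)$ (after absorbing a constant into $C_{|\mathcal{E}|}$), and by construction contains no $\xi \in \Xi$ in its interior. Applying Lemma \ref{key-lem} with $(J_0,\hat J_0) = (K^-,K^+)$, a suitable $J_0'$ enclosed by $K^-$, and the circle pairs from $\Xi$ playing the roles of $(\hat\xi_j,\xi_j)$, delivers the exponential factor $e^{-\alpha\pi d_\C(K^-,K^+)/2}$, which matches the target rate. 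The constraint $\tau_{\xi_a}<\tau_{\xi_c}<\tau_{\xi_b}$ with $\xi_a,\xi_b$ inside $J_1$ and $\xi_c$ outside $J_2$ ensures that the curve must traverse $A'$ with a topological configuration compatible with the event structure in Lemma \ref{key-lem}.

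The main obstacle is reconciling the rigid chain structure of Lemma \ref{key-lem} (which orders its $(\hat\xi_j,\xi_j)$ pairs deterministically) with the fact that in Lemma \ref{key-lem2} the circles from different families can be visited in many possible interleavings (only the intra-family order is forced by nesting). This will likely be handled either by a union bound over the $O(|\Xi|!)$ possible interleavings — with the combinatorial factor absorbed into $C_{|\mathcal{E}|}$ — or, more efficiently, by reorganizing the event $E$ so that the application of Lemma \ref{key-lem} applies to the crossings of $A'$ while the one-point telescoping within each $\Xi_e$ is carried out independently via iterated conditioning, with the pairs used in Lemma \ref{key-lem} restricted to those adjacent to $A'$. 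Care is also needed so that the telescoped one-point ratios $P_{y_e}(\cdot)/P_{y_e}(\cdot)$ combine across the "inside $K^-$", "outside $K^+$", and "sub-annulus" portions of each $\Xi_e$ to yield exactly $P_{y_e}(r_e)/P_{y_e}(R_e)$ without over- or under-counting.
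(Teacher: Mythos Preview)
Your high-level plan is in the right spirit --- one-point estimates for the product, Lemma \ref{key-lem} for the exponential decay --- but there is a genuine gap that cannot be patched by the sketch you give.

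\textbf{The combinatorial factor is the whole difficulty.} You write that the union bound over $O(|\Xi|!)$ interleavings ``can be absorbed into $C_{|\mathcal E|}$.'' It cannot: $|\Xi|=\sum_e|\Xi_e|$ is unbounded in terms of $|\mathcal E|$ (each family can contain arbitrarily many concentric circles), so $|\Xi|!$ is not controlled by any function of $|\mathcal E|$ alone. Likewise, your remark that $C^{|\Xi_e|}$ from iterated one-point estimates is absorbed ``since we get a geometric series with fixed ratio'' does not work: you need to \emph{retain} the full factor $P_{y_e}(r_e)/P_{y_e}(R_e)$ in the conclusion, so you cannot spend any of that geometric decay to cancel the $C^{|\Xi_e|}$. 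The paper's proof confronts this head-on: it also decomposes $E=\bigcup_\sigma E^\sigma$ over admissible orderings $\sigma$, but for each $\sigma$ it applies Lemma \ref{key-lem} not once but at every ``inter-family jump,'' extracting a factor $4^{-\frac{\alpha}{8|\mathcal E|}\sum_{e}\sum_j s_e(j)}$ that depends on how scrambled $\sigma$ is. Summing these weights over all $\sigma$ (grouped by their jump profiles $(M_e,(s_e(j)))$) gives a convergent series bounded by a function of $|\mathcal E|$ only. This summability is the technical heart of the lemma, and your proposal does not supply it.

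\textbf{Two smaller issues.} First, your pigeonhole on the $2|\mathcal E|$ extreme circles does not yield a sub-annulus free of $\Xi$: if the winning sub-annulus lies between the outermost and innermost circle of some $\Xi_e$ inside $A^*$, it contains all intermediate circles of that family, so Lemma \ref{key-lem} does not apply directly with $(K^-,K^+)$ as $(J_0,\hat J_0)$. Second, the paper does not obtain the exponent $\alpha/(4|\mathcal E|)$ by spatial pigeonhole; it gets $e^{-\alpha\pi d_\C(J_1,J_2)/4}$ from the forced $(J_1,J_2)$-crossing in the estimate for one chosen family $e_0$, and then takes a \emph{geometric average} of the resulting bounds over $e_0\in\mathcal E$ to distribute the gain, yielding the $1/|\mathcal E|$ in the exponent.
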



\no{\bf Discussion.} From \cite[Theorem 3.2]{RZ}, we know that $\PP[\tau_\xi<\infty,\xi\in\Xi]\le C_{|\cal E|} \prod_{e\in\cal E} \frac{P_{y_e}(r_e)}{P_{y_e}(R_e)}$. Now we need to derive the additional factor $e^{-\frac{\alpha}{4|\cal E|}\pi d_{\C}(J_1,J_2)}$ using the condition $\tau_{\xi_a}<\tau_{\xi_c}<\tau_{\xi_b}$. 

\begin{proof} We write $\N_n$ for $\{k\in\N:k\le n\}$.
	Let $S$ denote the set of bijections $\sigma:\N_{|\Xi|}\to \Xi$ such that $\xi_1<\xi_2$ implies that $\sigma^{-1}(\xi_1)<\sigma^{-1}(\xi_2)$, and $\sigma^{-1}(\xi_a)<\sigma^{-1}(\xi_c)<\sigma^{-1}(\xi_b)$. Let
	$$E^\sigma=\{\tau_{\sigma(1)}<\tau_{\sigma(2)}<\cdots<\tau_{\sigma(|\Xi|)}<\infty\},\quad \sigma\in S.$$
	Then we have
	\BGE E=\bigcup_{\sigma\in S} E^\sigma.\label{E-sigma}\EDE
	We will derive an upper bound of $\PP[E^\sigma]$ in (\ref{E-sigma-est}).
	
	Fix $\sigma\in S$. For $e\in\cal E$, if there is no $\xi\in\Xi$ such that $\xi>\max\Xi_e$, then we say that $e$ is a maximal element in $E$. In this case, we define $\ha \Xi_e=\Xi_e$ and $\xi_e^*=\max\Xi_e$. If $e$ is not a maximal element in $E$, let
	$\xi_e^*$ denote the first $\xi>\max \Xi_e$ that is visited by $\gamma$ on the event $E^\sigma$, and define $\ha\Xi_e=\Xi_e\cup\xi_e^*$. This definition certainly depends on $\sigma$. Label the elements of $\ha\Xi_e$ by $\xi^e_0<\cdots<\xi^e_{N_e}=\xi_e^*$, where $N_e=|\ha\Xi_e|-1$.
	
	For $e\in E$, define
	$$J_e=\{1\le n\le N_e:\sigma^{-1}(\xi^e_n)>  \sigma^{-1}(\xi^e_{n-1})+1\}.$$
	Roughly speaking, $n\in J_e$ means that between $\tau_{\xi^e_{n-1}}$ and $\tau_{\xi^e_n}$, $\gamma$ visits other element in $\Xi$ that it has not visited before on the event $E_\sigma$.
	
	Order the elements of $J_e\cup\{0\}$ by $0=s_e(0)<\cdots<s_e(M_e)$, where $M_e=|J_e|$. Set $s_e(M_e+1)=N_e+1$. Every $\ha\Xi_e$ can be partitioned into $M_e+1$  subsets:
	$$\ha\Xi_{(e,j)}=\{\xi^e_n:s_e(j)\le n\le s_e(j+1)-1\},\quad 0\le j\le M_e.$$
	The meaning of the partition is that, after $\gamma$ visits the first element in $\ha\Xi_{(e,j)}$, which must be $\xi^e_{s_e(j)}$, it then visits all elements in $\ha\Xi_{(e,j)}$ without visiting any other circles in $\Xi$ that it has not visited before.
	Let $I=\{(e,j):e\in{\cal E}, 0\le j\le M_e\}$. Then $\{\ha\Xi_{\iota}:\iota\in I\}$ is a cover of $\Xi$. Note that every $\sigma^{-1}(\ha \Xi_\iota)$, $\iota\in I$, is a connected subset of $\Z$.
	
	For $\iota\in I$, let $e_\iota$ denote the first coordinate of $\iota$, $z_\iota=z_{e_\iota}$ and $y_\iota=\Imm z_\iota$.
	Define $P_\iota$ for each $\iota\in I$. If $\max \ha\Xi_\iota \in \Xi_{e_\iota}$, define $P_\iota= \frac{P_{y_{\iota}}(R_{\max \ha\Xi_\iota})}{P_{y_{\iota}}(R_{\min\ha \Xi_\iota})}$, where we use $R_\xi$ to denote the radius of $\xi$. If $\max \ha\Xi_\iota \not\in \Xi_{e_\iota}$, which means $\max \ha\Xi_\iota=\xi^*_{e_\iota}> \max \Xi_{e_\iota}$, then we consider two subcases. If $\ha\Xi_\iota$ contains only one element (i.e., $\xi^*_{e_\iota}$) or two elements (i.e., $\xi^*_{e_\iota}$ and $ \max \Xi_{e_\iota}$), then let $P_\iota=1$; otherwise let $P_\iota= \frac{P_{y_{\iota}}(R_{\max \Xi_{e_\iota}})}{P_{y_{\iota}}(R_{\min \ha\Xi_\iota})}$.  From the one-point estimate, we get
	\BGE \PP[\tau_{\max\ha \Xi_\iota}<\infty|\F_{\min\ha\Xi_\iota}]\le C P_\iota,\quad \iota\in I.\label{P-iota}\EDE
	Let $P_e= \frac{P_{y_e}(r_e)}{P_{y_e}(R_e)}$, $e\in\cal E$. From Lemma \ref{Py} we get
	\BGE \prod_{j=0}^{M_e} P_{(e,j)}\le 4^{\alpha M_e} P_e,\quad e\in\cal E.\label{P-iota<}\EDE
	
	We have $|I|=\sum_{e\in\cal E}(M_e+1)$. Considering the order that $\gamma$ visits $\ha\Xi_\iota$, $\iota\in I$, we get a bijection map $\sigma_I:\N_{|I|}\to  I$ such that $n_1<n_2$ implies that  $\max\sigma^{-1}(\ha\Xi_{\sigma_I(n_1)})\le\min\sigma^{-1}(\ha\Xi_{\sigma_I(n_2)})$, and $n_1=n_2-1$ implies that $\min\sigma^{-1}(\ha\Xi_{\sigma_I(n_2)})-\max\sigma^{-1}(\ha\Xi_{\sigma_I(n_1)})\in\{0,1\}$. The difference may take value $0$ if $\max\ha\Xi_{\sigma_I(n_1)}=\xi_e^*\not\in \Xi_e$ for  $e=e_{\sigma_I(n_1)}$. We may express $E^\sigma$ as
	$$E^\sigma=\{\tau_{\min\ha\Xi_{\sigma_I(1)}}\le\tau_{\max\ha\Xi_{\sigma_I(1)}}\le\tau_{\min\ha \Xi_{\sigma_I(2)}} \le \cdots\le \tau_{\min\ha \Xi_{\sigma_I(|I|)}}<\tau_{\max\ha \Xi_{\sigma_I(|I|)}}<\infty\}.$$
	
	Fix $e_0\in\cal E$. Let $n_j=\sigma_I^{-1}((e_0,j))$, $0\le j\le M_{e_0}$. Then $n_{j+1}\ge n_{j}+2$, $0\le j\le M_{e_0}-1$. Fix $0\le j\le M_{e_0}-1$. Let $m=n_{j+1}-n_{j}-1$. If $\max\ha \Xi_{\sigma_I(n_j+k)}$ and $\min\ha \Xi_{\sigma_I(n_j+k)}$ are concentric for $1\le k\le m$, applying Lemma \ref{key-lem} with $\ha J_0=\min\Xi_{e_0}$, $J_0=\max\ha\Xi_{({e_0},j)}=\max\ha\Xi_{\sigma_I(n_{j})}$, $J'_0= \min\ha\Xi_{({e_0},j+1)}=\min\ha\Xi_{\sigma_I(n_{j+1})}$, $\{|z-z_k|=R_k\}=\min\ha\Xi_{\sigma_I(n_{j}+k)}$ and $\{|z-z_k|=r_k\}=\max\ha\Xi_{\sigma_I(n_j+k)}$, $1\le k\le m$, we get
	\BGE \PP[E^\sigma_{[\max\ha\Xi_{\sigma_I(n_{j})},\min\ha\Xi_{\sigma_I(n_{j+1})}]}|\F_{\tau_{\max\ha\Xi_{\sigma_I(n_{j})}}}]\le C^m 4^{-\alpha/4(s_{e_0}(j+1)-1)} \prod_{n=n_{j}+1}^{n_{j+1}-1}P_{\sigma_I(n)},\label{E[]}\EDE
	where $E^\sigma_{[\max\ha\Xi_{\sigma_I(n_{j})},\min\ha\Xi_{\sigma_I(n_{j+1})}]}$ is the 
	event
	$$\{\tau_{\max\ha\Xi_{\sigma_I(n_{j})}}\le \tau_{\min\ha\Xi_{\sigma_I(n_{j}+1)}}\le \tau_{\max\ha\Xi_{\sigma_I(n_{j}+1)}}\le \cdots \le  \tau_{\max\ha\Xi_{\sigma_I(n_{j}+m)}}\le \tau_{\min\ha\Xi_{\sigma_I(n_{j+1})}}<\infty\}.$$
	Because of the definition of $P_\iota$, $\iota\in I$, the above estimate still holds in the general case, i.e., there may be some $1\le k\le n$ such that $\max\ha \Xi_{\sigma_I(n_j+k)}=\xi_e^*\not\in \Xi_e$, where $e=e_{\sigma_I(n_j+k)}$.
	
	We say that $\gamma$ makes a $(J_1,J_2)$ jump during $[\max\ha\Xi_{\sigma_I(n_{j})},\min\ha\Xi_{\sigma_I(n_{j+1})}]$ if $\min\Xi_{e_0}$ is enclosed by $J_1$, and there is at least one $k_0\in\N_m$ such that $\min\ha\Xi_{\sigma_I(n_{j}+k_0)}$ is not enclosed by $J_2$. In this case, applying Lemma \ref{key-lem} with $J_0=J_1$ and $\ha J_0=J_2$, we get
	$$ \PP[E^\sigma_{[\max\ha\Xi_{\sigma_I(n_{j})},\min\ha\Xi_{\sigma_I(n_{j+1})}]}|\F_{\tau_{\max\ha\Xi_{\sigma_I(n_{j})}}}]\le C^m   e^{-\alpha\pi d_{\C}(J_1,\ha J_2)/2} \prod_{n=n_{j}+1}^{n_{j+1}-1}P_{\sigma_I(n)}.$$
	Combining this with (\ref{E[]}), we get
	\BGE \PP[E^\sigma_{[\max\ha\Xi_{\sigma_I(n_{j})},\min\ha\Xi_{\sigma_I(n_{j+1})}]}|\F_{\tau_{\max\ha\Xi_{\sigma_I(n_{j})}}}]\le C^m   e^{-\frac\alpha 4\pi d_{\C}(J_1,\ha J_2)} 4^{-\frac \alpha 8(s_{e_0}(j+1)-1)} \prod_{n=n_{j}+1}^{n_{j+1}-1}P_{\sigma_I(n)}.\label{jump}\EDE
	
	Letting $j$ vary between $0$ and $M_{e_0}-1$ and using (\ref{P-iota}) and (\ref{E[]}), we get
	$$\PP[E^\sigma]\le C^{|I|} 4^{-\alpha/4 \sum_{j=1}^{M_{e_0}} (s_{e_0}(j)-1)} \prod_{\iota\in I} P_\iota.$$
	Using (\ref{P-iota<}) and $|I|=\sum_e (M_e+1)$, we find that
	$$\PP[E^\sigma]\le C^{|{\cal E}|} C^{\sum_{e\in\cal E}M_e}4^{-\frac\alpha 4 \sum_{j=1}^{M_{e_0}} s_{e_0}(j)}\prod_{e\in\cal E} P_e.$$
	
	Since $\sigma^{-1}(\xi_a)<\sigma^{-1}(\xi_c)<\sigma^{-1}(\xi_b)$, $\xi_a<\xi_b $ are  enclosed by $J_1$, and $\xi_c$ is not enclosed by $J_2$, there must exist some $e_0\in\cal E$ and some $j\in[0,M_{e_0}-1]$ such that $\gamma$ makes a $(J_1,J_2)$ jump during $[\max\ha\Xi_{\sigma_I(n_{j})},\min\ha\Xi_{\sigma_I(n_{j+1})}]$. In that case,  using (\ref{P-iota}), (\ref{E[]}), and (\ref{jump}), we get
	$$\PP[E^\sigma]\le C^{|{\cal E}|} C^{\sum_{e\in\cal E}M_e}e^{-\frac \alpha4\pi d_{\C}(J_1,\ha J_2)} 4^{-\frac\alpha 8 \sum_{j=1}^{M_{e_0}} s_{e_0}(j)}\prod_{e\in\cal E} P_e.$$
	Taking a geometric average of the above upper bounds for $\PP[E^\sigma]$ over $e_0\in\cal E$, we get
	\BGE \PP[E^\sigma]\le   C^{|{\cal E}|} C^{\sum_{e\in\cal E}M_e}e^{-\frac{\alpha}{4|\cal E|}\pi d_{\C}(J_1,\ha J_2)} 4^{-\frac\alpha{8|{\cal E}|}\sum_{e\in\cal E} \sum_{j=1}^{M_{e}} s_{e}(j)}\prod_{e\in\cal E} P_e.\label{E-sigma-est}\EDE
	
	So far we have omitted the $\sigma$ on $I$, $M_e$, $s_e(j)$ and etc; we will put $\sigma$ on the superscript if we want to emphasize the dependence on $\sigma$. From (\ref{E-sigma}) and (\ref{E-sigma-est}), we get
	\BGE \PP[E]\le C^{|{\cal E}|} \sum_{(M_e;(s_e(j))_{j=0}^{M_e})_{e\in\cal E}}|S_{(M_e,(s_e(j)))}|  C^{\sum_{e\in\cal E}M_e}e^{-\frac{\alpha}{4|\cal E|}\pi d_{\C}(J_1,\ha J_2)}  4^{-\frac\alpha{8|{\cal E}|}\sum_{e\in\cal E} \sum_{j=1}^{M_{e}} s_{e}(j)}\prod_{e\in\cal E} P_e,\label{PE-S}\EDE
	where
	$$S_{(M_e,(s_e(j)))}:=\{\sigma\in S:M^\sigma_e=M_e, s^\sigma_e(j)=s_e(j), 0\le j\le M_e,e\in\cal M\},$$
	and the first summation in (\ref{PE-S}) is over all possible $(M_e;(s_e(j))_{j=0}^{M_e})_{e\in\cal E}$, namely, $M_e\ge 0$ and  $0=s_e(0)<s_e(1)<\cdots s_e(M_e)\le N_e$ for every $e\in\cal E$. It now suffices to show that
	\BGE \sum_{(M_e;(s_e(j))_{j=1}^{M_e})_{e\in\cal E}}|S_{(M_e,(s_e(j)))}|  C^{\sum_{e\in\cal E}M_e}4^{-\frac\alpha{8|{\cal E}|}\sum_{e\in\cal E} \sum_{j=1}^{M_{e}} s_{e}(j)} \le C_{|\cal E|},\label{suffice}\EDE
	for some $C_{|\cal E|}<\infty$ depending only on $|\cal E|$ and $\kappa$.
	
	We now bound the size of $S_{(M_e,(s_e(j)))}$. Note that when $M^\sigma_e$ and $s^\sigma_e(j)$, $0\le j\le M^\sigma_e$, $e\in\cal E$, are given, $\sigma$ is then determined by $\sigma_I:\N_{|I^\sigma|}\to  I^\sigma$, which is in turn determined by $e_{\sigma_I(n)}$, $1\le n\le |I^\sigma|=\sum_{e\in\cal E}(M^\sigma_e+1)$.
	Since each $e_{\sigma_I(n)}$ has at most $|\cal E|$ possibilities, we have $|S_{(M_e,(s_e(j)))}|\le |{\cal E}|^{\sum_{e\in\cal E}(M_e+1)}$.
	Thus, the left-hand side of (\ref{suffice}) is bounded by
	\begin{align*}
		&|{\cal E}|^{|{\cal E}|} \sum_{(M_e;(s_e(j))_{j=0}^{M_e})_{e\in\cal E}} \prod_{e\in\cal E}   (C|{\cal E}|)^{ M_e}4^{-\frac\alpha{8|{\cal E}|}  \sum_{j=1}^{M_{e}} s_{e}(j)}\\
		=&|{\cal E}|^{|{\cal E}|} \prod_{e\in\cal E} \sum_{M_e=0}^{N_e}  (C|{\cal E}|)^{ M_e} \sum_{0=s_e(0)<\cdots<s_e(M_e)\le N_e}4^{-\frac\alpha{8|{\cal E}|}  \sum_{j=1}^{M_{e}} s_{e}(j)}\\
		\le & |{\cal E}|^{|{\cal E}|} \prod_{e\in\cal E} \sum_{M=0}^{\infty}   (C|{\cal E}|)^{ M} \sum_{s(1)=1}^\infty\cdots  \sum_{s(M)=M}^\infty
		4^{-\frac\alpha{8|{\cal E}|}  \sum_{j=1}^{M} s(j)}\\
		\le &  |{\cal E}|^{|{\cal E}|} \prod_{e\in\cal E} \sum_{M=0}^{\infty}   (C|{\cal E}|)^{ M}\prod_{j=1}^M \sum_{s(j)=j}^\infty
		4^{-\frac\alpha{8|{\cal E}|}  s(j)}\\
		=  &\left[|{\cal E}| \sum_{M=0}^{\infty}   \left(\frac{C|{\cal E}|}{1-4^{-\frac\alpha{8|{\cal E}|} }}\right)^{ M} 4^{-\frac\alpha{16|{\cal E}|}  M(M+1)} \right]^{|{\cal E}|}.
	\end{align*}
	The conclusion now follows since the summation inside the square bracket equals to a finite number depending only on $\kappa$ and $|\cal E|$.
\end{proof}

\begin{proof}[Proof of Theorem \ref{RZ-Thm3.1}]
	By (\ref{Frasymp}), we may change the order of the points $z_1,\dots,z_n$. Thus, it suffices to show that
	\BGE \PP[\tau^{z_j}_{r_j}<\infty,1\le j\le n; \tau^{z_{1}}_{s_{1}}<\tau^{z_{2}}_{r_{2}}<\tau^{z_1}_{r_1}] \le C_n   \prod_{j=1}^n \frac{P_{y_j}(r_j)}{P_{y_j}(l_j)}\cdot\Big(\frac{s_{1}}{|z_{1}-z_{2}|\wedge |z_{1}|}\Big)^{\frac{\alpha}{32n^2}}, \label{perm-Thm}\EDE
	for any distinct points $z_1,\dots,z_n\in\lin\HH\sem\{0\}$, $r_j\in(0,d_j)$, $1\le j\le n$,  and $s_1\ge 0$,
	where $y_j,l_j,d_j$ are defined by (\ref{ldyR}). If $s_1\le r_1$, the event on the LHS is empty, and the formula trivially holds; if $s\ge |z_{1}-z_{2}|\wedge |z_{1}|$, the formula follows from \cite[Theorem 1.1]{RZ}. For the rest of the proof, we assume that $s_1\in (r_1,|z_{1}-z_{2}|\wedge |z_{1}|)$.
	
	We want to deduce the theorem from Lemma \ref{key-lem2}, so we want to construct a family  $\Xi$ of  mutually disjoint circles and Jordan curves $J_1,J_2$.
	
	Suppose $4^{h_j} r_j\le l_j\le 4^{h_j+1} r_j$ for some $h_j\in\N$, $1\le j\le n$.  By increasing the value of $s_1$, we may assume that $s_1=4^{\til h_1} r_1$, where $\til h_1\in\N$ and $\til h_1>h_1$. Define
	\[
	\xi_j^s=\{|z-z_j|=4^{h_j-s} r_j\}, \quad 1\le j\le n,\quad 1\le s\le h_j.
	\]
	The family $\{\xi_j^s:1\le j\le n,\quad 1\le s\le h_j\}$ may not be mutually disjoint. So we can not define $\Xi$ to be this family. To solve this issue, we will remove some circles as follows. For $1\le j<k\le n$, let $D_k=\{|z-z_k|\le l_k/4\}$, which contains every $\xi_k^r$, $1\le r\le h_k$, and
	\BGE I_{j,k}=\{\xi_j^s: 1\le s\le h_j, \xi_j^s\cap D_k\ne\emptyset\}.\label{I}\EDE
	Then $\Xi:=\{\xi_j^s:1\le j\le n, 1\le s\le h_j\}\sem \bigcup_{1\le j<k\le n} I_{j,k}$ is mutually disjoint.
	If $\dist(\gamma,z_j)\le r_j$, then $\gamma$ intersects every $\xi_j^s$, $1\le s\le h_j$. So we get
	\BGE \PP[\dist(\gamma,z_j)\le r_j,1\le j\le n]\le \PP\Big[\bigcap_{j=1}^n \bigcap_{s=1}^{h_j}\{\gamma\cap \xi_j^s\ne\emptyset\}\Big]
	\le \PP\Big[\bigcap_{\xi\in\Xi} \{\gamma\cap\xi\ne\emptyset\}\Big].\label{dist-xi}\EDE
	
	Next, we construct a partition $\{\Xi_e:e\in\cal E\}$ of $\Xi$.
	We introduce some notation: if $e$ is a family of  circles centered at $z_0\in\lin\HH$ with biggest radius $R$ and smallest radius $r$, then we define $A_e=\{r\le |z-z_0|\le R\}$ and $P_e=\frac{P_{\Imm z_0}(r)}{P_{\Imm z_0}(R)}$.
	
	First, $\Xi$ has a natural partition $\Xi_j$, $1\le j\le n$, such that $\Xi_j$ is composed of circles centered at $z_j$. For each $j$, we construct a graph $G_j$, whose vertex set is $\Xi_j$, and $\xi_1\ne\xi_2\in \Xi_j$ are connected by an edge iff the bigger radius is $4$ times the smaller one, and the open annulus between them does not contain any other circle in $\Xi$. Let ${\cal E}_j$ denote the set of connected components of $G_j$. Then we partition $\Xi_j$ into $\Xi_e$, $e\in {\cal E}_j$, such that every $\Xi_e$ is the vertex set of $e\in{\cal E}_j$. Then the circles in every $\Xi_e$ are concentric circles with radii forming a geometric sequence with common ratio $1/4$, and the closed annuli $A_e$ associated with $\Xi_e$, $e\in{\cal E}_j$, are mutually disjoint. From the construction we also see that for any $j<k$, and $e\in{\cal E}_j$, $A_e$ does not intersect $D_k$, which contains every $A_e$ with $e\in{\cal E}_k$. Let ${\cal E}=\bigcup_{j=1}^n {\cal E}_j$. Then $A_e$, $e\in\cal E$, are mutually disjoint. Thus, $\{\Xi_e:e\in\cal E\}$ is a partition of $\Xi$ that satisfies the properties before Lemma \ref{key-lem2}.
	
	We observe that for $j<k$, $\bigcup_{\xi\in\Xi_k}\xi \subset D_k$ can be covered by an annulus centered at $z_j$ with ratio less than $4$ because
	$$\frac{\max_{z\in D_k}\{|z-z_j|\}}{\min_{z\in D_k}\{|z-z_j|\}}\le \frac{|z_j-z_k|+l_k/4}{|z_j-z_k|-l_k/4}\le \frac{l_k+l_k/4}{l_k-l_k/4}<4.$$
	Thus, every $I_{j,k}$ defined in (\ref{I}) contains at most one element. We also see that, for $j<k$, $\bigcup_{\xi\in\Xi_k}\xi \subset D_k$ intersects at most $2$ annuli from $\{4^{h_j-s} r_j\le |z-z_j|\le 4^{h_j-s+1} r_j\}$, $2\le s\le h_j$. If $j>k$, by construction, $\bigcup_{\xi\in\Xi_k}\xi$ is disjoint from the annuli $\{4^{h_j-s} r_j\le |z-z_j|\le 4^{h_j-s+1} r_j\}$, $2\le s\le h_j$, which are contained in $D_j$.
	
	From \cite[Theorem 1.1]{RZ}, we have $\PP[\tau^{z_j}_{r_j}<\infty,1\le j\le n]\le C_n \prod_{j=1}^n \frac{P_{y_j}(r_j)}{P_{y_j}(l_j)}$. So we may assume that $|z_2-z_1|\wedge |z_1|>4^{4n+1} s_1$. Since for $k\ge 2$, $\bigcup_{\xi\in\Xi_k}\xi \subset D_k$ can be covered by an annulus centered at $z_1$ with ratio less than $4$, by pigeon hole principle, we can find a closed annulus centered at $z_1$ with two radii $r<R$ satisfying $s_1\le r<R\le |z_2-z_1|\wedge |z_1|$ and $R/r\le (\frac{|z_2-z_1|\wedge |z_1|}{s_1})^{1/2n}$ that is disjoint from all $\bigcup_{\xi\in\Xi_k}\xi \subset D_k$, $k\ge 2$. Moreover, we may choose $R$ and $r$ such that the boundary circles are disjoint from every $\xi\in\Xi$. Applying Lemma \ref{key-lem2} with
	$J_1=\{|z-z_1|=r\}$, $J_2=\{|z-z_1|=R\}$, $\xi_a=\{|z-z_1|=s_1\}$, $\xi_b=\{|z-z_1|=r_1\}$, $\xi_c=\{|z-z_2|=r_2\}$, and $\{\Xi_e:e\in\cal E\}$, we find that \BGE \PP[\tau^{z_j}_{r_j}<\infty,1\le j\le n; \tau^{z_{1}}_{s_{1}}<\tau^{z_{2}}_{r_{2}}<\tau^{z_1}_{r_1}]
	\le C_{|\cal E|}\Big(\frac{s_1}{|z_1-z_2|\wedge |z_1|}\Big)^{\frac{\alpha}{16n|\cal E|}} \prod_{j=1}^n \prod_{e\in {\cal E}_j} P_e.\label{dist-xi2}\EDE
	Here we set $\prod_{e\in{\cal E}_j}P_e=1$ if ${\cal E}_j=\emptyset$. We will finish the proof by proving that $|{\cal E}|\le 2n$ and  $\prod_{e\in {\cal E}} P_e\le C_n\frac{P_{y_j}(r_j)}{P_{y_j}(l_j)}$.
	
	We now bound $|{\cal E}|=\sum_{j=1}^n |{\cal E}_j|$. For $1\le m\le n$, we use ${\cal E}^{(m)}_j$, $1\le j\le m$, to denote the set of connected components of the graph $G^{(m)}_j$ obtained by  removing the circles in $I_{j,k}$, $j<k\le m$, from $\Xi_j$. Let ${\cal E}^{(m)}=\bigcup_{j=1}^m {\cal E}^{(m)}_j$. Then ${\cal E}={\cal E}^{(n)}$. For $2\le m\le n$, and $1\le j\le m-1$, we may define a map $f_{m}:\bigcup_{j=1}^{m-1}{\cal E}^{(m)}_j \to {\cal E}^{(m-1)}$ such that for every $e\in{\cal E}^{(m)}_j$, $1\le j\le m-1$, $f_{m}(e)$ is the unique element in ${\cal E}^{(m-1)}_j$ that contains $e$. Then each $e\in{\cal E}^{(m-1)}$ has at most $2$ preimages, and $e\in{\cal E}^{(m-1)}$ has exactly $2$ preimages iff $D_m$ is contained in the interior of $A_e$. Since the annuli $A_e$, $e\in {\cal E}^{(m-1)} $, are mutually disjoint, at most one of them has two preimages. Since  ${\cal E}^{(m)}_m$ contains only one element, we find that $|{\cal E}^{(m)}|\le |{\cal E}^{(m-1)}|+2$. From $|{\cal E}^{(1)}|=1$ and ${\cal E}={\cal E}^{(n)}$, we get $|{\cal E}|\le 2n-1$.
	
	To estimate $\prod_{e\in {\cal E}} P_e$, we introduce $S_j$ to be the family of pairs of circles $\{\{|z-z_j|=4^s r_j\},\{|z-z_j|=4^{s-1} r_j\}\}$, $s\in\N$. Let $S^{(m)}_j$ denote the set of $e'\in S_j$ such that $A_{e'}\subset \bigcup_{e\in {\cal E}^{(m)}_j} A_e$. Then $\prod_{e\in {\cal E}^{(m)}_j} P_e=\prod_{e'\in S^{(m)}_j} P_{e'}$. Note that, for $m>j$, $A_{e'}$, $e'\in S^{(m)}_j$ can be obtained from $A_{e'}$, $e'\in S^{(m-1)}_j$, by removing the annuli in the latter group that intersects $D_m$. Since $D_m$ can be covered by an annulus centered at $z_j$ with ratio less than $4$, it can intersect at most two of $A_{e'}$, $e'\in S_j$. Using Lemma \ref{Py}, we find that $\prod_{e\in {\cal E}^{(m)}_j} P_e\le 4^{2\alpha} \prod_{e\in {\cal E}^{(m-1)}_j} P_e$. Since $l_j\le 4^{h_j+1} r_j$, we get
	$\prod_{e\in {\cal E}^{(j)}_j} P_e=\frac{P_{y_j}(r_j)}{P_{y_j}(4^{h_j}r_j)}\le 4^\alpha \frac{P_{y_j}(r_j)}{P_{y_j}(l_j)}$. Thus, $\prod_{e\in {\cal E}^{(n)}_j} P_e\le 4^{\alpha(2n-2j+1)} \frac{P_{y_j}(r_j)}{P_{y_j}(l_j)}$, which implies that
	$$\prod_{e\in {\cal E}^{(n)}} P_e=\prod_{j=1}^n  \prod_{e\in {\cal E}^{(n)}_j} P_e \le \prod_{j=1}^n 4^{\alpha(2n-2j+1)}\frac{P_{y_j}(r_j)}{P_{y_j}(l_j)}=4^{\alpha n^2} \prod_{j=1}^n \frac{P_{y_j}(r_j)}{P_{y_j}(l_j)}.$$
	The proof is now complete.
\end{proof}
\end{appendices}

\end{document}